\newcommand{\Id}{\operatorname{Id}}
\newcommand{\R}{\mathbb{R}}
\newcommand{\C}{\mathbb{C}}
\newcommand{\ZZ}{\mathbb{Z}}
\newcommand{\eps}{\varepsilon}
\newcommand{\N}{\mathcal{N}}
\newcommand{\E}{\mathbb{E}}
\renewcommand{\P}{\mathbb{P}}
\renewcommand{\O}{\mathcal{O}}
\newcommand{\Cov}{\operatorname{Cov}}
\newcommand{\Var}{\operatorname{Var}}
\newcommand{\Tr}{\operatorname{Tr}}
\newcommand{\cT}{\mathcal{T}}
\newcommand{\cA}{\mathcal{A}}
\newcommand{\cB}{\mathcal{B}}
\newcommand{\cC}{\mathcal{C}}
\newcommand{\cS}{\mathcal{S}}
\newcommand{\cE}{\mathcal{E}}
\newcommand{\cP}{\mathcal{P}}
\newcommand{\cI}{\mathcal{I}}
\newcommand{\cJ}{\mathcal{J}}
\newcommand{\cV}{\mathcal{V}}
\newcommand{\1}{\mathbf{1}}
\newcommand{\cO}{\mathcal{O}}
\newcommand{\dist}{\operatorname{dist}}
\newcommand{\Unif}{\operatorname{Unif}}
\newcommand{\const}{\mathrm{const}}
\newcommand{\ii}{\mathbf{i}}
\newcommand{\wtheta}{\widetilde{\theta}}
\newcommand{\wt}{\tilde{t}}
\newcommand{\wvarphi}{\widetilde{\varphi}}
\newcommand{\Arg}{\operatorname{Arg}}
\newcommand{\diag}{\operatorname{diag}}
\renewcommand{\Re}{\operatorname{Re}}
\renewcommand{\Im}{\operatorname{Im}}
\newcommand{\der}{\mathsf{d}}
\newcommand{\cR}{\mathcal{R}}
\newcommand{\trdeg}{\operatorname{trdeg}}
\newcommand{\cK}{\mathcal{K}}
\newcommand{\bI}{\mathbf{1}}
\newcommand{\HS}{{\mathrm{HS}}}
\newcommand{\rO}{\mathrm{O}}
\newcommand{\trunc}{\text{trunc}}
\newcommand{\formal}{\text{formal}}
\newtheorem{theorem}{Theorem}[section]
\newtheorem{lemma}[theorem]{Lemma}
\newtheorem{corollary}[theorem]{Corollary}
\theoremstyle{definition}
\newtheorem{remark}[theorem]{Remark}
\newtheorem*{remark*}{Remark}
\newtheorem*{example*}{Example}
\newtheorem{example}[theorem]{Example}
\newtheorem{definition}[theorem]{Definition}
\numberwithin{equation}{section}
\numberwithin{figure}{section}
\title[Likelihood landscape for orbit recovery]{Likelihood landscape and
maximum likelihood estimation for the discrete orbit recovery model}
\author{Zhou Fan}\email{zhou.fan@yale.edu}
\author{Yi Sun}\email{yisun@math.columbia.edu}
\author{Tianhao Wang}\email{tianhao.wang@yale.edu}
\author{Yihong Wu}\email{yihong.wu@yale.edu}
\date{\today}
\begin{document}

\begin{abstract}
We study the non-convex optimization landscape for maximum likelihood
estimation in the discrete orbit recovery model with Gaussian noise.
This is a statistical model motivated by applications in molecular microscopy
and image processing, where each measurement of an unknown object is subject to
an independent random rotation from a known rotational group. Equivalently, it
is a Gaussian mixture model where the mixture centers belong to a group orbit.

We show that fundamental properties of the likelihood landscape depend on the
signal-to-noise ratio and the group structure. At low noise, this landscape is
``benign'' for any discrete group, possessing no spurious local optima and
only strict saddle points. At high noise, this landscape may develop
spurious local optima, depending on the specific group. We discuss several
positive and negative examples, and provide a general condition that ensures a
globally benign landscape at high noise. For cyclic permutations of coordinates
on $\R^d$ (multi-reference alignment), there may be spurious local optima when
$d \geq 6$, and we establish a correspondence
between these local optima and those of a surrogate function of the phase variables
in the Fourier domain.

We show that the Fisher information matrix transitions from resembling that of
a single Gaussian distribution in low noise to having a graded eigenvalue structure
in high noise, which is determined by the graded algebra of invariant polynomials
under the group action. In a local neighborhood of the true object, where the
neighborhood size is independent of the signal-to-noise ratio, the landscape is
strongly convex in a reparametrized system of variables given by a transcendence
basis of this polynomial algebra. We discuss implications for optimization
algorithms, including slow convergence of expectation-maximization, and possible
advantages of momentum-based acceleration and variable reparametrization for 
first- and second-order descent methods.
\end{abstract}

\maketitle

\tableofcontents

\section{Introduction}

We study statistical estimation of a vector $\theta_* \in \R^d$
from noisy observations, where each observation is subject to a random and
unknown rotation. Letting $G \subseteq \rO(d)$ be a known subgroup of
orthogonal rotations in dimension $d$, we consider the observation model
\begin{equation}\label{eq:orbitmodel}
Y=g \cdot \theta_*+\sigma \eps.
\end{equation}
Here, $g \sim \Unif(G)$ is an unobserved uniform random element of this group,
$\sigma>0$ is the noise level, and $\eps \sim \N(0,\Id)$ is observation noise
that is independent of $g$. This model is sometimes referred to as
multi-reference alignment, the group action channel, or the orbit recovery
problem \cite{bandeira2017optimal,bandeira2017estimation,bendory2017bispectrum,abbe2018multireference,abbe2018estimation,boumal2018heterogeneous,perry2019sample,brunel2019learning}.

Study of this model has largely been motivated by its relevance to the structure
recovery problem arising in single-particle cryo-electron microscopy (cryo-EM)
\cite{dubochet1988cryo,henderson1990model,frank2006three}. Cryo-EM is an
experimental method of determining the 3D structure of a molecule by imaging
many cryogenic samples of the molecule from different and unknown viewing
angles. Due to limitations of electron dose, the individual images
are subject to high levels of measurement noise, and they must be
aligned and averaged to obtain a high-resolution reconstruction of
the molecule. There is extensive literature on computational methods
for this problem, and we refer readers to the recent surveys
\cite{bendory2020single,singer2020computational}.
In our work, we study the simpler model (\ref{eq:orbitmodel}), which omits
many complications in cryo-EM such as a tomographic
projection, the contrast-transfer function, 
and structural heterogeneity. We do this so as to focus our attention on some of the
fundamental features of this reconstruction problem that may
arise due to the latent rotation $g$.

It has been observed since \cite{sigworth1998maximum} that the difficulty of
estimation in the model (\ref{eq:orbitmodel}) has an atypically strong dependence on
the noise level $\sigma$, and this is a common theme in subsequent study
\cite{bandeira2017optimal,bandeira2017estimation,abbe2018estimation,perry2019sample}. Figure
\ref{fig:rotationsexample} contrasts a low-noise and high-noise setting
in a simple example, where $G$ is the group of three-fold
rotations on the plane $\R^2$. Three distinct clusters corresponding to the
orbit points $\{g\theta_*:g \in G\}$ are observed in low noise,
whereas only a single large cluster is apparent in high noise.
The number of samples needed to recover $\theta_*$ and the dependence of this
sample complexity on $\sigma$ were studied in
\cite{bandeira2017estimation,abbe2018estimation}.
In particular, \cite{bandeira2017estimation} showed that
method-of-moments estimators can achieve rate-optimal sample complexity
in $\sigma$, and connected this complexity to properties of the algebra of
$G$-invariant polynomials.

\begin{figure}
\begin{center}
\includegraphics[width=0.5\textwidth]{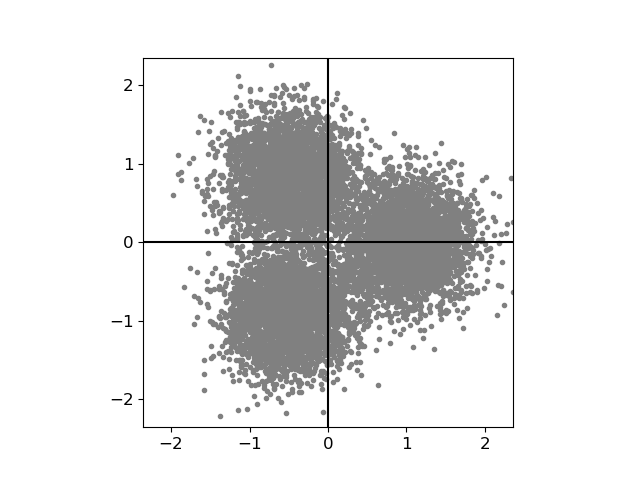}%
\includegraphics[width=0.5\textwidth]{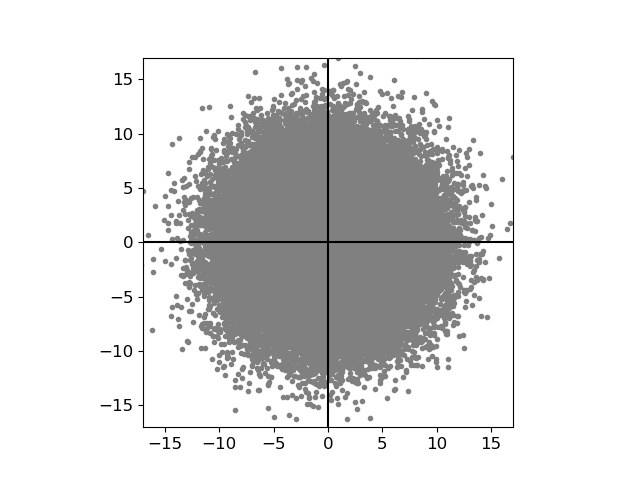}\\
\includegraphics[width=0.35\textwidth]{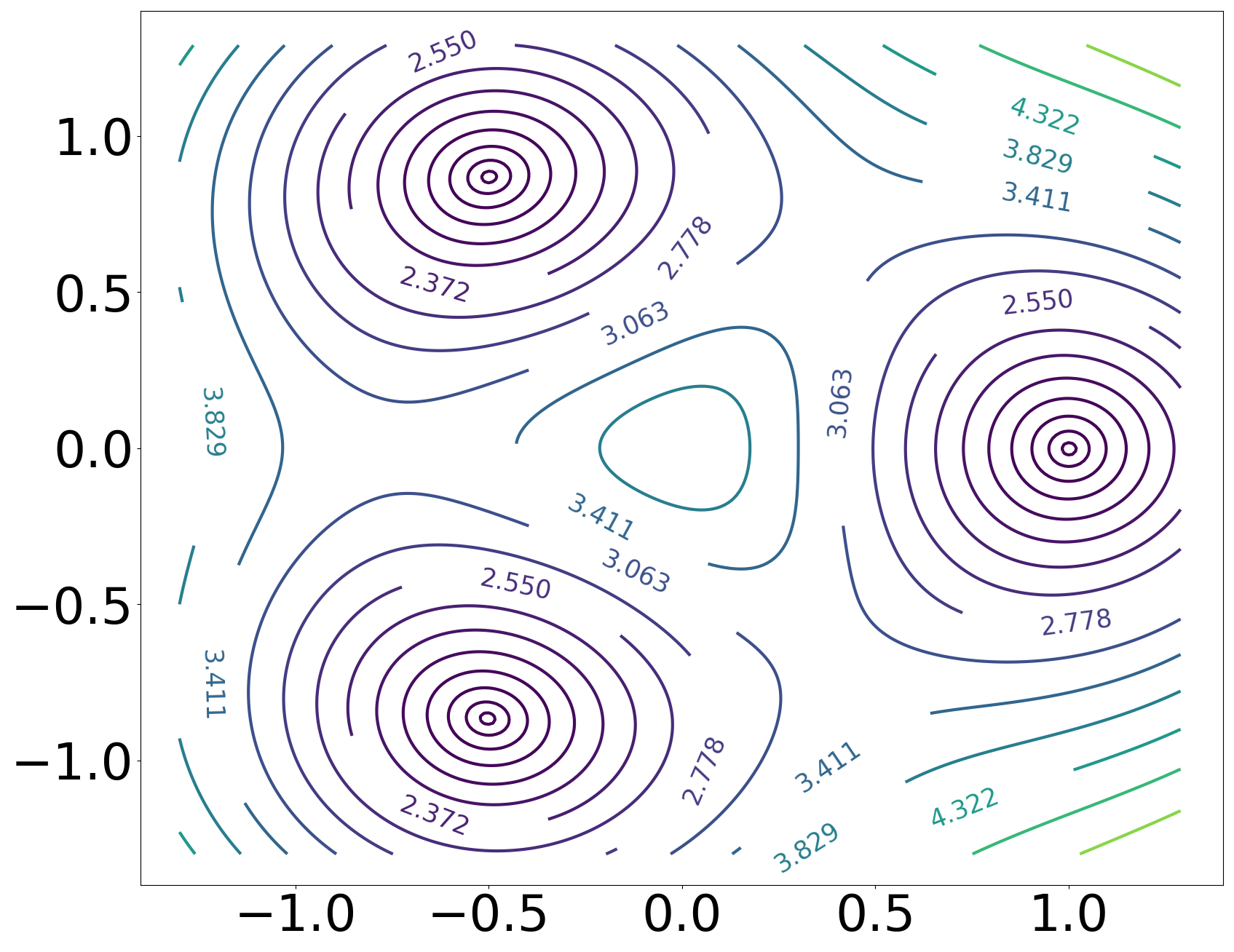}
\hspace{0.8in}
\includegraphics[width=0.35\textwidth]{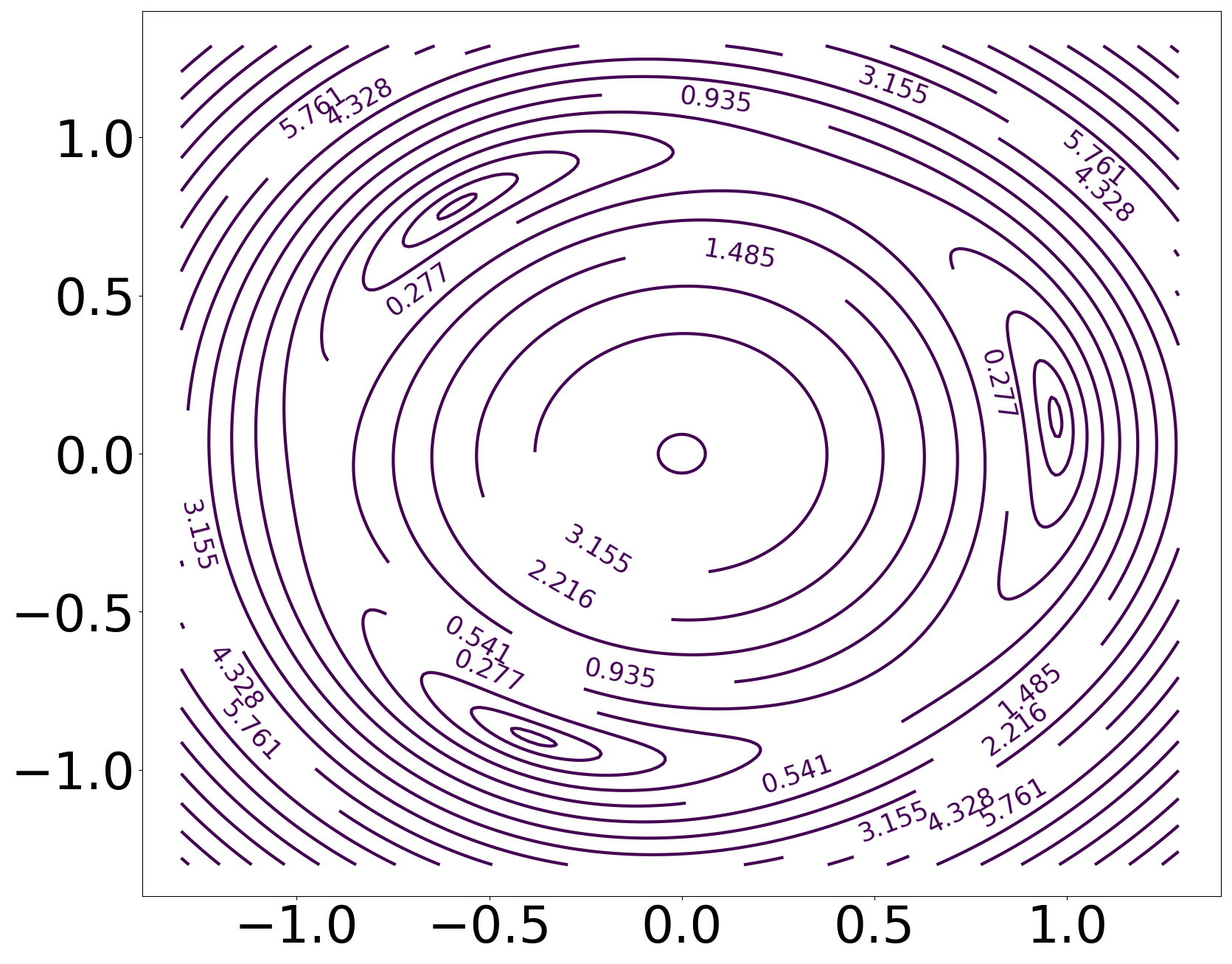}
\end{center}
\caption{Data samples and contours of negative log-likelihood $R_n(\theta)$ for
the group of three-fold rotations on $\R^2$, where $\theta_*=(1,0)$. Left: 10{,}000 samples at noise level $\sigma=0.4$. Right: 100{,}000 samples at noise level
$\sigma=4$. (Note the difference in axis limits between the data plots and
contour plots.) Values in the contour plots are displayed under an affine
transformation for better visualization.}
\label{fig:rotationsexample}
\end{figure}

The focus of our current work is, instead, on maximum likelihood estimation for 
$\theta_*$. Maximum likelihood is a widely used approach in practice, for either
\emph{ab initio} estimation of $\theta_*$ or for iterative refinement of a pilot
estimate obtained by other means
\cite{sigworth1998maximum,scheres2005maximum,scheres2007modeling,scheres2007disentangling}.
Letting $Y_1,\ldots,Y_n$ be i.i.d.\ observations
from the model (\ref{eq:orbitmodel}), the maximum
likelihood estimate (MLE) is a vector $\hat{\theta} \in \R^d$ which
maximizes the log-likelihood function
\[\theta \mapsto \frac{1}{n}\sum_{i=1}^n \log p_\theta(Y_i), \]
where $p_\theta$ is the probability density of $Y$
marginalizing over the latent rotation $g \sim \Unif(G)$.
We denote the negative log-likelihood function by $R_n(\theta)$;
this function is also depicted in
Figure \ref{fig:rotationsexample} for low and high noise.
The success of optimization algorithms for computing the MLE
for ab initio estimation and for iterative refinement depends,
respectively, on the global function landscape of
$R_n(\theta)$ and on its local landscape in a neighborhood of $\theta_*$.

In this work, we study the function landscape of $R_n(\theta)$, assuming that
the true vector $\theta_* \in \R^d$ is suitably generic. We restrict attention
to discrete groups $G$, so that $R_n(\theta)$ has isolated critical points, and
we derive several results. First, we show that the global landscape is ``benign''
for sufficiently low noise, having no spurious local minimizers
for any discrete group. Second, we show that the local landscape in a
$\sigma$-independent neighborhood of $\theta_*$ is also benign at any noise
level $\sigma>0$, and that $R_n(\theta)$ is strongly convex in this
neighborhood after suitable reparametrization. Third, we relate the critical
points of the global landscape in high noise to a sequence of simpler
optimization problems defined by the symmetric moment tensors under $G$.
We show that for discrete rotations in $\R^2$ as in Figure \ref{fig:rotationsexample}, and
for the symmetric group that permutes the coordinates of $\R^d$,
the global landscape is benign also at high noise. In contrast,
for the group of cyclic permutations in $\R^d$,
the global landscape may not be benign for even $d \geq 6$ and odd $d \geq 53$.

Our motivations for studying the MLE and the likelihood landscape are two-fold.
First, classical statistical theory indicates that in the limit $n \to \infty$ for
fixed dimension $d$, the MLE achieves
asymptotic efficiency, meaning that $\hat{\theta}$ converges to $\theta_*$ at an
$O(1/\sqrt{n})$ rate, with asymptotically optimal covariance $I(\theta_*)^{-1}$
(the inverse of the Fisher information matrix) 
matching the Cramer-Rao lower bound (see \cite[Sec.~2.5]{lehmann2006theory}). This need not hold for method-of-moments
estimators as studied in \cite{bandeira2017estimation}.
Our results connect one aspect of
\cite{bandeira2017estimation} regarding the sample complexity for ``list-recovery of
generic signals'' to the MLE, by showing that the eigenstructure of the Fisher information
matrix $I(\theta_*)$ corresponds to a
sequence of transcendence degrees in the graded algebra of $G$-invariant
polynomials.

Second, a body of empirical literature in cryo-EM suggests that $R_n(\theta)$
may have spurious local minimizers.
For ab initio estimation, this has motivated the development of a variety of
optimization algorithms including stochastic hill climbing
\cite{elmlund2013prime}, stochastic gradient descent
\cite{punjani2017cryosparc}, and ``frequency marching'' \cite{barnett2017rapid}.
However, at present, the function landscape of $R_n(\theta)$ is not
theoretically well-understood, even in simple examples of group actions.
For instance, it is unclear how this landscape depends on properties of the
group, and whether the roughness of this landscape is due to insufficient
sample size or is a fundamental aspect of the model even in the $n \to \infty$
limit. Our work takes a step towards understanding these questions,
and our results have concrete implications for descent-based optimization
algorithms in this problem. We discuss these implications
in Section \ref{sec:optimization} below.

\subsection{The orbit recovery model}\label{sec:model}

We study the orbit recovery model (\ref{eq:orbitmodel}) in the setting of a
discrete group. Let $G \subset \rO(d) \subset \R^{d \times d}$
be a discrete subgroup of the orthogonal group in dimension $d$, with finite
cardinality
\[|G|=K.\]
Each observation is modeled as
\[Y=g \cdot \theta_*+\sigma \eps\]
where $g \sim \Unif(G)$, $\eps \sim \N(0,\Id)$, and these are independent. Here,
$\sigma>0$ is the noise level, which we will assume is known.
This is a $K$-component Gaussian mixture model with equal weights, where the centers of the
mixture components are the points of the \emph{orbit} of $\theta_*$ under $G$,
given by
\[\O_{\theta_*}=\{g\theta_*:g \in G\}.\]
The marginal density of $Y$ in this model is the Gaussian mixture density
\begin{equation}\label{eq:mixturedensity}
p_{\theta_*}(Y)=\frac{1}{K}\sum_{g \in G}
\left(\frac{1}{\sqrt{2\pi \sigma^2}}\right)^d
\exp\left(-\frac{\|Y-g\theta_*\|^2}{2\sigma^2}\right).
\end{equation}
For $\theta,\theta' \in \R^d$, note that $p_\theta=p_{\theta'}$
if and only if the $K$ mixture components have the same centers,
i.e. $\O_{\theta'}=\O_\theta$. This means the parameter $\theta_*$ is
statistically identifiable in this model up to its orbit.

Given $n$ independent samples $Y_1,\ldots,Y_n$ distributed
according to (\ref{eq:orbitmodel}), we study the landscape of the
negative log-likelihood \emph{empirical risk}
\begin{equation}\label{eq:empiricalrisk}
R_n(\theta)=-\frac{1}{n}\sum_{i=1}^n \log p_\theta(Y_i)+\const.
\end{equation}
Here, $\const$ denotes a $\theta$-independent value that we
introduce to simplify
the expression for this risk; see (\ref{eq:Rn}) for details.
Our results will apply equally to a setting where the true group
element $g$ in (\ref{eq:orbitmodel}) is not uniform, and we discuss this in
Remark \ref{remark:nonuniform}.

This function $R_n(\theta)$ is non-convex for any non-trivial group $G$.
A maximum likelihood estimator $\hat{\theta} \in \R^d$ is any global
minimizer of $R_n(\theta)$. Note that if $\hat{\theta}$
minimizes $R_n(\theta)$, then all points in its orbit $\O_{\hat{\theta}}$ also
minimize $R_n(\theta)$, so the MLE is also only defined up to its orbit.

Fixing the true parameter $\theta_* \in \R^d$, we denote the mean of $R_n(\theta)$ by
\begin{equation}\label{eq:populationrisk}
R(\theta)=-\E\big[\log p_\theta(Y)\big]+\const,
\end{equation}
where $\E$ is the expectation over both $g$ and $\eps$ in the model
$Y=g \cdot \theta_*+\sigma \eps$. This function $R(\theta)$
depends implicitly on the true parameter
$\theta_*$. We call $R(\theta)$ the \emph{population risk}, and this
may be understood as the $n \to \infty$ limit of $R_n(\theta)$. Note that
\begin{equation}\label{eq:KLdivergence}
R(\theta)=D_{\text{KL}}(p_{\theta_*} \| p_\theta)-\E[\log
p_{\theta_*}(Y)]+\const
\end{equation}
where $D_{\text{KL}}(p\|q)=\int p(y) \log \frac{p(y)}{q(y)} dy$ is the Kullback-Leibler divergence between densities $p$ and $q$,
and the remaining two terms do not depend on $\theta$. Thus, a point $\theta \in \R^d$
is a global minimizer of $R(\theta)$ if and only if
$p_{\theta_*}=p_\theta$, i.e.\ $\theta \in \O_{\theta_*}$.

It was established in
\cite{mei2018landscape} that under mild conditions for empirical risks such as
(\ref{eq:empiricalrisk}), due to concentration of the gradient and
Hessian of $R_n(\theta)$ around those of $R(\theta)$, various
properties of the function landscape of $R(\theta)$ translate to those of
$R_n(\theta)$ for sufficiently large $n$---these
properties include the number of critical points and the number of negative
Hessian eigenvalues at each critical point. Versions of this argument were also
used in the analyses of dictionary learning and phase retrieval in
\cite{sun2016complete,sun2018geometric}. Our analysis will follow a similar
approach, and the core of our arguments will pertain to the population risk
(\ref{eq:populationrisk}) rather than its finite-$n$ counterpart
(\ref{eq:empiricalrisk}).

We will also study properties of the Fisher information matrix in this model.
This is given by
\begin{equation}\label{eq:fisherinformation}
I(\theta_*)=-\E\big[\nabla_\theta^2 \log
p_\theta(Y)\big|_{\theta=\theta_*}\big]
=\nabla_\theta^2 R(\theta_*),
\end{equation}
which is the Hessian of the population risk $R(\theta)$
evaluated at its global minimizer $\theta=\theta_*$. It was shown in
\cite{brunel2019learning} that $I(\theta_*)$ is invertible if and only if all
$K$ points of the orbit $\O_{\theta_*}$ are distinct. We assume this
condition in all of our results, and some of our results will further
restrict $\theta_*$ to satisfy additional \emph{generic}
properties that hold outside the zero set of an analytic function
on $\R^d$. Identifying the MLE $\hat{\theta}$ as the point
in its orbit closest to $\theta_*$, \cite{abbe2018estimation} verified that
$\hat{\theta}$ is an asymptotically consistent estimate for $\theta_*$ as
$n \to \infty$. By the classical theory of maximum likelihood estimation in
parametric models (see \cite[Chapter 5]{van2000asymptotic}),
we then have the convergence in law
\begin{equation}\label{eq:MLEnormal}
\sqrt{n}(\hat{\theta}-\theta_*) \to \N\big(0,I(\theta_*)^{-1}\big).
\end{equation}
Thus the eigenvalues of the Fisher
information matrix determine the coordinate-wise asymptotic variances of the 
MLE in an orthogonal basis for $\R^d$.

\subsection{Overview of results}\label{sec:results}

We will be interested in the geometric properties of the function landscapes of
$R_n(\theta)$ and $R(\theta)$. The most ideal setting for non-convex
optimization is when these landscapes are benign in the following sense.

\begin{definition}
The landscape of a twice continuously-differentiable function $f:\R^d \to \R$
is {\bf globally benign} if the only local minimizers of $f$ are global
minimizers, $f$ is strongly convex at each such local minimizer, and each
saddle point of $f$ is a strict saddle point.
\end{definition}

This is equivalent to saying that the only points $\theta \in \R^d$
where $\nabla f(\theta)=0$ and $\lambda_{\min}(\nabla^2 f(\theta)) \geq 0$ are
the global minimizers of $f$, and $\lambda_{\min}(\nabla^2 f(\theta))>0$
strictly at all such points. This condition has been
discussed in \cite{ge2015escaping,lee2016gradient,jin2017escape}, which show
that randomly-initialized gradient descent converges to a global minimizer
almost surely under this condition, and that gradient descent perturbed with
additive noise can furthermore converge in polynomial time under a
quantitative version of this condition.

In our results, we will fix a generic true parameter $\theta_* \in \R^d$. We study 
low-noise and high-noise regimes, where the low-noise regime is defined by
$\sigma<\sigma_0$ for a sufficiently small $(\theta_*,d,G)$-dependent constant
$\sigma_0>0$, and the high-noise regime by $\sigma>\sigma_0$ for a (different)
sufficiently large $(\theta_*,d,G)$-dependent constant $\sigma_0>0$. It is the
high-noise regime that is of primary interest in applications such as cryo-EM. We
provide results also for low noise, to contrast with the high-noise behavior,
and because these results may be of separate interest in other applications.\\

\noindent {\bf Global landscape and Fisher information at low noise.} We show
in Section \ref{sec:lownoise}
that both $R(\theta)$ and $R_n(\theta)$ are globally benign in the low noise
regime, for any discrete group $G$, any $\theta_*$ whose
orbit points are distinct under $G$, and sufficiently large sample size
$n$. That is, there exists $\sigma_0 \equiv
\sigma_0(\theta_*,d,G)$ for which $R(\theta)$ and $R_n(\theta)$ do not have any
spurious local minimizers when $\sigma<\sigma_0$.

We also show that the Fisher information satisfies $I(\theta_*) \approx
\sigma^{-2}\Id$, where the error of this approximation is exponentially small
in $\sigma^{-2}$. Here, $\sigma^{-2}\Id$ is the Fisher
information of the single Gaussian distribution $\N(\theta_*,\sigma^2\Id)$. Thus
the local geometries of $R(\theta)$ and $R_n(\theta)$ near $\theta_*$ resemble
those of a single Gaussian, and they do not ``feel'' the effects of the other
mixture components.

We remark that the group structure plays an important role in our proof of this
global landscape result, and such a result is not true for general Gaussian mixture models:
For the three-component Gaussian mixture model
\[\frac{1}{3}\N(\theta_1,\sigma^2\Id)+\frac{1}{3}\N(\theta_2,\sigma^2\Id)+\frac{1}{3}\N(\theta_3,\sigma^2\Id),\]
it is known that the negative log-likelihood population risk
as a function of $(\theta_1,\theta_2,\theta_3) \in \R^{3d}$ can have spurious local\
minimizers, even in the $\sigma \to 0$ limit. Similar examples may be constructed for
any number of mixture components $K \geq 3$ \cite{jin2016local}.\\

\noindent {\bf Fisher information at high noise.} As the noise level $\sigma$
increases, a transition
occurs in the structure of the Fisher information matrix $I(\theta_*)$.
We show in Section \ref{sec:locallandscape}
that in the high-noise regime, for any generic $\theta_* \in
\R^d$, there is a decomposition $d=d_1+d_2+\ldots+d_L$ where
\begin{equation}\label{eq:FIinformal}
I(\theta_*) \text{ has } d_\ell \text{ eigenvalues on the order of }
\sigma^{-2\ell} \text{ for each } \ell=1,\ldots,L.
\end{equation}
The number $d_\ell$ is $\trdeg(\cR^G_{\leq \ell})-\trdeg(\cR^G_{\leq \ell-1})$,
where $\trdeg(\cR^G_{\leq \ell})$ is the transcendence degree over $\R$ of the
space of $G$-invariant polynomials having degree $\leq \ell$.
The number $L$ is the smallest integer for which $\trdeg(\cR^G_{\leq L})=d$.

For the group of $K$-fold discrete rotations in $\R^2$, as
in Figure \ref{fig:rotationsexample}, we have $L=K$, $d_2=1$, $d_K=1$, and
$d_\ell=0$ for each other $\ell$.
Thus $I(\theta_*)$ has one eigenvalue of magnitude $\sigma^{-4}$,
corresponding to the curvature of $R(\theta)$ in the radial direction, and one
eigenvalue of magnitude $\sigma^{-2K}$, corresponding to the
direction tangent to the circle
$\{\theta \in \R^2:\|\theta\|=\|\theta_*\|\}$. For the symmetric
group of all permutations in $\R^d$, we have $L=d$ and $d_\ell=1$ for each
$\ell=1,\ldots,d$. For cyclic permutations in $\R^d$, we have $L=3$,
$d_1=1$, $d_2=\lceil \frac{d-1}{2} \rceil$, and $d_3=\lfloor \frac{d-1}{2}
\rfloor$. Here $d_1$ corresponds to the sum $\theta_1+\ldots+\theta_d$, $d_2$ to
the magnitudes of the remaining Fourier coefficients of $\theta$, and $d_3$ to
the phases.

Applying (\ref{eq:FIinformal})
to the classical efficiency result (\ref{eq:MLEnormal}) for the
MLE, this shows that $\hat{\theta}$ estimates $\theta_*$ with an asymptotic
covariance of $O(\sigma^{2L}/n)$. This rate agrees with the results of
\cite{bandeira2017estimation} on list-recovery of generic signals $\theta_*$
by a method-of-moments estimator. More precisely, (\ref{eq:FIinformal})
exhibits a decomposition of $\R^d$ into orthogonal subspaces of
dimensions $d_1,\ldots,d_L$, such that the MLE $\hat{\theta}$ estimates
$\theta_*$ with an asymptotic covariance of $O(\sigma^{2\ell}/n)$ in its
component belonging to the $\ell^\text{th}$ subspace. For any continuously
differentiable function $\psi:\R^d \to \R$, a Taylor expansion of $\psi$ (i.e.\
the statistical delta method) yields also the convergence in law
\begin{equation}\label{eq:deltamethod}
\sqrt{n}\Big(\psi(\hat{\theta})-\psi(\theta_*)\Big)
\to \N\Big(0,\nabla \psi(\theta_*)^\top I(\theta_*)^{-1} \nabla \psi(\theta_*)\Big)
\end{equation}
as $n \to \infty$. We show that when $\psi$ is any $G$-invariant polynomial of
degree $\ell$, the gradient $\nabla \psi(\theta_*)$ belongs to the span of the
first $\ell$ subspaces, so that $\psi(\hat{\theta})$
estimates $\psi(\theta_*)$ with variance $O(\sigma^{2\ell}/n)$.\\

\noindent {\bf Global landscape at high noise.}
Denote by
\begin{equation}\label{eq:momenttensor}
T_\ell(\theta)=\E_g[(g\theta)^{\otimes \ell}] \in (\R^d)^{\otimes \ell}
\end{equation}
the $\ell^\text{th}$ moment tensor of $g\theta$, where $\E_g$ is the expectation
over the uniform law $g \sim \Unif(G)$. The entries of $T_\ell(\theta)$ consist
of all order-$\ell$ mixed moments of entries of the random vector $g\theta \in
\R^d$. Let $\|\cdot\|_\HS$ be the Euclidean
norm of the vectorization of such a tensor in $\R^{d^\ell}$.
We relate the local minimizers of $R(\theta)$ and $R_n(\theta)$
in the high-noise regime to a
sequence of simpler optimization problems, given by successively minimizing
\begin{equation}\label{eq:momentobjective}
P_\ell(\theta)=\|T_\ell(\theta)-T_\ell(\theta_*)\|_\HS^2
\end{equation}
over the variety
\begin{equation}\label{eq:momentvariety}
\cV_{\ell-1}=\Big\{\theta \in \R^d:T_k(\theta)=T_k(\theta_*) \text{ for } k=1,\ldots,\ell-1\Big\},
\end{equation}
for $\ell=1,\ldots,L$. This sequence of optimization problems is related to the
method-of-moments, in that (\ref{eq:momentobjective}) may be interpreted as
matching the order-$\ell$ moments $T_\ell(\theta)$ to $T_\ell(\theta_*)$,
subject to the constraint (\ref{eq:momentvariety}) that the moments of
lower order have already been matched.

We show in Section \ref{sec:globalbenign} that for
generic $\theta_*$, if $\cV_L=\O_{\theta_*}$, each variety
$\cV_\ell$ is non-singular with constant dimension, each restriction
$P_\ell|_{\cV_{\ell-1}}$ satisfies a strict saddle condition,
and the only local minimizers of each restriction $P_\ell|_{\cV_{\ell-1}}$ are
the points $\theta \in \cV_\ell$,
then the global landscape of $R(\theta)$ is also benign in the high-noise
regime. In such examples, the landscape of the empirical risk
$R_n(\theta)$ is then also globally benign with high probability when $n
\gg \sigma^{2L}$. This requirement for $n$ matches the sample complexity for
recovery of generic signals in \cite{bandeira2017estimation}.
We analyze the two concrete examples of $K$-fold rotations in
$\R^2$ and the symmetric group of all permutations in $\R^d$, showing that
the global landscape is benign at high noise in these examples.

The first condition $\cV_L=\O_{\theta_*}$ means that $\theta_*$ is uniquely
specified, up to its orbit, by its first $L$ moment tensors
$T_1(\theta_*),\ldots,T_L(\theta_*)$. These are the examples in
\cite{bandeira2017estimation} where the notions of ``generic list recovery''
and ``generic unique recovery'' coincide. We note that this condition alone is
not sufficient to guarantee a benign landscape. For instance, in the cyclic
permutations example below, we have $L=3$ and $\cV_3=\O_{\theta_*}$ 
for generic points $\theta_* \in \R^d$ in any dimension $d$, but spurious local
minima may exist.\\

\noindent {\bf Spurious local minimizers for cyclic permutations.}
The complexity of the sequence of optimization problems in
(\ref{eq:momentobjective}--\ref{eq:momentvariety}) depends on the
structure of the $G$-invariant polynomial algebra. As a more complex example,
we study in Section \ref{sec:MRA}
the group $G$ of cyclic permutations in $\R^d$. Some authors refer
to this specific action as the multi-reference alignment (MRA) model, and the
invariant polynomial algebra for this group
bears some similarities to the continuous action of $\mathrm{SO}(3)$ that is relevant
for cryo-EM applications
\cite{bandeira2017optimal,bandeira2017estimation,perry2019sample}.

For this group, we have $L=3$, and $P_\ell(\theta)$ does not have spurious local
minimizers over $\cV_{\ell-1}$ for $\ell=1$ and 2. For $\ell=3$ and odd $d$,
denoting $\cI=\{1,2,\ldots,\frac{d-1}{2}\}$, we show in Theorem \ref{thm:MRA} that
minimizing $P_3(\theta)$ over $\cV_2$ is equivalent to minimizing
\[F^+(t_1,\ldots,t_{|\cI|})
=-\frac{1}{6}\mathop{\sum_{i,j,k \in \cI \cup -\cI}}_{i+j+k \equiv 0 \bmod d}
r_{i,*}^2r_{j,*}^2r_{k,*}^2\cos(t_i+t_j+t_k)\]
over phase variables $t_1,\ldots,t_{|\cI|} \in [0,2\pi)$,
where we identify $t_{-i}=-t_i$ and set $r_{i,*}$ as the modulus of the
$i^\text{th}$ Fourier coefficient of $\theta_*$. When $d$ is even,
there is an additional term to this function as well as a second function
$F^-(t_1,\ldots,t_{|\cI|})$, and we refer to Section \ref{sec:MRA} for details.

We show that for high noise and generic $\theta_* \in \R^d$,
local minimizers of $R(\theta)$ are in
correspondence with local minimizers of $F^\pm(t_1,\ldots,t_{|\cI|})$, where the
magnitudes of the Fourier coefficients of any such local minimizer $\theta \in
\R^d$ are close to those of $\theta_*$, and the differences in phases between
the Fourier coefficients of $\theta$ and those of $\theta_*$ are
close to the corresponding local minimizer of $F^{\pm}$. In dimensions $d
\leq 5$, there are no spurious local minimizers, and the landscapes
of $R(\theta)$ and $R_n(\theta)$ are globally benign. In even dimensions $d\geq 6$
and odd dimensions $d \geq 53$, we exhibit an open set $U \subset \R^d$ such
that $R(\theta)$ and $R_n(\theta)$ \emph{do} have spurious local minimizers,
for all $\theta_* \in U$.
This is a phenomenon of the population risk $R(\theta)$ and is not caused
by finite-$n$ behavior, so descent procedures may converge to these
spurious local minimizers even in the limit of infinite sample size.
(We have found via a computer search that spurious local minimizers may exist
for odd $d \geq 13$, but we will not attempt to make this rigorous.)

In the method-of-moments approach to MRA, the Fourier magnitudes of
$\theta$ are recovered from the \emph{power spectrum}, or the set of degree-$2$
polynomial invariants, and the Fourier phases are recovered from
certain degree-$3$ polynomial invariants known as the \emph{bispectrum}. The
above surrogate functions
$F^\pm(t_1, \ldots, t_{|\cI|})$ are functions of the bispectrum, and it
may be checked that they are examples of the non-convex 
bispectrum inversion objective in
\cite[Equation (III.4)]{bendory2017bispectrum}.
The spurious local minima that we exhibit for even $d \geq 6$ correspond to
the local minima also identified in \cite[Page 17]{bendory2017bispectrum}.
The spurious local minima for odd $d$ form a new family, which
demonstrates also that the objective in \cite{bendory2017bispectrum} may not be
globally benign in such settings.\\

\noindent {\bf Local landscape at high noise.} Motivated by the possibility that
$R(\theta)$ and $R_n(\theta)$ are not globally benign, we study also their
local landscapes restricted to a smaller neighborhood of $\theta_*$ in Section
\ref{sec:locallandscape}. We show that there is a \emph{$\sigma$-independent}
neighborhood $U$ of $\theta_*$, and a local reparametrization by an
analytic map $\varphi:\R^d \to \R^d$ that is 1-to-1 on $U$, such that $R$ and
$R_n$ are strongly convex as functions of $\varphi \in \varphi(U)$, with
unique local minimizers in $U$. The coordinates of this map $\varphi$ may be
taken to be $d$ polynomials that form a transcendence basis of the $G$-invariant
polynomial algebra.

We remark that this result does not automatically follow from the
invertibility of the Fisher information $I(\theta_*)$ established in
\cite{brunel2019learning}, as this
invertibility does not preclude the possibility that the size of this
neighborhood $U$ shrinks as $\sigma \to \infty$. In fact, it is not
true that $R(\theta)$ must be convex over $\theta \in U$ for
a $\sigma$-independent neighborhood $U$, and the
reparametrization by $\varphi$ is important to ensure convexity.
For instance, in the high-noise picture of
Figure \ref{fig:rotationsexample}, it is evident from the non-convex level sets
that $R_n(\theta)$ is convex only in a small neighborhood of $\theta_*$.
However, it is convex in a much larger neighborhood of $\theta_*$ when
reparametrized by two coordinates that represent the radius and angle.\\

\noindent {\bf High-noise expansion of the population risk.} Our
results in the high-noise regime are enabled by a series expansion
of the population risk function in $\sigma^{-2}$, given by
\[R(\theta)=\sum_{\ell=1}^\infty \sigma^{-2\ell}S_\ell(\theta)\]
for certain $G$-invariant polynomial functions $S_\ell(\theta)$. For fixed
$\theta_* \in \R^d$, each polynomial $S_\ell(\theta)$ takes the form
\[S_\ell(\theta)=\frac{1}{2(\ell!)}\|T_\ell(\theta)-T_\ell(\theta_*)\|_\HS^2
+Q_\ell(\theta)\]
where $Q_\ell(\theta)$ is in the algebra generated by $G$-invariant polynomials
of degree $\leq \ell-1$. We derive these results and provide a rigorous
interpretation of this expansion in Section \ref{sec:seriesexpansion}.

By the relation (\ref{eq:KLdivergence}), this is equivalent to a series
expansion of the KL-divergence $D_{\text{KL}}(p_{\theta_*}\|p_\theta)$ in
$\sigma^{-2}$. In the works
\cite{bandeira2017optimal,bandeira2017estimation,abbe2018estimation}, analogous
expansions were performed instead for upper and lower bounds to the
KL-divergence, and these were
then used to study the sample complexity of estimating $\theta_*$. To
study the log-likelihood landscape, we must perform this expansion for $R(\theta)$
itself. Our proof of this series expansion does not require $G$ to be discrete
(or $\theta_*$ to be generic), and this result may be used also to study
continuous group actions. Following the initial posting of this work, this
series expansion has recently been extended to more general high-noise Gaussian
mixture models in \cite{katsevich2020likelihood}.

\subsection{Implications for optimization}\label{sec:optimization}

In this section, we discuss some implications of our
results for descent-based optimization algorithms in high-noise settings.\\

\noindent {\bf Slow convergence of expectation-maximization.} One of the most
widely used optimization algorithms for minimizing $R_n(\theta)$
is expectation-maximization (EM) (see \cite{DLR1977}, and
\cite{sigworth1998maximum,SIGWORTH2010263,bendory2020single} for applications in
cryo-EM). Starting from an
initialization $\theta^{(0)} \in \R^d$, the EM algorithm iteratively computes
\[\theta^{(t+1)}=\arg\min_{\theta \in \R^d} Q(\theta \mid \theta^{(t)})\]
where
\[Q(\theta \mid \theta^{(t)})=-\frac{1}{n}\sum_{i=1}^n
\E_{g \mid Y_i,\theta^{(t)}}\left[\log
\left(\left(\frac{1}{\sqrt{2\pi\sigma^2}}\right)^d\exp\left(
-\frac{\|Y_i-g \theta\|^2}{2\sigma^2}\right)\right)\right]\]
is the expectation of the full-data negative log-likelihood
over the posterior law of $g \in G$. For each sample $Y_i$, the density of this posterior law is
\[p(g \mid Y_i,\theta^{(t)})=\exp\left(-\frac{\|Y_i-g
\theta^{(t)}\|^2}{2\sigma^2}\right) \Bigg/ \sum_{h \in G}
\exp\left(-\frac{\|Y_i-h \theta^{(t)}\|^2}{2\sigma^2}\right),\]
leading to the following explicit form of the EM iteration:
\[\theta^{(t+1)}=\frac{1}{n}\sum_{i=1}^n \E_{g \mid Y_i,\theta^{(t)}}[g^\top
Y_i].\]
It is straightforward to verify that this is equivalent to the gradient descent (GD)
update
\[\theta^{(t+1)}=\theta^{(t)}-\eta \cdot \nabla R_n(\theta^{(t)})\]
with a fixed step size $\eta=\sigma^2$.

Our results indicate that in the high-noise regime, this step size
$\eta=\sigma^2$ corresponding to EM may not be
correctly tuned for optimal convergence. For applying
GD to a smooth and strongly convex function $f(\theta)$ where
\[\alpha\Id \preceq \nabla^2 f(\theta) \preceq \beta\Id,\]
the optimal step size is $\eta \asymp 1/\beta$, and GD with this step size
achieves a convergence rate
\begin{equation}\label{eq:GDrate}
\|\theta^{(t)}-\theta^{(0)}\|^2 \leq O\Big((1-c\alpha/\beta)^t\Big)
\end{equation}
for a constant $c>0$
(see \cite[Theorem 2.1.14]{nesterov2013introductory}). For any
mean-zero group $G$, we have (by Lemma \ref{lemma:Smeanzero}) that $d_1=0$
in the decomposition $d=d_1+\ldots+d_L$ in \eqref{eq:FIinformal}, so that 
$\lambda_{\max}(\nabla^2 R_n(\theta)) \asymp \sigma^{-4}$ locally near
$\theta_*$. Thus there is a flattening of the landscape near $\theta_*$, and
GD should instead be tuned with the larger step size $\eta \asymp \sigma^4$ after reaching a
small enough neighborhood of $\theta_*$.

\begin{figure}
\begin{center}
  \begin{subfigure}{.43\textwidth}
    \includegraphics[width=\textwidth]{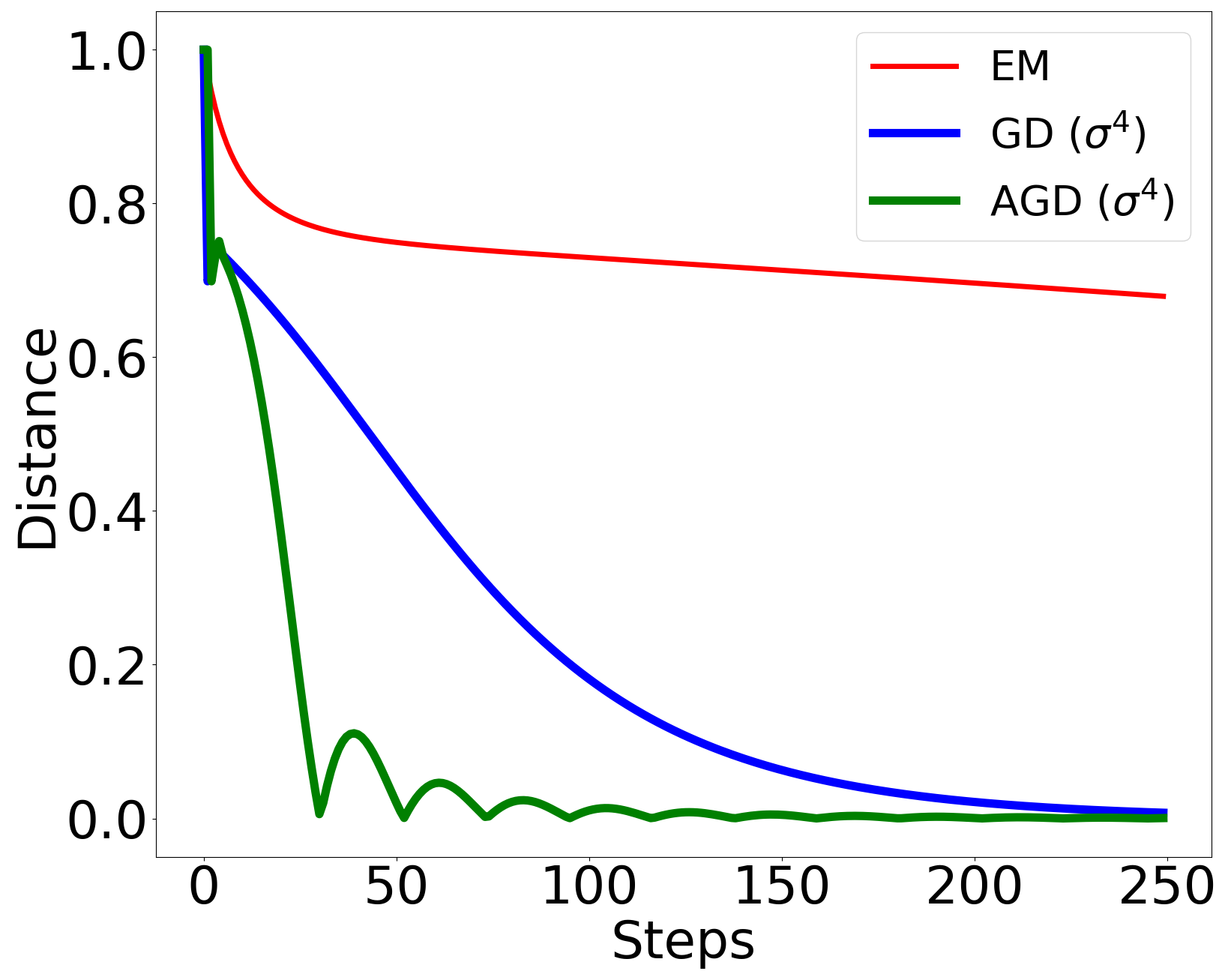}
    \caption{Distances
$\dist(\theta^{(t)},\O_{\theta_*})$ to the orbit of the true parameter
$\theta_*=(1,0)$, for 250 iterates
$\theta^{(1)},\ldots,\theta^{(250)}$ of each algorithm.}
  \end{subfigure}
\hspace{0.2cm}
    \begin{subfigure}{.46\textwidth}
      \includegraphics[width=\linewidth]{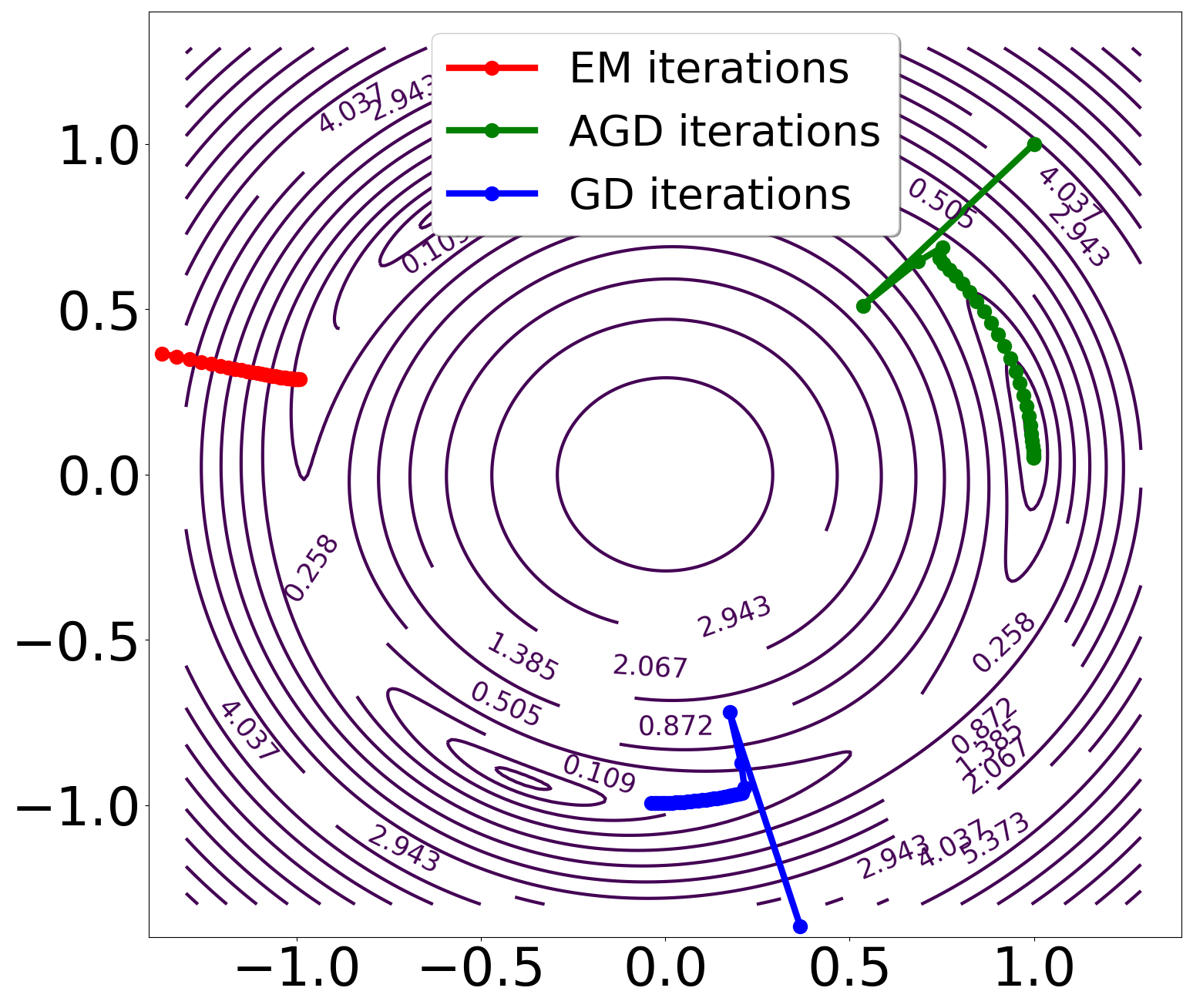}
      \caption{First 30 iterates
for each algorithm, depicted on the contour plot of the negative log-likelihood
function $R_n(\theta)$. Iterates for EM and GD are rotated by angles of $2\pi/3$ and
$4\pi/3$ for easier visualization.}
    \end{subfigure}
\caption{Convergence of expectation-maximization (EM), gradient descent (GD)
with step size $\eta=\sigma^4$, and Nesterov-accelerated gradient descent (AGD)
with step size $\eta=\sigma^4$ on the three-fold rotations example with
$n=100{,}000$ samples and noise level $\sigma=4$. All three algorithms are
initialized at $\theta^{(0)}=(1,1)$.}\label{fig:GDcompare}
\end{center}
\end{figure}

Figure \ref{fig:GDcompare} illustrates this for three-fold rotations in $\R^2$,
comparing 250 iterations of EM
versus GD with step size $\eta=\sigma^4$ on the high-noise example of Figure
\ref{fig:rotationsexample}.
EM converges quite slowly after reaching a vicinity of the circle $\{\theta \in
\R^2:\|\theta\|=\|\theta_*\|\}$, and the improved convergence rate
for step size $\eta=\sigma^4$ is apparent.\\

\noindent {\bf Nesterov acceleration for gradient descent.} The structure
(\ref{eq:FIinformal}) for the eigenvalues of $I(\theta_*)$ also indicates that
the Hessians of the risk functions $R_n(\theta)$ and $R(\theta)$ may be highly
anisotropic and ill-conditioned near $\theta_*$ in high-noise settings. This
poses a known problem for the convergence of gradient descent with any fixed
step size, including EM, as evident from the factor $\alpha/\beta$ in (\ref{eq:GDrate}).

This also suggests that substantial improvements in convergence may be
obtained by using momentum or acceleration methods
\cite{polyak1964some,nesterov2013introductory}. For example,
using the Nesterov acceleration scheme
\begin{align*}
\mu^{(t+1)}&=\theta^{(t)}-\eta \cdot \nabla R_n(\theta^{(t)})\\
\theta^{(t+1)}&=(1+\tau)\mu^{(t+1)}-\tau\mu^{(t)},
\end{align*}
accelerated gradient descent (AGD) can achieve the improved convergence rate
\begin{equation}\label{eq:AGDrate}
\|\theta^{(t)}-\theta^{(0)}\|^2 \leq O\Big((1-c\sqrt{\alpha/\beta})^t\Big),
\end{equation}
see \cite[Theorem 2.2.3]{nesterov2013introductory}.
Figure \ref{fig:GDcompare} also illustrates the convergence of AGD on the same
three-fold rotations example, with step size $\eta=\sigma^4$ and momentum
parameters $\tau \equiv \tau_t$ defined as (see \cite[Section 3.7.2]{bubeck2015convex})
\begin{align*}
\lambda_0=0, \quad \lambda_t=\Big(1+\sqrt{1+4\lambda_{t-1}^2}\Big)\Big/2, \quad
\tau_t=(\lambda_t-1)/\lambda_{t+1}.
\end{align*}
The iterates $\theta^{(t)}$ reach the orbit
$\O_{\theta_*}$ within 30 iterations of AGD, when neither EM nor standard GD
with $\eta=\sigma^4$ is close to having converged.\\

\noindent {\bf Reparametrization for second-order trust region methods.}
Second-order descent procedures may also be applied to minimize $R_n(\theta)$. Since $R_n$
is non-convex, it is possible for its second-order
approximation at an iterate $\theta^{(t)}$ to have a direction
of negative curvature. When this occurs, it is common to apply a trust-region
approach, where the next update $\theta^{(t+1)}$ is constrained to lie within a
fixed-radius ball around
$\theta^{(t)}$
\cite{sun2015nonconvex,sun2016complete,sun2018geometric,mei2018landscape}. This
trust region is used until the iterates $\theta^{(t)}$ reach a neighborhood of
strong convexity around a local minimizer of $R_n(\theta)$, after
which the algorithm naturally transitions to a standard second-order Newton
method for minimizing convex objectives.

At high noise, the region of convexity for $R(\theta)$ and $R_n(\theta)$
around $\theta_*$ may be vanishingly small in $\sigma$, requiring more careful tuning
of this trust-region algorithm and a large number of iterations before reaching
this convex region. However, as mentioned in Section~\ref{sec:results}, our results indicate that
the region of convexity is much larger, and is $\sigma$-independent,
upon reparametrizing by $G$-invariant coordinates $\varphi \equiv
\varphi(\theta)$. This suggests that second-order methods may be more effective
and stable when applied in the parametrization by $\varphi$, rather than the
original parametrization by $\theta$.

\subsection{Notation}
We write $\E_\eps$ for the expectation over $\eps \sim \N(0,\Id)$. We write
\[\E_g[f(g)]=\frac{1}{K}\sum_{g \in G} f(g)\]
for the expectation over the uniform law
$g \sim \Unif(G)$, and $\Var_g$ and $\Cov_g$ for the associated variance and
covariance. Similarly $\E_h$ is the expectation over $h \sim \Unif(G)$, and
$\E_{g_1,g_2}$ is the expectation over independent elements $g_1,g_2 \sim \Unif(G)$
unless stated otherwise.

We consider $\theta_*,d,G$ as constant throughout the paper. We write $C,C',c,c'>0$
for constants that may depend on $\theta_*,d,G$ and change from instance to
instance. These \emph{do not} depend on the noise level $\sigma$, and we will be
explicit about the dependence of our results on $\sigma$.

For a function $f:\R^d \to \R$, we denote its gradient and Hessian by
$\nabla f \in \R^d$ and $\nabla^2 f \in \R^{d \times d}$. More generally, we
denote by $\nabla^k f \in (\R^d)^{\otimes k}$ the symmetric
tensor of its $k^\text{th}$ order partial derivatives. For a coordinate $\theta_i$
of $\theta$, $\partial_{\theta_i} f$ is the partial derivative in $\theta_i$.
For $f:\R^d \to \R^k$, $\der f \in \R^{k \times d}$ is its full derivative
(i.e.\ Jacobian matrix).
When $k=1$, we take the convention that $\nabla f$ is a column vector,
so $\nabla f=\der f^\top$. We write $\nabla_\theta$, $\nabla_\theta^\ell$, and
$\der_\theta$ to clarify that these are taken with
respect to $\theta$, and we write $\nabla_\theta f(\theta_*)$,
$\nabla_\theta^\ell f(\theta_*)$, and $\der_\theta f(\theta_*)$ for their
evaluations at $\theta=\theta_*$. 

For a symmetric matrix $M \in \R^{d \times d}$, 
$\lambda_{\max}(M)$ and $\lambda_{\min}(M)$ are its largest and smallest
eigenvalues, and $\succeq$ and $\succ$ denote the positive-semidefinite and
positive-definite ordering. For $\mu \in \R^d$ and $\rho>0$,
$B_\rho(\mu)$ is the open $\ell_2$ ball of radius $\rho$ around $\mu$.
$\|\cdot\|$ is the $\ell_2$ norm for vectors and $\ell_2 \to \ell_2$ operator
norm (largest singular value) for matrices, $\langle \cdot,\cdot \rangle$ is the $\ell_2$
inner product, and $\|\cdot\|_\HS$ is the vectorized $\ell_2$ norm for higher-order tensors.
$\dist(x,S)=\inf_{y \in S} \|x-y\|$ is the $\ell_2$-distance from $x$ to a set $S$.
$\Id$ is the identity matrix, $\N(\cdot,\cdot)$ denotes the
Gaussian distribution parametrized by mean and variance/covariance,
and $[\ell]=\{1,\ldots,\ell\}$.

For $\alpha=1,2$, denote by
$\|W\|_{\psi_\alpha}=\inf\{t>0:\E_\eps[\exp((|W|/t)^\alpha)] \leq 2\}$ the
sub-exponential and sub-Gaussian norms of the random variable $W$. (See
\cite[Chapter 2]{vershyninHDP}.)

\subsection*{Acknowledgments}
We would like to thank Roy Lederman for helpful conversations at the onset of
this work. Z.~F.~was supported in part by NSF Grant DMS-1916198. Y.~S.~was supported
in part by a Junior Fellow award from the Simons Foundation and NSF Grant DMS-1701654.
Y.~W.~was supported in part by NSF Grant CCF-1900507, NSF CAREER award CCF-1651588,
and an Alfred Sloan fellowship.

\section{Preliminaries}

This section collects several more basic results about the population risk $R(\theta)$ and its empirical counterpart
$R_n(\theta)$, including expressions for their derivatives,
bounds on critical points, and the concentration of $R_n(\theta)$ around $R(\theta)$.

\subsection{The risk, gradient, and Hessian}

Let us first derive some simpler expressions for the risks $R_n(\theta)$ and $R(\theta)$.
We represent each sample $Y$ as
\begin{equation}\label{eq:Yalt}
Y=h(\theta_*+\sigma \eps)
\end{equation}
where $h \in G$, and $\eps \sim \N(0,\Id)$ is independent of $h$.
This is equivalent to the model (\ref{eq:orbitmodel}), by the rotational invariance
of the law of $\eps$. Then the marginal log-likelihood (\ref{eq:mixturedensity}) is
given by
\[-\log p_\theta(Y)=-\log \E_g \left[\left(\frac{1}{\sqrt{2\pi \sigma^2}}\right)^d
\exp\left(-\frac{\|h(\theta_*+\sigma \eps)-g\theta\|^2}{2\sigma^2}\right)\right].\]
Applying $\|h(\theta_*+\sigma \eps)-g\theta\|=\|\theta_*+\sigma \eps-h^\top
g\theta\|$ and the equality in law $h^\top g\overset{L}{=} g$ for any fixed $h \in G$,
we have
\begin{align*}
-\log p_\theta(Y) &=-\log \E_g
\left[\left(\frac{1}{\sqrt{2\pi \sigma^2}}\right)^d
\exp\left(-\frac{\|\theta_*+\sigma \eps-g\theta\|^2}{2\sigma^2}\right)\right]\\
&=\frac{d}{2}\log(2\pi\sigma^2)+\frac{\|\theta_*+\sigma\eps\|^2}{2\sigma^2}
+\frac{\|\theta\|^2}{2\sigma^2}-\log \E_g\left[\exp\left(\frac{\langle \theta_*
+\sigma \eps,g\theta \rangle}{\sigma^2}\right)\right].
\end{align*}

The first two terms above do not depend on $\theta$, and we omit them in the
sequel. We define the empirical risk as
\begin{equation}\label{eq:Rn}
R_n(\theta)=\frac{\|\theta\|^2}{2\sigma^2}-\frac{1}{n}\sum_{i=1}^n
\log \E_g\left[\exp\left(\frac{\langle \theta_*+\sigma \eps_i,g\theta
\rangle}{\sigma^2}\right)\right].
\end{equation}
Then $R_n(\theta)$ is a constant shift of the negative log-likelihood for
independent samples $Y_1,\ldots,Y_n$, as stated in (\ref{eq:empiricalrisk}).
We define the corresponding population risk $R(\theta)=\E[R_n(\theta)]$ by
\begin{equation}\label{eq:R}
R(\theta)=\frac{\|\theta\|^2}{2\sigma^2}-\E_\eps\left[
\log \E_g\left[\exp\left(\frac{\langle \theta_*+\sigma \eps,g\theta
\rangle}{\sigma^2}\right)\right]\right].
\end{equation}

\begin{remark}\label{remark:nonuniform}
The above arguments do not require $h \in G$ to be uniformly distributed.
That is to say, if $h$ is modeled as uniformly distributed,
the law of $p_\theta(Y)$ does not depend on the true distribution of $h$.
Thus our results apply also for non-uniform $h \in G$. Our results
do not describe the landscape if the non-uniformity is incorporated into
the likelihood model. Existing work on method-of-moments suggests
that, in such settings, the Fisher information may have a different dependence
on $\sigma$ in the high-noise
regime \cite{abbe2018multireference, sharon2020method}.
\end{remark}

Next, let us express the gradients, Hessians, and higher-order derivatives of
these risk functions in terms of a reweighted law for $g \in G$.
Given $\theta$ and $\eps$, we introduce the reweighted probability law on $G$ defined by
\begin{equation}\label{eq:gcondlaw}
p(g \mid \eps,\theta)=\exp\left(\frac{\langle \theta_*+\sigma\eps,
g\theta \rangle}{\sigma^2}\right)\Bigg/
\sum_{h \in G} \exp\left(\frac{\langle \theta_*+\sigma\eps,h\theta \rangle}
{\sigma^2}\right).
\end{equation}
We write $\P_g[\cdot \mid \eps, \theta]$, $\E_g[\cdot \mid \eps,\theta]$,
$\Var_g[\cdot \mid \eps,\theta]$, and $\Cov_g[\cdot \mid \eps,\theta]$ for the
probability, expectation, variance, and covariance with respect to
this reweighted law of $g$. We also write $\kappa_g^\ell[\cdot \mid \eps,\theta]$ for
the $\ell^\text{th}$ cumulant tensor with respect to this law; see
Appendix \ref{appendix:cumulants} for the definition.

\begin{lemma}\label{lem:empiricalriskform}
The derivatives of $R_n(\theta)$ take the forms
\begin{align}
\nabla R_n(\theta) &=\frac{1}{\sigma^2}\left(\theta-
\frac{1}{n}\sum_{i=1}^n \E_g\left[g^\top (\theta_*+\sigma \eps_i)
\Big| \eps_i,\theta\right]\right)\label{eq:gradRn}\\
\nabla^2 R_n(\theta) &=\frac{1}{\sigma^2}\left(\Id-
\frac{1}{\sigma^2} \cdot
\frac{1}{n}\sum_{i=1}^n \Cov_g\left[g^\top (\theta_*+\sigma \eps_i)
\Big| \eps_i,\theta\right]\right)\label{eq:hessRn}\\
\nabla^\ell R_n(\theta) &=-\frac{1}{\sigma^{2\ell}}
\cdot \frac{1}{n}\sum_{i=1}^n \kappa^\ell_g\left[g^\top (\theta_*+\sigma \eps_i)
\Big|\eps_i,\theta\right] \quad \text{ for } \ell \geq 3.\label{eq:derRn}
\end{align}
\end{lemma}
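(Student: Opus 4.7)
The plan is to reduce each identity to the standard fact that derivatives of a cumulant generating function (CGF) at a point $t$ yield the cumulants of the underlying random vector under the exponential tilt at $t$.

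For each sample $i$, set $v_i = \theta_* + \sigma\eps_i$ and use $\langle v_i, g\theta\rangle = \langle g^\top v_i, \theta\rangle$ to rewrite the summand of \eqref{eq:Rn} as
\[
f_i(\theta) \;:=\; \log \E_g\exp\!\Big(\tfrac{1}{\sigma^2}\langle g^\top v_i,\,\theta\rangle\Big) \;=\; K_i\!\big(\theta/\sigma^2\big),
\]
where $K_i$ is the CGF of the $\R^d$-valued random vector $g^\top v_i$ under $g\sim\Unif(G)$, with $\eps_i$ held fixed.

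I would then observe that the reweighted law $p(g\mid\eps_i,\theta)$ defined in \eqref{eq:gcondlaw} is precisely the exponential tilt of $\Unif(G)$ by the factor $\exp(\langle \theta/\sigma^2,\,g^\top v_i\rangle)$. Hence by the standard identity relating derivatives of a CGF to cumulants under its tilt,
\[
\nabla_\theta^\ell\, f_i(\theta) \;=\; \sigma^{-2\ell}\,\kappa_g^\ell\!\big[g^\top v_i\,\big|\,\eps_i,\theta\big],
\]
where the factor $\sigma^{-2\ell}$ arises from the chain rule applied to the inner map $\theta\mapsto\theta/\sigma^2$. The cases $\ell=1,2$ are just the facts that the first derivative of a log-MGF returns the tilted mean and the second its covariance; for $\ell\geq 3$ I would cite the definition of the cumulant tensor in Appendix~\ref{appendix:cumulants}, which expresses $\kappa^\ell$ as the $\ell$-th derivative of the CGF evaluated at an arbitrary tilting parameter.

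Finally, since $R_n(\theta) = \|\theta\|^2/(2\sigma^2) - (1/n)\sum_{i=1}^n f_i(\theta)$, the quadratic term contributes $\theta/\sigma^2$ to the gradient and $\sigma^{-2}\Id$ to the Hessian, and vanishes at all derivatives of order $\ell\geq 3$; assembling these with the contributions from the $f_i$ produces \eqref{eq:gradRn}--\eqref{eq:derRn}, the overall minus sign for $\ell\geq 3$ coming from the subtraction. There is no genuine obstacle here; the only points that require care are the chain-rule bookkeeping of the factor $\sigma^{-2\ell}$ and confirming that the appendix's combinatorial definition of $\kappa^\ell_g$ agrees with the CGF-derivative identity used above.
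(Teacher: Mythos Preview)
Your proposal is correct and takes essentially the same approach as the paper: both recognize the summand in \eqref{eq:Rn} as the log-MGF of the random vector $g^\top(\theta_*+\sigma\eps_i)$ (conditional on $\eps_i$) and invoke the standard identity that derivatives of a CGF yield cumulants under the exponential tilt, with the quadratic term handled separately. The only cosmetic difference is that the paper absorbs the factor $\sigma^{-2}$ into the random vector $u=g^\top(\theta_*+\sigma\eps_i)/\sigma^2$ and uses homogeneity of cumulants, whereas you keep it in the argument $\theta/\sigma^2$ and extract $\sigma^{-2\ell}$ via the chain rule.
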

\begin{proof}
For any random vector $u \in \R^d$, the derivatives of its cumulant generating
function are given by
\[\nabla_\theta^\ell \log \E[e^{\langle u,\theta \rangle}]
=\kappa^\ell[u \mid \theta]\]
where $\kappa^\ell[u \mid \theta] \in (\R^d)^{\otimes \ell}$
is the $\ell^\text{th}$ cumulant tensor of $u$
under its reweighted law defined by $\E[f(u) \mid \theta]=\E[f(u)e^{\langle
u,\theta\rangle}]/\E[e^{\langle u,\theta\rangle}]$. (See Appendix
\ref{appendix:cumulants}.) In particular, for $\ell=1,2$, these are the mean and
covariance with respect to this law. Then (\ref{eq:gradRn}--\ref{eq:derRn}) follow
from differentiating (\ref{eq:Rn}) in $\theta$, and applying this to the random
vector $u=g^\top(\theta_*+\sigma\eps_i)/\sigma^2$ conditional on $\eps_i$.
\end{proof}

\begin{lemma}
The derivatives of $R(\theta)$ take the forms
\begin{align}
\nabla R(\theta)&=\frac{1}{\sigma^2}
\left(\theta-\E_\eps\left[\E_g\Big[g^\top(\theta_*+\sigma\eps)
\Big\vert \eps,\theta\Big]\right]\right)\label{eq:gradR}\\
&=\frac{1}{\sigma^2}\left(\E_\eps\Big[\E_g[g \mid \eps,\theta]^\top
\E_g[g \mid \eps,\theta]\Big] \,\theta-\E_\eps\Big[\E_g[g \mid \eps,\theta]
\Big]^\top \theta_*\right)\label{eq:gradRalt}\\
\nabla^2 R(\theta)&=\frac{1}{\sigma^2}
\left(\Id-\frac{1}{\sigma^2}\,\E_\eps\Big[\Cov_g
\Big[g^\top (\theta_*+\sigma \eps) \Big\vert \eps,\theta
\Big]\Big]\right)\label{eq:hessR}\\
\nabla^\ell R(\theta)&=-\frac{1}{\sigma^{2\ell}}
\,\E_\eps\Big[\kappa_g^\ell \Big[g^\top (\theta_*+\sigma \eps) \Big\vert \eps,\theta
\Big]\Big] \quad \text{ for } \ell \geq 3\label{eq:derR}
\end{align}
\end{lemma}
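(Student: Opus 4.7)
The plan has two components: first obtain (\ref{eq:gradR}), (\ref{eq:hessR}), and (\ref{eq:derR}) by differentiating $R(\theta)$ under the $\E_\eps$-integral using Lemma \ref{lem:empiricalriskform}, and then deduce the alternative form (\ref{eq:gradRalt}) by Gaussian integration by parts applied to (\ref{eq:gradR}).

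For the first part, I view $R(\theta)$ as the $\E_\eps$-average of the $n=1$ version of the expression (\ref{eq:Rn}), so that differentiating in $\theta$ inside the integrand reproduces the cumulant formulas (\ref{eq:gradRn})--(\ref{eq:derRn}). The interchange of $\nabla_\theta^\ell$ with $\E_\eps$ is justified by dominated convergence: since every $g \in G \subset \rO(d)$ is orthogonal, $\|g^\top(\theta_*+\sigma\eps)\| \leq \|\theta_*\| + \sigma\|\eps\|$, so the conditional cumulant tensor $\kappa_g^\ell[g^\top(\theta_*+\sigma\eps)\mid \eps,\theta]$ is bounded, locally uniformly in $\theta$, by a polynomial of degree $\ell$ in $\|\eps\|$, which is integrable against the standard Gaussian density. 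This immediately gives (\ref{eq:gradR}), (\ref{eq:hessR}), and (\ref{eq:derR}).

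For (\ref{eq:gradRalt}), decompose the inner expectation in (\ref{eq:gradR}) as $\E_g[g^\top(\theta_*+\sigma\eps)\mid \eps,\theta] = \E_g[g\mid\eps,\theta]^\top\theta_* + \sigma\E_g[g^\top\eps\mid\eps,\theta]$. The first summand produces, after $\E_\eps$, the $\theta_*$ term of (\ref{eq:gradRalt}). For the second, differentiating (\ref{eq:gcondlaw}) in $\eps_j$ gives $\partial_{\eps_j}\log p(g\mid\eps,\theta) = \sigma^{-1}[(g\theta)_j - \E_g[(g\theta)_j\mid\eps,\theta]]$, hence $\partial_{\eps_j}\E_g[g_{ji}\mid\eps,\theta] = \sigma^{-1}\Cov_g[g_{ji},(g\theta)_j\mid\eps,\theta]$. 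Stein's lemma then converts each coordinate cross term $\sigma\E_\eps[\eps_j\E_g[g_{ji}\mid\eps,\theta]]$ into $\E_\eps[\Cov_g[g_{ji},(g\theta)_j\mid\eps,\theta]]$. Summing over $j$ and invoking the orthogonality identity $\sum_j g_{ji}(g\theta)_j = (g^\top g\theta)_i = \theta_i$ (valid for every $g \in G$), the mean-of-products part collapses to $\theta_i$, while the product-of-means part assembles into $(\E_g[g\mid\eps,\theta]^\top\E_g[g\mid\eps,\theta]\theta)_i$. Rearranging yields (\ref{eq:gradRalt}).

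The only conceptually nontrivial ingredient is the orthogonality identity $g^\top g = \Id$, which is precisely what makes the quadratic form $\E_g[g\mid\eps,\theta]^\top\E_g[g\mid\eps,\theta]$ appear in the final answer; all other steps are routine differentiation under the integral, a standard application of Gaussian integration by parts, and the polynomial-in-$\|\eps\|$ domination noted above. I expect no substantive obstacle, and the main care is purely notational bookkeeping with transposes and indices in the Stein's-lemma step.
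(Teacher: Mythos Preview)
Your proposal is correct and follows essentially the same route as the paper: the formulas (\ref{eq:gradR}), (\ref{eq:hessR}), (\ref{eq:derR}) come from differentiating under $\E_\eps$ with a dominated-convergence justification, and (\ref{eq:gradRalt}) is obtained by Gaussian integration by parts on the $\eps$-term of (\ref{eq:gradR}), using the orthogonality identity $g^\top g=\Id$ to collapse the mean-of-products into $\theta_i$. The only difference is index bookkeeping (the paper works column-by-column with $g_{\cdot j}$ and differentiates in $\eps_i$, while you work entrywise with $g_{ji}$ and differentiate in $\eps_j$), which is immaterial.
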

\begin{proof}
The identities (\ref{eq:gradR}), (\ref{eq:hessR}), and
(\ref{eq:derR}) are obtained by taking the expectations of
(\ref{eq:gradRn}--\ref{eq:derRn}) over $\eps_1,\ldots,\eps_n$.
(The derivatives of $R(\theta)$ in $\theta$ may be taken inside $\E_\eps$ by a
standard application of the dominated convergence theorem.)

For (\ref{eq:gradRalt}), we apply Gaussian integration by parts to rewrite
the $\E_\eps[\E_g[g^\top \eps \mid \eps,\theta]]$
term in (\ref{eq:gradR}): Denote by $g_{\cdot j}$ the $j^\text{th}$
column of a matrix $g \in G$, and by $g_{ij}$ the $(i,j)$ entry. Then
recalling the density (\ref{eq:gcondlaw}) and applying the integration-by-parts
identity $\E[f(\xi)\xi]=\E[f'(\xi)]$ for $\xi \sim \N(0,1)$, we get
\[\E_\eps\Big[\E_g[g_{\cdot j}^\top \eps \mid \eps,\theta]\Big]
=\sum_{i=1}^d \E_\eps\Big[\E_g[p(g \mid \eps,\theta)g_{ij}]\eps_i\Big]
=\sum_{i=1}^d \E_\eps\Big[\partial_{\eps_i}
\E_g[p(g \mid \eps,\theta)g_{ij}]\Big].\]
Write $(g\theta)_i$ as the $i^\text{th}$ coordinate of $g\theta$, and note that
differentiating (\ref{eq:gcondlaw}) in $\eps_i$ gives
\[\partial_{\eps_i}
p(g \mid \eps,\theta)=\frac{1}{\sigma} \Big(p(g \mid \eps,\theta)(g\theta)_i
-p(g \mid \eps,\theta)\E_h[p(h \mid \eps,\theta)(h\theta)_i]\Big)\]
where $h \sim \Unif(G)$ is independent of $g$. Then
\begin{align*}
\sigma\,\E_\eps\Big[\E_g[g_{\cdot j}^\top \eps \mid \eps,\theta] \Big]
&=\sum_{i=1}^d \E_\eps\Big[\Cov_g[g_{ij},(g\theta)_i \mid
\eps,\theta]\Big]\\
&=\E_\eps\Big[\E_g[g_{\cdot j}^\top g\theta \mid \eps,\theta]
-\E_g[g_{\cdot j} \mid \eps,\theta]^\top\E_g[g\theta \mid \eps,\theta]\Big]\\
&=\theta_j-\E_\eps\Big[\E_g[g_{\cdot j} \mid \eps,\theta]^\top
\E_g[g \mid \eps,\theta]\Big]\theta,
\end{align*}
the last line using $g_{\cdot j}^\top g\theta=\theta_j$ for any fixed orthogonal
matrix $g \in G$. Combining this for $j=1,\ldots,d$,
\[\sigma\,\E_\eps\Big[\E_g[g^\top \eps \mid \eps,\theta]\Big]
=\theta-\E_\eps\Big[\E_g[g \mid \eps,\theta]^\top \E_g[g \mid \eps,\theta]
\Big]\theta.\]
Substituting into (\ref{eq:gradR}) yields (\ref{eq:gradRalt}).
\end{proof}

\subsection{Subgroup decompositions}

If the group $G$ is the product of two groups $G_1$ and $G_2$ acting on orthogonal
subspaces of $\R^d$, then both the empirical and population risks
decompose as a sum corresponding to these two
components. This is stated formally in the following lemma.

\begin{lemma}\label{lemma:productgroup}
Let $V=[V_1 \mid V_2]$ be an orthogonal matrix, where $V_1 \in \R^{d \times d_1}$,
$V_2 \in \R^{d \times d_2}$, and $d_1+d_2=d$.
Suppose that $G \subset \rO(d)$ decomposes as
\[G=\left\{V \begin{pmatrix} g_1 & 0 \\ 0 & g_2 \end{pmatrix} V^\top:
g_1 \in G_1,\,g_2 \in G_2\right\}\]
for subgroups $G_1 \subset \rO(d_1)$ and $G_2 \subset \rO(d_2)$, and write
the corresponding decompositions $\theta_1=V_1^\top \theta$,
$\theta_2=V_2^\top \theta$, $\theta_{1,*}=V_1^\top \theta_*$,
$\theta_{2,*}=V_2^\top \theta_*$. Then
\[R_n(\theta)=R_n^{G_1}(\theta_1)+R_n^{G_2}(\theta_2) \qquad \text{ and }
\qquad R(\theta)=R^{G_1}(\theta_1)+R^{G_2}(\theta_2),\]
where $R_n^{G_1}$ and $R^{G_1}$ denote the empirical and population risks
(\ref{eq:empiricalrisk}) and (\ref{eq:populationrisk}) defined by $G_1$ and
$\theta_{1,*}$ in dimension $d_1$, and similarly for $G_2$.
\end{lemma}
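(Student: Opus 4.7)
The plan is to verify, directly from the explicit formulas \eqref{eq:Rn} and \eqref{eq:R} for $R_n$ and $R$, that each piece of the right-hand side splits into a $G_1$-part depending only on $\theta_1$ and a $G_2$-part depending only on $\theta_2$. The quadratic term is immediate, since orthogonality of $V$ gives $\|\theta\|^2 = \|\theta_1\|^2 + \|\theta_2\|^2$, so the real work is in the log-sum-exp term.

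For the exponent, I would exploit the block-diagonal form of $g \in G$ in the $V$-basis: $g\theta = V_1 g_1 \theta_1 + V_2 g_2 \theta_2$. Combined with $\theta_* = V_1\theta_{1,*} + V_2 \theta_{2,*}$ and the orthonormality relations $V_j^\top V_k = \delta_{jk}\Id_{d_j}$, this yields
\begin{equation*}
\langle \theta_* + \sigma \eps_i,\, g\theta\rangle
= \langle \theta_{1,*} + \sigma \eps_{1,i},\, g_1 \theta_1\rangle
+ \langle \theta_{2,*} + \sigma \eps_{2,i},\, g_2 \theta_2\rangle,
\end{equation*}
where $\eps_{j,i} := V_j^\top \eps_i$. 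Since $|G| = |G_1| \cdot |G_2|$ and $G$ acts as a direct product, $g \sim \Unif(G)$ is the same as $g_1, g_2$ independent and uniform on $G_1, G_2$. Hence $\E_g$ factors as $\E_{g_1}\E_{g_2}$, the exponential factors along the displayed splitting, and $\log$ turns the resulting product into a sum. Substituting into \eqref{eq:Rn} proves the identity for $R_n$, with $R^{G_j}_n(\theta_j)$ read off as the empirical risk in dimension $d_j$ relative to noise $\eps_{j,i}$ and samples $Y_{j,i} = V_j^\top Y_i$; applying the same block-diagonal identity to $Y_i = h_i(\theta_* + \sigma \eps_i)$ shows that $Y_{j,i}$ indeed follows the $G_j$-orbit recovery model with true parameter $\theta_{j,*}$, so the labelling is consistent with the definition.

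For $R(\theta) = \E[R_n(\theta)]$ I would take the expectation of the above decomposition. Orthogonality of $V$ makes $\eps_{1,i}$ and $\eps_{2,i}$ independent standard Gaussians in their respective dimensions, so $\E_\eps$ factors across the two pieces and one recovers $R^{G_1}(\theta_1) + R^{G_2}(\theta_2)$; equivalently, one can apply the same argument directly to \eqref{eq:R}. There is no substantive obstacle here—the proof is a bookkeeping exercise in which the two nontrivial structural inputs are that the inner product decomposes thanks to the block-diagonal action of $G$ in the $V$-basis, and that the uniform measure on the product group $G = G_1 \times G_2$ factors as a product of uniform measures.
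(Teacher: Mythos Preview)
Your proposal is correct and follows essentially the same approach as the paper: decompose $\|\theta\|^2$ by orthogonality, split the inner product $\langle \theta_*+\sigma\eps_i, g\theta\rangle$ using the block-diagonal form of $g$ in the $V$-basis, factor $\E_g$ as independent expectations over $g_1$ and $g_2$, and then take expectations using the independence of $V_1^\top\eps_i$ and $V_2^\top\eps_i$. The paper's proof is terser but the argument is identical.
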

\begin{proof}
Note that $\|\theta\|^2=\|\theta_1\|^2+\|\theta_2\|^2$.
Writing $g \in G$ as $g=V_1g_1V_1^\top+V_2g_2V_2^\top$, we have
\[\langle \theta_*+\sigma\eps_i,g\theta \rangle
=\langle \theta_{1,*}+\sigma V_1^\top \eps_i,g_1\theta_1 \rangle
+\langle \theta_{2,*}+\sigma V_2^\top \eps_i,g_2\theta_2 \rangle.\]
The expectation $\E_g$ may be written as independent expectations over
$g_1 \sim \Unif(G_1)$ and $g_2 \sim \Unif(G_2)$.
Furthermore, $V_1^\top \eps_i$ and $V_2^\top \eps_i$ are independent Gaussian
vectors of dimensions $d_1$ and $d_2$. Applying these to (\ref{eq:Rn})
yields $R_n(\theta)=R_n^{G_1}(\theta_1)+R_n^{G_2}(\theta_2)$.
Taking the expectation yields $R(\theta)=R^{G_1}(\theta_1)+R^{G_2}(\theta_2)$.
\end{proof}

In particular, we may always reduce our study to a group $G$ where $\E_g[g]=0$,
because of the following result. (Here $\E_g[g]$ is the expectation in $\R^{d
\times d}$ when we consider $G \subset \rO(d)$.)

\begin{lemma}\label{lemma:kerneldecomp}
Suppose $\E_g[g]$ has rank $d_1$ where $0<d_1 \leq d$, and set $d_2=d-d_1$.
Let $V=[V_1 \mid V_2]$ be an orthogonal
matrix where the columns of $V_2 \in \R^{d \times d_2}$ span the kernel of
$\E_g[g]$. Then 
\begin{equation}\label{eq:kerneldecomp}
G=\left\{V \begin{pmatrix} \Id & 0 \\ 0 & g_2 \end{pmatrix} V^\top:
g_2 \in G_2\right\}
\end{equation}
where $G_2 \subset \rO(d_2)$ is a subgroup that is group-isomorphic to $G$, and
$\E_{g_2}[g_2]=0$ for $g_2 \sim \Unif(G_2)$.
\end{lemma}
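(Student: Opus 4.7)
The plan is to identify the matrix $P := \E_g[g]$ as an orthogonal projection and then read off the block decomposition of each $g \in G$ from the invariant subspaces of $P$.

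First I would verify that $P = \frac{1}{K}\sum_{g \in G} g$ is the orthogonal projection onto the $G$-fixed subspace $W := \{v \in \R^d : gv = v \text{ for all } g \in G\}$. Three quick observations suffice: (i) $P$ is symmetric, since $G \subset \rO(d)$ gives $g^\top = g^{-1} \in G$, so averaging over $g$ and $g^{-1}$ yields the same matrix; (ii) $P$ is idempotent, since $hP = P$ for every $h \in G$ (by reindexing the sum) implies $P^2 = P$; (iii) the image of $P$ is contained in $W$ by the same reindexing argument, and $P$ acts as the identity on $W$. Thus $P$ is orthogonal projection onto $W$, its kernel is $W^\perp$, and by hypothesis $V_1$ spans $W$ and $V_2$ spans $W^\perp$.

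Next I would use the fact that each $g \in G$ preserves both $W$ and $W^\perp$. Preservation of $W$ is immediate (every element of $W$ is fixed by $g$, so $gV_1 = V_1$ and $V_1^\top g V_1 = \Id$). Preservation of $W^\perp$ follows from orthogonality of $g$: for $v \in W$ and $w \in W^\perp$, $\langle v, gw \rangle = \langle g^\top v, w \rangle = \langle g^{-1} v, w \rangle = \langle v, w \rangle = 0$, so $gw \in W^\perp$. Consequently $V_1^\top g V_2 = 0$ and $V_2^\top g V_1 = 0$, and setting $g_2 := V_2^\top g V_2 \in \rO(d_2)$ gives
\begin{equation*}
g = V \begin{pmatrix} \Id & 0 \\ 0 & g_2 \end{pmatrix} V^\top.
\end{equation*}
This establishes the form \eqref{eq:kerneldecomp}, with $G_2 := \{V_2^\top g V_2 : g \in G\}$.

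Finally I would check that the map $\phi : G \to G_2$ given by $\phi(g) = V_2^\top g V_2$ is a group isomorphism and that $\E_{g_2}[g_2] = 0$. Using the block form above, a direct multiplication shows $\phi(g)\phi(h) = V_2^\top gh V_2 = \phi(gh)$, so $\phi$ is a homomorphism; it is injective because $\phi(g)$ together with the fixed $\Id$ block determines $g$ completely; and it is surjective by definition of $G_2$, which is therefore a subgroup of $\rO(d_2)$ isomorphic to $G$. The vanishing of the mean is then immediate:
\begin{equation*}
\E_{g_2}[g_2] = V_2^\top \E_g[g] V_2 = V_2^\top P V_2 = 0,
\end{equation*}
since the columns of $V_2$ lie in $\ker P$. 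I do not anticipate a substantive obstacle; the only minor subtlety is confirming that $P$ is a symmetric projection onto precisely $W$ (rather than merely having image in $W$), which is why I would spell out step (iii) carefully.
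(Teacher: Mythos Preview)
Your proof is correct and follows essentially the same route as the paper: show $P=\E_g[g]$ is a symmetric idempotent, identify its range with the $G$-fixed subspace, deduce the block form of each $g$, and read off the isomorphism and $\E_{g_2}[g_2]=0$. The only cosmetic difference is that the paper shows $\operatorname{im}(P)\subseteq W$ via a sphere argument (if $\theta=\E_g[g\theta]$ with all $g\theta$ on the sphere of radius $\|\theta\|$, equality of the average forces $g\theta=\theta$), whereas you use the cleaner reindexing $hP=P$ directly.
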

\begin{proof}
Observe that if $g \sim \Unif(G)$, then $g^\top=g^{-1} \sim \Unif(G)$, so
$\E_g[g]=\E_g[g^\top]=\E_g[g]^\top$. Furthermore, if $g,h \sim \Unif(G)$ are
independent, then $gh \sim \Unif(G)$, so
$\E_g[g]=\E_{g,h}[gh]=\E_g[g]\E_h[h]=\E_g[g]^2$. Hence $\E_g[g]$ is symmetric and
idempotent, so it is an orthogonal projection. For any $\theta$ in the range of
this projection, $\theta=\E_g[g]\theta=\E_g[g\theta]$, so
$\|\theta\|^2=\theta^\top \E_g[g\theta]$. As each $g\theta$ is also a vector on
the sphere of radius $\|\theta\|$, we have $\theta^\top g\theta<\|\theta\|^2$
unless $\theta=g\theta$. Thus, $\theta=g\theta$ for every $g \in G$, so $G$ acts
as the identity on the column span of $V_1$. This shows that each $g \in G$ has the 
form (\ref{eq:kerneldecomp}) for some matrix $g_2 \in \rO(d_2)$, and this
1-to-1 mapping from $g$ to $g_2$ must be a group isomorphism between $G$
and $G_2$. Since $G_2$ represents the action of $G$ on the column span of $V_2$,
which is the kernel of $\E_g[g]$, we have $\E_{g_2}[g_2]=0$.
\end{proof}

Combining Lemmas \ref{lemma:productgroup} and \ref{lemma:kerneldecomp}, we may
always decompose $R_n(\theta)=R_n^{\Id}(\theta_1)+R_n^{G_2}(\theta_2)$ and
$R(\theta)=R^{\Id}(\theta_1)+R^{G_2}(\theta_2)$, where $\theta_2$
is the component of $\theta$ in the kernel of $\E_g[g]$.
For $\theta_1$, the risks $R_n^{\Id}(\theta_1)$ and $R^{\Id}(\theta_1)$
correspond to the single Gaussian model $\N(\theta_{1,*},\sigma^2\Id)$. Then
$R^{\Id}(\theta_1)$ and $R_n^{\Id}(\theta_1)$ are strongly convex, and our study
of the landscapes of $R(\theta)$ and $R_n(\theta)$ reduces to studying
$R^{G_2}(\theta_2)$ and $R^{G_2}_n(\theta_2)$ for the mean-zero group $G_2$.

\subsection{Generic parameters and critical points}\label{sec:generic}

Throughout, we will assume that the true parameter $\theta_* \in \R^d$
is generic in the following sense.

\begin{definition}
For a connected open set $U \subseteq \R^d$,
a statement holds for {\bf generic} $\theta_* \in U$ if
it holds for all $\theta_*$ outside the zero set of an analytic
function $f:U \to \R^k$ that is not identically zero on $U$.
\end{definition}

The zero set of any such analytic function has measure zero
(see \cite{mityagin2020zero}), so in particular, a statement that holds for
generic $\theta_* \in \R^d$ holds everywhere outside a measure-zero
subset of $\R^d$.

At a minimum, we will require that the points of the orbit
$\O_{\theta_*}$ are distinct, so $|\O_{\theta_*}|=|G|=K$. This holds for
generic $\theta_*$ because for any $g \neq h \in G$, the condition
$(g-h)\theta_*=0$ defines a subspace of dimension at most $d-1$.

\begin{definition}
For an open domain $U \subseteq \R^d$ and $f:U \to \R$ twice continuously
differentiable, a point $\theta \in U$ is a {\bf critical point} of $f$
if $\nabla f(\theta)=0$. The critical point is {\bf non-degenerate}
if $\nabla^2 f(\theta)$ is non-singular. The function $f$ is {\bf Morse} if all
critical points are non-degenerate. The same definitions apply to $f:M \to \R$ for
any manifold $M$, upon parametrizing $M$ by a local chart.
\end{definition}

A correspondence between non-degenerate critical points of a function $f_1:U \to
\R$ and those of a function $f_2$ uniformly close to $f_1$ was shown
in \cite{mei2018landscape}. We will apply the
following version of this result for only the local minimizers, which
has a more elementary proof.

\begin{lemma}\label{lem:localmincompare}
Let $\theta_0 \in \R^d$, and let $f_1,f_2:B_\eps(\theta_0) \to \R$ be two functions
which are twice continuously differentiable.
Suppose $\theta_0$ is a critical point of $f_1$, and
$\lambda_{\min}(\nabla^2 f_1(\theta))
\geq c_0$ for some $c_0>0$ and all $\theta \in B_\eps(\theta_0)$. If
\[|f_1(\theta)-f_2(\theta)| \leq \delta
\qquad \text{ and } \qquad \|\nabla^2 f_1(\theta)-\nabla^2 f_2(\theta)\| \leq
\delta\]
for some $\delta<\min(c_0,c_0\eps^2/4)$ and all $\theta \in B_\eps(\theta_0)$, then
$f_2$ has a unique critical point in $B_\eps(\theta_0)$, which is a local
minimizer of $f_2$.
\end{lemma}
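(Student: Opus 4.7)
The plan is to combine a perturbation argument on the Hessian (to get strong convexity of $f_2$, hence uniqueness and the local-minimum property for any critical point) with a direct comparison of function values on a suitably chosen smaller ball (to get existence via compactness).

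First I would establish that $f_2$ is strongly convex throughout $B_\eps(\theta_0)$. Since $\lambda_{\min}(\nabla^2 f_1(\theta))\geq c_0$ for $\theta\in B_\eps(\theta_0)$ and $\|\nabla^2 f_1(\theta)-\nabla^2 f_2(\theta)\|\leq \delta<c_0$, Weyl's inequality gives $\nabla^2 f_2(\theta)\succeq (c_0-\delta)\Id\succ 0$ on the ball. This immediately implies that any critical point of $f_2$ in $B_\eps(\theta_0)$ is a strict local minimizer, and that $f_2$ is strictly convex on $B_\eps(\theta_0)$, so at most one critical point of $f_2$ can lie in $B_\eps(\theta_0)$.

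The main step is to show existence of a critical point, for which I would locate a smaller closed ball $\overline{B_r(\theta_0)}\subset B_\eps(\theta_0)$ on which $f_2$ attains its minimum in the interior. From $\nabla f_1(\theta_0)=0$ and strong convexity of $f_1$, one has the quadratic lower bound $f_1(\theta)\geq f_1(\theta_0)+\tfrac{c_0}{2}\|\theta-\theta_0\|^2$. Combined with $|f_1-f_2|\leq \delta$, this yields
\[
f_2(\theta)-f_2(\theta_0)\;\geq\;\tfrac{c_0}{2}\|\theta-\theta_0\|^2-2\delta.
\]
Choosing $r:=2\sqrt{\delta/c_0}$, the hypothesis $\delta<c_0\eps^2/4$ ensures $r<\eps$, and for $\|\theta-\theta_0\|=r$ the right-hand side is strictly positive. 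Hence the minimum of $f_2$ on the compact set $\overline{B_r(\theta_0)}$ is achieved at some interior point $\theta_*'$, which is then a critical point of $f_2$ lying inside $B_\eps(\theta_0)$. Combined with the uniqueness from strong convexity, $\theta_*'$ is the unique critical point of $f_2$ in $B_\eps(\theta_0)$ and is a local minimizer, completing the proof.

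I do not anticipate a substantive obstacle: the argument is essentially a quantitative implicit function theorem for a convex perturbation, and the only care needed is to pick the intermediate radius $r$ so that the quadratic growth of $f_1$ dominates the $2\delta$ uniform error before reaching the boundary of $B_\eps(\theta_0)$, which is exactly what the hypothesis $\delta<\min(c_0,c_0\eps^2/4)$ is calibrated to guarantee.
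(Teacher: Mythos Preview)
Your approach is correct and essentially identical to the paper's: both deduce strong convexity of $f_2$ from the Hessian bound to get uniqueness, then use the quadratic growth $f_2(\theta)-f_2(\theta_0)\geq \tfrac{c_0}{2}\|\theta-\theta_0\|^2-2\delta$ on a sphere of radius $r<\eps$ to force an interior minimizer. One small arithmetic slip: at your choice $r=2\sqrt{\delta/c_0}$ the right-hand side equals $0$, not something strictly positive; you want any $r\in(2\sqrt{\delta/c_0},\eps)$, which is nonempty precisely because $\delta<c_0\eps^2/4$ (the paper simply takes $r$ close to $\eps$).
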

\begin{proof}
The given conditions imply $\lambda_{\min}(\nabla^2 f_2(\theta))>0$ for all
$\theta \in
B_\eps(\theta_0)$, so $f_2$ is strongly convex and has at most one
critical point. They also imply that for each $\theta \in
B_\eps(\theta_0)$ with $\|\theta-\theta_0\|=r$,
\[f_2(\theta)-f_2(\theta_0) \geq f_1(\theta)-f_1(\theta_0)-2\delta \geq
\frac{c_0r^2}{2}-2\delta.\]
For $r$ sufficiently close to $\eps$, we have $c_0r^2/2-2\delta>0$. Then
$f_2$ must have a local minimizer in $B_r(\theta_0)$.
\end{proof}

\subsection{Bounds for critical points}

A consequence of (\ref{eq:gradRn}) and (\ref{eq:gradR}) is the following
simple bound for critical points of $R(\theta)$ and $R_n(\theta)$.

\begin{lemma}\label{lemma:thetabound}
For $d$-dependent constants $C,C',c>0$, we have
$\sigma^2 \|\nabla R(\theta)\| \geq \|\theta\|-\|\theta_*\|-C\sigma$, and
$\sigma^2 \|\nabla R_n(\theta)\| \geq \|\theta\|-\|\theta_*\|-C\sigma$
with probability at least $1-C'e^{-cn}$. In particular, any critical point
$\theta$ of $R(\theta)$ satisfies $\|\theta\| \leq \|\theta_*\|+C\sigma$,
and the same holds for $R_n(\theta)$ with probability $1-C'e^{-cn}$.
\end{lemma}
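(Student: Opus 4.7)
The plan is to start from the explicit gradient formulas in Lemma \ref{lem:empiricalriskform} and equation (\ref{eq:gradR}), and extract the bound via reverse triangle inequality, exploiting that each $g \in G$ is orthogonal so $\|g^\top v\| = \|v\|$.

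First, for the population risk, I rewrite (\ref{eq:gradR}) as
\[\sigma^2 \nabla R(\theta) = \theta - \E_\eps\bigl[\E_g[g^\top(\theta_*+\sigma\eps) \mid \eps,\theta]\bigr].\]
The reverse triangle inequality gives
\[\sigma^2 \|\nabla R(\theta)\| \geq \|\theta\| - \bigl\|\E_\eps\bigl[\E_g[g^\top(\theta_*+\sigma\eps) \mid \eps,\theta]\bigr]\bigr\|.\]
Pulling the norm inside both expectations (Jensen) and using that $g$ is orthogonal so $\|g^\top(\theta_*+\sigma\eps)\| = \|\theta_*+\sigma\eps\|$, the subtracted term is at most $\E_\eps\|\theta_*+\sigma\eps\| \leq \|\theta_*\| + \sigma\E_\eps\|\eps\| \leq \|\theta_*\| + \sigma\sqrt{d}$. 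This yields the population bound with $C = \sqrt{d}$.

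For the empirical version, the same argument applied to (\ref{eq:gradRn}) gives
\[\sigma^2 \|\nabla R_n(\theta)\| \geq \|\theta\| - \frac{1}{n}\sum_{i=1}^n \|\theta_*+\sigma\eps_i\| \geq \|\theta\| - \|\theta_*\| - \frac{\sigma}{n}\sum_{i=1}^n \|\eps_i\|.\]
Then I need a high-probability bound on $\frac{1}{n}\sum_i \|\eps_i\|$. The cleanest route is to use that $\|\eps_i\|^2 \sim \chi^2_d$ is sub-exponential with $\E\|\eps_i\|^2 = d$, so by Bernstein's inequality (or the sub-exponential concentration in \cite[Chapter 2]{vershyninHDP}) we get $\frac{1}{n}\sum_i \|\eps_i\|^2 \leq 2d$ with probability $1 - C'e^{-cn}$. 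Cauchy--Schwarz then gives $\frac{1}{n}\sum_i \|\eps_i\| \leq \sqrt{2d}$, producing the claimed bound with $C = \sqrt{2d}$.

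The critical point statement is then immediate: if $\nabla R(\theta) = 0$ (respectively $\nabla R_n(\theta) = 0$ on the good event), the displayed inequality rearranges to $\|\theta\| \leq \|\theta_*\| + C\sigma$. There is no genuine obstacle here; the only mild point to verify is that the subtracted vector can indeed be bounded in norm by $\|\theta_*+\sigma\eps_i\|$, which relies precisely on orthogonality of $G \subset \rO(d)$ — if $G$ were not a subgroup of $\rO(d)$, the conditional expectation $\E_g[g^\top(\theta_*+\sigma\eps_i) \mid \eps_i,\theta]$ would not admit such a clean bound, and the lemma would require a more delicate argument.
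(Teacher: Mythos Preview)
Your proof is correct and follows essentially the same approach as the paper: apply the gradient formula, reverse triangle inequality, orthogonality of $g$, and concentration of $\frac{1}{n}\sum_i\|\eps_i\|$. The only minor difference is that the paper bounds $\frac{1}{n}\sum_i\|\eps_i\|$ directly via Hoeffding's inequality for sub-Gaussian variables (since $\|\eps_i\|$ is $1$-Lipschitz in $\eps_i$), whereas you go through Bernstein on $\|\eps_i\|^2$ and then Cauchy--Schwarz; both routes give the same $1-C'e^{-cn}$ guarantee.
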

\begin{proof}
The bound for $\|\nabla R(\theta)\|$ follows from (\ref{eq:gradR}) and
\[\Big\|\E_\eps\big[\E_g[g^\top (\theta_*+\sigma\eps) \mid \eps,\sigma]\big]\Big\|
\leq \E_\eps[\|\theta_*+\sigma\eps\|] \leq \|\theta_*\|+\sigma\,
\E_\eps[\|\eps\|] \leq \|\theta_*\|+\sigma\sqrt{d}.\]
The bound for $\|\nabla R_n(\theta)\|$ follows similarly from (\ref{eq:Rn}),
on the event
$n^{-1}\sum_{i=1}^n \|\eps_i\| \leq C$ which has probability at least
$1-C'e^{-cn}$ by Hoeffding's inequality for sub-Gaussian random variables (see
\cite[Theorem 2.6.2]{vershyninHDP}). Since $\nabla R(\theta)=0$ at a
critical point $\theta$, and similarly for $R_n(\theta)$, the statements for
critical points follow.
\end{proof}

When $\sigma$ is large, this bound is not sharp in its dependence on $\sigma$. We
will in fact show that any critical point $\theta$ of $R(\theta)$ satisfies
$\|\theta\| \leq C$ for a $\sigma$-independent
constant $C>0$. The following strengthening of Lemma \ref{lemma:thetabound}
first provides the a-priori bound $\|\theta\| \leq C\sigma^{2/3}$. Then,
combined with a series expansion of $R(\theta)$ in $\sigma^{-2}$,
we will improve this to $\|\theta\| \leq C$ in Lemma
\ref{lemma:localizationstrong} of Section \ref{sec:largesigma}.

\begin{lemma}\label{lemma:localization}
For some $(\theta_*,d,G)$-dependent constants $C,c,\sigma_0>0$ and all
$\sigma>\sigma_0$,
\begin{equation}\label{eq:localization}
\sigma^2\|\nabla R(\theta)\|>c\min\left(\frac{\|\theta\|^3}{\sigma^2},
\frac{\|\theta\|}{\sigma^{2/3}}\right)-\|\theta_*\|,
\end{equation}
and every critical point $\theta$ of $R(\theta)$ satisfies
$\|\theta\|<C\sigma^{2/3}$.
\end{lemma}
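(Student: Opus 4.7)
By Lemmas \ref{lemma:productgroup} and \ref{lemma:kerneldecomp} I reduce to the mean-zero case $\E_g[g]=0$, since the identity-fixed component contributes a strongly convex Gaussian quadratic whose gradient bound is trivial. In the remaining (mean-zero) case I use the alternative formula \eqref{eq:gradRalt}
\[ \sigma^2 \nabla R(\theta) = \E_\eps[M(\eps,\theta)^\top M(\eps,\theta)]\theta - \E_\eps[M(\eps,\theta)]^\top \theta_*, \]
with $M(\eps,\theta) = \E_g[g \mid \eps,\theta]$ a convex combination of orthogonal matrices satisfying $\|M\|\leq 1$. Hence $\|\E_\eps[M]^\top \theta_*\|\leq \|\theta_*\|$, and the task reduces to lower-bounding $\|\E_\eps[M^\top M]\theta\|$, which I do through the scalar quantity $\theta^\top \E_\eps[M^\top M]\theta/\|\theta\| = \E_\eps[\|M\theta\|^2]/\|\theta\|$.

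The core estimate is $\E_\eps[\|M\theta\|^2] \geq c\|\theta\|^4/\sigma^2$ for $\|\theta\| \leq c_0\sigma$. Since $\E_g[g]\theta = 0$, I write $M\theta = \sum_g (p_g - 1/K)\,g\theta$, Taylor-expand the softmax about $\eta = 0$ with $\eta_g = \langle \theta_*+\sigma\eps,\,g\theta\rangle/\sigma^2$, and retain the first-order term to obtain
\[ M\theta = \tfrac{1}{\sigma}T_2(\theta)\eps + \tfrac{1}{\sigma^2}T_2(\theta)\theta_* + O\big(|\eta|_\infty^2 \|\theta\|\big),\qquad T_2(\theta) = \E_g[(g\theta)(g\theta)^\top]. \]
Squaring and taking $\E_\eps$, the Gaussian variance of the first summand contributes $\|T_2(\theta)\|_\HS^2/\sigma^2$. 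Since $T_2(\theta) \succeq 0$ has trace $\|\theta\|^2$, Cauchy--Schwarz applied to its eigenvalues gives $\|T_2(\theta)\|_\HS^2 \geq \|\theta\|^4/d$. The remaining terms contribute errors of order $(\|\theta_*\|^2 + \|\theta\|^2)\|\theta\|^4/\sigma^4$, which are absorbed into the leading term once $c_0$ is small and $\sigma \geq \sigma_0$. This yields $\sigma^2\|\nabla R(\theta)\| \geq c\|\theta\|^3/\sigma^2 - \|\theta_*\|$ for $\|\theta\| \leq c_0\sigma$.

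For $\|\theta\|>c_0\sigma$, Lemma \ref{lemma:thetabound} gives $\sigma^2\|\nabla R\|\geq \|\theta\|-\|\theta_*\|-C\sigma$, which dominates $c\|\theta\|/\sigma^{2/3}$ once $\|\theta\|\gtrsim \sigma$ and $\sigma$ is sufficiently large. Combining the two regimes, and noting $\|\theta\|^3/\sigma^2 \geq \|\theta\|/\sigma^{2/3}$ precisely when $\|\theta\|\geq \sigma^{2/3}$, the bound $\sigma^2\|\nabla R(\theta)\|\geq c\min(\|\theta\|^3/\sigma^2,\|\theta\|/\sigma^{2/3})-\|\theta_*\|$ holds on all of $\R^d$. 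Setting $\nabla R(\theta) = 0$ at a critical point forces $c\min(\|\theta\|^3/\sigma^2,\|\theta\|/\sigma^{2/3})\leq\|\theta_*\|$, and analyzing both branches of the minimum then yields $\|\theta\|\leq C\sigma^{2/3}$.

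The main obstacle is the error control in the softmax expansion, which converges cleanly only when $\max_g|\eta_g|\lesssim 1$, i.e.\ $\|\eps\|\|\theta\|/\sigma\lesssim 1$; for atypically large $\|\eps\|$ it breaks down. I would split $\E_\eps$ into a bulk region $\{\|\eps\|\leq C'\}$ where the expansion is valid and a tail region controlled by Gaussian concentration together with the crude bound $\|M\theta\|\leq\|\theta\|$. A secondary subtlety is matching the expansion-based bound (valid on $\|\theta\|\leq c_0\sigma$ with $c_0$ forced to be small, roughly $c_0\lesssim 1/\sqrt{d}$) to the linear bound of Lemma \ref{lemma:thetabound} (informative only once $\|\theta\|\gtrsim\sigma$), which may require retaining additional terms in the softmax expansion to cover the intermediate window; the necessary moment estimates for cross terms between the Gaussian term $T_2(\theta)\eps/\sigma$ and higher-order $\eps$-polynomial corrections are routine but the bookkeeping is where the bulk of the work lies.
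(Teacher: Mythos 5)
The proposal shares the paper's starting point (the alternative form \eqref{eq:gradRalt} and the reduction to lower-bounding $\E_\eps[\|M\theta\|^2]/\|\theta\|$ with $M = \E_g[g\mid\eps,\theta]$), but then diverges: you Taylor-expand the softmax weights to leading order in $\eta_g = \langle\theta_*+\sigma\eps,g\theta\rangle/\sigma^2$, while the paper reduces to the scalar quantity $\E_g[\bar Y^\top g\bar\theta\mid\eps,\theta]$ and recognizes this as $K'(t)$, the derivative of a cumulant generating function evaluated at $t = \|\theta_*+\sigma\eps\|\|\theta\|/\sigma^2$. This difference matters.

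The genuine gap is the intermediate window $c_0\sigma \lesssim \|\theta\| \lesssim C\sigma$. Your cubic bound $\sigma^2\|\nabla R\| \gtrsim \|\theta\|^3/\sigma^2 - \|\theta_*\|$ requires the softmax expansion parameter $\max_g|\eta_g| \approx \|\eps\|\|\theta\|/\sigma$ to be small, forcing $\|\theta\| \ll \sigma$; but the linear bound from Lemma \ref{lemma:thetabound}, namely $\sigma^2\|\nabla R\| \geq \|\theta\| - \|\theta_*\| - C\sigma$, is negative and hence useless until $\|\theta\|$ exceeds a (possibly large) multiple of $\sigma$. Your proposed fix of ``retaining additional terms in the softmax expansion'' will not close this: in the window $\|\theta\|\sim\sigma$ the expansion parameter is $O(1)$, so the remainder is comparable to the leading term no matter how many terms you keep---the issue is not truncation accuracy but that the linearization misses the saturation of the softmax. (A one-dimensional sanity check makes this vivid: for two-fold reflections and $\|\theta\| = \alpha\sigma$, the relevant quantity behaves like $\E_\eps[\tanh(\eps_1)^2]\cdot\|\theta\|$, which is $\Theta(\sigma)$; your first-order expansion replaces $\tanh$ by its linearization and captures only the contribution near $\eps_1 = 0$.)

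The missing idea is the paper's convexity trick: the cumulant generating function $K$ of $\bar Y^\top g\bar\theta$ over $g\sim\Unif(G)$ has increasing derivative, so $K'(t) \geq K'(t_\sigma) \geq K(t_\sigma)/t_\sigma$ where $t_\sigma = \min(t,\sigma^{-1/3})$ is an artificially truncated argument chosen small enough that the cumulant series converges. This gives a lower bound on $K'(t)$ \emph{for all} $t$, including in the window where your Taylor expansion fails, because it never needs $t$ itself to be small. Combined with the observation that $\kappa_1$ or $\kappa_2$ of $\bar Y^\top g\bar\theta$ is bounded below along some direction $u_0$ (with $\bar Y \in B_{\delta_0}(u_0)$ having $\sigma$-uniformly positive probability under $\eps\sim\N(0,\Id)$), this yields $K'(t) \gtrsim t_\sigma$ with positive probability and closes the argument uniformly. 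One further minor note: the paper does not reduce to $\E_g[g]=0$ in this proof; your reduction via Lemmas \ref{lemma:productgroup}--\ref{lemma:kerneldecomp} is legitimate but requires a small additional step to transfer the bound from the components $\theta_1,\theta_2$ back to $\|\theta\|$ itself, which you should make explicit.
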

\begin{proof}
We apply the form of $\nabla R(\theta)$ given in (\ref{eq:gradRalt}).
Denote $\bar{Y}=(\theta_*+\sigma \eps)/\|\theta_*+\sigma \eps\|$
and $\bar{\theta}=\theta/\|\theta\|$. Then
\begin{align}
\sigma^2\|\nabla R(\theta)\|
&\geq \langle \bar{\theta},\sigma^2 \nabla R(\theta) \rangle\nonumber\\
&\geq \bar{\theta}^\top \E_\eps\Big[\E_g[g \mid \eps,\theta]^\top \E_g[g \mid
\eps,\theta]\Big]\theta-\bar{\theta}^\top \E_\eps\Big[\E_g[g \mid
\eps,\theta]^\top\Big]\theta_*\nonumber\\
&=\|\theta\| \cdot
\E_\eps\Big[\|\E_g[g\bar{\theta} \mid \eps,\theta]\|^2\Big]
-\bar{\theta}^\top \E_\eps\Big[\E_g[g \mid
\eps,\theta]^\top\Big]\theta_*\nonumber\\
&\geq \|\theta\| \cdot
\E_\eps\Big[(\bar{Y}^\top \E_g[g\bar{\theta} \mid
\eps,\theta])^2\Big]-\|\theta_*\|\nonumber\\
&=\|\theta\| \cdot \E_\eps\Big[\E_g[\bar{Y}^\top g\bar{\theta} \mid
\eps,\theta]^2\Big]-\|\theta_*\|.\label{eq:gradboundtmp}
\end{align}

We analyze the quantity $\E_g[\bar{Y}^\top g\bar{\theta} \mid \eps,\theta]$
for fixed $\eps$ (and hence fixed $\bar{Y}$):
Note that $|\bar{Y}^\top g\bar{\theta}| \leq 1$. Let
$K(s)$ be the cumulant generating function of $\bar{Y}^\top g\bar{\theta}$ over
the uniform law $g \sim \Unif(G)$, and let $K'(s)$ be its derivative.
Denote
\[t \equiv t(\eps,\theta)=\frac{\|\theta_*+\sigma \eps\|\|\theta\|}{\sigma^2}.\]
Then
\begin{equation}\label{eq:Ktrelation}
\E_g[\bar{Y}^\top g\bar{\theta} \mid \eps,\theta]
=\E_g[p(g \mid \eps,\theta)\bar{Y}^\top g\bar{\theta}]
=\frac{\E_g[\bar{Y}^\top g\bar{\theta} \cdot
e^{t\bar{Y}^\top g\bar{\theta}}]}
{\E_g[e^{t\bar{Y}^\top g\bar{\theta}}]}
=\frac{d}{ds}\log \E_g[e^{s\bar{Y}^\top g\bar{\theta}}]\Big|_{s=t}=K'(t).
\end{equation}
Writing $\kappa_\ell$ as the
$\ell^\text{th}$ cumulant of this law, we have
\begin{equation}\label{eq:Ksseries}
K(s)=\sum_{\ell=1}^\infty \kappa_\ell \frac{s^\ell}{\ell!},
\end{equation}
where this series is absolutely convergent for $|s|<1/e$ by
Lemma \ref{lem:cumulantbounds}. Set
\[t_\sigma \equiv t_\sigma(\eps,\theta)=\min(t(\eps,\theta),\sigma^{-1/3}),\]
where $t_\sigma<1/e$ for $\sigma>\sigma_0$ and large enough $\sigma_0$.
Since $K(0)=0$, 
using the convexity of the cumulant generating function $K$ we can bound its derivative from below by
\[K'(t) \geq K'(t_\sigma) \geq \frac{K(t_\sigma)}{t_\sigma}
=\sum_{\ell=1}^\infty \kappa_\ell \frac{t_\sigma^{\ell-1}}{\ell!}.\]
Applying $|\kappa_\ell| \leq \ell^\ell$ from Lemma \ref{lem:cumulantbounds}
and $\ell! \geq \ell^\ell/e^\ell$,
\[K'(t) \geq \kappa_1+\frac{t_\sigma}{2}\kappa_2-\sum_{\ell=3}^\infty
e^\ell t_\sigma^{\ell-1} \geq \kappa_1+\frac{t_\sigma}{2}\kappa_2-30t_\sigma^2\]
for $\sigma>\sigma_0$ and large enough $\sigma_0$.
Here, $\kappa_1=\E_g[\bar{Y}^\top g\bar{\theta}]$ and
$\kappa_2=\Var_g[\bar{Y}^\top g\bar{\theta}]$.

Now observe that there exists a constant $c_0 \equiv c_0(d)>0$, such that
if $v$ is any random vector on the unit sphere in $\R^d$, then there is a
deterministic vector $u_0$ on the unit sphere for which
\[\min(\E[u_0^\top v],\Var[u_0^\top v])>2c_0.\]
This is because if the mean of $v$ is near 0 and $v$ lies on the sphere,
then the variance of $v$ must be bounded below by a constant
in some direction. Then also
for some $\delta_0>0$ depending only on $c_0$, we have
\[\min(\E[u^\top v],\Var[u^\top v])>c_0 \text{ for all } u \in
B_{\delta_0}(u_0).\]
Let us apply this to the random vector $v=g\bar{\theta}$ under the
uniform law of $g$. (So $u_0$ depends on $G$ and $\theta$.)
Then for $\sigma>\sigma_0$, on the event $\bar{Y} \in B_{\delta_0}(u_0)$, we get
\[K'(t) \geq \frac{c_0}{2}t_\sigma-30t_\sigma^2 \geq \frac{c_0}{3} t_\sigma.\]

Recalling (\ref{eq:Ktrelation}) and applying this to (\ref{eq:gradboundtmp}),
\begin{align*}
\sigma^2\|\nabla R(\theta)\| &\geq \|\theta\|\cdot
\E_\eps\left[\left(\frac{c_0}{3}t_\sigma(\eps,\theta)\right)^2
\1\{\bar{Y} \in B_{\delta_0}(u_0)\}\right]-\|\theta_*\|\\
&\geq \|\theta\|\cdot \E_\eps\left[\left(\frac{c_0}{3}
t_\sigma(\eps,\theta)\right)^2
\1\{\bar{Y} \in B_{\delta_0}(u_0),\|\theta_*+\sigma \eps\|
\geq \sigma\}\right]-\|\theta_*\|.
\end{align*}
On the event $\|\theta_*+\sigma \eps\| \geq \sigma$, we have $t(\eps,\theta)
\geq \|\theta\|/\sigma$, so $t_\sigma(\eps,\theta) \geq
\min(\|\theta\|/\sigma,\sigma^{-1/3})$. Then
\[\sigma^2\|\nabla R(\theta)\| \geq
\frac{c_0^2}{9}\min\left(\frac{\|\theta\|^3}{\sigma^2},\frac{\|\theta\|}{\sigma^{2/3}}\right)
\P\Big[\bar{Y} \in B_{\delta_0}(u_0),\,\|\theta_*+\sigma \eps\| \geq
\sigma\Big]-\|\theta_*\|.\]
Recalling the definition $\bar{Y}=(\theta_*+\sigma \eps)/\|\theta_*+\sigma
\eps\|$, as $\sigma \to \infty$, we have
\[\P\Big[\bar{Y} \in B_{\delta_0}(u_0),\,\|\theta_*+\sigma \eps\|
\geq \sigma\Big]
\to \P\Big[\eps/\|\eps\| \in B_{\delta_0}(u_0),\,\|\eps\| \geq 1\Big].\]
Since $\eps/\|\eps\|$ is uniformly distributed on the sphere, the
limit is a positive constant depending only on the dimension $d$ and $\delta_0$.
Furthermore, for fixed $\theta_*$, this convergence is uniform over $u_0$ on the
unit sphere. Thus we obtain
\[\P\Big[\bar{Y} \in B_{\delta_0}(u_0),\,\|\theta_*+\sigma \eps\| \geq
\sigma\Big] \geq c\]
for a constant $c \equiv c(d)$ and all $\sigma>\sigma_0(\theta_*,d,G)$. This
yields (\ref{eq:localization}). For a large enough constant 
$C \equiv C(\theta_*,d,G)>0$, this implies $\|\nabla R(\theta)\|>0$ when
$\|\theta\| \geq C\sigma^{2/3}$, so any critical point
satisfies $\|\theta\|<C\sigma^{2/3}$.
\end{proof}

\subsection{Concentration of the empirical risk}

We establish uniform concentration of $R_n(\theta)$, $\nabla R_n(\theta)$, and
$\nabla^2 R_n(\theta)$ around their expectations. This will
allow us to translate results about the population landscape of $R(\theta)$ to
the empirical landscape of $R_n(\theta)$.

\begin{lemma}\label{lem:concentration}
There exist $(\theta_*,d,G)$-dependent constants $C,c>0$ such that 
for any $r,t>0$, denoting $B_r \equiv B_r(0)=\{\theta \in \R^d:\|\theta\|<r\}$,
\begin{align}
\P\Big[\sup_{\theta\in B_r}|R_n(\theta)-R(\theta)|\geq t\Big]&\leq 
\left(\tfrac{Cr(1+\sigma)}{\sigma^2 t}\right)^d
\exp\left(-cn\,\tfrac{\sigma^2t^2}{r^2}\right)+Ce^{-cn}
\label{eq:concRn}\\
\P\Big[\sup_{\theta\in B_r}\|\nabla R_n(\theta)-\nabla R(\theta)\|\geq
t\Big]&\leq \left(\tfrac{Cr(1+\sigma^2)}{\sigma^4
t}\right)^d\exp\left(-cn\,\tfrac{\sigma^4t^2}{1+\sigma^2}\right)+Ce^{-cn}\label{eq:concgradRn}\\
\P\Big[\sup_{\theta\in B_r}\|\nabla^2 R_n(\theta)-\nabla^2 R(\theta)\|\geq
t\Big]&\leq
\left(\tfrac{Cr(1+\sigma^3)}{\sigma^6t}\right)^d\exp\left(-cn\min\left(\tfrac{\sigma^8t^2}{1+\sigma^4},\tfrac{\sigma^4t}{1+\sigma^2}\right)\right)+Ce^{-cn^{2/3}}.\label{eq:conchessRn}
\end{align}
\end{lemma}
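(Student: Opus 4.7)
The plan is to follow the standard two-step scheme for uniform concentration of a smooth empirical process on a bounded set: establish \emph{pointwise} sub-Gaussian / sub-exponential concentration at a fixed $\theta \in B_r$, and then upgrade to a uniform bound via a $\delta$-net combined with a uniform Lipschitz estimate on the differences $R_n-R$, $\nabla R_n-\nabla R$, and $\nabla^2 R_n-\nabla^2 R$ in $\theta$.

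\textbf{Pointwise concentration at fixed $\theta$.} Each summand of $R_n(\theta)-R(\theta)$ depends only on $\eps_i$. Differentiating \eqref{eq:Rn} in $\eps_i$ shows that this summand is $(\|\theta\|/\sigma)$-Lipschitz in $\eps_i$, so by Gaussian Lipschitz concentration and averaging over $n$ independent samples it is sub-Gaussian, producing the exponential factor $\exp(-cn\sigma^2 t^2/r^2)$ in \eqref{eq:concRn}. For \eqref{eq:concgradRn}, the $i$-th summand $\sigma^{-2}\E_g[g^\top(\theta_*+\sigma\eps_i)\mid\eps_i,\theta]$ of \eqref{eq:gradRn} is bounded in norm by $\|\theta_*+\sigma\eps_i\|/\sigma^2$, hence sub-Gaussian with $\psi_2$-norm $\lesssim (1+\sigma)/\sigma^2$; a vector Hoeffding inequality gives the exponent $cn\sigma^4 t^2/(1+\sigma^2)$. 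For \eqref{eq:conchessRn}, the $i$-th summand $\sigma^{-4}\Cov_g[g^\top(\theta_*+\sigma\eps_i)\mid\eps_i,\theta]$ of \eqref{eq:hessRn} is a symmetric matrix with operator norm at most $\|\theta_*+\sigma\eps_i\|^2/\sigma^4$, hence sub-exponential with $\psi_1$-norm $\lesssim (1+\sigma^2)/\sigma^4$; matrix Bernstein yields the sharp $\min(t^2\sigma^8/(1+\sigma^4),\,t\sigma^4/(1+\sigma^2))$ form.

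\textbf{Uniform Lipschitz control in $\theta$.} Using Lemma~\ref{lem:empiricalriskform} together with the elementary bounds $\|\E_g[\cdot\mid\eps_i,\theta]\|\leq\|\theta_*+\sigma\eps_i\|$, $\|\Cov_g[\cdot\mid\eps_i,\theta]\|\leq\|\theta_*+\sigma\eps_i\|^2$, and $\|\kappa_g^3[\cdot\mid\eps_i,\theta]\|\leq C\|\theta_*+\sigma\eps_i\|^3$ (the last from $|\kappa_3|\leq C\|X\|_\infty^3$ for bounded $X$), on the high-probability event $\mathcal{E}=\{n^{-1}\sum_i\|\eps_i\|^k\leq C_k$ for $k=1,2,3\}$ one obtains, uniformly on $B_r$,
\begin{align*}
\|\nabla R_n(\theta)-\nabla R(\theta)\| &\leq C(1+\sigma)/\sigma^2, \\
\|\nabla^2 R_n(\theta)-\nabla^2 R(\theta)\| &\leq C(1+\sigma^2)/\sigma^4, \\
\|\nabla^3 R_n(\theta)-\nabla^3 R(\theta)\| &\leq C(1+\sigma^3)/\sigma^6.
\end{align*}
These serve as Lipschitz constants for $R_n-R$, $\nabla R_n-\nabla R$, and $\nabla^2 R_n-\nabla^2 R$ respectively. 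The event $\mathcal{E}$ fails with probability at most $Ce^{-cn}$ for $k=1,2$ by Bernstein's inequality for the sub-exponential variables $\|\eps_i\|$ and $\|\eps_i\|^2$, but only $Ce^{-cn^{2/3}}$ for $k=3$ by concentration for sub-Weibull-$(2/3)$ summands $\|\eps_i\|^3$; this is precisely why the third probability in \eqref{eq:conchessRn} degrades to $Ce^{-cn^{2/3}}$.

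\textbf{Covering and union bound.} Cover $B_r$ by a $\delta$-net of cardinality at most $(Cr/\delta)^d$, apply Step~1 at each net point with level $t/2$, and union-bound. Choosing $\delta$ so that the Lipschitz oscillation from Step~2 is at most $t/2$ yields $\delta\asymp t\sigma^2/(1+\sigma)$, $t\sigma^4/(1+\sigma^2)$, $t\sigma^6/(1+\sigma^3)$ in the three cases, producing the combinatorial factors $(Cr(1+\sigma^j)/(\sigma^{2j}t))^d$ for $j=1,2,3$ appearing in \eqref{eq:concRn}--\eqref{eq:conchessRn}. The complement of $\mathcal{E}$ contributes the additive $Ce^{-cn}$ and $Ce^{-cn^{2/3}}$ terms.

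\textbf{Main obstacle.} The delicate point is tracking each exponent correctly. One must (i) bound the \emph{difference} $\nabla^k R_n-\nabla^k R$ directly rather than the two pieces separately, otherwise a spurious factor of $r$ (or $\|\theta\|$) appears in the Lipschitz constants and the $\sigma$-scalings in the combinatorial factors would not have the stated form; (ii) invoke matrix Bernstein, not scalar concentration, to obtain the sub-exponential $\min(t^2,t)$ form in \eqref{eq:conchessRn}; and (iii) recognize that $\|\eps_i\|^3$ is only sub-Weibull of order $2/3$, which forces the weaker $e^{-cn^{2/3}}$ tail in \eqref{eq:conchessRn} as opposed to the $e^{-cn}$ tails available for \eqref{eq:concRn} and \eqref{eq:concgradRn}.
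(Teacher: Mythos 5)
Your proposal matches the paper's proof: pointwise concentration via Gaussian Lipschitz concentration for the value, coordinate-wise Hoeffding for the gradient, and (entrywise) Bernstein for the Hessian; $\theta$-Lipschitz control on the high-probability event $\mathcal{E}$ with the $\|\eps_i\|^3$ sum giving the weaker $e^{-cn^{2/3}}$ tail; and the $\delta$-net union bound with the same choices of $\delta$. The one cosmetic difference is that the paper subtracts the explicit quadratic/linear/constant pieces $\|\theta\|^2/(2\sigma^2)$, $\theta/\sigma^2$, $\Id/\sigma^2$ before bounding Lipschitz constants, while you observe the equivalent cancellation directly in $R_n-R$; both avoid the spurious factor of $r$ you correctly identify as the pitfall.
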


We prove this by first showing pointwise concentration in Lemma
\ref{lem:pointwise_concentration}, then establishing Lipschitz continuity of these
risks, gradients, and Hessians in Lemma \ref{lem:local_lipschitz}, and finally
applying a covering net argument.

\begin{lemma}\label{lem:pointwise_concentration}
For some $(\theta_*,d,G)$-dependent constants $C,c>0$, any $\theta \in \R^d$, and
any $t>0$,
\begin{align}
\P\left[\left|R_n(\theta)-R(\theta)\right|\geq t\right]&\leq C \exp\left(-cn\,
\tfrac{\sigma^2t^2}{\|\theta\|^2}\right)
\label{eq:value_pointwise}\\
\P\left[\|\nabla R_n(\theta)-\nabla R(\theta)\| \geq t\right]&\leq
C\exp\left(-cn\,\tfrac{\sigma^4t^2}{1+\sigma^2}\right)\label{eq:gradient_pointwise}\\
\P\left[\|\nabla^2 R_n(\theta) - \nabla^2 R(\theta)\| \geq t\right]&\leq
C\exp\left(-cn\min\left(\tfrac{\sigma^8t^2}{1+\sigma^4},\,
\tfrac{\sigma^4t}{1+\sigma^2}\right)\right).\label{eq:hessian_pointwise}
\end{align}
\end{lemma}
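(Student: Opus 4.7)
The plan is to write each of the three differences as an empirical average of $n$ i.i.d.\ mean-zero quantities, bound the Orlicz norms of the summands, and then apply, respectively, scalar sub-Gaussian Hoeffding, a vector-valued Hoeffding-type bound, and a matrix Bernstein inequality. Using \eqref{eq:Rn}, \eqref{eq:R}, and the closed forms of Lemma~\ref{lem:empiricalriskform},
\[R_n(\theta)-R(\theta)=\frac{1}{n}\sum_{i=1}^n (X_i-\E X_i),\qquad \nabla R_n(\theta)-\nabla R(\theta)=-\frac{1}{\sigma^2 n}\sum_{i=1}^n (V_i-\E V_i),\]
\[\nabla^2 R_n(\theta)-\nabla^2 R(\theta)=-\frac{1}{\sigma^4 n}\sum_{i=1}^n (M_i-\E M_i),\]
with i.i.d.\ scalars $X_i:=-\log \E_g[\exp(\langle \theta_*+\sigma\eps_i,g\theta\rangle/\sigma^2)]$, vectors $V_i:=\E_g[g^\top(\theta_*+\sigma\eps_i)\mid \eps_i,\theta]$, and symmetric matrices $M_i:=\Cov_g[g^\top(\theta_*+\sigma\eps_i)\mid \eps_i,\theta]$.

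The key step is to control the Orlicz norms of these summands. For $X_i$, the cleanest route is via Gaussian Lipschitz concentration: differentiating under the expectation (as in the proof of Lemma~\ref{lem:empiricalriskform}) gives $\nabla_{\eps} X_i = -\sigma^{-1}\E_g[g\theta\mid \eps,\theta]$, which is a convex combination of vectors of norm $\|\theta\|$, so $X_i$ is $(\|\theta\|/\sigma)$-Lipschitz in $\eps_i$, and Borell's inequality gives $\|X_i-\E X_i\|_{\psi_2}\leq C\|\theta\|/\sigma$. For $V_i$, orthogonality of each $g\in G$ gives $\|V_i\|\leq \|\theta_*+\sigma\eps_i\|$, hence $\|u^\top V_i\|_{\psi_2}\leq C(1+\sigma)$ uniformly over unit vectors $u\in\R^d$. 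For $M_i$, the covariance bound $\|M_i\|\leq \E_g[\|g^\top(\theta_*+\sigma\eps_i)\|^2\mid\eps_i]=\|\theta_*+\sigma\eps_i\|^2$ makes $\|M_i\|$ sub-exponential with $\psi_1$ norm of order $1+\sigma^2$.

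Feeding these bounds into the matching concentration inequalities completes the proof. Scalar Hoeffding on the $X_i$ yields \eqref{eq:value_pointwise} directly. For the gradient, combining the one-dimensional sub-Gaussian bound for $u^\top V_i$ with a standard $\eps$-net over the unit sphere in $\R^d$ (absorbing the fixed dimension $d$ into $C$) gives $\P[\|(1/n)\sum(V_i-\E V_i)\|\geq s]\leq C\exp(-cns^2/(1+\sigma^2))$; setting $s=\sigma^2 t$ yields \eqref{eq:gradient_pointwise}. For the Hessian, the matrix Bernstein inequality for i.i.d.\ mean-zero symmetric matrices with sub-exponential operator norm gives $\P[\|(1/n)\sum(M_i-\E M_i)\|\geq s]\leq Cd\exp(-cn\min(s^2/(1+\sigma^4),\,s/(1+\sigma^2)))$; setting $s=\sigma^4 t$ produces \eqref{eq:hessian_pointwise}. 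The main subtlety is the sharp $\psi_2$ bound on $X_i-\E X_i$: the naive pointwise estimate $|X_i|\leq \|\theta\|\|\theta_*+\sigma\eps_i\|/\sigma^2$ only gives $\|X_i\|_{\psi_2}=O(\|\theta\|(1+\sigma)/\sigma^2)$, which loses a factor of $\sigma$ at small noise and would spoil the variance proxy $\|\theta\|^2/(n\sigma^2)$ in \eqref{eq:value_pointwise}. Exploiting smoothness of $X_i$ in $\eps_i$ via Gaussian Lipschitz concentration is what makes the scalar bound tight; the vector and matrix bounds are more routine in comparison.
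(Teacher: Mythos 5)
Your proof is correct and takes essentially the same approach as the paper. The crucial step — obtaining the sharp $\psi_2$ bound on $R_n(\theta)-R(\theta)$ via the $(\|\theta\|/\sigma)$-Lipschitz property of $\eps\mapsto \log\E_g[\exp(\langle\theta_*+\sigma\eps,g\theta\rangle/\sigma^2)]$ and Gaussian concentration — is exactly the paper's argument, and you correctly identify why the naive pointwise bound would be lossy. The only deviation is in packaging the vector and matrix concentration: the paper bounds each coordinate/entry by scalar Hoeffding or Bernstein and then takes a union bound over the $d$ or $d^2$ entries, whereas you use an $\eps$-net over the sphere and a matrix Bernstein inequality for sub-exponential operator norm. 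Since $d$ is fixed these are interchangeable; the entrywise route the paper uses has the small advantage of relying only on the scalar Bernstein inequality rather than a (less standard) sub-exponential matrix Bernstein, but your version works just as well if you cite an appropriate matrix concentration result or truncate.
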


\begin{proof}
We apply the Bernstein and Hoeffding inequalities. 
Recall that for $\alpha=1$ or 2, 
$\|f(\eps)\|_{\psi_\alpha}$ denotes the
sub-exponential or sub-Gaussian norm of the random variable $f(\eps)$ over the
law $\eps \sim \N(0,\Id)$.

For $R_n(\theta)$, recall the form (\ref{eq:Rn}). Set
\[f_1(\eps)=\log \E_g\left[\exp \left(\frac{\langle \theta_*+\sigma \eps,g\theta
\rangle}{\sigma^2}\right)\right].\]
Then $\nabla_\eps f_1(\eps)=\E_g[g\theta \mid \eps,\theta]/\sigma$, so
$\|\nabla_\eps f_1(\eps)\| \leq \E_g[\|g\theta\| \mid
\eps,\theta]/\sigma \leq \|\theta\|/\sigma$
and $f_1$ is $\|\theta\|/\sigma$-Lipschitz. By Gaussian concentration of measure and
Hoeffding's inequality (see \cite[Theorems 2.6.2, 5.2.2]{vershyninHDP}),
for constants $C,c>0$ and any $t>0$,
\[\|f_1(\eps)-\E_\eps f_1(\eps)\|_{\psi_2} \leq \frac{C\|\theta\|}{\sigma},
\quad \P\bigg[\bigg|\frac{1}{n}\sum_{i=1}^n f_1(\eps_i)-\E_\eps[f_1(\eps)]\bigg|
\geq t \bigg] \leq 2\exp\left(-cn\,\frac{\sigma^2t^2}{\|\theta\|^2}\right).\]
Applying this to (\ref{eq:Rn}) yields (\ref{eq:value_pointwise}).

For $\nabla R_n(\theta)$, recall (\ref{eq:gradRn}). Denote by $g_{\cdot j}$ the
$j$th column of $g$. Momentarily fixing $j$, denote
\[f_2(\eps)=\E_g\left[g_{\cdot j}^\top (\theta_*+\sigma \eps)\Big|
\eps,\theta\right], \qquad f_{2,g}(\eps)=g_{\cdot j}^\top (\theta_*+\sigma
\eps)\]
where $f_{2,g}$ is defined for each fixed $g \in G$. Then
\[\|f_2(\eps)\|_{\psi_2} 
=\left\|\sum_{g \in G} p(g \mid \eps,\theta) f_{2,g}(\eps)\right\|_{\psi_2}
\leq K \cdot \max_{g \in G} \Big\|p(g \mid \eps,\theta) f_{2,g}(\eps)\Big\|_{\psi_2}
\leq K \cdot \max_{g \in G} \|f_{2,g}(\eps)\|_{\psi_2},\]
the last inequality applying $|p(g \mid \eps,\theta)| \leq 1$ and the definition
of the sub-Gaussian norm. For each fixed $g \in G$, we have
$\|f_{2,g}(\eps)\|_{\psi_2} \leq C(1+\sigma)$. Then by Hoeffding's inequality,
\[\P\bigg[\bigg|\frac{1}{n}\sum_{i=1}^n f_2(\eps_i)-\E_\eps[f_2(\eps)]\bigg|>t
\bigg] \leq 2\exp\left(-cn\,\frac{t^2}{(1+\sigma)^2}\right).\]
This establishes concentration of the $j$th coordinate of $R_n(\theta)$.
Applying a union bound over indices $j=1,\ldots,d$ and replacing $t$ by
$\sigma^2 t$ yields (\ref{eq:gradient_pointwise}).

For $\nabla^2 R_n(\theta)$, recall (\ref{eq:hessRn}). Momentarily
fixing the indices $j$ and $k$, denote
\begin{align*}
f_3(\eps)&=\Cov_g\left[g_{\cdot j}^\top (\theta_*+\sigma\eps),\;
g_{\cdot k}^\top (\theta_*+\sigma\eps)\Big|\eps,\theta\right]\\
&=\E_g\left[g_{\cdot j}^\top (\theta_*+\sigma\eps) \cdot
g_{\cdot k}^\top (\theta_*+\sigma\eps)\Big|\eps,\theta\right]
-\E_g\left[g_{\cdot j}^\top (\theta_*+\sigma\eps) \Big|\eps,\theta\right]
\cdot \E_g\left[g_{\cdot k}^\top (\theta_*+\sigma\eps)\Big|\eps,\theta\right]
\end{align*}
Using the same argument as above, we have the bounds
\[\left\|\E_g\left[g_{\cdot j}^\top (\theta_*+\sigma\eps) \cdot
g_{\cdot k}^\top (\theta_*+\sigma\eps)\Big|\eps,\theta\right]\right\|_{\psi_1}
\leq C(1+\sigma^2),
\quad \left\|\E_g\left[g_{\cdot j}^\top (\theta_*+\sigma\eps)
\Big|\eps,\theta\right]\right\|_{\psi_2} \leq C(1+\sigma).\]
Together with the inequality $\|XY\|_{\psi_1}
\leq \|X\|_{\psi_2}\|Y\|_{\psi_2}$, this yields $\|f_3(\eps)\|_{\psi_1} \leq
C(1+\sigma^2)$. Then by Bernstein's inequality (see \cite[Theorem
2.8.1]{vershyninHDP}),
\[\P\bigg[\bigg|\frac{1}{n}\sum_{i=1}^n
f_3(\eps_i)-\E_\eps[f_3(\eps)]\bigg|>t\bigg] \leq 2\exp\left(
-cn\min\left(\frac{t^2}{(1+\sigma^2)^2},\frac{t}{1+\sigma^2}\right)\right).\]
This establishes concentration of the $(j,k)$ entry
of $\nabla^2 R_n(\theta)$. Taking a union bound over $j,k \in \{1,\ldots,d\}$ and
replacing $t$ by $\sigma^4t$ yields (\ref{eq:hessian_pointwise}).
\end{proof}

\begin{lemma}\label{lem:local_lipschitz}
For a $(\theta_*,d,G)$-dependent constant $C'>0$, as functions over $\theta \in
\R^d$,
\begin{enumerate}[(a)]
\item $R(\theta)-\|\theta\|^2/(2\sigma^2)$ is $C'(1+\sigma)/\sigma^2$-Lipschitz.
\item Each entry of $\nabla R(\theta)-\theta/\sigma^2$ is $C'(1+\sigma^2)/\sigma^4$-Lipschitz.
\item Each entry of $\nabla^2 R(\theta)-\Id/\sigma^2$ is
$C'(1+\sigma^3)/\sigma^6$-Lipschitz.
\end{enumerate}
For $d$-dependent constants $C,c>0$, statements (a) and (b) also hold for
$R_n(\theta)-\|\theta\|^2/(2\sigma^2)$ and $\nabla
R_n(\theta)-\theta/\sigma^2$ with probability at least $1-Ce^{-cn}$, and (c) holds for
$\nabla^2 R_n(\theta)-\Id/\sigma^2$ with probability at least $1-Ce^{-cn^{2/3}}$.
\end{lemma}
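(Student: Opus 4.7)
The strategy is that a continuously differentiable function whose gradient has norm at most $L$ everywhere is $L$-Lipschitz, so in each case I would bound the next-order derivative. The explicit formulas from Lemma \ref{lem:empiricalriskform} and its population analogue express these derivatives as expectations of cumulants under the reweighted law (\ref{eq:gcondlaw}), and the key elementary fact is that $g^\top(\theta_*+\sigma\eps)$ has Euclidean norm exactly $\|\theta_*+\sigma\eps\|$ for every $g\in G$, since $g$ is orthogonal. Thus the $\ell$-th cumulant tensor of this vector (under any reweighting of $g$) has entries bounded in absolute value by a $(d,\ell)$-dependent constant times $\|\theta_*+\sigma\eps\|^\ell$, because cumulants are polynomial combinations of raw moments whose entries are themselves bounded by $\|\theta_*+\sigma\eps\|^\ell$.

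For the population statements: (a) follows from (\ref{eq:gradR}) by bounding $\|\nabla R(\theta)-\theta/\sigma^2\| \le \sigma^{-2}\E_\eps\|\theta_*+\sigma\eps\| \le C'(1+\sigma)/\sigma^2$. For (b) I would use (\ref{eq:hessR}) together with the fact that for any reweighting of $g$, the conditional covariance has operator norm at most $\E_g[\|g^\top(\theta_*+\sigma\eps)\|^2\mid \eps,\theta] = \|\theta_*+\sigma\eps\|^2$, so each entry of $\nabla^2 R-\Id/\sigma^2$ is bounded by $\sigma^{-4}\E_\eps\|\theta_*+\sigma\eps\|^2 \le C'(1+\sigma^2)/\sigma^4$, and the Lipschitz constant of an entry is controlled by the Euclidean norm of the column of the Hessian it induces, i.e.\ by the operator norm. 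For (c) I would bound each entry of $\nabla^3 R$ via (\ref{eq:derR}): $|\kappa_g^3[\,\cdot\,]_{ijk}|\le C\|\theta_*+\sigma\eps\|^3$ yields the entrywise bound $C\sigma^{-6}\E_\eps\|\theta_*+\sigma\eps\|^3 \le C'(1+\sigma^3)/\sigma^6$, and the gradient of an entry of $\nabla^2 R-\Id/\sigma^2$ is a vector of such third derivatives, giving the claimed Lipschitz constant (up to an absorbed $\sqrt d$).

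For the empirical versions the same derivative formulas from Lemma \ref{lem:empiricalriskform} apply, with $\E_\eps$ replaced by $n^{-1}\sum_{i=1}^n$. I would therefore need uniform-in-$\theta$ control (which is automatic because the pointwise bounds only depend on $\eps_i$ and $\sigma$) of the three averages $n^{-1}\sum_i \|\theta_*+\sigma\eps_i\|^k$ for $k=1,2,3$. For $k=1$, since $\|\eps_i\|$ is sub-Gaussian, Hoeffding's inequality gives the bound $C(1+\sigma)$ with probability $1-Ce^{-cn}$. For $k=2$, since $\|\eps_i\|^2$ is sub-exponential (it is a chi-squared), Bernstein's inequality gives $C(1+\sigma^2)$ with probability $1-Ce^{-cn}$. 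These handle the empirical versions of (a) and (b).

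\textbf{Main obstacle.} The only non-routine point is the tail control needed for (c): the random variable $\|\eps_i\|^3$ is only in the Orlicz class $\psi_{2/3}$, so an equivalent of Bernstein at this tail level produces a stretched-exponential rate rather than the standard $e^{-cn}$. This is exactly why the probability in the Hessian Lipschitz bound degrades to $1-Ce^{-cn^{2/3}}$, as stated. Concretely, I would truncate at level $M\asymp n^{\alpha}$ for a suitable $\alpha$, apply Bernstein to the bounded part, and control the tail $\sum_i \|\eps_i\|^3 \mathbf{1}\{\|\eps_i\|>M\}$ by the Gaussian tail $\P[\|\eps_i\|>M]\le e^{-cM^2}$; optimizing over $M$ gives the rate $e^{-cn^{2/3}}$. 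Modulo this truncation argument, the entire lemma is a direct bookkeeping exercise applying the cumulant bounds to the formulas of Lemma \ref{lem:empiricalriskform}.
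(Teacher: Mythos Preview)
Your approach matches the paper's: bound $\nabla^\ell R$ via the cumulant expressions (\ref{eq:gradR})--(\ref{eq:derR}) and Lemma~\ref{lem:cumulantbounds}, using $\|g^\top(\theta_*+\sigma\eps)\|=\|\theta_*+\sigma\eps\|$, then reduce the empirical statements to high-probability bounds on $n^{-1}\sum_i\|\eps_i\|^\alpha$ for $\alpha=1,2,3$, handling $\alpha=1,2$ by Hoeffding and Bernstein exactly as you describe.

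The one gap is your treatment of $\alpha=3$. Truncation at level $M$ plus Bernstein on the bounded part gives a bound $e^{-cn/M^3}$ (the variance is $O(1)$, so the $M^3 t$ term dominates the Bernstein denominator), while the union bound on $\{\max_i\|\eps_i\|>M\}$ gives $ne^{-cM^2}$; balancing yields $M\asymp n^{1/5}$ and only the rate $e^{-cn^{2/5}}$, not the $e^{-cn^{2/3}}$ you claim. The paper instead applies the Adamczak--Wolff polynomial concentration inequality (Theorem~\ref{thm:AW15}) to $f(\eps_1,\dots,\eps_n)=\sum_i\|\eps_i\|^3$ with $D=3$, bounding the tensor norms $\|\nabla^3 f\|_\cJ$ to obtain $\eta_f(n)\ge cn^{2/3}$. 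A more elementary fix that also achieves the sharp rate: the function $h(\eps)=\bigl(\sum_i\|\eps_i\|^3\bigr)^{1/3}$ is $1$-Lipschitz on $\R^{nd}$, since $\|\nabla h\|^2=\bigl(\sum_i\|\eps_i\|^3\bigr)^{-4/3}\sum_i\|\eps_i\|^4\le 1$ by $\|a\|_{\ell^4}\le\|a\|_{\ell^3}$; Gaussian concentration then gives $\P[h>\E h+t]\le e^{-t^2/2}$, and since $\E h\le (Cn)^{1/3}$ by Jensen, taking $t\asymp n^{1/3}$ yields $\P\bigl[\sum_i\|\eps_i\|^3>C_0 n\bigr]\le e^{-cn^{2/3}}$ directly.
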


\begin{proof}
To prove the desired Lipschitz property, it suffices to bound the first three derivatives of $R(\theta)$.
Recall the expressions (\ref{eq:gradR}), (\ref{eq:hessR}), and (\ref{eq:derR})
for $\nabla^\ell R(\theta)$.
Note that $\|g^\top(\theta_*+\sigma\eps)\|=\|\theta_*+\sigma\eps\|$. Thus, under the law 
\eqref{eq:gcondlaw}, each entry of $g^\top(\theta_*+\sigma\eps)$ has magnitude at most $\|\theta_*+\sigma\eps\|$. 
Invoking Lemma \ref{lem:cumulantbounds}(b), we conclude that for each $\ell \geq
1$ and some constant $C \equiv C(\ell,d,\|\theta_*\|)$,
\[\|\kappa_g^\ell[g^\top(\theta_*+\sigma\eps) \mid \eps,\theta]\|_\HS \leq
C(1+\sigma^\ell\|\eps\|^\ell)\]
where $\ell=1,2$ for the mean and covariance.
 
Applying these bounds to (\ref{eq:gradR}),
(\ref{eq:hessR}), and (\ref{eq:derR}) and taking the expectation over $\eps
\sim \N(0,\Id)$ yields the Lipschitz properties for the population risk
$R(\theta)$. Recalling the
forms (\ref{eq:gradRn}--\ref{eq:derRn}), this also shows the
Lipschitz properties for the empirical risk $R_n(\theta)$ on the events
\[\cE^\alpha=\left\{\frac{1}{n}\sum_{i=1}^n \|\eps_i\|^\alpha
\leq C_0 \right\}\]
for $\alpha=1,2,3$ respectively, where $C_0>0$ is any fixed constant. 
For $\alpha=1,2$ and a sufficiently large constant $C_0>0$,
we have $\P[\cE^\alpha] \geq
1-Ce^{-cn}$ by the Hoeffding and Bernstein inequalities.
For $\alpha=3$, we show in Appendix \ref{appendix:sumcubes}
using the result of \cite{adamczak2015concentration} that
\begin{equation}\label{eq:sumcubes}
\P\left[\frac{1}{n}\sum_{i=1}^n \|\eps_i\|^3 \leq C_0\right]
\geq 1-Ce^{-cn^{2/3}}
\end{equation}
for a sufficiently large constant $C_0>0$. (Note that
this bound is optimal, by considering the deviation of a single summand
$n^{-1}\|\eps_i\|^3$.) This concludes the proof.
\end{proof}

\begin{proof}[Proof of Lemma \ref{lem:concentration}]
Denote $\bar{R}_n(\theta)=R_n(\theta)-\|\theta\|^2/(2\sigma^2)$ and
$\bar{R}(\theta)=R(\theta)-\|\theta\|^2/(2\sigma^2)$. Note that concentration of
$R_n(\theta),\nabla R_n(\theta),\nabla^2 R_n(\theta)$ is equivalent to that
of $\bar{R}_n(\theta),\nabla \bar{R}_n(\theta),\nabla^2 \bar{R}_n(\theta)$.

For $\bar{R}_n(\theta)$, we take a $\delta$-net $N$ of $B_r$
having cardinality $|N| \leq (Cr/\delta)^d$.
Applying (\ref{eq:value_pointwise}) and a union bound over $N$,
\[\P\left[\sup_{\mu \in N}\left|\bar{R}_n(\mu)
-\bar{R}(\mu)\right| \geq t/3\right]\leq
\left(\frac{Cr}{\delta}\right)^d \exp\left(-cn\,\frac{\sigma^2t^2}{r^2}\right).\]
By the Lipschitz bounds for $\bar{R}(\theta)$ and $\bar{R}_n(\theta)$ in
Lemma \ref{lem:local_lipschitz}, picking $\delta=c\sigma^2 t/(1+\sigma)$ for a
small enough constant $c>0$ ensures on an event of probability $1-Ce^{-cn}$ that
$|\bar{R}(\theta)-\bar{R}(\mu)| \leq t/3$ and $|\bar{R}_n(\theta)-\bar{R}_n(\mu)| \leq t/3$
for each point $\theta \in B_r$ and the
closest point $\mu \in N$. Combining these shows (\ref{eq:concRn}).
The bounds (\ref{eq:concgradRn}) and (\ref{eq:conchessRn}) are obtained similarly.
\end{proof}

\section{Landscape analysis for low noise}\label{sec:lownoise}

In this section, we analyze the function landscapes of $R(\theta)$ and
$R_n(\theta)$ in the low-noise regime $\sigma<\sigma_0(\theta_*,d,G)$.
Section \ref{sec:lownoiselocal} analyzes the local landscapes in
a neighborhood of $\theta_*$, as well as the Fisher information
$I(\theta_*)=\nabla_\theta^2 R(\theta_*)$, and Theorem \ref{thm:lownoiselocal}
shows that these behave similarly to a
single-component Gaussian model $\N(\theta_*,\sigma^2\Id)$.
Section \ref{sec:lownoiseglobal} analyzes the global landscapes, and Theorem
\ref{thm:landscapelownoise} and Corollary \ref{cor:landscapelownoise}
show that these are globally benign for small $\sigma$ and large $n$.

\subsection{Local landscape and Fisher information}\label{sec:lownoiselocal}

\begin{theorem}\label{thm:lownoiselocal}
For any $\theta_* \in \R^d$ where $|\O_{\theta_*}|=|G|=K$, there exist
$(\theta_*,d,G)$-dependent constants $\sigma_0,c,\rho>0$ such that
as long as $\sigma<\sigma_0$, every $\theta \in B_\rho(\theta_*)$ satisfies
\begin{equation}\label{eq:Ccondition}
\|\nabla^2 R(\theta)-\sigma^{-2}\Id\|<e^{-c/\sigma^2}.
\end{equation}
In particular, the Fisher information satisfies
$\|I(\theta_*)-\sigma^{-2}\Id\|<e^{-c/\sigma^2}$.
\end{theorem}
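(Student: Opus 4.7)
The plan is to analyze the reweighted conditional law \eqref{eq:gcondlaw} and show that it is exponentially concentrated on the identity element of $G$ whenever $\theta$ is close to $\theta_*$ and $\eps$ is not atypically large. Since $|\O_{\theta_*}| = K$, the quantity
\[\delta_0 := \min_{g \in G,\, g \neq \Id} \tfrac{1}{2}\|(\Id - g)\theta_*\|^2 = \min_{g \neq \Id} \langle \theta_*, (\Id - g)\theta_* \rangle\]
is strictly positive. For $\theta = \theta_* + \delta\theta \in B_\rho(\theta_*)$, I would expand
\[\langle \theta_* + \sigma\eps, (\Id - g)\theta \rangle = \langle \theta_*, (\Id - g)\theta_* \rangle + \langle \theta_*, (\Id - g)\delta\theta \rangle + \sigma \langle \eps, (\Id - g)\theta \rangle\]
and, for $\rho$ small enough (depending on $\theta_*, G$) and $\|\eps\| \leq A := c_1/\sigma$ with $c_1 = c_1(\theta_*, G)$ suitably small, lower bound this by $\delta_0/2$. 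Dividing by $\sigma^2$ and exponentiating, this shows that for every $g \neq \Id$,
\[\frac{p(g \mid \eps, \theta)}{p(\Id \mid \eps, \theta)} \leq \exp\!\left(-\frac{\delta_0}{2\sigma^2}\right),\]
hence $1 - p(\Id \mid \eps, \theta) \leq (K-1)\exp(-\delta_0/(2\sigma^2))$ on the good event $\{\|\eps\| \leq A\}$.

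Next I would convert this concentration into a bound on the covariance appearing in \eqref{eq:hessR}. Setting $M_g = g^\top(\theta_* + \sigma\eps)$, all $M_g$ have norm $r := \|\theta_* + \sigma\eps\|$. Using the two-copy identity $\Var_g[u^\top X] = \tfrac{1}{2}\E_{g,h}[(u^\top(M_g - M_h))^2]$, one obtains
\[\left\|\Cov_g[g^\top(\theta_* + \sigma\eps) \mid \eps, \theta]\right\| \leq 4 r^2 \bigl(1 - p(\Id \mid \eps, \theta)\bigr),\]
since $\sum_{g \neq h} p(g) p(h) \leq 2(1 - p(\Id))$ when one weight dominates. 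On $\{\|\eps\| \leq A\}$ this yields an upper bound of the form $C(1 + \sigma^2 A^2) \exp(-\delta_0/(2\sigma^2)) \leq C' \exp(-c/\sigma^2)$.

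For the tail event $\{\|\eps\| > A\}$ with $A = c_1/\sigma$, I would use the trivial deterministic bound $\|\Cov_g[\cdots]\| \leq r^2 \leq 2\|\theta_*\|^2 + 2\sigma^2\|\eps\|^2$ together with the sub-exponential concentration of $\|\eps\|^2$: both $\P[\|\eps\| > A]$ and $\E[\|\eps\|^2 \mathbf{1}\{\|\eps\| > A\}]$ are bounded by $\exp(-c/\sigma^2)$ (with constants depending on $c_1$ and $d$) for $\sigma$ small enough. Combining the two regimes,
\[\left\|\E_\eps\Big[\Cov_g[g^\top(\theta_* + \sigma\eps) \mid \eps, \theta]\Big]\right\| \leq C \exp(-c/\sigma^2).\]
Substituting into \eqref{eq:hessR} gives $\|\nabla^2 R(\theta) - \sigma^{-2}\Id\| \leq C\sigma^{-4}\exp(-c/\sigma^2)$, and absorbing the polynomial prefactor by slightly shrinking $c$ yields the desired bound \eqref{eq:Ccondition}. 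The Fisher-information statement then follows by taking $\theta = \theta_*$.

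The main obstacle is the bookkeeping around the threshold $A = c_1/\sigma$: $c_1$ has to be chosen small enough (in terms of $\delta_0$, $\|\theta_*\|$, and $\rho$) so that the lower bound $\delta_0/2$ on $\langle \theta_* + \sigma\eps, (\Id - g)\theta\rangle$ survives both the perturbation $\delta\theta$ and the noise term $\sigma\eps$, while simultaneously being large enough that the Gaussian tail $\P[\|\eps\| > c_1/\sigma]$ is at most $\exp(-c/\sigma^2)$. Once $\rho$ is fixed based on $\delta_0$ and $c_1$ is tuned to balance these two requirements, the rest is a routine combination of Gaussian concentration and the covariance identity above.
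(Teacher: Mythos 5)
Your proposal is correct and follows essentially the same strategy as the paper's proof: split the $\eps$-expectation into a good event $\{\|\eps\| \lesssim 1/\sigma\}$ where the reweighted law $p(\cdot\mid\eps,\theta)$ concentrates on $\Id$ exponentially fast (via the same comparison of $\langle\theta_*+\sigma\eps,(\Id-g)\theta\rangle$ to a positive constant), and a bad event handled by Gaussian tail bounds, then substitute into \eqref{eq:hessR}. The only cosmetic differences are that the paper bounds $\Var_g[\langle gv,\theta_*+\sigma\eps\rangle]$ by $\E_g[\langle gv-v,\theta_*+\sigma\eps\rangle^2]$ directly rather than via the two-copy identity, and uses Cauchy--Schwarz on the bad event rather than splitting $r^2$; neither changes the substance of the argument.
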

Note that by rotational symmetry of $R(\theta)$,
the same statements hold for $B_\rho(\mu)$ and each $\mu \in \O_{\theta_*}$.

\begin{proof}
Since the $K$ points of $\O_{\theta_*}$ are distinct and have the same norm,
we must have $\|\theta_*\|^2>\theta_*^\top \mu$ for each $\mu \in \O_{\theta_*}$
different from $\theta_*$.
Pick ($\theta_*$-dependent) constants $c_0,\rho>0$ such that
$(\theta_*-\mu)^\top \theta_*>3c_0$ and 
$\|\theta_*-\mu\|\rho<c_0$ for all such $\mu$, and also
$\rho<\|\theta_*\|/2$. Define
\begin{equation}\label{eq:goodweights}
\cE=\{\eps \in \R^d: 2\sigma \|\eps\|\|\theta\| \leq c_0\}.
\end{equation}

Consider $\theta \in B_\rho(\theta_*)$, and
recall the form (\ref{eq:hessR}) for $\nabla^2 R(\theta)$.
For any unit vector $v \in \R^d$, we have
\begin{align*}
v^\top \E_\eps\Big[\Cov_g[g^\top (\theta_*+\sigma \eps)
\mid \eps,\theta]\Big]v
&=\E_\eps\Big[\Var_g[\langle v, g^\top (\theta_*+\sigma \eps) \rangle
\mid \eps,\theta]\Big]\\
&=\E_\eps\Big[\Var_g[\langle gv, \theta_*+\sigma \eps \rangle
\mid \eps,\theta]\Big]
\leq \E_\eps\Big[\E_g[ \langle gv-v,\,\theta_*+\sigma \eps
\rangle^2 \mid \eps,\theta ]\Big].
\end{align*}
Let us decompose the last line as $\mathrm{I}+\mathrm{II}$ where
\begin{align*}
\mathrm{I}&=\E_\eps\Big[\1\{\eps \notin \cE\}
\E_g[\langle gv-v,\,\theta_*+\sigma \eps \rangle^2 \mid \eps,\theta]\Big],\\
\mathrm{II}&=\E_\eps\Big[\1\{\eps \in \cE\}
\E_g[\langle gv-v,\,\theta_*+\sigma\eps \rangle^2 \mid \eps,\theta]\Big].
\end{align*}
For $\mathrm{I}$, we have $\|\theta\| \leq \|\theta_*\|+\rho$. Applying the
chi-squared tail bound $\P[\|\eps\|^2>t]<e^{-ct}$ for all $t>C$,
we get $\P[\eps \notin \cE]<e^{-c/\sigma^2}$. Then by Cauchy-Schwarz,
\begin{align*}
\mathrm{I} &\leq
\P[\eps \notin \cE]^{1/2}\E_\eps[\E_g[\langle gv-v,\,\theta_*+\sigma \eps
\rangle^2 \mid \eps,\theta]^2]^{1/2}\\
&\leq \P[\eps \notin \cE]^{1/2}\E_\eps[(2\|\theta_*+\sigma \eps\|)^4]^{1/2}
<e^{-c'/\sigma^2}
\end{align*}
for constants $c',\sigma_0>0$ and all $\sigma<\sigma_0$.
For $\mathrm{II}$, let us bound $\P_g[g \neq \Id \mid \eps,\theta]$ when $\eps
\in \cE$: For any $g \neq \Id$, letting $\mu=g^\top \theta_*$,
\[\langle \theta_*+\sigma\eps,\,\theta-g\theta \rangle
 \geq (\theta_*-\mu)^\top \theta-2\sigma \|\eps\|\|\theta\| \geq
(\theta_*-\mu)^\top \theta_*-2\sigma \|\eps\|\|\theta\|-\|\theta_*-\mu\|\rho>c_0.\]
Then recalling (\ref{eq:gcondlaw}),
$p(\Id \mid \eps,\theta)/p(g \mid \eps,\theta)>e^{c_0/\sigma^2}$
and so
\begin{equation}\label{eq:weightnearone}
p(\Id \mid \eps,\theta)>e^{c_0/\sigma^2}/(e^{c_0/\sigma^2}+K-1)
>1-e^{-c/\sigma^2}
\end{equation}
for constants $c,\sigma_0>0$ and all $\sigma<\sigma_0$.
Thus $\P_g[g \neq \Id \mid \eps,\theta]=1-p(\Id \mid \eps,\theta)<e^{-c/\sigma^2}$, so
\[\mathrm{II} \leq
\E_\eps[\1\{\eps \in \cE\} \P_g[g \neq \Id \mid \eps,\theta] \cdot
(2\|\theta_*+\sigma\eps\|)^2]<e^{-c'/\sigma^2}.\]
Combining these, we get
$v^\top \E_\eps[\Cov_g[g^\top (\theta_*+\sigma\eps) \mid \eps,\theta]]v
<e^{-c/\sigma^2}$ for any unit vector $v \in \R^d$.
Then (\ref{eq:Ccondition}) follows from (\ref{eq:hessR}).
Specializing to $\theta=\theta_*$ yields the statement for
$I(\theta_*)$.
\end{proof}

The following corollary then shows that with high probability when
$n \gg \sigma^{-1}\log \sigma^{-1}$, the empirical risk $R_n(\theta)$ is strongly
convex with a unique local minimizer in $B_\rho(\theta_*)$. By rotational
symmetry, the same statement holds for $B_\rho(\mu)$ and each $\mu \in
\O_{\theta_*}$.

\begin{corollary}\label{cor:lownoiselocal}
For some $(\theta_*,d,G)$-dependent constants $C,c,\sigma_0>0$, if
$\sigma<\sigma_0$, then with probability at least
$1-Ce^{-cn^{2/3}}-\sigma^{-C}e^{-c\sigma n}$,
$\lambda_{\min}(\nabla^2 R_n(\theta)) \geq 1/(2\sigma^2)$ for all $\theta \in
B_\rho(\theta_*)$, and $R_n(\theta)$ has a unique local minimizer and critical
point in $B_\rho(\theta_*)$.
\end{corollary}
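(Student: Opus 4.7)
The plan is to combine the Hessian estimate for the population risk from Theorem \ref{thm:lownoiselocal} with the uniform concentration inequality \eqref{eq:conchessRn}, and then to invoke Lemma \ref{lem:localmincompare} to convert strong convexity on $B_\rho(\theta_*)$ into the existence and uniqueness of a critical point for $R_n$ there.

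First, after possibly shrinking $\sigma_0$, Theorem \ref{thm:lownoiselocal} yields $\lambda_{\min}(\nabla^2 R(\theta))\geq \sigma^{-2}-e^{-c/\sigma^2}\geq \tfrac{3}{4}\sigma^{-2}$ for every $\theta\in B_\rho(\theta_*)$ and every $\sigma<\sigma_0$. Next I would apply \eqref{eq:conchessRn} with $t=\sigma^{-2}/4$, obtaining
\[
\sup_{\theta\in B_\rho(\theta_*)}\|\nabla^2 R_n(\theta)-\nabla^2 R(\theta)\|<\tfrac{1}{4}\sigma^{-2}
\]
on an event whose failure probability is at most $\sigma^{-C}e^{-c\sigma n}+Ce^{-cn^{2/3}}$, after absorbing the polynomial-in-$\sigma$ prefactor from \eqref{eq:conchessRn} into the $\sigma^{-C}$ term. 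The triangle inequality then gives $\lambda_{\min}(\nabla^2 R_n(\theta))\geq \tfrac{1}{2}\sigma^{-2}$ throughout $B_\rho(\theta_*)$, so $R_n$ is strongly convex there, has at most one critical point, and any such critical point is automatically a local minimizer.

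For existence I would invoke Lemma \ref{lem:localmincompare} with $f_1=R$, $f_2=R_n$, $\theta_0=\theta_*$, $\eps=\rho$, and $c_0=\tfrac{3}{4}\sigma^{-2}$. The point $\theta_*$ is the global minimizer of $R$ and is therefore a critical point, and the Hessian lower bound on $B_\rho(\theta_*)$ has just been established. The remaining hypothesis asks that $|R_n-R|$ and $\|\nabla^2 R_n-\nabla^2 R\|$ be uniformly at most some $\delta<\min(c_0,c_0\rho^2/4)$ on $B_\rho(\theta_*)$; since $\rho$ is a $(\theta_*,d,G)$-dependent constant, this is a requirement of order $\sigma^{-2}$, and so is ensured by a union bound applying \eqref{eq:concRn} and \eqref{eq:conchessRn} at the common scale $t\asymp\sigma^{-2}$. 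The value concentration \eqref{eq:concRn} is easily absorbed into the same failure probability.

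The only genuine subtlety is the quantitative bookkeeping in \eqref{eq:conchessRn} at the scale $t\asymp\sigma^{-2}$: one must choose the implicit constant small enough that $\delta<c_0\rho^2/4$ in Lemma \ref{lem:localmincompare}, while still producing an exponent of the claimed $c\sigma n$ form after the polynomial-in-$\sigma$ prefactor is absorbed into $\sigma^{-C}$. Conceptually no new ingredient is required beyond the preliminary section: the argument is a direct combination of the population Hessian estimate, the uniform concentration of Lemma \ref{lem:concentration}, and the local comparison Lemma \ref{lem:localmincompare}.
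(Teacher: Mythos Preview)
Your overall strategy is correct, and indeed the paper begins exactly as you do: apply Theorem \ref{thm:lownoiselocal} for the population Hessian, then concentrate $\nabla^2 R_n$ around $\nabla^2 R$ at scale $t\asymp\sigma^{-2}$, and finish with Lemma \ref{lem:localmincompare}. The value concentration from \eqref{eq:concRn} is harmless, just as you say.

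However, there is a genuine quantitative gap in your use of \eqref{eq:conchessRn}. Plugging $t=c_1\sigma^{-2}$ into \eqref{eq:conchessRn} for small $\sigma$ gives an exponent of order
\[
n\min\!\Big(\tfrac{\sigma^8 t^2}{1+\sigma^4},\,\tfrac{\sigma^4 t}{1+\sigma^2}\Big)\;\asymp\; n\min(\sigma^4,\sigma^2)\;=\;n\sigma^4,
\]
so the failure probability you actually obtain is $\sigma^{-C}e^{-c\sigma^4 n}+Ce^{-cn^{2/3}}$, not $\sigma^{-C}e^{-c\sigma n}$. The paper explicitly flags this: it first records the $\sigma^4$ bound from \eqref{eq:conchessRn} and then says ``to reduce $\sigma^4$ to $\sigma$ in this probability bound, let us derive a sharper concentration inequality for $\nabla^2 R_n(\theta)$.''

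The missing ingredient is a refined pointwise concentration argument for $\nabla^2 R_n(\theta)$ that exploits the special structure available when $\theta\in B_\rho(\theta_*)$ and $\sigma$ is small. The paper decomposes the covariance term in \eqref{eq:hessRn} as $X_i+Y_i+\sigma^2 Z_i$, where $Z_i=\Cov_g[g^\top\eps_i\mid\eps_i,\theta]$ has bounded sub-exponential norm (independent of $\sigma$), $X_i$ is the remainder restricted to the ``good'' event $\cE=\{2\sigma\|\eps_i\|\|\theta\|\leq c_0\}$ and is shown to satisfy $\|X_i\|\leq Ce^{-c/\sigma^2}$ deterministically via the weight bound \eqref{eq:weightnearone}, and $Y_i$ is the remainder on $\cE^c$, which is handled by a separate Hoeffding argument conditional on $|\{i:\eps_i\notin\cE\}|$ together with a Chernoff bound showing this count is typically at most $n\sigma^3$. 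It is this last piece that produces the exponent $c\sigma n$ rather than $c\sigma^4 n$. Without this refinement, your proof establishes the corollary only with the weaker probability $1-Ce^{-cn^{2/3}}-\sigma^{-C}e^{-c\sigma^4 n}$.
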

\begin{proof}
This follows from Lemma \ref{lem:localmincompare} and Theorem
\ref{thm:lownoiselocal} if we can show that
\[\sup_{\theta \in B_\rho(\theta_*)} \|R_n(\theta)-R(\theta)\| \leq c_1/\sigma^2
\qquad
\text{ and } \qquad
\sup_{\theta \in B_\rho(\theta_*)} \|\nabla^2 R_n(\theta)-\nabla^2 R(\theta)\| \leq
c_1/\sigma^2\]
for a small enough constant $c_1>0$. Applying (\ref{eq:concRn}) with
$r=\|\theta_*\|+\rho$ and $t=c_1/\sigma^2$, we obtain
$\sup_{\theta \in B_\rho(\theta_*)} \|R_n(\theta)-R(\theta)\| \leq c_1/\sigma^2$ with
probability $1-Ce^{-cn}$. Applying (\ref{eq:conchessRn}), we also obtain
$\sup_{\theta \in B_\rho(\theta_*)} \|\nabla^2 R_n(\theta)-\nabla^2 R(\theta)\|
\leq c_1/\sigma^2$ with probability $1-\sigma^{-C}e^{-c\sigma^4n}
-Ce^{-cn^{2/3}}$. To reduce $\sigma^4$ to $\sigma$ in this probability bound, let us
derive a sharper concentration inequality for $\nabla^2 R_n(\theta)$
than the general result provided by (\ref{eq:hessian_pointwise}),
when $\theta \in B_\rho(\theta_*)$ and $\sigma<\sigma_0$.

Recall the set $\cE$ in
(\ref{eq:goodweights}) and the form for $\nabla^2 R_n(\theta)$ in (\ref{eq:hessRn}).
Let us write this as
\begin{equation}\label{eq:hessRndecomp}
\nabla^2 R_n(\theta)=\frac{1}{\sigma^2}\Id
-\frac{1}{\sigma^4} \cdot \frac{1}{n}\sum_{i=1}^n (X_i+Y_i)
-\frac{1}{\sigma^2} \cdot \frac{1}{n}\sum_{i=1}^n Z_i
\end{equation}
where $X_i,Y_i,Z_i \in \R^{d \times d}$ are given by
\begin{align*}
X_i&=\left(\Cov_g\left[g^\top(\theta_*+\sigma\eps_i)\Big| \eps_i,\theta\right]
-\sigma^2\Cov_g\left[g^\top \eps_i \Big| \eps_i,\theta\right]\right)
\1\{\eps_i \in \cE\}\\
Y_i&=\left(\Cov_g\left[g^\top(\theta_*+\sigma\eps_i)\Big| \eps_i,\theta\right]
-\sigma^2\Cov_g\left[g^\top \eps_i \Big| \eps_i,\theta\right]\right)
\1\{\eps_i \notin \cE\}\\
Z_i&=\Cov_g\left[g^\top \eps_i \Big| \eps_i,\theta\right].
\end{align*}

Observe that since $\|Z_i\| \leq \|\E_g[g^\top \eps_i\eps_i^\top g \mid
\eps_i,\theta]\| \leq \|\eps_i\|^2$, and $\|\eps_i\|^2$ has constant
sub-exponential norm, each entry of $Z_i$ also has constant sub-exponential
norm (where constants may depend on $d$). Applying Bernstein's inequality
entrywise and taking a union bound over all entries, for constants $C,c>0$ and
any $t>0$,
\begin{equation}\label{eq:Zibound}
\P\left[\left\|\frac{1}{n}\sum_{i=1}^n Z_i-\E[Z_i]\right\| \geq t\right]
\leq Ce^{-cn\min(t,t^2)}.
\end{equation}

For $X_i$, note that $p(\Id \mid \eps,\theta)>1-e^{-c/\sigma^2}$ when $\eps \in
\cE$, as shown in (\ref{eq:weightnearone}). Then for any unit vector $v \in \R^d$,
\begin{align*}
|v^\top X_iv|&=\left|\Var_g\Big[\langle gv,\theta_*+\sigma\eps_i \rangle \Big|
\eps_i,\theta\Big]-\sigma^2\Var_g\Big[\langle gv,\eps_i \rangle
\Big|\eps_i,\theta\Big]\right|\1\{\eps_i \in \cE\}\\
&\leq \left(\E_g\Big[\langle gv-v,\theta_*+\sigma\eps_i \rangle^2 \Big|
\eps_i,\theta\Big]
+\sigma^2 \E_g\Big[\langle gv-v,\eps_i \rangle^2 \Big|\eps_i,\theta\Big]\right)
\1\{\eps_i \in \cE\}\\
&\leq \P_g[g \neq \Id \mid \eps_i,\theta] \Big(4\|\theta_*+\sigma\eps_i\|^2
+4\sigma^2\|\eps_i\|^2\Big)\1\{\eps_i \in \cE\} \leq Ce^{-c/\sigma^2}.
\end{align*}
Thus $\|X_i\| \leq Ce^{-c/\sigma^2}$ for each $i=1,\ldots,n$. Applying
Hoeffding's inequality entrywise to $X_i$ and taking a union bound over all
entries,
\begin{equation}\label{eq:Xibound}
\P\left[\left\|\frac{1}{n}\sum_{i=1}^n X_i-\E[X_i]\right\| \geq t\sigma^2\right]
\leq C\exp\left(-ne^{c'/\sigma^2}t^2\right).
\end{equation}

For $Y_i$, let us fix indices $j,k \in \{1,\ldots,d\}$ and consider
$\sum_i (Y_i)_{jk}$. Let $W_1,\ldots,W_m$ be i.i.d.\ random
variables whose law is that of $(Y_i)_{jk}$ conditional on $\eps_i
\notin \cE$. We apply Hoeffding's inequality for $W_1,\ldots,W_m$: 
Observe that since the two quadratic terms in $\eps_i$ cancel in the
definition of $Y_i$, we have $|(Y_i)_{jk}| \leq C(1+\sigma\|\eps_i\|)$ for a constant
$C=C(\|\theta_*\|)>0$. Then
\begin{align*}
\E\left[\exp\left(\frac{W_i^2}{t^2}\right)\right]
&\leq \E_\eps\left[\exp\left(\frac{C(1+\sigma^2\|\eps\|^2)}{t^2}\right) \bigg|
\eps \notin \cE\right]\\
&=e^{C/t^2} \cdot \E_\eps\left[\exp\left(\frac{C\sigma^2\|\eps\|^2}{t^2}\right)
\Bigg|\|\eps\|^2>\left(\frac{c_0}{2\sigma\|\theta\|}\right)^2\right].
\end{align*}
Specializing \cite[Eq.\ (2.9)]{coffey2000properties} to the chi-squared
distribution, we obtain
\[\E\left[\exp(s\|\eps\|^2) \mid \|\eps\|^2>x\right]
=\frac{\P[\|\eps\|^2>x(1-2s)]}{\P[\|\eps\|^2>x]}(1-2s)^{-d/2}\]
for $s<1/2$. Here $\P[\|\eps\|^2>x]=\Gamma(d/2,x/2)/\Gamma(d/2)$ where
$\Gamma(a,y)$ is the upper-incomplete Gamma function which satisfies
$\Gamma(a,y)/y^{a-1}e^{-y} \to 1$ as $y \to \infty$, for fixed $a$
(see \cite[Eq.\ (6.5.32)]{abramowitz1948handbook}). Then
\[\frac{\P[\|\eps\|^2>x(1-2s)]}{\P[\|\eps\|^2>x]}
\cdot (1-2s)^{-d/2+1}e^{-xs} \to 1\]
as $x \to \infty$, uniformly over $s \in (0,1/2)$. Setting
$x=c_0^2/(2\sigma\|\theta\|)^2$ and $t=C_1$ for a large enough constant $C_1>0$,
we obtain that $C/t^2<0.05$, $s \equiv C\sigma^2/t^2<0.05/x$, and hence
$\E[\exp(W_i^2/t^2)] \leq 2$ when $\sigma<\sigma_0$ for small enough
$\sigma_0>0$. Thus $\|W_i\|_{\psi_2} \leq C_1$, and Hoeffding's inequality
yields, for a constant $c>0$ and any $s \geq 0$,
\[\P\left[\left|\frac{1}{m}\sum_{i=1}^m W_i-\E[W_i]\right| \geq s\right]
\leq 2e^{-cms^2}.\]

Returning to $(Y_i)_{jk}$, let $S=\{i \in [n]:\eps_i \notin \cE\}$.
The above shows that, conditional on $S$,
\[\P\left[\left|\frac{1}{|S|}\sum_{i \in S} (Y_i)_{jk}\right| \geq s+|\E W_i|
\;\Bigg|\;S\right] \leq 2e^{-c|S|s^2}.\]
Noting that $(Y_i)_{jk}=0$ when $i \notin S$, this implies
\[\P\left[\left|\frac{1}{n}\sum_{i=1}^n (Y_i)_{jk}-\E[(Y_i)_{jk}]\right| \geq
\big(s+|\E W_i|\big)\frac{|S|}{n}+\big|\E (Y_i)_{jk}\big|
\;\Bigg|\;S\right] \leq 2e^{-c|S|s^2}.\]
We have $\P[\eps_i \notin \cE] \leq e^{-c/\sigma^2}$, by a chi-squared tail
bound. From the bound $\|W_i\|_{\psi_2} \leq C_1$, we have $|\E W_i| \leq C$.
Then also $\E (Y_i)_{jk}=(\E W_i) \cdot \P[\eps_i \notin \eps] \leq
Ce^{-c/\sigma^2}$. Setting $t\sigma^2=s|S|/n$,
\[\P\left[\left|\frac{1}{n}\sum_{i=1}^n (Y_i)_{jk}-\E[(Y_i)_{jk}]\right| \geq
t\sigma^2+Ce^{-c'/\sigma^2} \;\Bigg|\;S\right]
\leq 2e^{-cn^2\sigma^4t^2/|S|}\]
for some constants $C,c,c'>0$. On the event $|S| \leq n\sigma^3$, we obtain the
bound $2e^{-cn\sigma t^2}$. By a Chernoff bound, $\P[|S|>n\sigma^3] \leq
\exp(-n\,D_{\text{KL}}(\sigma^3||e^{-c/\sigma^2}))$ for the Bernoulli
relative entropy
\[D_{\text{KL}}(\sigma^3||e^{-c/\sigma^2})=\sigma^3
\log\frac{\sigma^3}{e^{-c/\sigma^2}}
+(1-\sigma^3)\log\frac{1-\sigma^3}{1-e^{-c/\sigma^2}}
\geq c'\sigma.\]
Combining these, we obtain unconditionally that
\begin{equation}\label{eq:Yibound}
\P\left[\left|\frac{1}{n}\sum_{i=1}^n (Y_i)_{jk}-\E[(Y_i)_{jk}]\right| \geq
t\sigma^2+Ce^{-c'/\sigma^2}\right] \leq Ce^{-cn\sigma t^2}.
\end{equation}

Picking a sufficiently small constant $t$ in (\ref{eq:Zibound}),
(\ref{eq:Xibound}), and (\ref{eq:Yibound}) and applying this to
(\ref{eq:hessRndecomp}), we obtain $\|\nabla^2 R_n(\theta)-\nabla^2 R(\theta)\|
\leq c_1/(2\sigma^2)$ with probability at least $1-Ce^{-c\sigma n}$. This is a
pointwise bound for each $\theta \in B_\rho(\theta_*)$. Taking a union bound over a
$\delta$-net of this ball for $\delta=c\sigma^4$, and applying the Lipschitz
continuity of $\nabla^2 R(\theta)$ and $\nabla^2 R_n(\theta)$ from Lemma
\ref{lem:local_lipschitz}, we get the uniform bound
$\sup_{\theta \in B_\rho(\theta_*)}
\|\nabla^2 R_n(\theta)-\nabla^2 R(\theta)\| \leq c_1/\sigma^2$
with probability $1-Ce^{-cn^{2/3}}-\sigma^{-C}e^{-c\sigma n}$ as desired.
\end{proof}

\subsection{Global landscape}\label{sec:lownoiseglobal}

\begin{theorem}\label{thm:landscapelownoise}
Let $\theta_* \in \R^d$ be such that $|\O_{\theta_*}|=|G|=K$.
There exists a $(\theta_*,d,G)$-dependent constant $\sigma_0>0$
such that as long as $\sigma<\sigma_0$, the landscape of $R(\theta)$ is globally
benign.

More quantitatively, let $\rho$ be as in Theorem \ref{thm:lownoiselocal}.
Then there is a $(\theta_*,d,G)$-dependent constant $c>0$ and a decomposition
$\R^d \setminus \bigcup_{\mu \in \O_{\theta_*}} B_\rho(\mu) \equiv \cA \sqcup \cB$,
where for $\theta \in \cA$
\begin{equation}\label{eq:Acondition}
\lambda_{\min}(\nabla^2 R(\theta))<-c/\sigma^3,
\end{equation}
and for $\theta \in \cB$
\begin{equation}\label{eq:Bcondition}
\|\nabla R(\theta)\|>c/\sigma^2.
\end{equation}
\end{theorem}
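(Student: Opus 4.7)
Plan. Building on Theorem \ref{thm:lownoiselocal}, which already handles the interior of each $B_\rho(\mu)$, I would partition the exterior using the alignment margin
\[\Delta(\theta) := \max_{g \in G}\langle \theta_*, g\theta\rangle - \max_{h: h \neq g^*(\theta)}\langle \theta_*, h\theta\rangle \geq 0,\]
where $g^*(\theta)$ is any argmax of $g \mapsto \langle\theta_*, g\theta\rangle$ on $G$. For a suitable small $(\theta_*, d, G, \rho)$-dependent constant $m_0 > 0$, set $\cA := \{\theta \in \R^d\setminus\bigcup_\mu B_\rho(\mu) : \Delta(\theta) \leq m_0\sigma^2\}$ and $\cB := (\R^d\setminus\bigcup_\mu B_\rho(\mu)) \setminus \cA$. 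Lemma \ref{lemma:thetabound} absorbs the far-field $\{\|\theta\| > \|\theta_*\|+C\sigma+\rho\}$ into $\cB$, so I may assume $\theta$ lies in a $(\theta_*, d, G)$-dependent bounded ball throughout.

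\textbf{Gradient bound on $\cB$.} The guiding heuristic is that $\sigma^2 R(\theta)$ converges as $\sigma \to 0$ to the piecewise-quadratic limit $R_0(\theta) = \tfrac{\|\theta\|^2}{2} - \max_g\langle\theta_*, g\theta\rangle$, which equals $\tfrac{1}{2}\|\theta - (g^*(\theta))^\top\theta_*\|^2 - \tfrac{1}{2}\|\theta_*\|^2$ on the Voronoi cell $V_{g^*}$. Working with form \eqref{eq:gradRalt}, $\sigma^2 \nabla R(\theta) = M(\theta)\theta - N(\theta)^\top\theta_*$ with $M = \E_\eps[\bar{g}^\top\bar{g}]$, $N = \E_\eps[\bar{g}]$, and $\bar{g}(\eps,\theta) = \E_g[g \mid \eps,\theta]$. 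On the ``far-from-crease'' part $\cB \cap \{\Delta > m_0\}$, a Laplace-style expansion of $\log \E_g[e^{\langle\theta_*+\sigma\eps,g\theta\rangle/\sigma^2}]$ around its dominant exponent, splitting the expectation along the typical Gaussian event $\cE = \{\|\eps\| \leq K_0\sqrt d\}$ (whose complement contributes at most $e^{-c/\sigma^2}$ by Cauchy--Schwarz, exactly as in the proof of Theorem \ref{thm:lownoiselocal}), yields
\[\sigma^2\nabla R(\theta) = \theta - (g^*(\theta))^\top\theta_* + o(1)\]
uniformly as $\sigma \to 0$, so $\|\nabla R(\theta)\| \geq \rho/(2\sigma^2)$. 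On the intermediate part $\cB \cap \{m_0\sigma^2 < \Delta \leq m_0\}$, where the posterior need not concentrate on a single group element, I would analyze the identity $\sigma^2\nabla R = M\theta - N^\top\theta_*$ directly: a case check confirms that any zero of this expression would force $\theta$ to lie at a convex combination of orbit points of the form $N^\top\theta_*$ with $N$ in the convex hull of $G$, and a perturbation argument (cf.\ the explicit computation in the $G=\{\pm\Id\}$ toy example, where the one-dimensional identity $\sigma^2\nabla R_1 = t - \E_\eps[\tanh(W)(1+\sigma\eps_1)]$ with $W=(1+\sigma\eps_1)t/\sigma^2$ is bounded away from $0$ whenever $|t|\in[m_0\sigma^2, 1-\rho]$) gives $\|\nabla R(\theta)\| \geq c/\sigma^2$ in this intermediate regime as well.

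\textbf{Hessian bound on $\cA$.} Fix $\theta \in \cA$, let $g_1 := g^*(\theta)$ and let $g_2$ attain the second maximum of $\langle\theta_*, h\theta\rangle$ over $h \neq g_1$, so $0 \leq \langle\theta_*, (g_1-g_2)\theta\rangle \leq m_0\sigma^2$. Set $\alpha := \|(g_1-g_2)^\top\theta_*\| = \|g_1^{-1}\theta_* - g_2^{-1}\theta_*\|$, which is bounded below by a positive $\theta_*$-dependent constant since $|\O_{\theta_*}|=K$, and $v := (g_1-g_2)^\top\theta_*/\alpha$. From \eqref{eq:hessR} and the two-point variance lower bound
\[\Var_g[X_g \mid \eps,\theta] \geq p(g_1 \mid \eps,\theta)\,p(g_2 \mid \eps,\theta)\,(X_{g_1}-X_{g_2})^2\]
applied to $X_g := \langle\theta_*+\sigma\eps, gv\rangle$, the problem reduces to lower bounding $\E_\eps[p(g_1 \mid \eps,\theta)\,p(g_2 \mid \eps,\theta) \cdot \langle\theta_*+\sigma\eps, (g_1-g_2)v\rangle^2]$. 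Setting $\beta := \|(g_1-g_2)\theta\|$, the key step is to identify the Gaussian ``balancing window''
\[\cW := \big\{\eps : \langle\eps, (g_1-g_2)\theta\rangle \in \big[-\tfrac{\Delta(\theta)}{\sigma}-c_1\sigma,\ -\tfrac{\Delta(\theta)}{\sigma}+c_1\sigma\big],\ \|\eps\| \leq K_0\sqrt{d}\big\}\]
on which the posterior log-ratio $\log(p(g_1 \mid \eps,\theta)/p(g_2 \mid \eps,\theta)) = \Delta/\sigma^2 + \langle\eps, (g_1-g_2)\theta\rangle/\sigma$ is $O(c_1)$ (so both posterior masses are $\geq c > 0$) and $\langle\theta_*+\sigma\eps, (g_1-g_2)v\rangle = \alpha + O(\sigma)$. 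A direct one-dimensional Gaussian density estimate gives $\P[\cW] \geq c\sigma/\max(\beta, \sigma)$, so the displayed expectation is at least $c\alpha^2\sigma$, and \eqref{eq:hessR} yields
\[v^\top\nabla^2 R(\theta)\,v \leq \sigma^{-2} - c\alpha^2\sigma^{-3} \leq -c/\sigma^3\]
for $\sigma$ sufficiently small.

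\textbf{Main obstacle and conclusion.} The most delicate step is controlling the gradient in the intermediate regime $\Delta \in (m_0\sigma^2, m_0)$, where the posterior is neither balanced (so the Hessian argument fails) nor concentrated on a single element (so the naive $\sigma^2\nabla R \approx \theta-(g^*)^\top\theta_*$ needs an extra perturbation analysis). In the Hessian step, the worst case $\beta = \Theta(1)$ forces $\P[\cW] \asymp \sigma$ and produces exactly the claimed exponent $-c/\sigma^3$ (degenerate regimes such as $\beta \ll \sigma$, e.g.\ where $\theta$ is nearly fixed by $g_1^{-1}g_2$, instead give the stronger $-c/\sigma^4$). Combining the two bounds with Theorem \ref{thm:lownoiselocal}, every critical point of $R$ must lie inside some $B_\rho(\mu)$, and local strong convexity there forces the unique critical point in each such ball to be $\mu$ itself, which establishes global benignness.
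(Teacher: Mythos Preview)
Your Hessian argument on $\cA$ is essentially the paper's: both locate an event of $\eps$-probability $\gtrsim\sigma$ on which the posterior places comparable mass on two group elements, then read off a lower bound on the conditional covariance in \eqref{eq:hessR}. (One small point you gloss over: on your window $\cW$ you control only the log-ratio $p(g_1)/p(g_2)$, not the possibility that some third $g_3$ dominates both; this is easily repaired by intersecting $\cW$ with a constant-probability event in the orthogonal noise directions.)

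The genuine gap is exactly where you flag it: the intermediate regime $m_0\sigma^2<\Delta(\theta)\leq m_0$ in your gradient bound. Here the posterior does \emph{not} concentrate on a single element---the noise term $\langle\eps,(g_1-g_2)\theta\rangle/\sigma$ has standard deviation $\beta/\sigma\gg 1$, so it easily overturns the deterministic margin $\Delta/\sigma^2=O(m_0/\sigma^2)$ with constant probability---so the Laplace approximation $\sigma^2\nabla R\approx\theta-(g^*)^\top\theta_*$ fails. Your fallback (``a case check\dots a perturbation argument'') is not an argument; the one-dimensional $\{\pm\Id\}$ computation you cite works because $\bar g$ is a scalar $\tanh$, and no such direct analysis of $M\theta-N^\top\theta_*$ is offered for general $G$.

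The paper sidesteps this intermediate regime entirely by partitioning not on the deterministic margin $\Delta(\theta)$ but on the \emph{straddling probabilities} $\P[\eps\in\cE_{ij}(\theta,\tau)]$: the set $\cB$ is defined as $\{\theta:\P[\eps\in\cE_{ij}]\leq c\sigma\text{ for all }i\neq j\}$. The key step (Lemma~\ref{lemma:boundary}) is a Gaussian isoperimetric inequality showing that if none of the straddling sets has probability $\gtrsim\sigma$, then necessarily some single concentration set $\cE_i$ has probability $\geq 1-Kp$. This is precisely the dichotomy your partition via $\Delta$ cannot deliver: it rules out the possibility that $\P[\eps\in\cE_i]\approx 1/2\approx\P[\eps\in\cE_j]$ while $\P[\eps\in\cE_{ij}]$ is tiny. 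With this in hand, the gradient bound on $\cB$ reduces to the easy ``far-from-crease'' case you already handle, with no intermediate regime left over.
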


Let us provide some intuition for the proof:
Recall the reweighted law (\ref{eq:gcondlaw}) for $g \in G$. We enumerate
\[G=\{g_1,\ldots,g_K\},\]
fix a small constant $\tau>0$, and divide the space of $\eps \in \R^d$ into the regions
\begin{align}
\cE_i(\theta,\tau)&=\Big\{\eps \in \R^d:p(g_k \mid \eps,\theta) \leq \tau \text{ for all }
k \in \{1,\ldots,K\} \setminus \{i\}\Big\},\label{eq:Ei}\\
\cE_{ij}(\theta,\tau)&=\Big\{\eps \in \R^d:p(g_i \mid \eps,\theta)>\tau
\text{ and } p(g_j \mid \eps,\theta)>\tau\Big\}.\label{eq:Eij}
\end{align}
Here, for $\tau$ small enough, $\cE_i(\theta,\tau)$ is the space of noise
vectors $\eps$ for which the $\eps$-dependent
distribution (\ref{eq:gcondlaw}) places nearly all
of its weight on $g_i$, and $\cE_{ij}(\theta,\tau)$ is the space of $\eps$ for
which this distribution ``straddles'' its weight between at least two points
$g_i \neq g_j \in G$.

We will choose the set $\cB$ in Theorem \ref{thm:landscapelownoise}
to be those vectors $\theta \in \R^d$ for which
$\P[\eps \in \cE_i(\theta,\tau)] \approx 1$ for some $i \in \{1,\ldots,K\}$.
Thus, for some fixed $g_i \in G$, with high probability over $\eps$, the
law (\ref{eq:gcondlaw}) places nearly all of its weight on the single element $g_i$.
Intuitively, from the form (\ref{eq:gcondlaw}),
these are the points $\theta \in \R^d$ which are closer to
$g_i^\top \theta_*$ than to the other points $g_j^\top \theta_*$ for $j \neq i$.

The remaining points $\R^d \setminus \cB$ will constitute $\cA$.
A key step of the proof is to show that if $\theta \notin \cB$,
then there must be a pair $i \neq j$ for which
$\P[\eps \in \cE_{ij}(\theta,\tau)] \gtrsim \sigma$. That is,
with some small probability of order $\sigma$, the law (\ref{eq:gcondlaw})
straddles its weight between $g_i$ and $g_j$.
(Note that this is not tautological from the definitions, as we must rule out the
possibility, e.g., that $\P[\eps \in \cE_i(\theta,\tau)]=1/2$ and
$\P[\eps \in \cE_j(\theta,\tau)]=1/2$ for some $i \neq j$, but
$\P[\eps \in \cE_{ij}(\theta,\tau)]=0$. Indeed, from the
form of (\ref{eq:gcondlaw}), we see that even if $\theta$ is exactly equidistant
from $g_i^\top \theta_*$ and $g_j^\top \theta_*$, the probability over $\eps$ is only
$O(\sigma)$ that $p(g_i \mid \eps,\theta)$
and $p(g_j \mid \eps,\theta)$ are comparable.)
We prove this claim using a Gaussian isoperimetric argument in Lemma
\ref{lemma:boundary} below.

\begin{lemma}\label{lemma:boundary}
Fix any $\theta \neq 0$ and $\tau \in (0,(K+9)^{-1})$, and define
$\cE_i,\cE_{ij}$ by (\ref{eq:Ei}) and (\ref{eq:Eij}). Suppose, for some
$i \in \{1,\ldots,K\}$ and $p \in (0,1/2]$, that
\[p \leq \P[\eps \in \cE_i] \leq 1/2.\]
Then for some $j \in \{1,\ldots,K\} \setminus \{i\}$,
\[\P[\eps \in \cE_{ij}] \geq \frac{p}{(K-1)\sqrt{2\pi}}
\min\left(\frac{\sigma}{\|\theta\|},1\right).\]
\end{lemma}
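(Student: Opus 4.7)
\medskip

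The plan is to combine a Lipschitz estimate for the reweighted probabilities $p(g_k\mid\eps,\theta)$ with the Gaussian isoperimetric inequality applied to the level set $\cE_i$.

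First, I would differentiate \eqref{eq:gcondlaw} in $\eps$ to obtain
\[
\nabla_\eps p(g_k\mid\eps,\theta)=\frac{1}{\sigma}\,p(g_k\mid\eps,\theta)\,\big(g_k\theta-\E_g[g\theta\mid\eps,\theta]\big),
\]
so $\|\nabla_\eps p(g_k\mid\eps,\theta)\|\le 2\|\theta\|/\sigma$ since every $\|g_l\theta\|$ equals $\|\theta\|$. With this Lipschitz bound in hand, I would set $\delta=(1-K\tau)\sigma/(2\|\theta\|)$ and let $(\cE_i)_\delta=\{\eps':\dist(\eps',\cE_i)<\delta\}$ denote the open $\delta$-enlargement. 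The next claim is
\[
(\cE_i)_\delta\setminus\cE_i\;\subseteq\;\bigcup_{k\neq i}\cE_{ik}:
\]
for any $\eps'\in(\cE_i)_\delta\setminus\cE_i$ pick $\eps\in\cE_i$ with $\|\eps-\eps'\|<\delta$; the Lipschitz estimate together with $p(g_i\mid\eps,\theta)\ge 1-(K-1)\tau$ yields $p(g_i\mid\eps',\theta)>1-(K-1)\tau-2\delta\|\theta\|/\sigma=\tau$, while $\eps'\notin\cE_i$ forces $p(g_k\mid\eps',\theta)>\tau$ for some $k\neq i$, so $\eps'\in\cE_{ik}$. The hypothesis $\tau<1/(K+9)$ is what keeps all of these inequalities strict with room to spare.

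Next, I would invoke the Gaussian isoperimetric inequality of Borell/Sudakov--Tsirelson: writing $q=\P[\eps\in\cE_i]\in[p,1/2]$,
\[
\P\big[\eps\in(\cE_i)_\delta\big]\;\ge\;\Phi\big(\Phi^{-1}(q)+\delta\big),
\]
where $\Phi$ is the standard normal CDF. Subtracting $q$ and averaging by pigeonhole over the $K-1$ sets $\cE_{ik}$ would give some $j\neq i$ with
\[
\P[\eps\in\cE_{ij}]\;\ge\;\frac{1}{K-1}\Big(\Phi\big(\Phi^{-1}(q)+\delta\big)-q\Big).
\]

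The main obstacle is the density estimate
\[
\Phi\big(\Phi^{-1}(q)+\delta\big)-q\;\ge\;\frac{q\min(\delta,1)}{\sqrt{2\pi}},\qquad q\in(0,1/2],\ \delta\ge 0,
\]
needed to turn the isoperimetric increment into a bound proportional to $q$ (and hence to $p$). The naive linearization gives only $\Phi(\Phi^{-1}(q)+\delta)-q\approx\delta\varphi(\Phi^{-1}(q))$, which is useless when $q$ is small, since $\varphi(\Phi^{-1}(q))$ can be arbitrarily smaller than $q$. The saving grace is Mills' ratio in the form $-x\Phi(x)\le\varphi(x)$ for $x\le 0$, equivalently $\varphi(x)/\Phi(x)\ge\sqrt{2/\pi}$ on $(-\infty,0]$; this forces $\varphi(\Phi^{-1}(q))\ge q\sqrt{2/\pi}$ for $q\in(0,1/2]$, and integrating along $[\Phi^{-1}(q),\Phi^{-1}(q)+\delta]$ with a case split on the sign of $\Phi^{-1}(q)+\delta$ (plus the auxiliary estimate $e^{-(x+1)^2/2}\ge\Phi(x)$ for $x\le 0$ to cover $\delta\ge 1$) produces the required inequality. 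Substituting $\delta\asymp\sigma/\|\theta\|$ from the Lipschitz step, with the residual factor $(1-K\tau)/2$ absorbed thanks to $\tau<1/(K+9)$, completes the argument.
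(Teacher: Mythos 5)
Your proof follows the same overall blueprint as the paper's: a Lipschitz estimate for the posterior weights, an inclusion of the enlargement $(\cE_i)_\delta \setminus \cE_i$ in $\bigcup_{k\neq i}\cE_{ik}$, pigeonhole over the $K-1$ sets, Gaussian isoperimetry, and a lower bound on the increment $\Phi(\Phi^{-1}(q)+\delta)-q$. The density bound you propose, $\Phi(\Phi^{-1}(q)+\delta)-q\geq q\min(\delta,1)/\sqrt{2\pi}$, is correct and is essentially the same estimate the paper derives (they phrase it via $\phi(\Phi^{-1}(p))\geq p/\sqrt{2\pi}$ using $\Phi(x)\leq e^{-x^2/2}$, you phrase it via the Mills-ratio inequality $\phi(x)/\Phi(x)\geq\sqrt{2/\pi}$ on $(-\infty,0]$; both work).

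The genuine gap is in the Lipschitz step, and the final claim that the residual factor can be ``absorbed.'' You bound $\nabla_\eps p(g_i\mid\eps,\theta)$ and obtain an \emph{additive} control $|p(g_i\mid\eps',\theta)-p(g_i\mid\eps,\theta)|<2\|\theta\|\delta/\sigma$. To ensure $p(g_i\mid\eps',\theta)>\tau$ starting from $p(g_i\mid\eps,\theta)\geq 1-(K-1)\tau$, this forces $\delta<(1-K\tau)\sigma/(2\|\theta\|)$, and you cannot take $\delta$ any larger. Feeding $\delta=(1-K\tau)\sigma/(2\|\theta\|)$ through the rest of the argument yields the bound with an extra factor $(1-K\tau)/2<1/2$, which cannot be ``absorbed thanks to $\tau<1/(K+9)$'': the stated lemma has an exact constant, and $(1-K\tau)/2$ is always strictly less than $1$ (indeed it tends to $0$ as $K\to\infty$). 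The paper avoids this by applying the Lipschitz bound to $\log p(g_i\mid\cdot,\theta)$ instead of $p(g_i\mid\cdot,\theta)$. This gives a \emph{multiplicative} control: over a ball of radius $t=\sigma/\|\theta\|$ the log changes by at most $2$, so $p$ changes by at most a factor $e^2<8$, and the condition $\tau<1/(K+9)$ is exactly calibrated so that $1-(K-1)\tau>8\tau$, giving $p(g_i\mid\eps,\theta)>\tau$ without shrinking $t$. So your approach proves the lemma only with the weaker constant $\frac{p}{(K-1)\sqrt{2\pi}}\min\bigl(\frac{(1-K\tau)\sigma}{2\|\theta\|},1\bigr)$ (which, to be clear, would still suffice for the downstream use in Theorem~\ref{thm:landscapelownoise}), and the missing idea is precisely the switch from an additive to a multiplicative Lipschitz estimate via $\log p$.
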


\begin{proof}
Let $\cE_i^t=\{\eps \in \R^d:\dist(\eps,\cE_i)<t\}$.
We first claim that if $\eps \in \cE_i^t \setminus \cE_i$
for $t=\sigma/\|\theta\|$, then
there exists some $j \neq i$ for which $\eps \in \cE_{ij}$. For this, note that
\[\nabla_\eps [\,\log p(g \mid \eps,\theta)\,]
=\frac{1}{\sigma}\Big(g\theta-\E_h[ h\theta \mid \eps,\theta]\Big),\]
so $\eps \mapsto \log p(g_i \mid \eps,\theta)$ has the Lipschitz bound
$\|\nabla_\eps \log p(g_i \mid \eps,\theta)\|
\leq 2\|\theta\|/\sigma$.
Suppose that $\eps \in \cE_i^t \setminus \cE_i$.
Then there is $\eps' \in \cE_i$ with $\|\eps-\eps'\|<\sigma/\|\theta\|$, so
$\log p(g_i \mid \eps',\theta)-\log p(g_i \mid \eps,\theta) \leq 2$ and
\[p(g_i \mid \eps',\theta)/p(g_i \mid \eps,\theta) \leq e^2<8.\]
Since $p(g_1 \mid \eps',\theta)+\ldots+p(g_K \mid \eps',\theta)=1$
and $(K+9)\tau<1$, when $\eps' \in \cE_i$
we must have $p(g_i \mid \eps',\theta) \geq 1-(K-1)\tau>8\tau$.
Then the above implies $p(g_i \mid \eps,\theta)>\tau$.
Since $\eps \notin \cE_i$, by definition of $\cE_i$
we must also have $p(g_j \mid \eps,\theta)>\tau$ for
some $j \neq i$, so that $\eps \in \cE_{ij}$ as desired. Note that this index
$j \in \{1,\ldots,K\}$ may depend on $\eps$. However, this shows that
for at least one \emph{fixed} index $j \in \{1,\ldots,K\} \setminus \{i\}$,
\begin{equation}\label{eq:ZiZjbound}
\P[\eps \in \cE_{ij}]
\geq \frac{\P[\eps \in \cE_i^t \setminus \cE_i]}{K-1}.
\end{equation}

We now apply the Gaussian isoperimetric inequality to lower bound the right
side: For $\Phi$ the standard normal distribution function,
\[\Phi^{-1}(\P[\eps \in \cE_i^t]) \geq \Phi^{-1}(\P[\eps \in \cE_i])+t,\]
see \cite[Theorem 10.15]{boucheron2013concentration}.
Then, denoting by $\phi$ the standard normal density,
\begin{align*}
\P[\eps \in \cE_i^t \setminus \cE_i]
=\P[\eps \in \cE_i^t]-\P[\eps \in \cE_i]
&\geq \Phi(\Phi^{-1}(\P[\eps \in \cE_i])+t)
-\Phi(\Phi^{-1}(\P[\eps \in \cE_i]))\\
&=\int_{\Phi^{-1}(\P[\eps \in \cE_i])}^{\Phi^{-1}(\P[\eps \in \cE_i])+t}
\phi(r)dr.
\end{align*}
Applying $\P[\eps \in \cE_i] \in [p,1/2]$ by assumption,
we get $\Phi^{-1}(\P[\eps \in \cE_i]) \in [\Phi^{-1}(p),0]$. Then
there is always an interval of values for $r$, having length
$\min(t,1)$ and contained in the above range of integration,
for which $\phi(r) \geq \min(\phi(\Phi^{-1}(p)),\phi(1))$ over this
interval. Applying the tail bound $\Phi(x) \leq e^{-x^2/2}$ for all
$x \leq 0$, we get $\Phi^{-1}(p) \geq -\sqrt{2\log 1/p}$ and
$\phi(\Phi^{-1}(p)) \geq p/\sqrt{2\pi}$. For $p \leq 1/2$ we have
$p/\sqrt{2\pi}<\phi(1)$. Combining these observations gives
\[\P[\eps \in \cE_i^t \setminus \cE_i] \geq \min(t,1)\cdot
\frac{p}{\sqrt{2\pi}}.\]
Recalling $t=\sigma/\|\theta\|$
and combining with (\ref{eq:ZiZjbound}) yields the lemma.
\end{proof}

\begin{proof}[Proof of Theorem \ref{thm:landscapelownoise}]
Let us fix two positive constants
\begin{equation}\label{eq:tau}
\tau<\min\left(\frac{1}{K+9},\;\frac{\rho}{8\|\theta_*\|K}\right)
\end{equation}
and
\begin{equation}\label{eq:p}
p<\left(\frac{\rho}{12\|\theta_*\|}\right)^2\Bigg/K.
\end{equation}
Define $\cE_i(\theta,\tau)$ and $\cE_{ij}(\theta,\tau)$ by (\ref{eq:Ei})
and (\ref{eq:Eij}) with this choice of $\tau$, and set
\begin{align*}
\cA&=\left\{\theta \in \R^d \setminus \cC:
\P[\eps \in \cE_{ij}(\theta,\tau)]>\frac{p}{K\sqrt{2\pi}} \cdot
\frac{\sigma}{3\|\theta_*\|} \text{ for some } i \neq j\right\},\\
\cB&=\left\{\theta \in \R^d \setminus \cC:
\P[\eps \in \cE_{ij}(\theta,\tau)] \leq \frac{p}{K\sqrt{2\pi}} \cdot
\frac{\sigma}{3\|\theta_*\|} \text{ for all } i \neq j\right\}.
\end{align*}

To check (\ref{eq:Acondition}) when $\theta \in \cA$, recall the form of
$\nabla^2 R(\theta)$ in (\ref{eq:hessR}). We apply
$\P[\eps \in \cE_{ij}(\theta,\tau)]>c\sigma$ for a constant $c>0$ and
some $i \neq j$, by the definition of $\cA$. Choose a constant $c_0>0$ such that
$\|g_i^\top \theta_*-g_j^\top \theta_*\|>3c_0$.
Then a chi-squared tail bound yields
\begin{equation}\label{eq:goodepsilon}
\P\Big[\|\eps\| \leq c_0/\sigma \text{ and }
\eps \in \cE_{ij}(\theta,\tau) \Big]>c'\sigma
\end{equation}
for a different constant $c'<c$ and all $\sigma<\sigma_0$.
For $\eps$ satisfying (\ref{eq:goodepsilon}), we have
\[\|g_i^\top (\theta_*+\sigma \eps)-g_j^\top(\theta_*+\sigma \eps)\|
\geq \|g_i^\top \theta_*-g_j^\top \theta_*\|-2\sigma \|\eps\| \geq c_0,\]
and also $p(g_i \mid \eps,\theta)>\tau$ and $p(g_j \mid \eps,\theta)>\tau$.
Then for such $\eps$, denoting $\mu=\E_g[g^\top (\theta_*+\sigma \eps) \mid
\eps,\theta]$, we have
\begin{align*}
\Tr \Cov_g[g^\top (\theta_*+\sigma \eps) \mid \eps,\theta]
&=\E_g[\|g^\top (\theta_*+\sigma \eps)-\mu\|^2 \mid \eps,\theta]\\
& \geq \tau \cdot \|g_i^\top (\theta_*+\sigma \eps)-\mu\|^2
+\tau \cdot \|g_j^\top (\theta_*+\sigma \eps)-\mu\|^2>c.
\end{align*}
Combining this with (\ref{eq:goodepsilon}) implies that
\[\lambda_{\max}\left(
\E_\eps\Big[\Cov_g\Big[g^\top (\theta_*+\sigma\eps) \;\Big|\;
\eps,\theta\Big]\Big]\right)>c\sigma.\]
Then (\ref{eq:Acondition}) follows from (\ref{eq:hessR}).

To check (\ref{eq:Bcondition}) when $\theta \in \cB$, note that if
$\|\theta\| \geq 3\|\theta_*\|$, then (\ref{eq:Bcondition}) follows from
Lemma \ref{lemma:thetabound}. For $\theta \in \cB$ such that
$\|\theta\|<3\|\theta_*\|$, the definition of $\cB$ and
Lemma \ref{lemma:boundary} imply that either $\P[\eps \in \cE_i(\theta,\tau)]<p$
or $\P[\eps \in \cE_i(\theta,\tau)]>1/2$ for every $i \in \{1,\ldots,K\}$.
Note that since $K\tau<1$, we must have:
\begin{itemize}
\item $\cE_1(\theta,\tau),\ldots,\cE_K(\theta,\tau)$ are disjoint.
\item $\{\cE_i(\theta,\tau)\}_{i=1}^K$ and $\{\cE_{ij}(\theta,\tau)\}_{i \neq
j}$ together cover all of $\R^d$.
\end{itemize}
The first observation implies that
$\P[\eps \in \cE_i(\theta,\tau)]>1/2$ for at most one index
$i \in \{1,\ldots,K\}$, so we must have
$\P[\eps \in \cE_j(\theta,\tau)]<p$ for all other $j \neq i$.
Combining this with the second observation,
\[1 \leq \P[\eps \in \cE_i(\theta,\tau)]+\sum_{j:j \neq i} \P[\eps \in
\cE_j(\theta,\tau)]+\sum_{j \neq k} \P[\eps \in \cE_{jk}(\theta,\tau)]
\leq \P[\eps \in \cE_i]+(K-1)p+\binom{K}{2}c\sigma.\]
For $\sigma<\sigma_0$ and sufficiently small $\sigma_0$,
this implies $\P[\eps \in \cE_i(\theta,\tau)] \geq 1-Kp$.

Recall the form (\ref{eq:gradR}) for $\nabla R(\theta)$. For this
index $i$, let us write
\[\E_\eps\Big[\E_g[g^\top (\theta_*+\sigma\eps) \mid \eps,\theta]\Big]-g_i^\top \theta_*=\mathrm{I}+\mathrm{II}+\mathrm{III}\]
where
\begin{align*}
\mathrm{I}&=\E_\eps\Big[\1\{\eps \notin \cE_i\}\Big(\E_g[g^\top
(\theta_*+\sigma \eps) \mid \eps,\theta]-g_i^\top \theta_*\Big)\Big],\\
\mathrm{II}&=\E_\eps\Big[\1\{\eps \in \cE_i\}\Big(\E_g[\1\{g \neq g_i\}g^\top
(\theta_*+\sigma \eps) \mid \eps,\theta]\Big)\Big],\\
\mathrm{III}&=\E_\eps\Big[\1\{\eps \in \cE_i\}\Big(\E_g[\1\{g=g_i\}g^\top
(\theta_*+\sigma\eps) \mid \eps,\theta]-g_i^\top \theta_*\Big)\Big].
\end{align*}
Applying Cauchy-Schwarz, the above bound $\P[\eps \in \cE_i(\theta,\tau)]
\geq 1-Kp$,
and the condition (\ref{eq:p}) for $p$, we get for $\sigma<\sigma_0$ and small
enough $\sigma_0$ that
\[\|\mathrm{I}\| \leq \P[\eps \notin \cE_i]^{1/2}
\E_\eps[(\|\theta_*+\sigma\eps\|+\|\theta_*\|)^2]^{1/2}
\leq (Kp)^{1/2} \cdot 3\|\theta_*\|<\rho/4.\]
When $\eps \in \cE_i$, we have $\P_g[g=g_i \mid \eps,\theta]
=p(g_i \mid \eps,\theta)>1-K\tau$. Then by the condition (\ref{eq:tau}) for
$\tau$, for $\sigma<\sigma_0$,
\[\|\mathrm{II}\| \leq \E_\eps\Big[\1\{\eps \in \cE_i\}
\P_g[g \neq g_i \mid \eps,\theta] \cdot \|\theta_*+\sigma \eps\|\Big]
\leq K\tau \cdot 2\|\theta_*\|<\rho/4.\]
For $\mathrm{III}$, we cancel $g_i^\top \theta_*$ to get the bound
\[\|\mathrm{III}\| \leq \E_\eps\Big[\E_g[\1\{g=g_i\}
\|g^\top (\sigma \eps)\| \mid \eps,\theta]\Big] \leq \sigma\,\E_\eps[\|\eps\|]
<\rho/4.\]
Combining these with (\ref{eq:gradR}) yields
\[\|\sigma^2 \nabla R(\theta)-(\theta-g_i^\top \theta_*)\|<3\rho/4,\]
and (\ref{eq:Bcondition}) follows since
$\|\theta-g_i^\top \theta_*\| \geq \rho$ because $\theta \notin \bigcup_{\mu \in
\O_{\theta_*}} B_\rho(\mu)$.
These conditions (\ref{eq:Acondition}), (\ref{eq:Bcondition}), and Theorem
\ref{thm:lownoiselocal} together show that the landscape of $R(\theta)$ is
globally benign.
\end{proof}

The following then shows that the landscape of $R_n(\theta)$ is also globally
benign with high probability, when $n \gg \sigma^{-2} \log \sigma^{-1}$.

\begin{corollary}\label{cor:landscapelownoise}
In the setting of Theorem \ref{thm:landscapelownoise}, the same statements hold
for the empirical risk $R_n(\theta)$ with probability at least
$1-\sigma^{-C}e^{-c\sigma^2 n}-Ce^{-cn^{2/3}}$.
\end{corollary}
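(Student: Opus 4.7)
The plan is to deduce the corollary by transferring each part of Theorem \ref{thm:landscapelownoise} to $R_n$ using the uniform concentration in Lemma \ref{lem:concentration}, together with the local strong-convexity statement of Corollary \ref{cor:lownoiselocal}. The key observation is that on each of the three regions $\{B_\rho(\mu)\}_{\mu \in \O_{\theta_*}}$, $\cA$, and $\cB$, the population quantity we care about (Hessian near the orbit, negative Hessian eigenvalue on $\cA$, or nonvanishing gradient on $\cB$) is strictly bounded away from zero by an explicit $\sigma^{-k}$ rate, and we just need to check that the sampling fluctuation can be made smaller than this rate with the right probability.

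First I would localize. By Lemma \ref{lemma:thetabound}, with probability $1-C'e^{-cn}$ every critical point of $R_n$ lies in $B_r(0)$ for $r=2\|\theta_*\|$ (using $\sigma<\sigma_0$). Choose $\rho$ small enough that all $B_\rho(\mu)$ for $\mu\in\O_{\theta_*}$ are contained in $B_r$. Then it suffices to establish the benign landscape inside $B_r$, and the sets $\cA,\cB$ of Theorem \ref{thm:landscapelownoise} need only be controlled in their intersection with $B_r$.

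Next I would apply the three bounds in Lemma \ref{lem:concentration} with carefully chosen tolerances. On $\cB$, combine $\|\nabla R(\theta)\|>c/\sigma^2$ with (\ref{eq:concgradRn}) at $t=c/(2\sigma^2)$: since $\sigma<\sigma_0\le 1$ gives $\sigma^4t^2/(1+\sigma^2)\asymp 1$, we get $\sup_{B_r}\|\nabla R_n-\nabla R\|\le c/(2\sigma^2)$ with probability $1-\sigma^{-C}e^{-cn}-Ce^{-cn}$, hence $\|\nabla R_n\|>0$ on $\cB\cap B_r$. On $\cA$, combine $\lambda_{\min}(\nabla^2 R(\theta))<-c/\sigma^3$ with (\ref{eq:conchessRn}) at $t=c/(2\sigma^3)$: for small $\sigma$ we have $\sigma^8t^2/(1+\sigma^4)\asymp\sigma^2$ and $\sigma^4t/(1+\sigma^2)\asymp\sigma$, so the $\min$ is $\asymp\sigma^2$. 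This yields $\sup_{B_r}\|\nabla^2 R_n-\nabla^2 R\|\le c/(2\sigma^3)$ with probability at least $1-\sigma^{-C}e^{-c\sigma^2 n}-Ce^{-cn^{2/3}}$, so $\lambda_{\min}(\nabla^2 R_n)<-c/(2\sigma^3)$ on $\cA\cap B_r$, making any critical point of $R_n$ there a strict saddle. Finally, Corollary \ref{cor:lownoiselocal} applied to each $\mu\in\O_{\theta_*}$ (there are only $K$ such points, so a union bound is harmless) supplies the unique strongly-convex local minimizer inside each $B_\rho(\mu)$. Combining all of these on the intersection of their good events, and using that $e^{-c\sigma n}\le e^{-c\sigma^2 n}$ for $\sigma<1$ to absorb the local bound from Corollary \ref{cor:lownoiselocal}, yields the claimed probability $1-\sigma^{-C}e^{-c\sigma^2 n}-Ce^{-cn^{2/3}}$.

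The argument is mostly bookkeeping and the only real constraint is identifying the dominant term in the failure probability. The driver is the Hessian concentration on $\cA$: because the population gap is only $\Theta(\sigma^{-3})$ while a typical Hessian summand has sub-exponential size $\Theta(\sigma^{-2})$ in the small-$\sigma$ regime, Bernstein's inequality produces the rate $e^{-c\sigma^2 n}$ after rescaling. The $e^{-cn^{2/3}}$ term simply carries through from the sub-cubic moment bound (\ref{eq:sumcubes}) used to control the Lipschitz constant of $\nabla^2 R_n$ in Lemma \ref{lem:local_lipschitz}; no new idea is needed there.
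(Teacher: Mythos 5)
Your proposal is correct and follows essentially the same route as the paper: use Lemma~\ref{lemma:thetabound} to localize to a bounded ball, apply the uniform gradient concentration \eqref{eq:concgradRn} at $t\asymp\sigma^{-2}$ and Hessian concentration \eqref{eq:conchessRn} at $t\asymp\sigma^{-3}$ to transfer \eqref{eq:Bcondition} and \eqref{eq:Acondition} to $R_n$, and finish with Corollary~\ref{cor:lownoiselocal} near each orbit point; the probability bookkeeping (the $\sigma^{-C}e^{-c\sigma^2 n}$ from the Hessian, $e^{-cn^{2/3}}$ from the Lipschitz bound in Lemma~\ref{lem:local_lipschitz}, and absorbing $e^{-c\sigma n}$ and $e^{-cn}$ into $e^{-c\sigma^2 n}$ for $\sigma<1$) is also correct. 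One minor nit: in your closing remark the per-sample Hessian summand has sub-exponential size $\Theta(\sigma^{-4})$ (i.e.~$\psi_1$ norm $(1+\sigma^2)/\sigma^4$), not $\Theta(\sigma^{-2})$, though this does not affect the final rate $e^{-c\sigma^2 n}$ that the Bernstein computation yields.
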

\begin{proof}
For $\sigma<\sigma_0$ and small enough $\sigma_0$, with probability
$1-Ce^{-cn}$, we have $\|\nabla R_n(\theta)\|
\geq c/\sigma^2$ for all $\theta$ such that $\|\theta\|>3\|\theta_*\|$ by Lemma
\ref{lemma:thetabound}. Applying the concentration result
(\ref{eq:concgradRn}) with $t=c_0/\sigma^2$, and (\ref{eq:conchessRn}) with
$t=c_0/\sigma^3$, over the ball $B_r$ for $r=3\|\theta_*\|$, for small enough
$c_0$ we obtain (\ref{eq:Acondition}) and (\ref{eq:Bcondition}) also for 
the empirical risk $R_n(\theta)$, with probability $1-\sigma^{-C}e^{-c\sigma^2
n}-Ce^{-cn^{2/3}}$. The result then follows from combining with
Corollary \ref{cor:lownoiselocal}.
\end{proof}

\begin{remark}
For $\theta \in
\R^d$ roughly equidistant to multiple points of the orbit $\O_{\theta_*}$, the
weights $p(g \mid \eps,\theta)$ do not concentrate with high probability on a
single deterministic rotation $g \in G$, so we do not obtain the same
refinement of the concentration probability as in
Corollary \ref{cor:lownoiselocal} for the local analysis near $\theta_*$.
\end{remark}

\section{Landscape analysis for high noise}\label{sec:largesigma}

In this section, we analyze the function landscapes of $R(\theta)$ and
$R_n(\theta)$ in the high-noise regime $\sigma>\sigma_0(\theta_*,d,G)$. Our
results relate to the algebra of $G$-invariant polynomials and systems of
reparametrized coordinates in local neighborhoods, which we first review in
Section \ref{sec:invariants}.

Our analysis for high noise is based on showing that truncations of
the formal $\sigma^{-1}$-series
\begin{equation}\label{eq:formalseries}
\sum_{\ell=1}^\infty \sigma^{-2\ell} S_\ell(\theta)
\end{equation}
provide asymptotic estimates for the population risk $R(\theta)$. We derive this in
Lemma \ref{lemma:seriesexpansion} using the series
expansion of the cumulant generating function
$\log \E_g \exp(\langle \theta_*+\sigma \eps,g\theta \rangle/\sigma^2)$
in (\ref{eq:R}).
We quantify the accuracy of the approximation to $R(\theta)$ by
bounding its deviation from the first $k$ terms of its
formal series for any fixed $k$ as $\sigma \to \infty$. To analyze the
concentration of the empirical risk, we provide a similar series expansion for
$R_n(\theta)$ in Lemma \ref{lemma:Rnexpansion}.

The functions $S_\ell(\theta)$ in (\ref{eq:formalseries}) do not depend on
$\sigma$, and we analyze the form of these terms also in Section
\ref{sec:seriesexpansion}. We show in Section
\ref{sec:localreparam} that the local landscape of $R(\theta)$ around any
point $\wtheta \in \R^d$ may be understood, for large $\sigma$, by analyzing the
successive landscapes of these functions $S_\ell(\theta)$ in a reparametrized
system of coordinates near $\wtheta$.

In Section \ref{sec:locallandscape}, we apply this at $\wtheta=\theta_*$ to analyze
the local landscape near $\theta_*$. Theorem \ref{thm:locallargenoise} and
Corollary \ref{cor:locallargenoise} show that $R(\theta)$ is
strongly convex in a $\sigma$-independent neighborhood of $\theta_*$,
when reparametrized by a transcendence basis of the $G$-invariant
polynomial algebra. The same holds with high probability for
$R_n(\theta)$ when $n \gg \sigma^{2L}$, where $L$ is the smallest
integer for which $\trdeg(\cR_{\leq L}^G)=d$.
Theorem \ref{thm:locallargenoise} also shows that $I(\theta_*)$
has a certain graded structure, where the magnitudes of its eigenvalues
correspond to a sequence of transcendence degrees in this algebra.

In Section \ref{sec:globalbenign}, we patch together the local results
of Section \ref{sec:localreparam} to study the global
landscapes of $R(\theta)$ and $R_n(\theta)$.
Theorems \ref{thm:rotations}, \ref{thm:permutations} and Corollaries
\ref{cor:rotations}, \ref{cor:permutations} establish globally benign landscapes
for $K$-fold discrete rotations on $\R^2$ and the symmetric group of all
permutations on $\R^d$, for large $\sigma$ and large $n$. Theorem \ref{thm:globalbenign} then
generalizes this to a more abstract condition, in terms of minimizing the sequence of
polynomials $P_\ell(\theta)$ in (\ref{eq:momentobjective}) over the sequence of
moment varieties $\cV_{\ell-1}$ in (\ref{eq:momentvariety}), and shows
that the empirical landscape of $R_n(\theta)$ inherits the benign property of
$R(\theta)$ also when $n \gg \sigma^{2L}$.

Finally, in Section \ref{sec:MRA}, we analyze the global landscape for
cyclic permutations on $\R^d$ (i.e.\ multi-reference
alignment). Theorem \ref{thm:MRA} and Corollary \ref{cor:MRA} show that the local
minimizers of $R(\theta)$ and $R_n(\theta)$ are in correspondence with those of a
minimization problem in phase space. Corollary \ref{cor:MRAbad} shows that their
landscapes are benign in dimensions $d \leq 5$ (for large $\sigma$ and large
$n$), but may not be benign even for generic $\theta_*$ for even $d \geq 6$
and odd $d \geq 53$.

\subsection{Invariant polynomials and local reparametrization}\label{sec:invariants}

\begin{definition}
For a subgroup $G \subseteq \rO(d)$, a polynomial function $\varphi:\R^d \to \R$ is
{\bf $G$-invariant} if $\varphi(g\theta)=\varphi(\theta)$ for all $g \in G$.
We denote by $\cR^G$ the algebra (over $\R$) of all $G$-invariant polynomials
on $\R^d$, and by $\cR^G_{\leq \ell} \subset \cR^G$ the vector space of such
polynomials having degree $\leq \ell$.
\end{definition}

\begin{definition}
Polynomials $\varphi_1,\ldots,\varphi_k:\R^d \to \R$ are
{\bf algebraically independent} (over $\R$) if there is no non-zero polynomial
$P:\R^k \to \R$ for which $P(\varphi_1(\theta),\ldots,\varphi_k(\theta))$ is
identically 0 over $\theta \in \R^d$.
For a subset $A \subseteq \cR^G$, its {\bf transcendence degree}
$\trdeg(A)$ is the maximum number of algebraically independent elements in $A$.
\end{definition}

One may construct a transcendence basis of $d$ such polynomials according to the
following lemma; we provide a proof for convenience in Appendix \ref{appendix:transcendence}.

\begin{lemma}\label{lemma:phiconstruction}
For any finite subgroup $G \subset \rO(d)$, there exists a smallest integer
$L \geq 1$ for which $\trdeg(\cR_{\leq L})=d$. Writing $d=d_1+\ldots+d_L$ where
\[d_\ell=\trdeg(\cR_{\leq \ell}^G)-\trdeg(\cR_{\leq \ell-1}^G),\]
there also exist $d$ algebraically independent $G$-invariant polynomials
$\varphi=(\varphi^1,\ldots,\varphi^L)$, where each subvector $\varphi^\ell$
consists of $d_\ell$ polynomials having degree exactly $\ell$.
\end{lemma}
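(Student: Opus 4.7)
The plan is to establish three ingredients in sequence: (i) existence of $L$, (ii) that the telescoping sum equals $d$, and (iii) inductive construction of the graded transcendence basis.

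First I would show that $\trdeg(\cR^G_{\leq \ell}) = d$ for all sufficiently large $\ell$, so that $L$ exists. The key observation is that, since $G$ is finite, for each coordinate $x_i$ the polynomial
\[
p_i(t) \;=\; \prod_{g \in G} \bigl(t - (gx)_i\bigr) \;\in\; \R[x_1,\ldots,x_d][t]
\]
is monic of degree $|G|$ in $t$, and its coefficients (elementary symmetric functions in $\{(gx)_i : g \in G\}$) lie in $\cR^G_{\leq |G|}$. Since $p_i(x_i) \equiv 0$, each $x_i$ is algebraic over the fraction field of $\cR^G_{\leq |G|}$, so $\trdeg(\cR^G_{\leq |G|}) \geq d$; the reverse inequality is trivial, giving $\trdeg(\cR^G_{\leq |G|}) = d$. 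As $\ell \mapsto \trdeg(\cR^G_{\leq \ell})$ is non-decreasing, integer-valued, and bounded by $d$, the smallest integer $L \geq 1$ with $\trdeg(\cR^G_{\leq L}) = d$ exists and satisfies $L \leq |G|$.

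Second, with $d_\ell$ defined as in the statement and $\trdeg(\cR^G_{\leq 0}) = 0$, the identity $d_1 + \ldots + d_L = d$ follows by telescoping.

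Third, I would construct $\varphi^1,\ldots,\varphi^L$ inductively on $\ell$. Assume $(\varphi^1,\ldots,\varphi^{\ell-1})$ has been produced with each $\varphi^k$ consisting of $d_k$ polynomials of degree exactly $k$, and forming a transcendence basis of $\cR^G_{\leq \ell-1}$. By definition of $d_\ell$, there exist $\psi_1,\ldots,\psi_{d_\ell} \in \cR^G_{\leq \ell}$ extending this to a transcendence basis of $\cR^G_{\leq \ell}$. Decompose each $\psi_i = \psi_i^{(\ell)} + \psi_i^{(<\ell)}$ into its degree-$\ell$ homogeneous component and its lower-degree part. Because $G \subseteq \rO(d)$ acts linearly, this decomposition is $G$-equivariant, so $\psi_i^{(\ell)}$ is $G$-invariant of degree $\ell$ and $\psi_i^{(<\ell)} \in \cR^G_{\leq \ell-1}$. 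Define $\varphi^\ell := (\psi_1^{(\ell)},\ldots,\psi_{d_\ell}^{(\ell)})$.

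The remaining point is that $(\varphi^1,\ldots,\varphi^\ell)$ is still a transcendence basis of $\cR^G_{\leq \ell}$, and this is the main algebraic step. Let $F = \R(\varphi^1,\ldots,\varphi^{\ell-1})$; since $\cR^G_{\leq \ell-1}$ is algebraic over $F$, each $\psi_i^{(<\ell)}$ lies in the algebraic closure of $F$. Hence $\psi_i^{(\ell)} = \psi_i - \psi_i^{(<\ell)}$ differs from $\psi_i$ by an element algebraic over $F$, and a standard fact from field theory (adding algebraic elements does not change transcendence degree over $F$) implies that $\psi_1^{(\ell)},\ldots,\psi_{d_\ell}^{(\ell)}$ remain algebraically independent over $F$. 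In particular each $\psi_i^{(\ell)}$ is nonzero of degree exactly $\ell$ (otherwise $\psi_i \in \cR^G_{\leq \ell-1}$, contradicting independence). Iterating up to $\ell = L$ yields the claimed basis $\varphi = (\varphi^1,\ldots,\varphi^L)$. The main obstacle is not a technical difficulty but the careful bookkeeping in this last step, where one must verify that passing to the top-degree homogeneous component both preserves $G$-invariance (using linearity of the action) and preserves algebraic independence over the already-constructed basis.
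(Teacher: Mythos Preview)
Your proof is correct. The existence of $L$ via the characteristic polynomial $\prod_{g\in G}(t-(gx)_i)$ is exactly the standard argument underlying the fact the paper cites (that $\R(\theta_1,\ldots,\theta_d)$ is algebraic over $\R(\cR^G)$), so there you are simply unpacking what the paper takes as given.

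The construction in step three is where your route differs. The paper's argument is shorter: having built a maximal algebraically independent set $(\varphi^1,\ldots,\varphi^{\ell-1})$ inside $\cR^G_{\leq \ell-1}$, one extends it to a maximal algebraically independent set inside $\cR^G_{\leq \ell}$; any new element $\psi$ cannot lie in $\cR^G_{\leq \ell-1}$ (else it would already be algebraic over $(\varphi^1,\ldots,\varphi^{\ell-1})$ by maximality), so $\deg\psi=\ell$ automatically. No homogenization is needed. Your approach---project each $\psi_i$ to its top-degree homogeneous part and argue that subtracting an element algebraic over $F=\R(\varphi^1,\ldots,\varphi^{\ell-1})$ preserves algebraic independence over $F$---is equally valid and has the minor aesthetic advantage that the resulting $\varphi^\ell$ are homogeneous of degree $\ell$, not merely of degree exactly $\ell$. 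The paper's route avoids the extra field-theoretic lemma; yours makes the degree control more explicit. Both are short.
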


It was shown in \cite{bandeira2017estimation} that this number $L$ is
the highest-order moment needed for a moment-of-moments estimator to recover a
generic signal $\theta_*$ in the model (\ref{eq:orbitmodel}), up to a finite
list of possibilities including (but not necessarily limited to) the orbit 
points $\O_{\theta_*}$, and that the number of samples required for this type of
recovery scales as $\sigma^{2L}$.

In our local analysis around a point $\wtheta \in \R^d$, we will switch to a
system of reparametrized coordinates. Let us specify our notation for such a
reparametrization.

\begin{definition}
A function $\varphi:\R^d \to \R^d$ is a {\bf local reparametrization} in an open
neighborhood $U$ of $\wtheta \in \R^d$ if $\varphi$ is
1-to-1 on $U$ with inverse function $\theta(\varphi)$,
and $\varphi(\theta)$ and $\theta(\varphi)$ are analytic respectively on $U$ and
$\varphi(U)$.
\end{definition}

If $\varphi$ is a local reparametrization, then $\der_\theta \varphi$ is
non-singular and equal to $(\der_\varphi \theta)^{-1}$ at each $\theta \in U$.
Conversely, by the inverse function theorem, if $\varphi(\theta)$ is analytic and 
$\der_\theta \varphi(\wtheta)$ is non-singular, then there is such an open 
neighborhood $U$ of $\wtheta$ on which $\varphi$ defines a local reparametrization.

To ease notation, we write (with a slight abuse) $f(\varphi)$ for
$f(\theta(\varphi))$ when the meaning is clear, and we write
$\nabla_\varphi f(\varphi)$, $\nabla_\varphi^2 f(\varphi)$, and
$\partial_{\varphi_i} f(\varphi)$ for the 
gradient, Hessian, and partial derivatives of $f(\varphi)$ with respect to $\varphi$.
For a decomposition $\varphi=(\varphi^1,\ldots,\varphi^L)$ of dimensions
$d_1,\ldots,d_L$, we denote by $\nabla_{\varphi^\ell} f(\varphi) \in \R^{d_\ell}$ and
$\nabla_{\varphi^\ell}^2 f(\varphi) \in \R^{d_\ell \times d_\ell}$ the
subvectors and submatrices of $\nabla_\varphi f(\varphi)$ and $\nabla^2_\varphi
f(\varphi)$ corresponding to the coordinates in $\varphi^\ell$.

Recalling $\nabla_\theta f(\theta)=\der_\theta f(\theta)^\top$, by the chain rule
and product rule, we have
\begin{align}
\nabla_\theta f(\theta)&=(\der_\theta \varphi)^\top
\nabla_\varphi f(\varphi)\label{eq:gradreparam}\\
\nabla_\theta^2 f(\theta)&=
(\der_\theta \varphi)^\top \cdot \nabla_\varphi^2 f(\varphi) \cdot \der_\theta \varphi
+\sum_{i=1}^d \partial_{\varphi_i}f(\varphi) \cdot \nabla_\theta^2
\varphi_i\label{eq:hessreparam}
\end{align}
Note that $\nabla_\theta f(\wtheta)=0$ if and only if $\nabla_\varphi
f(\wvarphi)=0$ for $\wvarphi=\varphi(\wtheta)$, i.e.\ critical points do not depend 
on the choice of parametrization. At a critical point $\wtheta$ of $f(\theta)$,
letting $\wvarphi=\varphi(\wtheta)$, the identity (\ref{eq:hessreparam}) simplifies to
just the first term,
\[\nabla_\theta^2 f(\wtheta)=(\der_\theta \varphi(\wtheta))^\top \cdot
\nabla_\varphi^2 f(\wvarphi) \cdot \der_\theta \varphi(\wtheta),\]
so that the rank and signs of the eigenvalues of $\nabla_\theta^2 f(\wtheta)$
also do not depend on the choice of parametrization. This may be
false when $\theta$ is not a critical point---in particular, strong convexity
of $f(\varphi)$ as a function of $\varphi \in \varphi(U)$ does not imply
strong convexity of $f(\theta)$ as a function of $\theta \in U$.

For analyzing specific groups, we will explicitly describe
our reparametrization $\varphi$. For more general results, we will reparametrize by
the transcendence basis of polynomials $\varphi$ in Lemma \ref{lemma:phiconstruction}.
The following clarifies the relationship between algebraic independence of these
polynomials and linear independence of their gradients, and implies
in particular that $\varphi$ is a local reparametrization at generic points
of $\R^d$. We provide a proof also in Appendix \ref{appendix:transcendence}.

\begin{lemma}\label{lemma:polynomialreparam}
Let $G \subset \rO(d)$ be any
subgroup, and let $\varphi_1,\ldots,\varphi_k$ be polynomials in $\cR^G$.
\begin{enumerate}[(a)]
\item If $\varphi_1,\ldots,\varphi_k$ are algebraically independent, then
$\nabla \varphi_1,\ldots,\nabla \varphi_k$ are linearly
independent at generic points $\theta \in \R^d$.
\item If $\nabla \varphi_1,\ldots,\nabla \varphi_k$ are linearly
independent at any point $\theta \in \R^d$, then $\varphi_1,\ldots,\varphi_k$
are algebraically independent.
\item If $\nabla \varphi_1,\ldots,\nabla \varphi_k$ are linearly
independent at a point $\wtheta \in \R^d$, and $\varphi_1,\ldots,\varphi_k \in
\cR^G_{\leq \ell}$ with $k=\trdeg(\cR^G_{\leq \ell})$, then there is an open
neighborhood $U$ of $\wtheta$ such that for every polynomial $\psi \in \cR^G_{\leq
\ell}$, there is an analytic function $f:\R^k \to \R$ for which
$\psi(\theta)=f(\varphi_1(\theta),\ldots,\varphi_k(\theta))$ for all $\theta \in U$.
\end{enumerate}
\end{lemma}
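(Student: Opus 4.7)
The plan is to prove the three parts in the order (b), (a), (c): part (b) by a direct differentiation-plus-minimality argument, part (a) by the classical Jacobian criterion for algebraic independence, and part (c) by combining these with the analytic inverse function theorem.

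For (b), suppose $\nabla \varphi_1(\theta_0),\ldots,\nabla \varphi_k(\theta_0)$ are linearly independent at some $\theta_0 \in \R^d$ and, for contradiction, that a nonzero polynomial $P$ of minimal total degree satisfies $P(\varphi_1,\ldots,\varphi_k) \equiv 0$. Differentiating via the chain rule gives the polynomial identity $\sum_{i=1}^k (\partial_i P)(\varphi(\theta))\,\nabla \varphi_i(\theta) \equiv 0$ on $\R^d$. By continuity the $\nabla \varphi_i$ remain linearly independent on an open neighborhood of $\theta_0$, so each $(\partial_i P)(\varphi(\theta))$ vanishes there and hence identically as a polynomial in $\theta$. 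If any $\partial_i P$ were nonzero as a polynomial in $k$ variables it would give a strictly lower-degree polynomial relation among $\varphi_1,\ldots,\varphi_k$, contradicting minimality; so every $\partial_i P \equiv 0$, forcing $P$ to be a nonzero constant, which contradicts $P(\varphi)\equiv 0$.

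For (a), I would appeal to the Jacobian criterion for algebraic independence of polynomials over a field of characteristic zero: $\varphi_1,\ldots,\varphi_k$ are algebraically independent over $\R$ if and only if some $k \times k$ minor of $[\nabla \varphi_1(\theta) \mid \cdots \mid \nabla \varphi_k(\theta)]$ is not identically zero as a polynomial in $\theta$ (see, e.g., Lang's \emph{Algebra}). Given algebraic independence, the non-vanishing locus of such a minor is the complement of a proper algebraic subset of $\R^d$, which is a generic set in the sense defined in the paper. A self-contained induction on $k$ would argue that if the generic Jacobian rank is $k'<k$, then by the constant-rank theorem the image of $\varphi$ near a generic point lies in a $k'$-dimensional analytic submanifold of $\R^k$, whose Zariski closure is an algebraic set of dimension $<k$ and hence contained in the zero set of a nonzero polynomial, yielding the desired algebraic relation.

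For (c), since $k = \trdeg(\cR^G_{\leq \ell})$, any $\psi \in \cR^G_{\leq \ell}$ together with $\varphi_1,\ldots,\varphi_k$ is algebraically dependent over $\R$. Applying (a) to these $k+1$ polynomials gives linear dependence of their gradients on a dense open set; since linear dependence is the vanishing of all $(k+1)\times(k+1)$ minors of the augmented gradient matrix, which is a closed polynomial condition, the dependence in fact holds at every $\theta \in \R^d$. Choose the neighborhood $U$ of $\wtheta$ on which $\nabla \varphi_1,\ldots,\nabla \varphi_k$ remain linearly independent; this $U$ depends only on $\varphi$, not on $\psi$. Throughout $U$ the dependence must express $\nabla \psi(\theta) \in \sp(\nabla \varphi_1(\theta),\ldots,\nabla \varphi_k(\theta))$. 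Shrinking $U$ if necessary, extend $\varphi$ to an analytic local chart $\Phi:U \to \R^d$ by appending $d-k$ affine coordinates transverse to $\sp(\nabla \varphi_i(\wtheta))$; the analytic inverse function theorem makes $\Phi^{-1}$ analytic on $\Phi(U)$. In these coordinates the inclusion $\nabla \psi \in \sp(\nabla \varphi_i)$ forces the partials of $\psi$ in the appended directions to vanish on $U$, so $\psi$ depends only on $\varphi$; the resulting analytic function $f$ on $\varphi(U)$ gives the desired identity $\psi=f\circ\varphi$ on $U$, and $f$ may then be extended to $\R^k$ by standard means. The main obstacle is part (a): the Jacobian criterion is a classical but nontrivial input, and producing a self-contained proof --- via the constant-rank theorem and dimension theory of constructible sets --- would likely be the longest portion of the write-up.
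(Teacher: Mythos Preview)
Your proof is correct and follows essentially the same architecture as the paper's: both invoke the Jacobian criterion for algebraic independence for (a)--(b), then for (c) extend $\varphi$ to a full local chart, use the inverse function theorem, and deduce from linear dependence of the $k+1$ gradients that $\psi$ is constant in the appended directions.

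Two differences worth noting. For (b), you give a self-contained minimal-degree-relation argument, whereas the paper simply cites the Jacobian criterion from \cite{ehrenborg1993apolarity}; your argument is more elementary and avoids the external reference. For (c), your phrasing ``applying (a) to these $k+1$ polynomials gives linear dependence of their gradients on a dense open set'' is a bit backwards: part (a) as stated goes from algebraic independence to generic linear independence, and its contrapositive does not yield what you want. What you actually need (and what the paper uses) is the contrapositive of (b): algebraic dependence implies linear dependence of gradients at \emph{every} point. This makes your subsequent closedness step redundant. Since your discussion of (a) states the full if-and-only-if Jacobian criterion, your logic is ultimately sound, but invoking the contrapositive of (b) directly is both cleaner and better aligned with the lemma's internal structure.
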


\subsection{Series expansion of the population risk}\label{sec:seriesexpansion}

For any partition $\pi$ of $[\ell+m] \equiv \{1,\ldots,\ell+m\}$, denote by
$|\pi|$ the number of sets in $\pi$, and label these sets as
$1,\ldots,|\pi|$. For each $i \in [\ell+m]$, denote by
$\pi(i) \in \{1,\ldots,|\pi|\}$ the index of the set containing element $i$. 
For $0 \leq m \leq \ell$, define
\begin{equation}\label{eq:Ml}
M_{\ell,m}(\pi \mid \theta,\theta_*)=\E_{g_1,\ldots,g_{|\pi|}}
\left[\prod_{j=1}^m \Big\langle g_{\pi(2j-1)}\theta,\;g_{\pi(2j)}\theta
\Big\rangle \cdot \prod_{j=2m+1}^{\ell+m} \Big\langle \theta_*,
\;g_{\pi(j)}\theta \Big\rangle\right]
\end{equation}
where the expectation is over independent group elements
$g_1,\ldots,g_{|\pi|} \sim \Unif(G)$.

\begin{example}\label{ex:Mlm}
Consider $\ell=3$, $m=1$, and $\pi=\{\{1,2\},\{3,4\}\}$. For this partition
$\pi$, we have $|\pi|=2$ and $(\pi(1),\pi(2),\pi(3),\pi(4))=(1,1,2,2)$.
Letting $g_1,g_2 \sim \Unif(G)$ be two independent and uniformly distributed
group elements,
\begin{align}
M_{3,1}(\pi \mid \theta,\theta_*)&=
\E_{g_1,g_2}\left[\langle g_1\theta,g_1\theta \rangle
\langle \theta_*,g_2\theta \rangle^2\right]. \label{eq:Mlm1}
\end{align}
For $\pi=\{\{1,3\},\{2\},\{4\}\}$, we have $|\pi|=3$ and
$(\pi(1),\pi(2),\pi(3),\pi(4))=(1,2,1,3)$. Then
\begin{align}
M_{3,1}(\pi \mid \theta,\theta_*)&=
\E_{g_1,g_2,g_3}\left[\langle g_1\theta,g_2\theta \rangle
\langle \theta_*,g_1\theta \rangle \langle \theta_*, g_3\theta \rangle\right].
\label{eq:Mlm2}
\end{align}
Similarly, for $\pi=\{\{1,3,4\},\{2\}\}$, we have
\begin{align}
M_{3,1}(\pi \mid \theta,\theta_*)&=
\E_{g_1,g_2}\left[\langle g_1\theta,g_2\theta \rangle
\langle \theta_*,g_1\theta \rangle^2\right].
\label{eq:Mlm3}
\end{align}\qed
\end{example}

Define the set
\begin{equation}\label{eq:goodpartition}
    \cP(\ell,m)=\Big\{\;\text{partitions } \pi \text{ of } [\ell+m]:
\;\pi(2j-1) \neq \pi(2j)  \text{ for all } j=1,\ldots,m\; \Big\}.
\end{equation}
That is, partitions $\pi \in \cP(\ell,m)$ separate each pair of elements
$\{1,2\},\{3,4\},\ldots,\{2m-1,2m\}$.  Define the quantity
\begin{equation}\label{eq:Sl}
S_\ell(\theta)=\frac{1}{\ell!}
\sum_{m=0}^\ell \frac{1}{2^m}\binom{\ell}{m} \sum_{\pi \in \cP(\ell,m)}
(|\pi|-1)!(-1)^{|\pi|}M_{\ell,m}(\pi \mid \theta,\theta_*)
\end{equation}
and the corresponding $k$-term expressions
\begin{equation}\label{eq:Rk}
R^k(\theta)=\sum_{\ell=1}^k \sigma^{-2\ell}S_\ell(\theta).
\end{equation}
The following is our rigorous result corresponding to (\ref{eq:formalseries}),
which states that $R(\theta)$ may be approximated by $R^k(\theta)$ for
$\|\theta\| \ll \sigma/\log \sigma$ and fixed $k$, as $\sigma \to \infty$.
We provide its proof at the end of this section.

\begin{lemma}\label{lemma:seriesexpansion}
Fix any function $r:(0,\infty) \to [1,\infty)$ such that
$r(\sigma) \cdot (\log \sigma)/\sigma \to 0$ as
$\sigma \to \infty$. For each $k \geq 1$, there exist $(\theta_*,d,G)$-dependent
constants $C,\sigma_0>0$ depending also on $k$,
such that for all $\sigma>\sigma_0$ and all $\theta \in \R^d$
with $\|\theta\|<r(\sigma)$,
\begin{align*}
\left|R(\theta)-R^k(\theta)\right|
&\leq \left(\frac{C\log \sigma}{\sigma}\right)^{2k+2}
(\|\theta\| \vee 1)^{2k+2}\\
\left\|\nabla R(\theta)-\nabla R^k(\theta)\right\| &\leq
\left(\frac{C\log \sigma}{\sigma}\right)^{2k+2}(\|\theta\| \vee 1)^{2k+1}\\
\left\|\nabla^2 R(\theta)-\nabla^2 R^k(\theta)\right\| &\leq
\left(\frac{C\log \sigma}{\sigma}\right)^{2k+2}(\|\theta\| \vee 1)^{2k}.
\end{align*}
\end{lemma}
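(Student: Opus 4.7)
The plan is to write $R(\theta) = \|\theta\|^2/(2\sigma^2) - \E_\eps L_\sigma(\eps, \theta)$ with $L_\sigma(\eps,\theta) = \log \E_g \exp(\langle v_\eps, g\theta\rangle)$ and $v_\eps = (\theta_*+\sigma\eps)/\sigma^2$, expand $L_\sigma$ as an absolutely convergent cumulant series in the scalar $\langle v_\eps, g\theta\rangle$ under $g \sim \Unif(G)$, integrate term-by-term in $\eps$ via Isserlis' theorem, and match the result with $R^k(\theta)$ via the formula (\ref{eq:Sl}). First I would localize $\eps$ to the event $\|\eps\| \leq R := C_1 \log\sigma$; on its complement, the Gaussian probability is super-polynomially small in $\sigma$ while $|L_\sigma|$ grows at most linearly in $\|\eps\|\|\theta\|/\sigma$, so this region contributes negligibly. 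On the bounded event, $M := \|v_\eps\|\|\theta\| \leq CR \|\theta\|/\sigma$ satisfies $eM < 1$ for large $\sigma$ by the hypothesis $r(\sigma)\log\sigma/\sigma \to 0$; Lemma \ref{lem:cumulantbounds} then gives $|\kappa_g^{\ell_c}[\langle v_\eps, g\theta\rangle]|/\ell_c! \leq (eM)^{\ell_c}$, and truncating at $\ell_c \leq k_c := 2k+1$ produces a remainder at most $2(eM)^{k_c+1}$ whose $\E_\eps$-integral over $\|\eps\|\leq R$ is at most $C(\log\sigma\,\|\theta\|/\sigma)^{2k+2}$.

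Next I would expand each retained cumulant via the moment--cumulant identity $\kappa_g^{\ell_c}[\langle v_\eps, g\theta\rangle] = \sum_{\pi \in \Pi[\ell_c]}(-1)^{|\pi|-1}(|\pi|-1)!\,\E_{g_1,\ldots,g_{|\pi|}}\prod_i \langle v_\eps, g_{\pi(i)}\theta\rangle$ (Appendix \ref{appendix:cumulants}), split each inner product as $\sigma^{-2}\langle\theta_*, g\theta\rangle + \sigma^{-1}\langle\eps, g\theta\rangle$, multi-expand the product over $i \in [\ell_c]$, and integrate out $\eps$. For each choice of a subset $J \subseteq [\ell_c]$ of even size $2m$ marking the $\eps$-factors and a perfect matching $\tau$ of $J$, Isserlis' theorem contracts those factors to yield $\prod_{\{a,b\}\in\tau}\langle g_{\pi(a)}\theta, g_{\pi(b)}\theta\rangle \cdot \prod_{i \notin J}\langle \theta_*, g_{\pi(i)}\theta\rangle$ and contributes to the $\sigma^{-2\ell}$ coefficient with $\ell = \ell_c - m$. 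Relabeling $J = \{1,\ldots,2m\}$ with canonical pairs $(2j-1,2j)$ and using the arithmetic identity $\binom{\ell_c}{2m}(2m-1)!!/\ell_c! = \binom{\ell}{m}/(2^m\,\ell!)$ reproduces the prefactors of (\ref{eq:Sl}) together with the quantities $M_{\ell,m}(\pi\mid\theta,\theta_*)$. The step I expect to be most delicate is the combinatorial cancellation showing that partitions with $\pi(2j-1)=\pi(2j)$ for some $j$ drop out of the final formula: in such a $\pi$, the Wick-contracted pair $(2j-1,2j)$ shares a $g$ and produces a factor $\|\theta\|^2$, and for $\ell \geq 2$ the alternating partition sum over all such excluded $\pi$'s vanishes (reflecting the fact that joint cumulants containing a constant argument are zero), while for $\ell = 1$ the sole surviving excluded contribution $\|\theta\|^2/2$ at order $\sigma^{-2}$ is exactly cancelled by the explicit $\|\theta\|^2/(2\sigma^2)$ term in the definition of $R(\theta)$. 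This restricts the surviving partitions to $\cP(\ell,m)$ and produces exactly $S_\ell(\theta)$.

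For each $\ell \leq k$, the choice $k_c = 2k+1$ captures all cumulant orders $\ell_c = \ell+m$ with $m \in \{0,\ldots,\ell\}$ since $\ell_c \leq 2\ell \leq 2k < k_c$, so the $\sigma^{-2\ell}$-coefficient equals $S_\ell(\theta)$ exactly and the first $k$ terms reproduce $R^k(\theta)$. The extra contributions at $\ell \in \{k+1,\ldots,k_c\}$ are polynomials in $\theta$ of degree $\leq 2\ell$, each bounded by $C(\|\theta\|\vee 1)^{2\ell}/\sigma^{2\ell} \leq C(\|\theta\|\vee 1)^{2k+2}/\sigma^{2k+2}$ in the regime $\|\theta\| \leq r(\sigma) \leq \sigma$, and summing the $O_k(1)$ such terms preserves the stated bound on $|R-R^k|$. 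For the gradient and Hessian statements, I would differentiate the same cumulant series term-by-term under $\E_\eps$, justified by absolute convergence and dominated convergence; each derivative reduces the degree of every extra-term polynomial by one, and the truncation bound $(eM)^{k_c+1}$ becomes $\lesssim \|v_\eps\|^{k_c+1}\|\theta\|^{k_c}$ for $\nabla_\theta$ and $\lesssim \|v_\eps\|^{k_c+1}\|\theta\|^{k_c-1}$ for $\nabla_\theta^2$, which after integration in $\eps$ yield the $(\|\theta\|\vee 1)^{2k+1}$ and $(\|\theta\|\vee 1)^{2k}$ factors while preserving the $(\log\sigma/\sigma)^{2k+2}$ small-parameter.
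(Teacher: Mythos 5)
Your proposal follows essentially the same route as the paper's proof: truncate $\eps$ to $\|\eps\|\le C\log\sigma$, expand the cumulant generating function of $\langle v_\eps,g\theta\rangle$, integrate out the Gaussian, reorganize by powers of $\sigma^{-2}$, and exhibit the cancellation of partitions that keep a contracted pair $\{2j-1,2j\}$ together (with the $\ell=1$ term absorbing the explicit $\|\theta\|^2/(2\sigma^2)$). The paper phrases the Wick contraction in terms of the tensor cumulants $\cK_k(g)$ and $\E_\eps[\eps^{\otimes 2m}]$ as a sum over pairings, while you do the equivalent bookkeeping with scalar cumulants plus Isserlis; and the paper truncates the $\sigma^{-2\ell}$-sum at $\ell=k$ whereas you truncate the cumulant order at $\ell_c\le 2k+1$ --- but for $\ell\le k$ these capture the same terms. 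One small step worth spelling out, which the paper does via Cauchy--Schwarz: after truncating $\eps$ you obtain the $\sigma^{-2\ell}$-coefficient with \emph{truncated} Gaussian moments, and you must separately bound the difference from the untruncated $S_\ell(\theta)$ (a super-polynomially small Gaussian-tail error) before you can claim that ``the first $k$ terms reproduce $R^k(\theta)$.'' With that reconciliation added, the argument matches the paper's.
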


From the definition in (\ref{eq:Ml}), we observe that for any fixed $\theta_*
\in \R^d$, the term $M_{\ell,m}(\pi \mid \theta,\theta_*)$ is a $G$-invariant
polynomial function of $\theta$. Counting the number of occurrences of $\theta$,
$M_{\ell,m}(\pi \mid \theta,\theta_*)$ has degree $\ell+m$ in $\theta$.
Hence, $S_\ell(\theta)$ is a $G$-invariant polynomial of degree $2\ell$.
The following shows that, in fact, $S_\ell(\theta)$ is in the algebra
generated by the polynomials $\cR^G_{\leq \ell}$ of degree at most $\ell$. (That
is, $S_\ell$ is a polynomial function of elements of $\cR^G_{\leq \ell}$.)
Furthermore, its dependence on the polynomials of degree $\ell$ has an explicit
form in terms of the moment tensor $T_\ell(\theta)=\E_g[(g\theta)^{\otimes \ell}]$
from (\ref{eq:momenttensor}). These properties will allow us to understand the
dependence of $S_\ell(\theta)$ on the transcendence basis for $\cR^G_{\leq
\ell}$ constructed in Lemma \ref{lemma:phiconstruction}.

\begin{lemma}\label{lemma:Selldegell}
For each fixed $\theta_* \in \R^d$ and each $\ell \geq 1$, we have
\begin{equation}
S_\ell(\theta)=\frac{1}{2(\ell!)}\big\|T_\ell(\theta)-T_\ell(\theta_*)
\big\|_\HS^2+Q_\ell(\theta)
\label{eq:Selldegell}
\end{equation}
where $Q_\ell(\theta)$ is a polynomial (with coefficients depending on $\theta_*$)
in the algebra generated by $\cR^G_{\leq \ell-1}$.
In particular, $S_\ell(\theta)$ is in the algebra generated by $\cR^G_{\leq \ell}$.
\end{lemma}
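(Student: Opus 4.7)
The plan is to isolate within the combinatorial formula (\ref{eq:Sl}) the two ``leading'' partition contributions that together produce $\frac{1}{2(\ell!)}\|T_\ell(\theta)-T_\ell(\theta_*)\|_\HS^2$, and to argue that all remaining contributions are polynomials in entries of the lower moment tensors $T_k(\theta)$ for $k\leq\ell-1$, which lie in $\cR^G_{\leq\ell-1}$. The key structural input is that by independence of $g_1,\ldots,g_{|\pi|}\sim\Unif(G)$, each $M_{\ell,m}(\pi\mid\theta,\theta_*)$ is a full tensor contraction of ``block factors'' $T_B=\E_g[(g\theta)^{\otimes p(B)}\langle\theta_*,g\theta\rangle^{q(B)}]$, one per block $B\in\pi$, whose entries are $G$-invariant polynomials in $\theta$ of degree exactly $|B|=p(B)+q(B)$, where $p(B)=|B\cap\{1,\ldots,2m\}|$ and $q(B)=|B\cap\{2m+1,\ldots,\ell+m\}|$; entries of $T_\ell(\theta)$ can appear in the contraction precisely when some block has $|B|=\ell$.

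The leading contributions carrying $T_\ell(\theta)$-entries come from two families of partitions: (i) $m=\ell$ with $|\pi|=2$, where the separating constraint $\pi(2j-1)\neq\pi(2j)$ forces each of the $\ell$ pairs to contribute one element to each block, giving $p(B_1)=p(B_2)=\ell$ and $M_{\ell,\ell}(\pi\mid\theta,\theta_*)=\|T_\ell(\theta)\|_\HS^2$; counting the $2^{\ell-1}$ unordered such partitions and combining with the prefactor $\frac{1}{\ell!\,2^\ell}$ yields total $\frac{1}{2(\ell!)}\|T_\ell(\theta)\|_\HS^2$; and (ii) $m=0$ with $|\pi|=1$ (the trivial partition), giving $M=\langle T_\ell(\theta),T_\ell(\theta_*)\rangle$ with signed prefactor $-\frac{1}{\ell!}$. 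Adding and subtracting the $\theta$-independent constant $\frac{1}{2(\ell!)}\|T_\ell(\theta_*)\|_\HS^2$ (absorbable into $Q_\ell$ since constants lie in $\cR^G_{\leq\ell-1}$) consolidates these two terms into $\frac{1}{2(\ell!)}\|T_\ell(\theta)-T_\ell(\theta_*)\|_\HS^2$.

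It remains to show that every remaining $M_{\ell,m}(\pi\mid\theta,\theta_*)$ in (\ref{eq:Sl}) lies in the algebra generated by $\cR^G_{\leq\ell-1}$. When every block has $|B|\leq\ell-1$---automatic for $m=0$, $|\pi|\geq 2$ since $\sum|B|=\ell$---every block-factor entry already lies in $\cR^G_{\leq\ell-1}$ and the contracted $M$ follows immediately. The remaining ``awkward'' cases (some block of size $\geq\ell$ in a non-leading partition, possible for $m\geq 1$, $|\pi|\geq 2$ or $m=\ell$, $|\pi|\geq 3$) I would handle by a group change of variable: replacing an independent group element $g_{B'}$ with $g_Bh'$, for $h'\sim\Unif(G)$ fresh and independent of $g_B$, rewrites each connecting pair factor $\langle g_B\theta,g_{B'}\theta\rangle$ as $\langle\theta,h'\theta\rangle$ and decouples the block expectations. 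Iterating this device, the contribution factors into a product of $\E_{h'}[\langle\theta,h'\theta\rangle^k]=\|P_\mathrm{triv}(\theta^{\otimes k})\|_\HS^2$ (a squared norm of the $G$-invariant projection of $\theta^{\otimes k}$ whose entries are $G$-invariant of degree $k$, hence controllable to lie in the algebra of $\cR^G_{\leq k}$) and smaller $T_k$-type factors of order $<\ell$. The main obstacle is orchestrating these substitutions and the signed combinatorial weights $(|\pi|-1)!(-1)^{|\pi|}$ across all awkward partitions consistently so that no net $T_k(\theta)$-entries of degree $\geq\ell$ survive beyond the two leading terms; I expect this to follow either by induction on $\ell$ using the analogous statement for $S_{\ell'}$ with $\ell'<\ell$, or by matching against the Taylor--cumulant expansion of $R(\theta)$ underlying Lemma \ref{lemma:seriesexpansion}, where the $\sigma^{-2\ell}$ truncation naturally isolates precisely the correct combination of moment tensors. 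The final ``in particular'' statement then follows since $T_\ell(\theta)$-entries are in $\cR^G_{\leq \ell}$.
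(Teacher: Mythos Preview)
Your approach is essentially the paper's: identify the two leading partition families, and show every remaining $M_{\ell,m}(\pi\mid\theta,\theta_*)$ lies term-by-term in the algebra generated by $\cR^G_{\leq\ell-1}$ via block factorization plus a group change of variable. Your identification of the leading terms and their coefficients is correct and matches the paper exactly.

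Where you lose the thread is in the final paragraph. The worry about ``orchestrating these substitutions and the signed combinatorial weights $(|\pi|-1)!(-1)^{|\pi|}$ across all awkward partitions consistently'' is a red herring: you do not need any cancellation. Each individual awkward $M_{\ell,m}(\pi)$ already lies in the algebra of $\cR^G_{\leq\ell-1}$, so any $\R$-linear combination of them does too. No induction on $\ell$ and no appeal to the cumulant expansion are needed.

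The clean observation you are missing is that for $\pi\in\cP(\ell,m)$ the separating constraint $\pi(2j-1)\neq\pi(2j)$ forces every block to have size at most $\ell$: each block can contain at most one element from each of the $m$ pairs and at most $\ell-m$ of the unpaired indices. Hence the only case not covered by ``all blocks of size $\leq\ell-1$'' is when some block has size exactly $\ell$; and since $\sum_B|B|=\ell+m\leq 2\ell$, either there are exactly two such blocks (forcing $m=\ell$, $|\pi|=2$: your leading case (i)) or there is exactly one. In that remaining case the size-$\ell$ block, say block $1$, meets every pair and contains all unpaired indices; pick any other block $B'$ (necessarily of size $k\leq\ell-1$, consisting entirely of pair-partners of block $1$) and apply your substitution $g_{B'}=g_1h'$. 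This turns the $k$ factors $\langle g_1\theta,g_{B'}\theta\rangle$ into $\langle\theta,h'\theta\rangle$, whose expectation over $h'$ equals $\|T_k(\theta)\|_\HS^2$ and thus lies in the algebra of $\cR^G_{\leq k}\subseteq\cR^G_{\leq\ell-1}$. What remains is an expression in which $g_1$ now appears only $\ell-k\leq\ell-1$ times and all other group elements still appear $\leq\ell-1$ times, so the block-factorization argument applies directly. One substitution suffices; no iteration.
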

\begin{proof}
We consider the terms $M_{\ell,m}(\pi \mid \theta,\theta_*)$ which constitute
$S_\ell(\theta)$. For each $\pi \in \cP(\ell,m)$, applying
the constraint that $\pi(2j-1) \neq \pi(2j)$ for $j=1,\ldots,m$, we observe
that each set in the partition $\pi$ has cardinality at most $\ell$, and hence
each distinct group element $g_i$ for $i=1,\ldots,|\pi|$ appears at most
$\ell$ times inside the expectation in (\ref{eq:Ml}).

If each set in $\pi$ has cardinality at most $\ell-1$ (e.g.~\eqref{eq:Mlm1} and \eqref{eq:Mlm2} in Example~\ref{ex:Mlm}), then we claim that
$M_{\ell,m}(\pi \mid \theta,\theta_*)$ is in the generated algebra of
$\cR^G_{\leq \ell-1}$. To see this,
observe that for any $k \leq \ell-1$ and tensor $A \in (\R^d)^{\otimes k}$,
we may write
\[\E_g\left[\sum_{i_1,\ldots,i_k=1}^d
\left(\prod_{j=1}^k (g\theta)_{i_j}\right)A_{i_1,\ldots,i_k}\right]
=\E_g\Big[\big\langle (g\theta)^{\otimes k},\,A \big\rangle\Big]
=\Big\langle T_k(\theta),\,A \Big\rangle.\]
Each entry of the moment tensor $T_k(\theta)$ is a $G$-invariant
polynomial of degree $k$, and hence belongs to $\cR^G_{\leq k}$.
Applying this identity once for each distinct element $g_1,\ldots,g_{|\pi|}$ in
(\ref{eq:Ml}), and using that each such element appears $k \leq \ell-1$ times,
we get that $M_{\ell,m}(\pi \mid \theta,\theta_*)$ belongs to the algebra
generated by $\cR^G_{\leq \ell-1}$.
Absorbing the contributions of these terms $M_{\ell,m}(\pi \mid
\theta,\theta_*)$ into $Q_\ell(\theta)$, it remains to consider those partitions
$\pi \in \cP(\ell,m)$ where some set in $\pi$ has cardinality $\ell$.

Without loss of generality, let us order the sets of $\pi$ so that its first set has
cardinality $\ell$. Then $g_1$ appears $\ell$ times in (\ref{eq:Ml}), so
exactly one of $\{\pi(2j-1),\pi(2j)\}$ must be 1 for each $j=1,\ldots,m$, and
every $\pi(j)$ must be 1 for $j=2m+1,\ldots,\ell+m$. For notational convenience,
consider $\pi$ such that $\pi(2j-1)=1$ for each $j=1,\ldots,m$ (e.g.~\eqref{eq:Mlm3} in Example~\ref{ex:Mlm}). For such $\pi$,
we have
\begin{equation}\label{eq:Mellspecial}
M_{\ell,m}(\pi \mid \theta,\theta_*)=\E_{g_1,\ldots,g_{|\pi|}}
\big[\langle g_1\theta,g_{\pi(2)}\theta \rangle \ldots
\langle g_1\theta,g_{\pi(2m)}\theta \rangle \langle g_1\theta,\theta_*
\rangle^{\ell-m}\big].
\end{equation}
Suppose now that there is a second set of $\pi$ which has cardinality at
most $\ell-1$, corresponding to the element $g_2$. Then $g_2$ appears between 1
and $\ell-1$ times in
$g_{\pi(2)},g_{\pi(4)},\ldots,g_{\pi(2m)}$. We may decouple the corresponding
$g_1$'s by introducing a new independent variable $\tilde{g}_1 \sim \Unif(G)$,
setting $\tilde{g}_2=\tilde{g}_1g_1^{-1}g_2$, and writing
\[\langle g_1\theta,g_2\theta \rangle=\langle \theta,g_1^{-1}g_2\theta \rangle
=\langle \tilde{g}_1\theta, \tilde{g}_2\theta \rangle.\]
The expectation over the uniform random pair $(g_1,g_2)$ may be
replaced by that over the uniform random triple $(g_1,\tilde{g}_1,\tilde{g}_2)$,
reducing (\ref{eq:Mellspecial}) into an expectation where each distinct
group element now appears $\leq \ell-1$ times. Then by the argument for the previous case,
we also have that $M_{\ell,m}(\pi \mid \theta,\theta_*)$ belongs to the algebra
generated by $\cR^G_{\leq \ell-1}$ in this case, and these terms may
be absorbed into $Q_\ell(\theta)$.

The only partitions that remain are those where every set in $\pi$ has
cardinality $\ell$. One such partition corresponds to $m=0$, where
$\pi=\{\{1,2,\ldots,\ell\}\}$. For this $\pi$, we have
\[M_{\ell,m}(\pi \mid \theta,\theta_*)=\E_g[\langle \theta_*,g\theta
\rangle^\ell]=\E_{g_1,g_2}[\langle g_1\theta_*,g_2\theta \rangle^\ell] = \langle T_\ell(\theta_*),T_\ell(\theta) \rangle.\]
The remaining $2^{\ell-1}$ such partitions correspond to $m=\ell$ and $|\pi|=2$, where we may
assume without loss of generality that $1 \in \pi(1)$ and $2 \in \pi(2)$,
and take one element of each remaining pair $\{\pi(2j-1),\pi(2j)\}$ for
$j=1,\ldots,\ell$ to belong to $\pi(1)$ and the other to belong to $\pi(2)$.
For these partitions $\pi$, we have
\[M_{\ell,m}(\pi \mid \theta,\theta_*)=\E_{g_1,g_2}[\langle g_1 \theta,g_2
\theta \rangle^\ell]
= \|T_\ell(\theta)\|_\HS^2 
.\]
Applying the above two displays to (\ref{eq:Sl}), we obtain
\[S_\ell(\theta)=-\frac{1}{\ell!} \langle T_\ell(\theta_*),T_\ell(\theta) \rangle +\frac{1}{2(\ell!)}\|T_\ell(\theta)\|_\HS^2 +Q_\ell(\theta)\]
for some $Q_\ell$ in the algebra generated by $\cR^G_{\leq \ell-1}$. 
Completing the square yields
$S_\ell(\theta)=\frac{1}{2(\ell!)} \|T_\ell(\theta)-T_\ell(\theta_*)\|_\HS^2 - \frac{1}{2(\ell!)} \|T_\ell(\theta_*)\|_\HS^2  +Q_\ell(\theta)$,
where $\|T_\ell(\theta_*)\|_\HS^2$ does not depend on $\theta$ and can be absorbed into $Q_\ell(\theta)$. We thus arrive at the stated
form of $S_\ell(\theta)$ in \eqref{eq:Selldegell}. 
Since the entries of $T_\ell(\theta)$
belong to $\cR^G_{\leq \ell}$, we obtain also that $S_\ell$ belongs to the
algebra generated by $\cR^G_{\leq \ell}$.
\end{proof}

The following computation of the first three terms of (\ref{eq:formalseries})
will be useful in our analysis of specific group actions. By Lemma
\ref{lemma:kerneldecomp}, we assume without loss of generality that $\E_g[g]=0$.

\begin{lemma}\label{lemma:Smeanzero}
If $\E_g[g]=0$, then
\begin{align*}
S_1(\theta)&=0\\
S_2(\theta)&=-\tfrac{1}{2}\E_g[\langle \theta_*,g\theta \rangle^2]
+\tfrac{1}{4}\E_g[\langle \theta,g\theta \rangle^2]\\
S_3(\theta)&=-\tfrac{1}{6}\E_g[\langle \theta_*,g\theta \rangle^3]
+\tfrac{1}{12}\E_g[\langle \theta,g\theta \rangle^3]\\
&\hspace{0.5in}+\tfrac{1}{2}\E_{g_1,g_2}[\langle g_1\theta,g_2\theta \rangle
\langle \theta_*,g_1\theta \rangle\langle \theta_*,g_2\theta \rangle]
-\tfrac{1}{3}\E_{g_1,g_2}[\langle g_1\theta,g_2\theta \rangle
\langle \theta,g_1\theta \rangle \langle \theta,g_2\theta \rangle].
\end{align*}
\end{lemma}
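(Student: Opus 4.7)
The plan is to apply the definition (\ref{eq:Sl}) directly, relying on a single vanishing principle: under $\E_g[g]=0$, a summand $M_{\ell,m}(\pi\mid\theta,\theta_*)$ equals zero whenever some independent group variable $g_i$ appears---possibly only after a change of variable---in exactly one factor of the form $\langle v,g_i u\rangle$ with $u,v$ not involving $g_i$, since $\E_{g_i}[\langle v,g_i u\rangle]=\langle v,\E_{g_i}[g_i]u\rangle=0$. The most immediate case is when $\pi$ has a singleton block, so $g_i$ appears only in a single $\langle\theta_*,g_i\theta\rangle$ factor. A subtler variant arises when $g_i$ appears several times but only inside factors $\langle g_i\theta,g_j\theta\rangle$: using orthogonality of $g_i$, the substitution $h=g_i^\top g_j$ rewrites these as $\langle\theta,h\theta\rangle$, after which $g_i$ may remain only in a single $\langle\theta_*,\cdot\rangle$ factor and the same cancellation applies.

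The case $\ell=1$ is immediate since both partitions in $\cP(1,0)\cup\cP(1,1)$ consist entirely of singletons. For $\ell=2$, a short enumeration shows that the only surviving partitions are $\{\{1,2\}\}\in\cP(2,0)$ and the two ``cross'' partitions $\{\{1,3\},\{2,4\}\}$, $\{\{1,4\},\{2,3\}\}\in\cP(2,2)$; every partition in $\cP(2,1)$ partitions the odd-sized set $[3]$ and must contain a singleton, and hence vanishes. The two cross partitions each produce $\E_{g_1,g_2}[\langle g_1\theta,g_2\theta\rangle^2]=\E_g[\langle\theta,g\theta\rangle^2]$ after the substitution $g=g_1^\top g_2$, and combining with the prefactor $\tfrac{1}{2!}\binom{2}{2}\tfrac{1}{4}$ yields the stated $\tfrac{1}{4}$-coefficient.

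For $\ell=3$ the bookkeeping is longer but follows the same pattern. In $\cP(3,0)$ only $\{\{1,2,3\}\}$ contributes, giving the $-\tfrac{1}{6}\E_g[\langle\theta_*,g\theta\rangle^3]$ term. In $\cP(3,1)$ only the two cross partitions of $[4]$ survive, and they give equal values, yielding together the $\tfrac{1}{2}\E_{g_1,g_2}[\langle g_1\theta,g_2\theta\rangle\langle\theta_*,g_1\theta\rangle\langle\theta_*,g_2\theta\rangle]$ term. In $\cP(3,3)$ the partitions of $[6]$ with all blocks of size $\geq 2$ and avoiding the three original pairs split into four $(3,3)$-type partitions with $|\pi|=2$ and eight $(2,2,2)$-type partitions with $|\pi|=3$; applying the substitutions $h=g_1^\top g_2$ and $h'=g_1^\top g_3$ reduces them to the stated expectations, and combining the counts $4$ and $8$ with the factors $(|\pi|-1)!(-1)^{|\pi|}$ and the prefactor $\tfrac{1}{3!}\binom{3}{3}\tfrac{1}{8}$ produces the $\tfrac{1}{12}$ and $-\tfrac{1}{3}$ coefficients. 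The main obstacle is $\cP(3,2)$: the only partitions of $[5]$ with all blocks of size $\geq 2$ are four $(3,2)$-partitions, none of which contains a singleton, so the elementary rule fails. Here the orthogonal-substitution argument is essential: in each such partition the element $5$ (which sits in the $\theta_*$-position) lies in the size-$3$ block, and after substituting $h=g_1^\top g_2$ the expression decouples into $\E_h[\langle\theta,h\theta\rangle^2]\cdot\E_{g_1}[\langle\theta_*,g_1\theta\rangle]=0$, so all four $\cP(3,2)$ contributions vanish and no cross term mixing $\theta$ and $\theta_*$ of that shape appears in $S_3$.
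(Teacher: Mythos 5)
Your proposal is correct and follows essentially the same route as the paper's proof: expand $S_\ell$ via (\ref{eq:Sl}), observe that singleton blocks kill a summand under $\E_g[g]=0$, enumerate surviving partitions, collapse repeated expectations via the orthogonal change of variable $h=g_1^\top g_2$, and tally the coefficients. You correctly identified the one subtle point — the four $(3,2)$-type partitions in $\cP(3,2)$ have no singleton but still vanish after decoupling — and your factorization $\E_h[\langle\theta,h\theta\rangle^2]\cdot\E_{g_1}[\langle\theta_*,g_1\theta\rangle]=0$ is exactly the paper's argument for that term. The enumeration (2 cross partitions for $m=1$, 4 partitions for $m=2$, 4 of type $(3,3)$ and 8 of type $(2,2,2)$ for $m=3$) and the resulting coefficients $-\tfrac16,\tfrac12,0,\tfrac{1}{12},-\tfrac13$ all check out.
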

\begin{proof}
If $\E_g[g]=0$, then by (\ref{eq:Ml}), any $\pi \in \cP(\ell,m)$ which has a singleton
yields $M_{\ell,m}(\pi \mid \theta,\theta_*)=0$.

For $\ell=1$ and $m \in \{0,1\}$, every $\pi \in \cP(\ell,m)$ has a singleton,
so $S_1(\theta)=0$.

For $\ell=2$ and $m \in \{0,1,2\}$, the only partitions
$\pi \in \cP(\ell,m)$ which do not have a singleton are
$\{\{1,2\}\}$ for $m=0$ and $\{\{1,3\},\{2,4\}\}$ and $\{\{1,4\},\{2,3\}\}$
for $m=2$. We get
\begin{align*}
S_2(\theta)&=-\tfrac{1}{2}M_{2,0}(\{\{1,2\}\})
+\tfrac{1}{8}M_{2,2}(\{\{1,3\},\{2,4\}\})+\tfrac{1}{8}M_{2,2}(\{\{1,4\},
\{2,3\}\})\\
&=-\tfrac{1}{2}\E_g[\langle \theta_*,g\theta \rangle^2]
+\tfrac{1}{4}\E_{g_1,g_2}[\langle g_1\theta,g_2\theta \rangle^2]\\
&=-\tfrac{1}{2}\E_g[\langle \theta_*,g\theta \rangle^2]
+\tfrac{1}{4}\E_g[\langle \theta,g\theta \rangle^2],
\end{align*}
the last line applying the equality in law $g_1^\top g_2\overset{L}{=}g_1$.

For $\ell=3$, grouping together
$\pi \in \cP(\ell,m)$ that yield the same value of
$M_{\ell,m}(\pi \mid \theta,\theta_*)$ by symmetry, we may check that
\begin{align*}
S_3(\theta)&=-\tfrac{1}{6}M_{3,0}(\{\{1,2,3\}\})
+2 \cdot \tfrac{1}{4}M_{3,1}(\{1,3\},\{2,4\})
+4 \cdot \tfrac{1}{8}M_{3,2}(\{1,3,5\},\{2,4\})\\
&\hspace{0.5in}+4 \cdot \tfrac{1}{48}M_{3,3}(\{1,3,5\},\{2,4,6\})
-8 \cdot \tfrac{1}{24}M_{3,3}(\{1,3\},\{2,5\},\{4,6\})\\
&=-\tfrac{1}{6}\E_g[\langle \theta_*,g\theta \rangle^3]
+\tfrac{1}{2}\E_{g_1,g_2}[\langle g_1\theta,g_2\theta \rangle
\langle \theta_*,g_1\theta \rangle \langle \theta_*,g_2\theta \rangle]
+\tfrac{1}{2}\E_{g_1,g_2}[\langle g_1\theta,g_2\theta \rangle^2 \langle
\theta_*,g_1\theta \rangle]\\
&\hspace{0.5in}+\tfrac{1}{12}\E_{g_1,g_2}[\langle g_1\theta,g_2\theta \rangle^3]
-\tfrac{1}{3}\E_{g_1,g_2,g_3}[\langle g_1\theta,g_2\theta \rangle
\langle g_1\theta,g_3\theta \rangle \langle g_2\theta,g_3\theta \rangle].
\end{align*}
By the equality in joint law $(g_1^\top g_2,g_1)
\overset{L}{=}(g_2,g_1)$, the third term vanishes because
\[\E_{g_1,g_2}[\langle g_1\theta,g_2\theta \rangle^2 \langle
\theta_*,g_1\theta \rangle]
=\E_{g_1,g_2}[\langle \theta,g_2\theta \rangle^2 \langle \theta_*,g_1\theta
\rangle]=\E_{g_2}[\langle \theta,g_2\theta \rangle^2]
\E_{g_1}[\langle \theta_*,g_1\theta \rangle]=0.\]
Applying $g_1^\top g_2
\overset{L}{=}g$ and $(g_1^\top g_2,g_1^\top g_3,g_2^\top g_3)
\overset{L}{=}(g_1^\top g_2,g_1^\top,g_2^\top)$ to the remaining terms yields
the form of $S_3$.
\end{proof}

We now prove Lemma \ref{lemma:seriesexpansion}. We will first show
the expansion (\ref{eq:formalseries}) of $R(\theta)$ formally
in Lemma \ref{lem:formal-expansion} below, and then prove
quantitative estimates on the truncation error.  
Recalling the form of $R(\theta)$ in (\ref{eq:R}), we define the formal series
\[
R_\formal (\theta) = \frac{\|\theta\|^2}{2} \sigma^{-2}
- \sum_{k = 1}^\infty \frac{1}{k!} \E_\eps\left[\kappa_k\Big(\langle \sigma^{-2} \theta_* + \sigma^{-1} \eps, g\theta\rangle\Big) \right]
\]
using the cumulant generating function
\begin{equation}\label{eq:cumulantexpansionfg}
\log \E_g[e^{f(g)}] = \sum_{k=1}^\infty \frac{1}{k!}\,\kappa_k(f(g))
\end{equation}
for $f(g) = \langle \sigma^{-2} \theta_* + \sigma^{-1} \eps,g\theta \rangle$, where $\kappa_k(f(g))$
is the $k^\text{th}$ cumulant of $f(g)$ over the law $g \sim \Unif(G)$, conditional on $\eps$.
See Appendix \ref{appendix:cumulants} for definitions.

\begin{lemma} \label{lem:formal-expansion}
As formal power series in $\sigma^{-1}$, we have the equality
\[
R_\formal(\theta) = \sum_{\ell=1}^\infty \sigma^{-2\ell}S_\ell(\theta).
\]
\end{lemma}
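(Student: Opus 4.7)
The plan is to expand the cumulant generating function $\log\E_g[e^{\langle u,g\theta\rangle}]$ in $u=\sigma^{-2}\theta_*+\sigma^{-1}\eps$, integrate against $\eps$ via Wick's theorem for isotropic Gaussians, and collect coefficients of $\sigma^{-2\ell}$ to match the right-hand side. First I apply the cumulant--moment formula from Appendix \ref{appendix:cumulants},
\[
\kappa_k\big(\langle u,g\theta\rangle\big)=\sum_{\pi}(-1)^{|\pi|-1}(|\pi|-1)!\prod_{B\in\pi}\E_g\big[\langle u,g\theta\rangle^{|B|}\big],
\]
with $\pi$ ranging over partitions of $[k]$, and replace the product of expectations by $\E_{(g_B)_{B\in\pi}}$ using independent copies $g_B\sim\Unif(G)$. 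Each $\langle u,g_B\theta\rangle^{|B|}$ then expands binomially: I label every factor either as a ``$\theta_*$-position'' (contributing $\sigma^{-2}\langle\theta_*,g_B\theta\rangle$) or as an ``$\eps$-position'' (contributing $\sigma^{-1}\langle\eps,g_B\theta\rangle$). Taking $\E_\eps$ and applying Wick's theorem, the product over $\eps$-positions becomes a sum over perfect matchings $M$ of those positions, each pair $\{i_1,i_2\}\in M$ yielding $\langle g_{\pi(i_1)}\theta,g_{\pi(i_2)}\theta\rangle$; in particular the number $2m$ of $\eps$-positions must be even.

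A term with $\ell-m$ $\theta_*$-positions and $2m$ $\eps$-positions (so $k=\ell+m$) carries a factor $\sigma^{-(2(\ell-m)+2m)}=\sigma^{-2\ell}$. By joint symmetry of the cumulants in the $\theta_*$- and $\eps$-arguments and of Wick's sum over matchings, the $\binom{\ell+m}{\ell-m}$ possible $\theta_*$-position subsets and the $(2m-1)!!$ matchings each give the same value once $\pi$ is summed. Fixing the canonical labeling with $\theta_*$-positions at $\{2m+1,\ldots,\ell+m\}$ and matching $\{\{2j-1,2j\}:j=1,\ldots,m\}$ recovers exactly $M_{\ell,m}(\pi\mid\theta,\theta_*)$ from \eqref{eq:Ml}, and the combinatorial prefactor simplifies via
\[
\frac{1}{(\ell+m)!}\binom{\ell+m}{\ell-m}(2m-1)!!=\frac{1}{(\ell-m)!\,m!\,2^m}=\frac{1}{\ell!}\binom{\ell}{m}\frac{1}{2^m},
\]
matching the prefactor in \eqref{eq:Sl}. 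This identifies the coefficient of $\sigma^{-2\ell}$ in $R_\formal$ as
\[
\frac{\|\theta\|^2}{2}\,\1\{\ell=1\}+\frac{1}{\ell!}\sum_{m=0}^{\ell}\frac{1}{2^m}\binom{\ell}{m}\sum_{\pi}(-1)^{|\pi|}(|\pi|-1)!\,M_{\ell,m}(\pi\mid\theta,\theta_*),
\]
where now $\pi$ ranges over \emph{all} partitions of $[\ell+m]$ rather than only over $\cP(\ell,m)$.

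The main obstacle is to show that the extra ``degenerate'' partitions $\pi\notin\cP(\ell,m)$, in which some Wick pair $\{2j-1,2j\}$ lies inside a single block $B$ and produces a redundant factor $\langle g_B\theta,g_B\theta\rangle=\|\theta\|^2$, yield a net contribution of $-\tfrac{\|\theta\|^2}{2}\,\1\{\ell=1\}$, so that the remaining $\cP(\ell,m)$-restricted sum is exactly $S_\ell(\theta)$. My plan is a sign-reversing involution: given degenerate $\pi$, set $j^\star=\min\{j:\pi(2j-1)=\pi(2j)\}$ and let $B^\star$ denote the block of $\pi$ containing $\{2j^\star-1,2j^\star\}$. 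If $B^\star\supsetneq\{2j^\star-1,2j^\star\}$ I split $B^\star$ into $\{2j^\star-1,2j^\star\}$ and $B^\star\setminus\{2j^\star-1,2j^\star\}$, while if $B^\star=\{2j^\star-1,2j^\star\}$ I group $\pi$ together with the $|\pi|-1$ partitions obtained by merging $B^\star$ into some other block. Because $\|g\theta\|^2=\|\theta\|^2$ decouples from the remaining factors, $M_{\ell,m}$ is invariant under both operations, and the weighted sum over a ``split form'' $\pi'$ with its $|\pi'|-1$ mergers telescopes:
\[
(-1)^{|\pi'|}(|\pi'|-1)!+(|\pi'|-1)\cdot(-1)^{|\pi'|-1}(|\pi'|-2)!=0.
\]
No pair $j<j^\star$ is ever separated by the split or the merge, so $j^\star$ is preserved and the resulting groups are disjoint and exhaust the degenerate set. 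The only case with no available merger is $|\pi'|=1$, which forces $\ell+m=2$ and hence $\ell=m=1$; the lone partition $\{\{1,2\}\}$ then contributes $-\|\theta\|^2/2$ after multiplication by the prefactor, cancelling the external $\|\theta\|^2/(2\sigma^2)$ term of $R_\formal$. The remaining details---justifying interchanges at the formal-series level, the entrywise Wick computation, and that only even powers of $\sigma^{-1}$ arise---are routine.
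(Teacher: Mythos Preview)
Your proof is correct and follows essentially the same route as the paper: expand the cumulant via the moment--cumulant relation, apply the binomial expansion in $\theta_*$- and $\eps$-positions, compute $\E_\eps$ by Wick pairings, and then cancel the ``degenerate'' partitions (those not in $\cP(\ell,m)$) via the same telescoping identity $(-1)^{|\pi'|}(|\pi'|-1)!+(|\pi'|-1)(-1)^{|\pi'|-1}(|\pi'|-2)!=0$, with the $\ell=m=1$ leftover absorbing the $\|\theta\|^2/2$ term. The only cosmetic differences are that the paper packages the expansion through cumulant \emph{tensors} $\cK_k(g)$ rather than working directly with the scalar moment--cumulant formula, and phrases the cancellation by iterating over the pairs $\{1,2\},\{3,4\},\ldots$ rather than via your minimum-violated-index $j^\star$ (note that your ``sign-reversing involution'' is really the same grouping argument, not a bijection).
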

\begin{proof}
For notational convenience, set $z = \sigma^{-1}$.
In the rest of the proof, we treat all series expansions formally and take
termwise expectations $\E_\eps$. We now rewrite $R_\formal(\theta)$ using the cumulant
tensors of $g$: Define the order-$k$ moment tensor $\cT_k(g)$ of $g$ by
\begin{equation}\label{eq:gtensor}
\cT_k(g)=\E_g[g^{\otimes k}]
\end{equation}
where $g^{\otimes k} \in (\R^{d \times d})^{\otimes k}$ is the $k$-fold tensor
product of the linear map
$g:\R^d \to \R^d$, acting on $(\R^d)^{\otimes k}$ via
$g^{\otimes k}(v_1 \otimes \ldots \otimes
v_k)=gv_1 \otimes \ldots \otimes gv_k$.
Define the order-$k$ cumulant tensor $\cK_k(g)$ by the moment-cumulant relation
\begin{equation}\label{eq:tensormomentcumulant}
\cK_k(g)=\sum_{\text{partitions } \pi \text{ of } [k]}\;\;
(|\pi|-1)!(-1)^{|\pi|-1} \bigotimes_{S \in \pi} \cT_S(g),
\end{equation}
which is analogous to the usual moment-cumulant relation for scalar random
variables in (\ref{eq:scalarmomentcumulant}).
Here $\cT_S(g)$ is the order-$|S|$ moment tensor of $g$ acting on
$(\R^d)^{\otimes S}$, corresponding to the $|S|$ coordinates belonging to $S$.
For vectors $v_i,w_i \in \R^d$, we have the relation
\[\left\langle \bigotimes_{i \in S} v_i,\;
\cT_S(g) \left(\bigotimes_{i \in S} w_i\right) \right\rangle
=\E_g\left[ \left\langle \bigotimes_{i \in S} v_i,\;
\bigotimes_{i \in S} (gw_i) \right\rangle \right]
=\E_g\left[\prod_{i\in S} \langle v_i,gw_i \rangle\right].\]
Applying this, (\ref{eq:tensormomentcumulant}), and
(\ref{eq:scalarmomentcumulant}), we obtain
\begin{equation}\label{eq:cumtensorrelation}
\left\langle \bigotimes_{i=1}^k v_i,\;
\cK_k(g) \left(\bigotimes_{i=1}^k w_i\right) \right\rangle
=\kappa_k\Big(\langle v_1,gw_1 \rangle,\ldots,\langle v_k,gw_k \rangle\Big).
\end{equation}
Recall that $\kappa_k(f(g))=\kappa_k(f(g),\ldots,f(g))$,
where the latter mixed cumulant function is multi-linear and permutation
invariant in its arguments.
Applying \eqref{eq:cumtensorrelation} followed by a binomial expansion, we get
\[
\kappa_k\Big(\langle z^2\theta_*+z\eps,\;g\theta \rangle\Big)
=\Big\langle (z^2\theta_*+z\eps)^{\otimes k},\;\cK_k(g) \theta^{\otimes k} \Big\rangle
=\sum_{j=0}^k z^{2k-j} \binom{k}{j} \Big\langle \eps^{\otimes j} \otimes
\theta_*^{\otimes (k-j)},\;\cK_k(g) \theta^{\otimes k} \Big\rangle.\]
So as formal series we find
\begin{equation}\label{eq:multilinearexpansion}
R_\formal(\theta)=\frac{\|\theta\|^2}{2}z^2-\sum_{k=1}^\infty \sum_{j=0}^k \frac{z^{2k-j}}{j!(k-j)!}
\Big\langle \E_\eps[\eps^{\otimes j}]
\otimes \theta_*^{\otimes (k-j)},\;\cK_k(g)\theta^{\otimes k}\Big\rangle.
\end{equation}
Note that $\E_\eps[\eps^{\otimes j}]=0$ if $j$ is odd.
Reparametrizing the terms for even $j$ by $j=2m$ and $\ell=k-m$, it may be
checked that $\{(k,j): k \geq 1, 0 \leq j \leq k\}$ is in bijection with
$\{(\ell,m):\ell \geq 1,0 \leq m \leq \ell\}$. Thus, we obtain
\begin{equation}\label{eq:Q}
R_\formal(\theta)=\sum_{\ell=1}^\infty z^{2\ell}R_\ell(\theta),
\end{equation}
where
\begin{equation} \label{eq:rl-def}
R_\ell(\theta)=\1\{\ell=1\}\frac{\|\theta\|^2}{2}-\sum_{m=0}^\ell
\frac{1}{(2m)!(\ell-m)!} \Big\langle \E\big[\eps^{\otimes 2m}\big] 
\otimes \theta_*^{\otimes (\ell-m)},\;\cK_{\ell+m}(g)\theta^{\otimes
(\ell+m)}\Big\rangle.
\end{equation}
It remains to check that $R_\ell(\theta) = S_\ell(\theta)$.

To show this, let us compute explicitly the expectation over
$\eps \sim \N(0,\Id)$ in (\ref{eq:rl-def}).
Consider the identity matrix as an element of $(\R^d)^{\otimes 2}$,
\[\Id=\sum_{i=1}^d e_i \otimes e_i,\]
where $e_i$ is the $i^\text{th}$ standard basis vector in $\R^d$.
For any pairing $\pi$ of $[2m]$, denote
$\bigotimes_{S \in \pi} \Id \in (\R^d)^{\otimes 2m}$
as the tensor product of $m$ copies of $\Id$ that
associates the two coordinates of each copy of $\Id$ with a pair $S \in \pi$.
Using that the $2k^\text{th}$ moment of a standard Gaussian variable is the number of
pairings of $[2k]$, we have for any basis vector $e_{i_1} \otimes
\ldots \otimes e_{i_{2m}} \in (\R^d)^{2m}$ that
\begin{align*}
\langle \E_\eps[\eps^{\otimes {2m}}], e_{i_1} \otimes
\ldots e_{i_{2m}} \rangle=\E_\eps\left[\prod_{j=1}^{2m} \eps_{i_j}\right]
&=\sum_{\text{pairings } \pi \text{ of } [2m]}\;\;
\prod_{(j_1,j_2) \in \pi} \1\{i_{j_1}=i_{j_2}\}\\
&=\left\langle \sum_{\text{pairings } \pi \text{ of } [2m]}\;\;
\left(\bigotimes_{S \in \pi} \Id\right),\;
e_{i_1} \otimes \ldots \otimes e_{i_{2m}}\right\rangle.
\end{align*}
Hence we see that
\[
\E_\eps[\eps^{\otimes 2m}]=\sum_{\text{pairings } \pi \text{ of }[2m]}\;\;
\bigotimes_{S \in \pi} \Id.
\]
Applying (\ref{eq:cumtensorrelation}) and
the permutation invariance of $\kappa_{\ell+m}$ in its arguments, we get
\begin{equation}
\Big\langle \E_\eps[\eps^{\otimes 2m}] \otimes \theta_*^{\otimes (\ell-m)},\;
\cK_{\ell+m}(g)\theta^{\otimes (\ell+m)}\Big\rangle
=(2m-1)!! \cdot \Big\langle \Id^{\otimes m} \otimes \theta_*^{\otimes (\ell-m)},
\;\cK_{\ell+m}(g)\theta^{\otimes (\ell+m)}\Big\rangle,
\label{eq:sumpairing}
\end{equation}
since there are $(2m-1)!!$ total pairings, and by permutation invariance,
the term corresponding to each pairing contributes equally to this
inner product. (The right side of \eqref{eq:sumpairing} corresponds to the
consecutive pairing of $[2m]$.) Applying \eqref{eq:sumpairing} and $(2m-1)!!/(2m)!=1/(2^mm!)$
to (\ref{eq:rl-def}) yields
\begin{equation}\label{eq:Sltmp2}
R_\ell(\theta)=\1\{\ell=1\}\frac{\|\theta\|^2}{2}- \frac{1}{\ell!}\sum_{m=0}^\ell
\frac{1}{2^m } \binom{\ell}{m} \Big\langle \Id^{\otimes m}
\otimes \theta_*^{\otimes (\ell-m)},\;
\cK_{\ell+m}(g)\theta^{\otimes (\ell+m)}\Big\rangle.
\end{equation}
Now we use (\ref{eq:tensormomentcumulant}) to write
\begin{align}
&\Big\langle \Id^{\otimes m} \otimes \theta_*^{\otimes (\ell-m)},
\;\cK_{\ell+m}(g)\theta^{\otimes (\ell+m)}\Big\rangle\nonumber\\
&=\sum_{\text{partitions } \pi \text{ of } [\ell+m]}\;\;
(|\pi|-1)!(-1)^{|\pi|-1} \Big\langle
\Id^{\otimes m} \otimes \theta_*^{\otimes (\ell-m)},\;
\Big(\bigotimes_{S \in \pi} \cT_S(g)
\Big)\theta^{\otimes (\ell+m)}\Big\rangle\nonumber\\
&=\sum_{\text{partitions } \pi \text{ of } [\ell+m]}\;\;
(|\pi|-1)!(-1)^{|\pi|-1} M_{\ell,m}(\pi)
\label{eq:Mpiexpression}
\end{align}
where we set
\[M_{\ell,m}(\pi)
\equiv \Big\langle \Id^{\otimes m} \otimes \theta_*^{\otimes (\ell-m)},
\; \bigotimes_{S \in \pi} \E_g\big[(g\theta)^{\otimes S}\big]\Big\rangle.\]
We may move the expectations over $g$ out of the
inner product by writing this as an expectation over $|\pi|$ independent copies
of $g$, one for each $S \in \pi$, so that 
\[
M_{\ell,m}(\pi) = \E_{g_1,\ldots,g_{|\pi|}}\left[
\left\langle \Id^{\otimes m} \otimes \theta_*^{\otimes (\ell-m)},
\; \bigotimes_{i=1}^{\ell+m} (g_{\pi(i)} \theta) \right\rangle
\right],
\]
where for each $i\in[\ell+m]$, $\pi(i)$ denotes the index of the part in $\pi$ containing $i$.
Then using $\langle \Id,v \otimes w \rangle=\langle v,w \rangle$
and $\langle a \otimes b, c \otimes d \rangle=\langle a,c \rangle \langle b,d
\rangle$, we see that this is exactly the quantity
$M_{\ell,m}(\pi \mid \theta,\theta_*)$ defined previously in (\ref{eq:Ml}).

Finally, we combine (\ref{eq:Mpiexpression}) with (\ref{eq:Sltmp2}) and
describe a cancellation of terms that reduces the expression to $S_\ell(\theta)$:
First, note that $\langle \Id,(g \theta)^{\otimes 2} \rangle
=\langle g\theta,g\theta \rangle=\|\theta\|^2$, which does not depend on $g$.
If $m \geq 1$ and $\{1,2\}$ belong to the same part in $\pi$, then 
\begin{equation}
M_{\ell,m}(\pi)=\|\theta\|^2 M_{\ell-1,m-1}(\pi^-)
\label{eq:piminus}
\end{equation}
where $\pi^-$ is the partition of $\{3,\ldots,\ell+m\}$ obtained by removing
1 and 2. Suppose first that $\ell \geq 2$ and
$m \geq 1$. Fix any partition $\pi^-$ of $\{3,\ldots,\ell+m\}$. Let 
$\mathcal{S}$ be the collection of partitions of $[\ell+m]$ that 
do not separate $\{1,2\}$ and that reduce to $\pi^-$
upon removing 1 and 2. 
There are two types of such partitions $\pi$:
(a) $\pi$ includes $1,2$ into a part of $\pi^-$. Then $|\pi|=|\pi^-|$ and there are $|\pi^-|$ such partitions; 
(b) $\pi$ is the unique partition that adds $\{1,2\}$ as a new part to $\pi^-$ so that $|\pi|=|\pi^-|+1$.
Summing over both types and using (\ref{eq:piminus}), we get
\begin{align*}
&\sum_{\pi \in \mathcal{S}} (|\pi|-1)!(-1)^{|\pi|-1} M_{\ell,m}(\pi)\\
&=\|\theta\|^2 M_{\ell-1,m-1}(\pi^-)
\left(|\pi^-| \cdot (|\pi^-|-1)!(-1)^{|\pi^-|-1}
+1 \cdot (|\pi^-|)!(-1)^{|\pi^-|}\right) = 0.
\end{align*}
Summing over all $\pi^-$, the total
contribution to (\ref{eq:Mpiexpression}) from partitions $\pi$ that put
$\{1,2\}$ in the same set is 0. Similarly, the total contribution to
(\ref{eq:Mpiexpression}) from partitions $\pi$ that put $\{3,4\}$ in the
same set, but that \emph{do not} put $\{1,2\}$ in the same set, is also 0, and
so forth. Recalling the set of partitions $\cP(\ell,m)$ defined in (\ref{eq:goodpartition})
which separate each pair $\{1,2\},\ldots,\{2m-1,2m\}$,
we get in this case of $\ell \geq 2$ and $m \geq 1$ that only these partitions
contribute to (\ref{eq:Mpiexpression}), i.e.
\[\Big\langle \Id^{\otimes m} \otimes \theta_*^{\otimes (\ell-m)},\;
\cK_{\ell+m}(g)\theta^{\otimes (\ell+m)} \Big\rangle
=\sum_{\pi \in \cP(\ell,m)} (|\pi|-1)!(-1)^{|\pi|-1}M_{\ell,m}(\pi).\]
Using that $\cP(\ell,0)$ is simply the set of all partitions of $[\ell]$, and
applying this to (\ref{eq:Sltmp2}), we get that (\ref{eq:Sltmp2}) is the same as
$S_\ell(\theta)$ for $\ell \geq 2$.
For $\ell=1$, we have either $m=0$ or $m=1$. When $m=1$, the only partition
of $[\ell+m]=[2]$ not belonging to $\cP(1,1)$ is
$\{\{1,2\}\}$. Note that $M_{1,1}(\{\{1,2\}\})=\E_g[\langle g\theta,g\theta
\rangle]=\|\theta\|^2$, which cancels  the leading term
$\|\theta\|^2/2$ for $\ell=1$ in (\ref{eq:Sltmp2}). Thus 
(\ref{eq:Sltmp2}) also coincides with $S_\ell(\theta)$ for $\ell=1$, concluding the proof.
\end{proof}

\begin{proof}[Proof of Lemma \ref{lemma:seriesexpansion}]
  We will apply a truncation argument to handle the expansion of Lemma \ref{lem:formal-expansion}
  analytically.  Within the rest of the proof, all summations will be standard (non-formal)
  summations. For notational convenience, set
\[z=\sigma^{-1}, \qquad s(z)=r(z^{-1})=r(\sigma),
\qquad q(z)=\log(z^{-1})=\log \sigma.\]
The given conditions are $s(z) \to \infty$ and $zs(z)q(z) \to 0$ as $z \to 0$.

Consider the event $\|\eps\| \leq q(z)$ and define the truncation
\[R_\trunc(\theta)=\frac{\|\theta\|^2}{2}z^2-\sum_{k=1}^\infty \frac{1}{k!}
\E_\eps\left[\kappa_k\Big(
\langle z^2\theta_*+z\eps,\;g\theta \rangle\Big)\1\{\|\eps\| \leq q(z)\}\right].\]
For $\|\theta\|<s(z)$ and on this event $\|\eps\| \leq q(z)$,
observe that $\max_{g \in G} |f(g)| \leq (z^2\|\theta_*\|+zq(z))s(z)$.
By the given condition $zs(z)q(z) \to 0$ as $z \to 0$ (which also implies
$z^2s(z) \to 0$), and by Lemma
\ref{lem:cumulantbounds}(c), we observe that this series defining $R_\trunc(\theta)$
is absolutely convergent whenever $z<z_0$, for a small enough constant $z_0>0$.
Then, writing (\ref{eq:R}) as
\[R(\theta)=\frac{\|\theta\|^2}{2}z^2
-\E_\eps\Big[\1\{\|\eps\| \leq q(z)\} \cdot \log \E_g[e^{f(g)}]\Big]
-\E_\eps\Big[\1\{\|\eps\|>q(z)\} \cdot \log \E_g[e^{f(g)}]\Big]\]
and applying (\ref{eq:cumulantexpansionfg}) and
Fubini's theorem to exchange $\E_\eps$ and $\sum_k$ in the second term, we arrive
at
\begin{align}
R(\theta)&=R_\trunc(\theta)-\E_\eps\left[\1\{\|\eps\|>q(z)\} \cdot
\log \E_g\left[e^{\langle z^2\theta_*+z\eps, g\theta \rangle}\right]\right].\label{eq:RQ}
\end{align}

Note that $\E_\eps[\eps^{\otimes j}\1\{\|\eps\| \leq q(z)\}]=0$ if $j$ is odd
by sign symmetry of the law of $\eps$ conditional on $\|\eps\| \leq q(z)$.
Therefore, by the same argument as for (\ref{eq:Q}), we obtain
\begin{equation}\label{eq:Q2}
R_\trunc(\theta)=\sum_{\ell=1}^\infty z^{2\ell}R_{\trunc, \ell}(\theta)
\end{equation}
where
\[R_{\trunc, \ell}(\theta)=\1\{\ell=1\}\frac{\|\theta\|^2}{2}-\sum_{m=0}^\ell
\frac{1}{(2m)!(\ell-m)!} \Big\langle \E\big[\eps^{\otimes 2m}
\1\{\|\eps\| \leq q(z)\}\big] 
\otimes \theta_*^{\otimes (\ell-m)},\;\cK_{\ell+m}(g)\theta^{\otimes
(\ell+m)}\Big\rangle.\]
Applying the cumulant bound of Lemma \ref{lem:cumulantbounds} together with
(\ref{eq:cumtensorrelation}) and $k! \geq k^k/e^k$,
for $\ell \geq 2$,
\begin{align}
|R_{\trunc, \ell}(\theta)| &\leq \sum_{m=0}^\ell
\frac{1}{(2m)!(\ell-m)!} \E_\eps\left[\,\left|\Big\langle \eps^{\otimes 2m}
\otimes \theta_*^{\otimes (\ell-m)},\;\cK_{\ell+m}(g)\theta^{\otimes
(\ell+m)}\Big\rangle\right|\,\1\{\|\eps\| \leq q(z)\}\right]\nonumber\\
&\leq \sum_{m=0}^\ell \frac{1}{(\ell+m)!}
\binom{\ell+m}{2m}(\ell+m)^{\ell+m}q(z)^{2m}\|\theta_*\|^{\ell-m}
\|\theta\|^{\ell+m}\nonumber\\
&\leq e^{2\ell}\|\theta\|^{2\ell}\sum_{m=0}^\ell
\binom{\ell+m}{2m}q(z)^{2m}\|\theta_*\|^{\ell-m}
\leq e^{2\ell}(q(z)+\|\theta_*\|)^{2\ell}\|\theta\|^{2\ell}.\label{eq:Qbound}
\end{align}
Then for $\|\theta\|<s(z)$ and $z<z_0$, 
the series in (\ref{eq:Q2}) is absolutely convergent.
Differentiating each $R_{\trunc, \ell}(\theta)$ in $\theta$ using the product rule, a
similar argument shows that for $\ell \geq 2$,
\begin{align}
\|\nabla R_{\trunc, \ell}(\theta)\| &\leq 2\ell e^{2\ell}
(q(z)+\|\theta_*\|)^{2\ell}\|\theta\|^{2\ell-1},\label{eq:gradQbound}\\
\|\nabla^2 R_{\trunc, \ell}(\theta)\| &\leq 2\ell(2\ell-1) e^{2\ell}
(q(z)+\|\theta_*\|)^{2\ell}\|\theta\|^{2\ell-2}.\label{eq:hessQbound}
\end{align}
Then both $\sum_\ell z^{2\ell} \nabla R_{\trunc, \ell}(\theta)$ and $\sum_\ell
z^{2\ell} \nabla^2 R_{\trunc, \ell}(\theta)$ are also absolutely and
uniformly convergent over $\|\theta\|<s(z)$, so
\[\nabla R_\trunc(\theta)=\sum_{\ell=1}^\infty z^{2\ell} \nabla R_{\trunc, \ell}(\theta),
\qquad \nabla^2 R_\trunc(\theta)=\sum_{\ell=1}^\infty z^{2\ell}
\nabla^2 R_{\trunc, \ell}(\theta).\]

We now fix an integer $k \geq 1$ and remove the truncation event
$\|\eps\| \leq q(z)$. Note first that by Cauchy-Schwarz and a
chi-squared tail bound, for all $z<z_0$ and some constants $C,c,z_0>0$,
the second term in \eqref{eq:RQ} is at most
\begin{align*}
\left|\E_\eps\left[\1\{\|\eps\|>q(z)\} \cdot
\log \E_g\Big[e^{\langle z^2\theta_*+z\eps,g\theta \rangle}\Big]\right]\right|
&\leq \E_\eps\Big[\1\{\|\eps\|>q(z)\} \cdot \|z^2 \theta_*+z\eps\|
\cdot \|\theta\|\Big]\\
&\leq \|\theta\| \cdot \P\big[\|\eps\|>q(z)\big]^{1/2}
\E_\eps\big[\|z^2\theta_*+z\eps\|^2\big]^{1/2}\\
    &\leq s(z) \cdot e^{-cq(z)^2} \cdot Cz.
\end{align*}
Recalling $zs(z) \to 0$ and $q(z)=\log (1/z)$,
there exists $z_0$ (depending
on $k$) such that $zs(z)e^{-cq(z)^2} \leq z^{2k+2}$ for all $z<z_0$. Applying
this to (\ref{eq:RQ}), and also using (\ref{eq:Qbound}) to bound the sum over
$\ell \geq k+1$ in (\ref{eq:Q2}), we obtain
\begin{equation}\label{eq:Rboundtmp}
\left|R(\theta)-\sum_{\ell=1}^k z^{2\ell}
R_{\trunc, \ell}(\theta)\right| \leq [Czq(z)(\|\theta\| \vee 1)]^{2k+2}
\end{equation}
for $z<z_0$ and $C,z_0$ depending on $k$.
For the gradient and Hessian, recall (\ref{eq:gcondlaw}) and note that
\begin{align*}
\left\|\nabla_\theta
\log \E_g\Big[e^{\langle z^2\theta_*+z\eps,g\theta \rangle}\Big]\right\|
&=\Big\|\E_g[g^\top(z^2\theta_*+z\eps) \mid \eps,\theta]\Big\| \leq \|z^2\theta_*+z\eps\|,\\
\left\|\nabla_\theta^2
\log \E_g\Big[e^{\langle z^2\theta_*+z\eps,g\theta \rangle}\Big]\right\|
&=\Big\|\Cov_g[g^\top(z^2\theta_*+z\eps) \mid \eps,\theta]\Big\| \leq \|z^2\theta_*+z\eps\|^2.
\end{align*}
Then applying a similar Cauchy-Schwarz argument together with
(\ref{eq:gradQbound}) and (\ref{eq:hessQbound}), we get
\begin{align}
\left\|\nabla R(\theta)-\sum_{\ell=1}^k z^{2\ell}
\nabla R_{\trunc, \ell}(\theta) \right\| &\leq [Czq(z)]^{2k+2}(\|\theta\| \vee
1)^{2k+1},\label{eq:gradRboundtmp}\\
\left\|\nabla^2 R(\theta)-\sum_{\ell=1}^k z^{2\ell}
\nabla^2 R_{\trunc, \ell}(\theta) \right\| &\leq [Czq(z)]^{2k+2}(\|\theta\| \vee
1)^{2k}.\label{eq:hessRboundtmp}
\end{align}

Next, for all $\ell \leq k$ and some $C,z_0>0$ 
depending on $k$, the same Cauchy-Schwarz argument yields for $z<z_0$ that
\begin{align*}
|R_\ell(\theta)-R_{\trunc, \ell}(\theta)| &\leq \sum_{m=0}^\ell
\frac{1}{(2m)!(\ell-m)!}\P\big[\|\eps\|>q(z)\big]^{1/2}
\E_\eps\Big[\Big\langle \eps^{\otimes 2m} \otimes \theta_*^{\otimes (\ell-m)}
,\; \cK_{\ell+m}(g)\theta^{\otimes (\ell+m)}\Big \rangle^2\Big]^{1/2}\\
&\leq C \cdot \P[\|\eps\|>q(z)]^{1/2} \cdot \|\theta\|^{2\ell}
\leq Ce^{-cq(z)^2}\|\theta\|^{2\ell} \leq [Cz(\|\theta\| \vee 1)]^{2k+2}.
\end{align*}
Applying this to each term $\ell=1,\ldots,k$ in (\ref{eq:Rboundtmp}), we get
\[\left|R(\theta)-\sum_{\ell=1}^k z^{2\ell} R_\ell(\theta)\right|
\leq [Czq(z)(\|\theta\| \vee 1)]^{2k+2}.\]
The differences $\|\nabla R_\ell(\theta)-\nabla R_{\trunc, \ell}(\theta)\|$
and $\|\nabla^2 R_\ell(\theta)-\nabla^2 R_{\trunc, \ell}(\theta)\|$ may be
bounded similarly, and combined with (\ref{eq:gradRboundtmp})
and (\ref{eq:hessRboundtmp}) to show
\[\left\|\nabla R(\theta)-\sum_{\ell=1}^k z^{2\ell} \nabla
 R_\ell(\theta) \right\| \leq [Czq(z)]^{2k+2}(\|\theta\| \vee 1)^{2k+1},\]
\[\left\|\nabla^2 R(\theta)-\sum_{\ell=1}^k z^{2\ell} \nabla^2
 R_\ell(\theta) \right\| \leq [Czq(z)]^{2k+2}(\|\theta\| \vee 1)^{2k}.\]
 Recalling that $z=1/\sigma$ and $q(z)=\log \sigma$ and noting that
 $R_\ell(\theta) = S_\ell(\theta)$ by Lemma \ref{lem:formal-expansion}
 concludes the proof.
\end{proof}

To provide sharper finite-sample concentration bounds, we now establish
an analogous expansion for the empirical risk $R_n(\theta)$. Note that whereas
in the population expansion (\ref{eq:formalseries})
the term for $\sigma^{-\ell}$ was a polynomial of $\theta$ belonging to
$\cR_{\leq \ell/2}^G$ (for even $\ell$), here the term for $\sigma^{-\ell}$ in
this expansion of the empirical risk is a polynomial of $\theta$ only
guaranteed to belong to $\cR_{\leq \ell}^G$.

\begin{lemma}\label{lemma:Rnexpansion}
Fix any function $r:(0,\infty) \to [1,\infty)$ such that $r(\sigma) \cdot (\log
\sigma)/\sigma \to 0$ as $\sigma \to \infty$.
There are polynomials $P_\ell(\eps,\theta,\theta_*)$ such that for any
$k \geq 1$, some $(\theta_*,d,G,k)$-dependent constants $C,c,c_0,\sigma_0>0$,
any $\sigma>\sigma_0$, and any $t>e^{-c_0(\log \sigma)^2}$,
with probability at least
\[1-Ce^{-c(\log n)^2}-(C\sigma (\log n)/t)^d
\Big(e^{-cnt\sigma/(\log n)^{k \vee 2}}+
e^{-cnt^2(\sigma/\log \sigma)^{2k+2}}\Big),\]
we have
\begin{align}
\bigg|R_n(\theta)-\sum_{\ell=1}^k \sigma^{-\ell} \cdot
\frac{1}{n}\sum_{i=1}^n P_\ell(\eps_i,\theta,\theta_*)\bigg|
&\leq \bigg[t+C\bigg(\frac{\log \sigma}{\sigma}\bigg)^{k+1}\bigg]
(\|\theta\| \vee 1)^{k+1},\label{eq:Rnexpansion0}\\
\bigg\|\nabla R_n(\theta)-\sum_{\ell=1}^k \sigma^{-\ell} \cdot
\frac{1}{n}\sum_{i=1}^n \nabla P_\ell(\eps_i,\theta,\theta_*)\bigg\|
&\leq \bigg[t+C\bigg(\frac{\log \sigma}{\sigma}\bigg)^{k+1}\bigg]
(\|\theta\| \vee 1)^{k},\label{eq:Rnexpansion1}\\
\bigg\|\nabla^2 R_n(\theta)-\sum_{\ell=1}^k \sigma^{-\ell} \cdot
\frac{1}{n}\sum_{i=1}^n \nabla^2 P_\ell(\eps_i,\theta,\theta_*) \bigg\|
&\leq \bigg[t+C\bigg(\frac{\log \sigma}{\sigma}\bigg)^{k+1}\bigg]
(\|\theta\| \vee 1)^{k-1} \label{eq:Rnexpansion2}
\end{align}
simultaneously for all $\theta \in \R^d$ satisfying $\|\theta\|<r(\sigma)$.

Here each term takes the form
\begin{equation}
P_\ell(\eps,\theta,\theta_*)=\sum_{m=1}^{M_\ell}
A_{\ell,m}(\eps,\theta_*)P_{\ell,m}(\theta)
\label{eq:Pell-form}
\end{equation}
for some $M_\ell \geq 1$, where
\begin{itemize}
\item Each $A_{\ell,m}$ is a polynomial in $\eps$ and $\theta_*$
of total degree at most $\ell$,
\item Each $P_{\ell,m} \in \cR_{\leq \ell}^G$ is a $G$-invariant polynomial in
$\theta$ of degree at most $\ell$,
\item $\E_\eps[P_\ell(\eps,\theta,\theta_*)]$ equals
$S_{\ell/2}(\theta)$ if $\ell$ is even and equals 0 if $\ell$ is odd,
where $S_{\ell/2}$ is as defined in (\ref{eq:Sl}) for the series expansion
of $R(\theta)$, and
\item $P_\ell$ and its derivatives satisfy, for some universal constant $C_0>0$,
\begin{align}
|P_\ell(\eps,\theta,\theta_*)| \leq  & ~
(\|\eps\|+\|\theta_*\|+C_0)^\ell (\|\theta\|\vee 1)^{\ell} 	\label{eq:Pell0} \\
\|\nabla_\theta P_\ell(\eps,\theta,\theta_*)\| \leq  & ~ C_0^{\ell}
(\|\eps\|+\|\theta_*\|+C_0)^\ell (\|\theta\|\vee 1)^{\ell-1} 	\label{eq:Pell1} \\
\|\nabla^2_\theta P_\ell(\eps,\theta,\theta_*)\| \leq & ~ C_0^{\ell}
(\|\eps\|+\|\theta_*\|+C_0)^\ell (\|\theta\|\vee 1)^{(\ell-2)
\vee 0}.\label{eq:Pell2}
\end{align}
\end{itemize}
\end{lemma}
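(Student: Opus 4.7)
The plan is to derive the expansion sample-by-sample via a cumulant generating function expansion, and then combine with concentration for the empirical average. For each sample $Y_i$, apply the identity~\eqref{eq:cumulantexpansionfg} to the log-likelihood $\log \E_g[\exp(\langle \sigma^{-2}\theta_*+\sigma^{-1}\eps_i,\,g\theta\rangle)]$ appearing in~\eqref{eq:Rn}, then use the tensor-cumulant identity~\eqref{eq:cumtensorrelation} together with the binomial expansion $u^{\otimes k}=\sum_j\binom{k}{j}\sigma^{-(2k-j)}\eps_i^{\otimes j}\otimes\theta_*^{\otimes(k-j)}$ as in~\eqref{eq:multilinearexpansion}. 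Setting $\ell=2k-j$ and collecting terms of order $\sigma^{-\ell}$ yields the explicit formula
\[
P_\ell(\eps,\theta,\theta_*)=
\1\{\ell=2\}\tfrac{\|\theta\|^2}{2}
- \sum_{k=\lceil \ell/2\rceil}^{\ell}\frac{1}{k!}\binom{k}{2k-\ell}
\Big\langle \eps^{\otimes(2k-\ell)}\otimes \theta_*^{\otimes(\ell-k)},\;\cK_k(g)\theta^{\otimes k}\Big\rangle,
\]
which is a polynomial in $\theta$ of degree $\leq \ell$. Since $R_n(\theta)$ is $G$-invariant in $\theta$, so is each $P_\ell$; applying the Reynolds operator $\theta \mapsto |G|^{-1}\sum_{h\in G}h\theta$ to the $\theta$-monomials then produces the decomposition~\eqref{eq:Pell-form} with $P_{\ell,m}\in \cR^G_{\leq\ell}$ and $A_{\ell,m}$ polynomial in $(\eps,\theta_*)$ of total degree $k\leq \ell$. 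The bounds~\eqref{eq:Pell0}--\eqref{eq:Pell2} follow from Lemma~\ref{lem:cumulantbounds}(b) applied to $\cK_k(g)$ combined with $k!\geq (k/e)^k$; taking $\E_\eps$ and using that Gaussian odd moments vanish (so only even $2k-\ell$ survive) recovers $\E_\eps[P_\ell]=S_{\ell/2}$ for even $\ell$ and $0$ for odd $\ell$, via the same reduction carried out in~\eqref{eq:rl-def}--\eqref{eq:Sltmp2} in the proof of Lemma~\ref{lem:formal-expansion}.

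For the uniform concentration, I would follow the scheme of Lemma~\ref{lem:concentration}: pointwise concentration combined with a covering argument. Set $q=C\log n$ and condition on the event $\cE_n=\{\max_i\|\eps_i\|\leq q\}$, which by Gaussian concentration and a union bound has probability at least $1-Ce^{-c(\log n)^2}$. On $\cE_n$, for fixed $\theta$ with $\|\theta\|<r(\sigma)$, the same geometric-tail argument leading to~\eqref{eq:Qbound}, applied pointwise in $i$ to the cumulant series of the individual log-likelihood, shows that the pointwise remainder $-\log p_\theta(Y_i)-\sum_{\ell\leq k}\sigma^{-\ell}P_\ell(\eps_i,\theta,\theta_*)$ is at most $C(q/\sigma)^{k+1}(\|\theta\|\vee 1)^{k+1}\lesssim (\log\sigma/\sigma)^{k+1}(\|\theta\|\vee 1)^{k+1}$, after absorbing the sub-case $\log n>C\log\sigma$ into the exceptional event. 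Averaging over $i$ then yields the deterministic part of the stated error.

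For the random part of size $t$, apply Bernstein's inequality to each centered empirical average $(1/n)\sum_i(P_\ell(\eps_i,\theta,\theta_*)-\E_\eps P_\ell)$ for $\ell\leq k$: by~\eqref{eq:Pell0} the summand is bounded by $Cq^\ell(\|\theta\|\vee 1)^\ell$ on $\cE_n$ and has sub-exponential norm $C(\|\theta\|\vee 1)^\ell$ (inherited from the sub-Gaussian tails of $\|\eps\|$). After weighting by $\sigma^{-\ell}$ and adding across $\ell\leq k$, the combined tail splits into the sub-Gaussian Bernstein regime $\exp(-cnt^2(\sigma/\log\sigma)^{2k+2})$ controlled by the $\ell=k$ variance term, and the sub-exponential regime $\exp(-cnt\sigma/(\log n)^{k\vee 2})$ controlled by the worst bounded-summand term on $\cE_n$. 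Uniformity in $\theta$ over $\{\|\theta\|<r(\sigma)\}$ follows by taking a $\delta$-net of mesh $\delta\asymp t\sigma/\log n$ with cardinality $(C\sigma\log n/t)^d$ and applying Lipschitz continuity of $R_n$ and of each partial sum on $\cE_n$, which follows from the gradient bound~\eqref{eq:Pell1}. The gradient and Hessian statements~\eqref{eq:Rnexpansion1}--\eqref{eq:Rnexpansion2} are proved identically, using~\eqref{eq:Pell1}--\eqref{eq:Pell2} and the differentiated cumulant expansion; the reduction in the power of $(\|\theta\|\vee 1)$ in the error comes directly from the analogous reduction in~\eqref{eq:Pell1}--\eqref{eq:Pell2}. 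The main obstacle will be aligning the precise exponents $(\log n)^{k\vee 2}$ and $(\sigma/\log\sigma)^{2k+2}$ in the two Bernstein regimes: this requires carefully identifying which order $\ell\in\{1,\ldots,k\}$ dominates each tail (the largest $\ell=k$ term controlling the sub-Gaussian variance after weighting, and a low-order term such as $\ell=2$ controlling the sub-exponential tail through its bounded range on $\cE_n$), and verifying that contributions from $\ell>k$ can be absorbed into the deterministic $(\log\sigma/\sigma)^{k+1}$ error on $\cE_n$.
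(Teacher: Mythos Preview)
Your construction of $P_\ell$ and verification of the structural properties \eqref{eq:Pell-form}--\eqref{eq:Pell2} match the paper's. The gap is in the probabilistic step.

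\textbf{Misidentification of the remainder.} The quantity to be bounded is
\[
R_n(\theta)-R_n^k(\theta),\qquad
R_n^k(\theta):=\sum_{\ell=1}^k \sigma^{-\ell}\cdot\frac{1}{n}\sum_{i=1}^n P_\ell(\eps_i,\theta,\theta_*),
\]
i.e.\ the tail of the per-sample cumulant series at orders $\ell>k$. The terms $P_\ell$ for $\ell\leq k$ are already inside $R_n^k$, so applying Bernstein to the centered averages $(1/n)\sum_i(P_\ell-\E_\eps P_\ell)$ for $\ell\leq k$ is irrelevant: that would control $|R_n^k-R^k|$, not $|R_n-R_n^k|$.

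\textbf{Wrong truncation level.} On $\cE_n=\{\max_i\|\eps_i\|\leq C\log n\}$ your pointwise tail bound gives
\[
\Big|\text{per-sample remainder}_i\Big|\;\leq\; C\,(\log n/\sigma)^{k+1}(\|\theta\|\vee 1)^{k+1},
\]
and hence the same for the empirical average. This cannot be reduced to $(\log\sigma/\sigma)^{k+1}$ by ``absorbing $\log n>C\log\sigma$ into the exceptional event'': that is exactly the regime $n\gg\sigma^C$ in which the lemma is applied. Even if you then concentrate the empirical remainder around its (small) population mean, the Hoeffding range on $\cE_n$ is $(\log n/\sigma)^{k+1}$, yielding $\exp(-cnt^2(\sigma/\log n)^{2k+2})$ rather than the claimed $\exp(-cnt^2(\sigma/\log\sigma)^{2k+2})$.

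\textbf{The missing idea.} The paper uses a two-level truncation: it expands each sample on the event $\{\|\eps_i\|\leq\log\sigma\}$ (not $\log n$), producing the decomposition $R_n-R_n^k=\mathrm{I}+\mathrm{II}+\mathrm{III}$ where $\mathrm{I}$ is the tail $\ell>k$ restricted to $\{\|\eps_i\|\leq\log\sigma\}$, and $\mathrm{II},\mathrm{III}$ carry the indicator $\1\{\|\eps_i\|>\log\sigma\}$. For $\mathrm{I}$, the summands are bounded by $(C\log\sigma/\sigma)^\ell(\|\theta\|\vee1)^\ell$, so Hoeffding yields the $\exp(-cnt^2(\sigma/\log\sigma)^{2k+2})$ tail and the mean contributes the deterministic $(\log\sigma/\sigma)^{k+1}$ term. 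For $\mathrm{II},\mathrm{III}$, the indicator forces the variance down to $O(e^{-c(\log\sigma)^2})$, while on the separate event $\cE=\{\max_i\|\eps_i\|\leq\log n\}$ (this is where $\log n$ enters) the summands are bounded by $C\sigma^{-1}(\log n)^{k\vee 2}(\|\theta\|\vee1)^k$; Bernstein then gives the sub-exponential tail $\exp(-cnt\sigma/(\log n)^{k\vee 2})$. Without splitting off the $\{\|\eps_i\|>\log\sigma\}$ contribution, you cannot simultaneously obtain the correct deterministic $(\log\sigma/\sigma)^{k+1}$ error and the correct sub-Gaussian exponent.
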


\begin{proof}
As in the preceding proof, let $z=\sigma^{-1}$, $s(z)=r(z^{-1})=r(\sigma)$,
and $q(z)=\log(z^{-1})=\log \sigma$.
We write as shorthand $\E_n[f(\eps_i)]=n^{-1}\sum_{i=1}^n f(\eps_i)$. Then
analogous to (\ref{eq:RQ}), we have
\[R_n(\theta)=R_{\trunc, n}(\theta)-\E_n\Big[\1\{\|\eps_i\|>q(z)\}
\cdot \log \E_g\Big[e^{\langle z^2\theta_*+z\eps_i,g\theta \rangle}\Big]\Big]\]
where
\begin{align*}
R_{\trunc, n}(\theta)&=\frac{\|\theta\|^2}{2}z^2-\sum_{k=1}^\infty \frac{1}{k!}
\E_n\Big[\kappa_k\Big(\langle z^2\theta_*+z \eps_i,\,g\theta\rangle\Big)
\1\{\|\eps_i\| \leq q(z)\}\Big]\\
&=\frac{\|\theta\|^2}{2}z^2-\sum_{k=1}^\infty \sum_{j=0}^k
\frac{z^{2k-j}}{j!(k-j)!}\Big\langle \E_n[\eps_i^{\otimes j}\1\{\|\eps_i\|
\leq q(z)\}] \otimes \theta_*^{k-j},\,\cK_k(g)\theta^{\otimes k}\Big\rangle.
\end{align*}
Both sums are absolutely convergent, and the second line follows from the same
multi-linear expansion of the cumulant $\kappa_k$ as in
(\ref{eq:multilinearexpansion}).
Rearranging this sum according to powers of $z$ and applying the cumulant tensor identity (\ref{eq:cumtensorrelation}), we obtain
\[R_{\trunc, n}(\theta)=\sum_{\ell=1}^\infty z^\ell \cdot
\E_n\bigg[\1\{\|\eps_i\| \leq q(z)\} \cdot P_\ell(\eps_i,\theta,\theta_*)
\bigg]\]
where 
\begin{align}
P_\ell(\eps,\theta,\theta_*)
= & ~ \frac{\|\theta\|^2}{2}\1\{\ell=2\} - \sum_{\ell/2 \leq k \leq \ell} \frac{1}{k!} \binom{k}{\ell-k}
\kappa_k(\underbrace{\langle \eps,g\theta\rangle, \cdots, \langle \eps,g\theta\rangle}_{\text{$2k-\ell$ times}}, 
\underbrace{\langle \theta_*,g\theta\rangle, \cdots, \langle
\theta_*,g\theta\rangle}_{\text{$\ell-k$ times}}).
\label{eq:Pell-explicit}
\end{align}
These polynomials $P_\ell(\eps,\theta,\theta_*)$ satisfy
the conditions of the lemma. 
In particular, by the moment-cumulant relationship, in the form \eqref{eq:Pell-form} each $P_{\ell,m}$ can be taken to be an entry of the moment tensor $T_\ell(\theta) = \E_g[(g\theta)^{\otimes \ell}]$ (which is a degree-$\ell$ $G$-invariant polynomial) and hence $M_\ell \leq d^\ell$. 
The bounds \eqref{eq:Pell0}--\eqref{eq:Pell2} on $P_\ell$ and its derivatives follow from the same arguments as those that led to \eqref{eq:Qbound}--\eqref{eq:hessQbound}, in particular, the cumulant bound in Lemma \ref{lem:cumulantbounds}.

Thus, we arrive at
\begin{equation}\label{eq:Rndecomp}
R_n(\theta)=\sum_{\ell=1}^k z^\ell \cdot \frac{1}{n}\sum_{i=1}^n
P_\ell(\eps_i,\theta,\theta_*)+\mathrm{I}(\theta)+\mathrm{II}(\theta)
+\mathrm{III}(\theta)
\end{equation}
where
\begin{align*}
\mathrm{I}(\theta)&=\sum_{\ell=k+1}^\infty z^\ell \cdot \frac{1}{n}\sum_{i=1}^n
\1\{\|\eps_i\| \leq q(z)\} \cdot P_\ell(\eps_i,\theta,\theta_*)\\
\mathrm{II}(\theta)&=-\sum_{\ell=1}^k z^\ell \cdot \frac{1}{n}\sum_{i=1}^n 
\1\{\|\eps_i\|>q(z)\} \cdot P_\ell(\eps_i,\theta,\theta_*)\\
\mathrm{III}(\theta)&=-\frac{1}{n}\sum_{i=1}^n \1\{\|\eps_i\|>q(z)\} \cdot
\log \E_g\Big[e^{\langle z^2\theta_*+z\eps_i,g\theta \rangle}\Big].
\end{align*}

We conclude the proof by bounding these three remainder terms and their
derivatives. Throughout, $C,C',c,c'>0$ denote $(\theta_*,d,G,k)$-dependent
constants changing from instance to instance.
Beginning with $\mathrm{II}(\theta)$ and $\mathrm{III}(\theta)$,
we define the event $\cE$ where
$\|\eps_i\| \leq \log n$ for all $i=1,\ldots,n$. Then
\[\P[\cE^c]=\P\Big[\max_{i=1}^n \|\eps_i\|>\log n\Big] \leq ne^{-c(\log n)^2}
\leq Ce^{-c'(\log n)^2}.\]
Let $f(\eps)$ be either $z^\ell \cdot \1\{\|\eps\|>q(z)\} \cdot
P_\ell(\eps,\theta,\theta_*)$ for some $\ell \in \{1,\ldots,k\}$ in the case of
$\mathrm{II}(\theta)$, or $\1\{\|\eps\|>q(z)\} \cdot \log \E_g[e^{\langle
z^2\theta_*+z\eps_i,g\theta \rangle}]$ in the case of $\mathrm{III}(\theta)$.
Applying the bounds
\[\E_g[\langle z^2\theta_*+z\eps_i,g\theta \rangle]
\leq \log \E_g[e^{\langle z^2\theta_*+z\eps_i,g\theta \rangle}]
\leq \max_g \langle z^2\theta_*+z\eps_i,g\theta \rangle,
\]
in both cases and on the event $\cE$ for small $z=\sigma^{-1}$, we have
$|f(\eps_i)| \leq Cz(\|\theta\| \vee 1)^k(\log n)^k$.
Introducing the bounded summand
\[\check{f}(\eps)=\min\Big(\max\Big(f(\eps),-Cz(\|\theta\| \vee 1)^k
(\log n)^k\Big),Cz(\|\theta\| \vee 1)^k(\log n)^k\Big),\]
Bernstein's inequality yields
\[\P\bigg[\bigg|\frac{1}{n}\sum_{i=1}^n \check{f}(\eps_i)
-\E_\eps[\check{f}(\eps)]\bigg|>\tau \bigg] \leq
2\exp\left(-\frac{\frac{1}{2}n\tau^2}{\Var_\eps[\check{f}(\eps)]
+\frac{1}{3}\tau \cdot Cz(\|\theta\| \vee 1)^k(\log n)^k}\right).\]
We apply this with $\tau=\eta t \cdot (\|\theta\| \vee 1)^k$
and some small constant $\eta>0$. By the definition of $f(\eps)$ and
Cauchy-Schwarz,
\begin{equation}\label{eq:fepsmean}
\E_\eps[|\check{f}(\eps)|] \leq \E_\eps[|f(\eps)|]
\leq \P_\eps\big[\|\eps\|>q(z)\big]^{1/2} \cdot
\Big(Cz^2(\|\theta\| \vee 1)^{2k}\Big)^{1/2} \leq 
Ce^{-cq(z)^2} \cdot z(\|\theta\| \vee 1)^k \leq \tau,
\end{equation}
the last inequality holding when the constant $c_0$ for which
$t>e^{-c_0(\log \sigma)^2}$ is sufficiently small. Similarly,
\begin{equation}\label{eq:fepsvar}
\Var_\eps[\check{f}(\eps)]
\leq \E_\eps[f(\eps)^2]
\leq Ce^{-cq(z)^2} \cdot z^2(\|\theta\| \vee 1)^{2k}
\leq \tau \cdot z(\|\theta\| \vee 1)^k.
\end{equation}
Applying this to Bernstein's inequality above,
\[\P\Bigg[\bigg|\frac{1}{n}\sum_{i=1}^n \check{f}(\eps_i)\bigg|
>2\eta t \cdot (\|\theta\| \vee 1)^k\Bigg] \leq
2\exp\Big(-\frac{c\eta tn}{z(\log n)^k}\Big).\]
Note that $\check{f}(\eps_i)=f(\eps_i)$ for all $i$ on the event $\cE$. 
Then for some constants $C,C'>0$,
\[\P\Big[|\mathrm{II}(\theta)|>C\eta t \cdot (\|\theta\| \vee 1)^k
\text{ and } \cE\Big],\;
\P\Big[|\mathrm{III}(\theta)|>C\eta t \cdot (\|\theta\| \vee 1)^k
\text{ and } \cE\Big] \leq C'\exp\Big(-\frac{c\eta tn}
{z(\log n)^k}\Big).\]
To obtain a uniform guarantee over the ball $\|\theta\|<s(z)$, observe that
on the event $\cE$, both $\mathrm{II}(\theta)$ and $\mathrm{III}(\theta)$ are
$C(\log n)^k \cdot s(z)^k$-Lipschitz in $\theta$ over this ball.
Let us take a $\delta$-net of this ball with
$\delta=c\eta t/[s(z)^k(\log n)^k]$ and a sufficiently small constant $c>0$,
where the net has cardinality $(Cs(z)/\delta)^d$.
Applying the Lipschitz continuity and a union bound over $\theta$ in this net,
we then obtain
\begin{align*}
&\P\bigg[\sup_{\theta:\|\theta\|<s(z)}
|\mathrm{II}(\theta)|>C\eta t \cdot (\|\theta\| \vee 1)^k\bigg],\;
\P\bigg[\sup_{\theta:\|\theta\|<s(z)}
|\mathrm{III}(\theta)|>C\eta t \cdot (\|\theta\| \vee 1)^k\bigg]\\
&\leq (Cs(z)/\delta)^d e^{-\frac{c\eta tn}{z(\log n)^k}}+e^{-c(\log n)^2}
\leq (C'\sigma(\log n)/\eta t)^d e^{-\frac{c\eta t \cdot n\sigma}{(\log n)^k}}
+Ce^{-c(\log n)^2}.
\end{align*}

We may bound $\nabla \mathrm{II}(\theta)$, $\nabla^2 \mathrm{II}(\theta)$,
$\nabla \mathrm{III}(\theta)$, and $\nabla^2 \mathrm{III}(\theta)$ similarly:
Defining $f(\eps_i)$ as the summand corresponding to any entry of one of these
quantities, and recalling the forms of the derivatives of $\log \E_g[e^{\langle
z^2\theta_*+z\eps_i,g\theta \rangle}]$ from Lemma \ref{lem:empiricalriskform},
on $\cE$ we have $|f(\eps_i)| \leq Cz(\|\theta\| \vee 1)^{k-1}(\log n)^k$ in the case of $\nabla \mathrm{II}(\theta)$ or $\nabla \mathrm{III}(\theta)$, and 
$|f(\eps_i)| \leq Cz(\|\theta\| \vee 1)^{{(k-2) \vee 0}} (\log n)^{k \vee 2}$
in the case of $\nabla^2 \mathrm{II}(\theta)$ or
$\nabla^2 \mathrm{III}(\theta)$.
The inequalities (\ref{eq:fepsmean}) and (\ref{eq:fepsvar}) continue to hold,
and $\nabla \mathrm{II}(\theta)$, $\nabla^2 \mathrm{II}(\theta)$,
$\nabla \mathrm{III}(\theta)$, and $\nabla^2 \mathrm{III}(\theta)$ all remain
$C(\log n)^{k \vee 3} \cdot s(z)^k$-Lipschitz over the ball $\|\theta\|<s(z)$.
Then applying the same arguments as above, we obtain
for $i=0,1,2$,
\begin{equation}\label{eq:IIIIIbound}
\|\nabla^{(i)} \mathrm{II}(\theta)\|, \|\nabla^{(i)} \mathrm{III}(\theta)\|
\leq C\eta t \cdot (\|\theta\| \vee 1)^{{(k-i) \vee 0}}
\text{ for all } \|\theta\|<s(z)
\end{equation}
with probability at least
$1-(C'\sigma(\log n)/\eta t)^d e^{-\frac{c\eta t \cdot n\sigma}{(\log n)^{k \vee
2}}}-Ce^{-c(\log n)^2}$.

Turning to $\mathrm{I}(\theta)$, write the summand $f(\eps_i)=
z^\ell \cdot \1\{\|\eps_i\| \leq q(z)\} \cdot P_\ell(\eps_i,\theta,\theta_*)$.
Using \eqref{eq:Pell0}, we have $|f(\eps_i)| \leq
(C_0z q(z))^\ell (\|\theta\| \vee 1)^\ell$ where $C_0$ is a universal
constant independent of $\ell$. Then Hoeffding's inequality yields
\[\P\Bigg[\bigg|\frac{1}{n}\sum_{i=1}^n f(\eps_i)-\E_\eps[f(\eps)] \bigg|>
\tau\Bigg]
\leq 2e^{-\frac{2n\tau^2}{C_0^{2\ell}z^{2\ell}q(z)^{2\ell}(\|\theta\|
\vee 1)^{2\ell}}}.\]
We apply this with $\tau=\eta t \cdot (\|\theta\| \vee 1)^{k+1}/\ell^2$, and
we apply also
\[\sum_{\ell=k+1}^\infty z^\ell \cdot \E_\eps\Big[\1\{\|\eps\| \leq q(z)\} \cdot
P_\ell(\eps,\theta,\theta_*)\Big] \leq \sum_{\ell=k+1}^\infty
(C_0 z)^\ell q(z)^\ell(\|\theta\| \vee 1)^\ell
\leq C'\Big[zq(z)(\|\theta\| \vee 1)\Big]^{k+1}\]
for any small enough $z$, by the given condition $zq(z)s(z)=\frac{r(\sigma)\log\sigma}{\sigma} \to 0$.
Then taking a union bound over all $\ell \geq k+1$ and recalling the definition
of $\mathrm{I}(\theta)$,
\[\P\Big[|\mathrm{I}(\theta)|>
C\big[zq(z)(\|\theta\| \vee 1)\big]^{k+1}
+C\eta t \cdot (\|\theta\| \vee 1)^{k+1}\Big] \leq \sum_{\ell=k+1}^\infty
{2\exp\left(-\frac{2\eta^2t^2 (\|\theta\| \vee 1)^{2k+2}n}{C_0^{2\ell}
z^{2\ell}q(z)^{2\ell}(\|\theta\| \vee 1)^{2\ell}\ell^4}\right)}.\]
Applying again $zs(z)q(z) \to 0$ as $z \to 0$, for any constant $B>0$ and
sufficiently small $z$ we have
$1/(C_0^{2\ell}z^{2\ell}q(z)^{2\ell}(\|\theta\| \vee
1)^{2\ell}\ell^4) \geq B^\ell$. Then the summands of this probability bound
decay at least geometrically fast, so that the sum is at most
\[C'\exp\left(-\frac{c'\eta^2 t^2(\|\theta\| \vee 1)^{2k+2}n}
{z^{2k+2}q(z)^{2k+2}(\|\theta\| \vee 1)^{2k+2}} \right)
\leq C'e^{-c'\eta^2 t^2n(\sigma/\log\sigma)^{2k+2}}.\]
The same argument applies to bound $\nabla \mathrm{I}(\theta)$ and $\nabla^2
\mathrm{II}(\theta)$ entrywise, except that we use \eqref{eq:Pell1} and 
\eqref{eq:Pell2} in lieu of \eqref{eq:Pell0}  to get $\1\{\|\eps_i\| \leq q(z)\} \cdot \|\nabla P_\ell(\eps_i,\theta,\theta_*)\| \leq
(C_0q(z))^\ell (\|\theta\| \vee 1)^{\ell-1}$ and 
$\1\{\|\eps_i\| \leq q(z)\} \cdot \|\nabla^2 P_\ell(\eps_i,\theta,\theta_*)\| \leq
(C_0q(z))^\ell (\|\theta\| \vee 1)^{\ell-2}$ respectively, {for all
$\ell \geq 2$}. From the previous Lipschitz bounds for $P_\ell(\eps,\theta,\theta_*)$,
$\mathrm{II}(\theta)$, $\mathrm{III}(\theta)$ and their derivatives, and from
those for $R_n(\theta)$ and its derivatives from Lemma
\ref{lem:local_lipschitz}, we see that $\mathrm{I}(\theta)$, $\nabla
\mathrm{I}(\theta)$, and $\nabla^2 \mathrm{I}(\theta)$ are also
$C(\log n)^{k \vee 3} \cdot s(z)^k$-Lipschitz in $\theta$ over the ball
$\|\theta\|<s(z)$. Then applying a union bound over a $\delta$-net as before, we
obtain
for $i=0,1,2$,
\begin{equation}\label{eq:Ibound}
\|\nabla^{(i)} \mathrm{I}(\theta)\| \leq 
C\Big(\frac{\log \sigma}{\sigma}\Big)^{k+1}(\|\theta\| \vee 1)^{k+1-i}
+C\eta t \cdot (\|\theta\| \vee 1)^{k+1} \text{ for all } \|\theta\|<s(z)
\end{equation}
with probability at least
$1-(C\sigma(\log n)/\eta t)^de^{-c\eta^2t^2 n(\sigma/\log \sigma)^{2k+2}}$.
Applying (\ref{eq:IIIIIbound}) and (\ref{eq:Ibound}) to (\ref{eq:Rndecomp}) and
now taking $\eta$ to be a sufficiently small constant, we obtain the lemma.
\end{proof}

\subsection{Descent directions and pseudo-local-minimizers}\label{sec:localreparam}

We now relate the series expansion result of Lemma \ref{lemma:seriesexpansion}
to the landscape
of $R(\theta)$ around a fixed point $\wtheta \in \R^d$, for large $\sigma$. The
constants in this section may depend on this point $\wtheta$.

The following lemma establishes a condition for $\wtheta \in \R^d$ under which
we will be able to show that $R(\theta)$ has either a first-order or second-order 
descent direction in a neighborhood $\wtheta$.

\begin{lemma}\label{lemma:largesigmadescent}
Fix $\wtheta \in \R^d$, let $\varphi$ be a local reparametrization in an
open neighborhood $U$ of $\wtheta$, and let $\wvarphi=\varphi(\wtheta)$.
Suppose there exists $\ell \geq 1$ and a partition of $\varphi$
into subvectors $\varphi=(\varphi^1,\ldots,\varphi^\ell)$ such that
$S_1(\varphi),\ldots,S_{\ell-1}(\varphi)$ are functions depending only on
$\varphi^1,\ldots,\varphi^{\ell-1}$ and not on $\varphi^\ell$, and
\[\text{ either } \quad \nabla_{\varphi^\ell} S_\ell(\wvarphi) \neq 0 \quad
\text{ or } \quad \lambda_{\min}(\nabla_{\varphi^\ell}^2 S_\ell(\wvarphi))<0.\]
Then there exist constants $c,\sigma_0>0$ and an open neighborhood $U_0$ of
$\wtheta$ (all depending on $\wtheta,\theta_*,d,G$ but not on $\sigma$) such
that for all $\sigma>\sigma_0$ and for every $\theta \in U_0$,
\[\text{ either } \quad \|\nabla_\theta R(\theta)\| \geq c\sigma^{-2\ell} \quad
\text{ or } \quad \lambda_{\min}(\nabla_\theta^2 R(\theta)) \leq
-c\sigma^{-2\ell}.\]
\end{lemma}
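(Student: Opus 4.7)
The plan is to apply Lemma \ref{lemma:seriesexpansion} with $k=\ell$, which asymptotically expands $R(\theta)$, $\nabla_\theta R(\theta)$, and $\nabla_\theta^2 R(\theta)$ around $R^\ell(\theta) = \sum_{k=1}^\ell \sigma^{-2k}S_k(\theta)$ with errors of order $(\log\sigma/\sigma)^{2\ell+2}=o(\sigma^{-2\ell})$. The key observation is that, since by hypothesis $S_1,\ldots,S_{\ell-1}$ depend only on $\varphi^1,\ldots,\varphi^{\ell-1}$ in the reparametrized system, we have $\nabla_{\varphi^\ell}R^\ell(\varphi) = \sigma^{-2\ell}\nabla_{\varphi^\ell}S_\ell(\varphi)$ and $\nabla_{\varphi^\ell}^2 R^\ell(\varphi) = \sigma^{-2\ell}\nabla_{\varphi^\ell}^2 S_\ell(\varphi)$, so the block of $\nabla_\varphi^k R^\ell$ corresponding to $\varphi^\ell$ is pure $\sigma^{-2\ell}$ at leading order. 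By continuity of $S_\ell$ and its derivatives, I will first shrink $U_0\subseteq U$ to a neighborhood of $\wtheta$ on which either $\|\nabla_{\varphi^\ell}S_\ell\|\geq c/2$ or $\lambda_{\min}(\nabla_{\varphi^\ell}^2 S_\ell)\leq -c/2$ (according to the case), and on which $\|\der_\theta\varphi\|$, $\|\der_\varphi\theta\|$, and the second derivatives $\|\nabla_\theta^2\varphi_i\|$ are all uniformly bounded. The error bounds from Lemma \ref{lemma:seriesexpansion} in $\theta$-coordinates transfer to $\|\nabla_\varphi R - \nabla_\varphi R^\ell\|$ and $\|\nabla_\varphi^2 R - \nabla_\varphi^2 R^\ell\|$ of the same order $(\log\sigma/\sigma)^{2\ell+2}$ via the chain and product rules and the uniform bounds on the Jacobian.

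In the gradient case, $\|\nabla_{\varphi^\ell}R(\varphi)\| \geq \sigma^{-2\ell}\|\nabla_{\varphi^\ell}S_\ell(\varphi)\| - O((\log\sigma/\sigma)^{2\ell+2}) \geq (c/4)\sigma^{-2\ell}$ for $\sigma$ large enough; hence $\|\nabla_\varphi R(\varphi)\|\geq(c/4)\sigma^{-2\ell}$, and using $\nabla_\theta R(\theta) = (\der_\theta\varphi)^\top\nabla_\varphi R(\varphi)$ together with the bound on $\|\der_\varphi\theta\|$ transfers this to $\|\nabla_\theta R(\theta)\|\geq c'\sigma^{-2\ell}$ as desired.

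The Hessian case is the main obstacle. Let $v\in\R^{d_\ell}$ be a unit vector with $v^\top\nabla_{\varphi^\ell}^2 S_\ell(\varphi)v\leq -c/2$ throughout $U_0$, let $u\in\R^d$ have $v$ in its $\varphi^\ell$-block and zero elsewhere, and set $w = \der_\varphi\theta(\varphi)\cdot u\in\R^d$. The reparametrization formula \eqref{eq:hessreparam} gives
\[
w^\top\nabla_\theta^2 R(\theta)\,w = u^\top\nabla_\varphi^2 R(\varphi)\,u + \sum_{i=1}^d \partial_{\varphi_i}R(\varphi)\cdot w^\top\nabla_\theta^2\varphi_i(\theta)\,w.
\]
The first term is $\leq v^\top\nabla_{\varphi^\ell}^2 R(\varphi)v\leq -(c/4)\sigma^{-2\ell}$ for large $\sigma$. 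The correction term is bounded by $C\|\nabla_\varphi R(\varphi)\|$, and the difficulty is that $\|\nabla_\varphi R\|$ could \emph{a priori} be as large as $O(\sigma^{-2})$, dwarfing the leading $\sigma^{-2\ell}$ term when $\ell\geq 2$. This is precisely why the conclusion must be an either/or statement: I split into subcases according to whether $\|\nabla_\theta R(\theta)\|$ (equivalently $\|\nabla_\varphi R(\varphi)\|$, up to bounded factors) is $\geq c_1\sigma^{-2\ell}$ or not. In the former subcase the first alternative of the conclusion holds directly; in the latter subcase the correction term is at most $Cc_1\sigma^{-2\ell}$, which for $c_1$ chosen small enough is absorbed into the leading $-(c/4)\sigma^{-2\ell}$, giving $w^\top\nabla_\theta^2 R(\theta)w\leq -c''\sigma^{-2\ell}$. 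Since $\|w\|$ is bounded above and below by constants on $U_0$, this yields $\lambda_{\min}(\nabla_\theta^2 R(\theta))\leq -c'''\sigma^{-2\ell}$, completing the proof.
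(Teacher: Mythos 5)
Your proof is correct and takes essentially the same approach as the paper: a continuity argument to persist the $S_\ell$ bound near $\wtheta$, the series-expansion error estimates from Lemma \ref{lemma:seriesexpansion}, and a dichotomy in the Hessian case between "gradient already large" and "correction term absorbable." The only cosmetic differences are that you transfer the series-expansion error into $\varphi$-coordinates and dichotomize at the level of $R$, whereas the paper dichotomizes at the level of $R^\ell$ (using $\lambda_{\min}(A+B) \leq \lambda_{\min}(A) + \|B\|$ rather than a test vector) and invokes Lemma \ref{lemma:seriesexpansion} only at the end.
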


\begin{proof}
First suppose that $\nabla_{\varphi^\ell} S_\ell(\wvarphi) \neq 0$.
Denote $c=\|\nabla_{\varphi^\ell} S_\ell(\wvarphi)\|$, and
note that this constant $c$ depends only on $\wtheta,\theta_*,d,G$ and
not on $\sigma$. By continuity of $\nabla_{\varphi^\ell} S_\ell$, this
implies $\|\nabla_{\varphi^\ell} S_\ell(\varphi)\|>c/2$ for all $\varphi$ in a
neighborhood $V_0$ of $\wvarphi$. Since $S_1,\ldots,S_{\ell-1}$
do not depend on $\varphi^\ell$, we have
$\nabla_{\varphi^\ell} S_1=\ldots=\nabla_{\varphi^\ell} S_{\ell-1}=0$.
Then, recalling (\ref{eq:Rk}), we get
$\|\nabla_{\varphi} R^\ell(\varphi)\| \geq
\|\nabla_{\varphi^\ell} R^\ell(\varphi)\|>(c/2)\sigma^{-2\ell}$ for all
$\varphi \in V_0$. Applying (\ref{eq:gradreparam}) and continuity and invertibility
of $\der_\theta \varphi$ near $\wtheta$,
this implies that $\|\nabla_\theta R^\ell(\theta)\|>c'\sigma^{-2\ell}$
for a constant $c'>0$ and all $\theta$ in a small enough neighborhood $U_0$ of
$\wtheta$. Then applying Lemma \ref{lemma:seriesexpansion}, for all
$\sigma>\sigma_0$, large enough $\sigma>0$, and all $\theta \in U_0$,
\[\|\nabla_\theta R(\theta)\| \geq (c'/2)\sigma^{-2\ell}.\]

Now suppose that $\lambda_{\min}(\nabla_{\varphi^\ell}^2 S_\ell(\wvarphi))<0$.
The argument is similar: Denote $-c=\lambda_{\min}(\nabla_{\varphi^\ell}^2
S_\ell(\wvarphi))$. Then $\lambda_{\min}(\nabla_{\varphi^\ell}^2
S_\ell(\varphi))<-c/2$ for all $\varphi$ in a neighborhood
$V_0$ of $\wvarphi$ by continuity, so $\lambda_{\min}(\nabla_\varphi^2
R^\ell(\varphi)) \leq \lambda_{\min}(\nabla_{\varphi^\ell}^2
R^\ell(\varphi))<-(c/2)\sigma^{-2\ell}$. Applying (\ref{eq:hessreparam}),
\[\nabla_\theta^2 R^\ell(\theta)=
(\der_\theta \varphi)^\top \cdot \nabla_\varphi^2 R^\ell(\varphi) \cdot \der_\theta \varphi
+\sum_{i=1}^d \partial_{\varphi_i}R^\ell(\varphi) \cdot \nabla_\theta^2 \varphi_i.\]
Then by continuity and invertibility of $\der_\theta \varphi$ near $\wtheta$,
for the first term we have
\[\lambda_{\min}((\der_\theta \varphi)^\top \cdot \nabla_\varphi^2
R^\ell(\varphi) \cdot \der_\theta \varphi)<-c'\sigma^{-2\ell}\]
for a constant $c'>0$
and all $\theta$ in a neighborhood $U_0$ of $\wtheta$. Then either
$\lambda_{\min}(\nabla_\theta^2 R^\ell(\theta))<-(c'/2)\sigma^{-2\ell}$, or we must
have for the second term and some $i \in \{1,\ldots,d\}$ that
$\|\nabla_\varphi R^\ell(\varphi)\| \geq |\partial_{\varphi_i}
R^\ell(\varphi)|>c'' \sigma^{-2\ell}$.
Here, we may take $c''=c'/(2d\max \|\nabla_\theta^2 \varphi_j(\theta)\|)$,
where this maximum is taken over all $j \in \{1,\ldots,d\}$ and $\theta \in U_0$.
Applying again Lemma \ref{lemma:seriesexpansion}, for all $\sigma>\sigma_0$ and
large enough $\sigma_0>0$, this implies that for
every $\theta \in U_0$, either
$\lambda_{\min}(\nabla_\theta^2 R^\ell(\theta)) \leq -(c'/4)\sigma^{-2\ell}$ or
$\|\nabla_\theta R^\ell(\theta)\| \geq (c''/2)\sigma^{-2\ell}$.
\end{proof}

Conversely, the following is a condition for $\wtheta \in \R^d$ under which we
will show that $R(\theta)$ has a local minimizer in any fixed neighborhood of
$\wtheta$, for all sufficiently large $\sigma$. We call these points
pseudo-local-minimizers, and these will be in correspondence with the
true local minimizers of $R(\theta)$ for large $\sigma$.
Note that pseudo-local-minimizers are determined by $\theta_*,d,G$
and do not depend on $\sigma$, but true local minimizers of $R(\theta)$ not
belonging to $\O_{\theta_*}$ may in general depend on $\sigma$. We discuss
an example of this phenomenon in Remark \ref{rem:min-example}.

\begin{definition}\label{eq:pseudolocalminimizer}
A point $\wtheta \in \R^d$ is a {\bf pseudo-local-minimizer} in
a local reparametrization $\varphi=(\varphi^1,\ldots,\varphi^L)$ around
$\wtheta$ if each function $S_\ell(\varphi)$ for $\ell=1,\ldots,L$ depends
only on $\varphi^1,\ldots,\varphi^\ell$ and not on
$\varphi^{\ell+1},\ldots,\varphi^L$, and
for each $\ell \in \{1,\ldots,L\}$ where $\varphi^\ell$ has non-zero dimension,
\[\nabla_{\varphi^\ell} S_\ell(\wvarphi)=0 \qquad \text{ and }
\qquad \lambda_{\min}(\nabla_{\varphi^\ell}^2 S_\ell(\wvarphi))>0.\]
\end{definition}

For each pseudo-local-minimizer $\wtheta$, we will also show that the risk
$R(\varphi)$ is strongly convex in a $\sigma$-independent neighborhood of
$\wvarphi=\varphi(\wtheta)$, and its Hessian
$\nabla_\varphi^2 R(\varphi)$ has the following graded block structure.

\begin{definition}\label{def:gradedblocks}
Consider a partition of coordinates $(\varphi^1,\ldots,\varphi^L)$
for $\R^d$. Let $H \equiv H(\sigma) \in \R^{d \times d}$ be a symmetric matrix,
and write its $L \times L$ block decomposition with respect to this partition as
\[H=\begin{pmatrix} H_{11} & \cdots & H_{1L} \\
\vdots & \ddots & \vdots \\
H_{L1} & \cdots & H_{LL} \end{pmatrix}.\]
The matrix $H(\sigma)$ has a {\bf graded block structure} with respect to
this partition if there are constants
$C,c,\sigma_0>0$ such that for all $\sigma>\sigma_0$ and all $k,\ell \in
\{1,\ldots,L\}$
where $\varphi^k$ and $\varphi^\ell$ have non-zero dimension,
\[C\sigma^{-2\ell} \geq \lambda_{\max}(H_{\ell\ell}) \geq
\lambda_{\min}(H_{\ell\ell}) \geq c\sigma^{-2\ell}
\qquad \text{ and } \qquad
\|H_{k\ell}\| \leq C\sigma^{-2\max(k,\ell)}.\]
\end{definition}

Thus the upper-left block of $H(\sigma)$ has magnitude $\sigma^{-2}$, the three
blocks adjacent to this have magnitude $\sigma^{-4}$, and so forth. We allow
$\varphi^\ell$ to have dimension 0, in which case the blocks $H_{k\ell}$ and
$H_{\ell k}$ for $k=1,\ldots,L$ are empty.

\begin{lemma}\label{lemma:largesigmapseudomin}
Let $\wtheta \in \R^d$ be a pseudo-local-minimizer in the
reparametrization $\varphi=(\varphi^1,\ldots,\varphi^L)$.
Denote $\wvarphi=\varphi(\wtheta)$. Then for any sufficiently small
open neighborhood $V_0$ of $\varphi$, there exist constants $c,\sigma_0>0$
depending on $\wtheta,V_0$ and $\theta_*,d,G$ but not on $\sigma$,
such that for all $\sigma>\sigma_0$ and $\varphi \in V_0$,
\begin{enumerate}[(a)]
\item $\nabla_\varphi^2 R(\varphi)$ has a graded block structure with respect to
the partition $\varphi=(\varphi^1,\ldots,\varphi^L)$,
\item $\lambda_{\min}(\nabla_\varphi^2 R(\varphi)) \geq c\sigma^{-2L}$, and
\item There is a unique critical point of $R(\varphi)$ in $V_0$,
which is a local minimizer of $R(\varphi)$.
\end{enumerate}
\end{lemma}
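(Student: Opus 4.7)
The plan is to work in the reparametrized coordinates $\varphi=(\varphi^1,\ldots,\varphi^L)$ and combine the series expansion of Lemma \ref{lemma:seriesexpansion} with the graded vanishing of $S_\ell$ built into the pseudo-local-minimizer definition.

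For part (a), Lemma \ref{lemma:seriesexpansion} taken at truncation order $k=L+1$ approximates $\nabla_\varphi^2 R(\varphi)$ on $V_0$ by $\sum_{\ell=1}^{L+1}\sigma^{-2\ell}\nabla_\varphi^2 S_\ell(\varphi)$ with error $O((\log\sigma/\sigma)^{2L+4})$. Since $S_\ell$ depends only on $\varphi^1,\ldots,\varphi^\ell$, the $(j,k)$-block of $\nabla_\varphi^2 S_\ell$ vanishes whenever $\max(j,k)>\ell$, so the $(j,k)$-block of $\nabla_\varphi^2 R$ has leading order $\sigma^{-2\max(j,k)}$, which dominates the truncation error. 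For the diagonal block, the leading term $\sigma^{-2\ell}\nabla_{\varphi^\ell}^2 S_\ell(\varphi)$ can be pinned between $cI$ and $CI$ on a sufficiently small $V_0$ by continuity and $\nabla_{\varphi^\ell}^2 S_\ell(\wvarphi)\succ 0$. Part (b) follows by the diagonal rescaling $D=\diag(\sigma^{-\ell} I_{d_\ell})_\ell$: writing $H:=\nabla_\varphi^2 R = DMD$, the rescaled Hessian $M$ has blocks $M_{jk}=\sigma^{j+k}H_{jk}$ satisfying $cI\preceq M_{\ell\ell}\preceq CI$ and $\|M_{jk}\|\leq C\sigma^{-|j-k|}$ for $j\neq k$, so $M$ is an $o(1)$ perturbation of a block-diagonal positive-definite matrix with spectrum in $[c,C]$. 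Hence $\lambda_{\min}(M)\geq c/2$ for large $\sigma$, and $v^\top H v = (Dv)^\top M(Dv)\geq (c/2)\sigma^{-2L}\|v\|^2$ for any unit $v$.

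For part (c), uniqueness and the strict-local-minimum property are immediate from (b). For existence, the same series expansion combined with the vanishing $\nabla_{\varphi^j}S_\ell(\wvarphi)=0$ for $\ell\leq j$ (for $\ell<j$ because $S_\ell$ does not depend on $\varphi^j$; for $\ell=j$ by the pseudo-local-min assumption) yields the block gradient bound $\|\nabla_{\varphi^j} R(\wvarphi)\|\leq C\sigma^{-2(j+1)}$ for each $j\leq L$. I then exploit the graded structure of $H^{-1}$: a Neumann-series expansion on $M=M_0+E$ (with $M_0$ block-diagonal and $\|E_{jk}\|\lesssim \sigma^{-|j-k|}$ for $j\neq k$) gives $\|(M^{-1})_{jk}\|\lesssim \sigma^{-|j-k|}$, and consequently $\|(H^{-1})_{jk}\|=\sigma^{j+k}\|(M^{-1})_{jk}\|\lesssim \sigma^{2\min(j,k)}$. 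Working in the rescaled variable $\eta=D(\varphi-\wvarphi)$, where $F(\eta):=R(\wvarphi+D^{-1}\eta)$ has Hessian $M$ and is uniformly strongly convex on a $\sigma$-independent ball, a standard strong-convexity/contraction argument produces a unique critical point $\varphi^*$ of $R$ near $\wvarphi$. Combining the graded bounds blockwise yields
\[
\|\varphi^{*,j}-\wvarphi^j\| \lesssim \sum_k \|(H^{-1})_{jk}\|\cdot\|\nabla_{\varphi^k} R(\wvarphi)\| \lesssim \sum_k \sigma^{2\min(j,k)-2(k+1)} \lesssim \sigma^{-2}
\]
uniformly in $j$, so $\varphi^*\in V_0$ for large enough $\sigma$.

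The principal obstacle is keeping the critical point inside the $\sigma$-independent $V_0$ in (c). Applying only the isotropic bound $\lambda_{\min}(H)\gtrsim \sigma^{-2L}$ from (b) together with the aggregate $\|\nabla R(\wvarphi)\|\lesssim \sigma^{-4}$ places a critical point within only $O(\sigma^{2L-4})$ of $\wvarphi$, which diverges for $L\geq 3$. Matching the graded gradient decay $\|\nabla_{\varphi^j}R(\wvarphi)\|\lesssim \sigma^{-2(j+1)}$ to the graded inverse-Hessian structure $\|(H^{-1})_{jk}\|\lesssim \sigma^{2\min(j,k)}$ is precisely what delivers the uniform $O(\sigma^{-2})$ block displacement, independent of $L$.
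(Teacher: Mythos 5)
Parts (a) and (b) of your proposal are correct. Part (a) reproduces the paper's argument: the truncated expansion from Lemma \ref{lemma:seriesexpansion} approximates $\nabla_\varphi^2 R$ to below the smallest block's magnitude, and the block-upper-triangular support of each $\nabla_\varphi^2 S_\ell$ together with continuity of the positive-definite $\nabla_{\varphi^\ell}^2 S_\ell(\wvarphi)$ gives the graded bounds on a small $V_0$. Your proof of (b) via $H=DMD$ with $D=\diag(\sigma^{-\ell}I_{d_\ell})$ is a clean alternative to the paper's inductive Cauchy--Schwarz argument in Lemma \ref{lemma:gradedblocks}(b): the rescaled $M$ is a block-diagonal positive-definite matrix with spectrum in $[c,C]$ plus an $O(\sigma^{-1})$ off-diagonal perturbation, hence $\lambda_{\min}(M)\geq c/2$ for large $\sigma$, and $v^\top Hv=(Dv)^\top M(Dv)\geq (c/2)\sigma^{-2L}\|v\|^2$.

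Part (c) has a genuine gap. The statement that ``$F(\eta):=R(\wvarphi+D^{-1}\eta)$ \ldots is uniformly strongly convex on a $\sigma$-independent ball'' is false: the pre-image of $V_0$ under $\eta\mapsto\wvarphi+D^{-1}\eta$ has half-extent only $\rho\sigma^{-\ell}$ in block $\ell$, which shrinks to zero as $\sigma\to\infty$, and outside this pre-image you have no control on $\nabla^2 F(\eta)=M(\wvarphi+D^{-1}\eta)$. So the ``standard strong-convexity/contraction argument'' does not establish existence, and your graded displacement estimate --- which rests on the mean-value identity $\varphi^*-\wvarphi=-\bar H^{-1}\nabla R(\wvarphi)$, hence on $\nabla R(\varphi^*)=0$ --- presupposes interiority of $\varphi^*$, which is exactly what remains to be proved. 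One could repair this by a graded Newton fixed-point argument in the original $\varphi$-coordinates: combining your bound $\|(H^{-1})_{jk}\|\lesssim\sigma^{2\min(j,k)}$ with the graded Lipschitz bound $\|H_{mk}(\varphi)-H_{mk}(\wvarphi)\|\lesssim\sigma^{-2\max(m,k)}\|\varphi-\wvarphi\|$ shows $\|I-H(\wvarphi)^{-1}H(\varphi)\|\lesssim\|\varphi-\wvarphi\|$ on $V_0$ for large $\sigma$, so that $T(\varphi)=\varphi-H(\wvarphi)^{-1}\nabla R(\varphi)$ is a contraction on a fixed $\delta$-ball with first step $\|T(\wvarphi)-\wvarphi\|\lesssim\sigma^{-2}$. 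The paper instead avoids the inverse Hessian entirely via a block-by-block bootstrap on function values: take $\check\varphi$ minimizing $R$ over the compact $\overline{V_0}$ (so existence in $\overline{V_0}$ is automatic), use $R(\check\varphi)\leq R(\wvarphi)$ together with quadratic growth of $S_1$ around $\wvarphi^1$ and the series expansion to get $\|\check\varphi^1-\wvarphi^1\|\lesssim\sigma^{-0.9}$, then compare $R(\check\varphi)$ to $R$ at the modified reference $(\check\varphi^1,\bar\varphi^2,\wvarphi^3,\ldots,\wvarphi^L)$ --- with $\bar\varphi^2$ the minimizer of $\varphi^2\mapsto S_2(\check\varphi^1,\varphi^2)$, located near $\wvarphi^2$ by Lemma \ref{lem:localmincompare} --- to get $\|\check\varphi^2-\wvarphi^2\|\lesssim\sigma^{-0.9}$, and iterate through all blocks to conclude $\check\varphi$ is interior.
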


\begin{proof}[Proof of Lemma \ref{lemma:largesigmapseudomin}]
For part (a), observe that the Hessian $\nabla_\varphi^2 S_\ell(\varphi)$ is
non-zero only in the upper-left $\ell \times \ell$ blocks of the decomposition
corresponding to $(\varphi^1,\ldots,\varphi^L)$. Since
$\nabla_{\varphi^\ell}^2 S_\ell(\wvarphi)$ is positive-definite by assumption,
by continuity there is a neighborhood $V_0$ of $\wvarphi$ and constants $C,c>0$
for which
\begin{equation}\label{eq:Sellbounds}
\lambda_{\min}(\nabla_{\varphi^\ell}^2 S_\ell(\varphi)) \geq c
\qquad \text{ and } \qquad
\|\nabla_{\varphi}^2 S_\ell(\varphi)\| \leq C
\end{equation}
for all $\varphi \in V_0$. Applying this for each $\ell=1,\ldots,L$ and
recalling (\ref{eq:Rk}), we see that $\nabla_\varphi^2 R_L(\varphi)$ has a graded block
structure. Then $\nabla_\varphi^2 R(\varphi)$ also has a graded block structure, by Lemma
\ref{lemma:seriesexpansion}. This shows (a). Part (b) will follow from (a)
and Lemma \ref{lemma:gradedblocks} which we prove in the next section.

To show part (c), let us assume for expositional simplicity that each $\varphi^\ell$
has positive dimension---the same argument applies with minor modification to the
setting where some of the vectors $\varphi^\ell$ have dimension 0. Let
$\check{\varphi}=(\check{\varphi}^1,\ldots,\check{\varphi}^L)$
be a point which minimizes $R(\varphi)$ over the compact set $\overline{V}_0$.
Observe that the given condition implies $\wvarphi$ is a local and global 
minimizer of $S_1$ over $V_0$, and that
\[S_1(\check{\varphi})-S_1(\wvarphi) \geq c\|\check{\varphi}^1-\wvarphi^1\|^2.\]
Then by Lemma \ref{lemma:seriesexpansion}, for all $\sigma>\sigma_0$ and large
enough $\sigma_0>0$,
\[R(\check{\varphi})-R(\wvarphi) \geq
c\sigma^{-2}\|\check{\varphi}^1-\wvarphi^1\|^2-C\left(\frac{\log
\sigma}{\sigma}\right)^4.\]
The left side is non-positive because $\check{\varphi}$ minimizes $R(\varphi)$,
so we get $\|\check{\varphi}^1-\wvarphi^1\| \leq \sigma^{-\tau}$ for, say, $\tau=0.9$. Now consider the functions $f(\varphi^2)=S_2(\wvarphi^1,\varphi^2)$
and $\check{f}(\varphi^2)=S_2(\check{\varphi}^1,\varphi^2)$. The given condition
implies that $f$ is strongly convex and has a local and global minimizer in $V_0$ given by
$\wvarphi^2$. Applying the bound $\|\check{\varphi}^1-\wvarphi^1\| \leq
\sigma^{-\tau}$, we get that $\|f-\check{f}\| \leq C\sigma^{-\tau}$ and
$\|\nabla^2 f-\nabla^2 \check{f}\| \leq C\sigma^{-\tau}$, for some constant
$C>0$ and any sufficiently small neighborhood $V_0$ of $\wvarphi$.
Then applying Lemma
\ref{lem:localmincompare}, $\check{f}$ is also strongly convex on $V_0$, with a
local and global minimizer in $V_0$ given by some point $\bar{\varphi}^2$ for which
$\|\bar{\varphi}^2-\wvarphi^2\| \leq C'\sigma^{-\tau}$. This implies
\[S_2(\check{\varphi}^1,\check{\varphi}^2)-S_2(\check{\varphi}^1,\bar{\varphi}^2)
\geq c\|\check{\varphi}^2-\bar{\varphi}^2\|^2.\]
Since $S_1$ depends only on $\varphi^1$ and not on $\varphi^2$, we have by Lemma
\ref{lemma:seriesexpansion} that
\[R(\check{\varphi})-R((\check{\varphi}^1,\bar{\varphi}^2,\wvarphi^3,\ldots,\wvarphi^L))
\geq c\sigma^{-4}\|\check{\varphi}^2-\bar{\varphi}^2\|^2
-C\left(\frac{\log \sigma}{\sigma}\right)^6.\]
Then, since this is again non-positive, we obtain
$\|\check{\varphi}^2-\bar{\varphi}^2\| \leq \sigma^{-\tau}$, and hence also
$\|\check{\varphi}^2-\wvarphi^2\| \leq C\sigma^{-\tau}$. Now applying this
argument to $f(\varphi^3)=S_3(\wvarphi^1,\wvarphi^2,\varphi^3)$ and
$\check{f}(\check{\varphi}^1,\check{\varphi}^2,\varphi^3)$, we obtain similarly
$\|\check{\varphi}^3-\wvarphi^3\| \leq C\sigma^{-\tau}$. Iterating this argument
yields $\|\check{\varphi}-\wvarphi\| \leq C\sigma^{-\tau}$ for a constant $C>0$.
For any neighborhood $V_0$, large enough $\sigma_0>0$ (depending on $V_0$),
and all $\sigma>\sigma_0$, this implies 
that this minimizer $\check{\varphi}$ belongs to the interior of $V_0$,
and hence must be a critical point of $R(\varphi)$. Then the strong convexity
in part (b) implies that this is the unique critical point in $V_0$,
which shows (c).
\end{proof}

\subsection{Local landscape and Fisher information}\label{sec:locallandscape}

We apply Lemma \ref{lemma:largesigmapseudomin} to analyze the 
Fisher information $I(\theta_*)=\nabla_\theta^2 R(\theta_*)$
and the local landscape of $R(\theta)$ near $\theta_*$. By rotational symmetry
of $R(\theta)$, the same statements hold locally around each point
in the orbit $\O_{\theta_*}$.

Recall the transcendence basis $\varphi$ in Lemma \ref{lemma:phiconstruction},
and the decompositions $d=d_1+\ldots+d_L$ and
$\varphi=(\varphi^1,\ldots,\varphi^L)$ according to the sequence of
subspaces $\cR_{\leq \ell}^G$ and their transcendence degrees. Lemma
\ref{lemma:polynomialreparam} establishes that $\varphi$ is a local
reparametrization around generic points $\theta_*$, and we will analyze the
landscape in this reparametrization.

\begin{theorem}\label{thm:locallargenoise}
Fix a choice of transcendence basis $\varphi=(\varphi^1,\ldots,\varphi^L)$
satisfying Lemma \ref{lemma:phiconstruction}, and let $\theta_* \in \R^d$ be 
a point with $\der_\theta \varphi(\theta_*)$ non-singular (which holds for
generic $\theta_*$). For some constants $C,c,\sigma_0>0$ and some
neighborhood $U$ of $\theta_*$, and for all $\sigma\geq \sigma_0$,
\begin{enumerate}[(a)]
\item In the reparametrization by $\varphi$, $R(\varphi)$ is strongly convex on
$\varphi(U)$ with $\lambda_{\min}(\nabla_\varphi^2 R(\varphi)) \geq c\sigma^{-2L}$.
\item The Fisher information matrix $I(\theta_*)$ has
$d_\ell$ eigenvalues belonging to $[c\sigma^{-2\ell},C\sigma^{-2\ell}]$
for each $\ell=1,\ldots,L$, where $d_\ell=\trdeg(\cR_{\leq \ell}^G)
-\trdeg(\cR_{\leq \ell-1}^G)$.
\item For any polynomial $\psi \in \cR^G_{\leq \ell}$, there is a constant $C>0$
(depending also on $\psi$) such that
\[\nabla_\theta \psi(\theta_*)^\top I(\theta_*)^{-1}
\nabla_\theta \psi(\theta_*) \leq C\sigma^{2\ell}.\]
\end{enumerate}
\end{theorem}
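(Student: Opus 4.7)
The plan is to apply Lemma~\ref{lemma:largesigmapseudomin} by verifying that $\theta_*$ is a pseudo-local-minimizer in the transcendence-basis reparametrization $\varphi=(\varphi^1,\ldots,\varphi^L)$ of Lemma~\ref{lemma:phiconstruction}. All three parts will then follow from the graded block structure of $\nabla_\varphi^2 R(\varphi)$ on a neighborhood $\varphi(U)$, together with the observation that $J=\der_\theta\varphi(\theta_*)$ is a fixed non-singular matrix (depending on $\theta_*,d,G$ but not on $\sigma$) that transfers statements between $\theta$- and $\varphi$-coordinates up to bounded factors.

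To verify the pseudo-local-minimizer conditions, I use Lemma~\ref{lemma:Selldegell} to write $S_\ell(\theta)=\tfrac{1}{2(\ell!)}\|T_\ell(\theta)-T_\ell(\theta_*)\|_\HS^2+Q_\ell(\theta)$, where $Q_\ell$ lies in the algebra generated by $\cR^G_{\leq\ell-1}$. By Lemma~\ref{lemma:polynomialreparam}(c), the entries of $T_\ell$ and $Q_\ell$ depend analytically near $\theta_*$ only on $(\varphi^1,\ldots,\varphi^\ell)$ and $(\varphi^1,\ldots,\varphi^{\ell-1})$ respectively, so $S_\ell$ depends only on $\varphi^1,\ldots,\varphi^\ell$. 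At $\wvarphi=\varphi(\theta_*)$ the squared-norm term has a double zero and $Q_\ell$ is independent of $\varphi^\ell$, yielding $\nabla_{\varphi^\ell}S_\ell(\wvarphi)=0$ and
\[
\nabla_{\varphi^\ell}^2 S_\ell(\wvarphi)=\tfrac{1}{\ell!}\,\der_{\varphi^\ell}T_\ell(\wvarphi)^\top\,\der_{\varphi^\ell}T_\ell(\wvarphi).
\]
The main obstacle is showing this is positive definite, equivalently, that $\der_{\varphi^\ell}T_\ell(\wvarphi)$ has full column rank $d_\ell$; this forces a clean bridge between the algebraic fact that entries of $T_\ell$ linearly span the homogeneous degree-$\ell$ $G$-invariants and the analytic Jacobian condition. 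I would prove it as follows: suppose $c\in\ker(\der_{\varphi^\ell}T_\ell(\wvarphi))$ and consider the polynomial $q(\theta)=\sum_i c_i\varphi^\ell_i(\theta)$, which is $G$-invariant and homogeneous of degree $\ell$ (taking each $\varphi^\ell_i$ homogeneous, as may be arranged). Any such $q$ satisfies $q(\theta)=\E_g\langle A,(g\theta)^{\otimes\ell}\rangle=\langle A,T_\ell(\theta)\rangle$ for its coefficient tensor $A$, so $q=\sum_j b_j T_{\ell,j}$ for some $b$. Equating $\nabla_{\varphi^\ell}$ of the two representations of $q$ at $\wvarphi$ gives $c=\der_{\varphi^\ell}T_\ell(\wvarphi)^\top b$, so
\[
\|c\|^2=c^\top\,\der_{\varphi^\ell}T_\ell(\wvarphi)^\top b=b^\top\bigl(\der_{\varphi^\ell}T_\ell(\wvarphi)\,c\bigr)=0,
\]
forcing $c=0$. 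Thus $\nabla_{\varphi^\ell}^2 S_\ell(\wvarphi)\succ 0$ and $\theta_*$ is a pseudo-local-minimizer.

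Once the pseudo-local-minimizer property is established, Lemma~\ref{lemma:largesigmapseudomin} on a sufficiently small $\varphi(U)$ delivers the graded block structure of $\nabla_\varphi^2 R(\varphi)$ together with $\lambda_{\min}(\nabla_\varphi^2 R(\varphi))\geq c\sigma^{-2L}$, which is part (a). For part (b), I write $I(\theta_*)=J^\top\nabla_\varphi^2 R(\wvarphi)\,J$ and note that eigenvalues under congruence by the fixed non-singular $J$ differ from those of $\nabla_\varphi^2 R(\wvarphi)$ by bounded multiplicative constants (by the Courant--Fischer characterization); combined with the eigenvalue counts for graded block matrices supplied by Lemma~\ref{lemma:gradedblocks} (cited ahead), this transfers the $d_\ell$ eigenvalues of order $\sigma^{-2\ell}$ to $I(\theta_*)$. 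For part (c), Lemma~\ref{lemma:polynomialreparam}(c) expresses $\psi=f(\varphi^1,\ldots,\varphi^\ell)$ near $\theta_*$ for some analytic $f$, so $\nabla_\varphi\psi(\wvarphi)=J^{-\top}\nabla_\theta\psi(\theta_*)$ is supported on the first $\ell$ blocks with $O(1)$ norm. The change of variable
\[
\nabla_\theta\psi(\theta_*)^\top I(\theta_*)^{-1}\nabla_\theta\psi(\theta_*)=\nabla_\varphi\psi(\wvarphi)^\top\bigl(\nabla_\varphi^2 R(\wvarphi)\bigr)^{-1}\nabla_\varphi\psi(\wvarphi),
\]
combined with the fact that the $(k,m)$-block of the inverse of a graded block matrix has operator norm $O(\sigma^{k+m})$, gives the claimed bound $\sum_{k,m\leq\ell}O(\sigma^{k+m})=O(\sigma^{2\ell})$.
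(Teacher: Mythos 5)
Your proof is correct and follows the paper's structural outline: verify that $\theta_*$ is a pseudo-local-minimizer in the transcendence-basis coordinates (using Lemmas \ref{lemma:Selldegell} and \ref{lemma:polynomialreparam}(c)), invoke Lemma \ref{lemma:largesigmapseudomin} for the graded block structure and strong convexity, then transfer to $\theta$-coordinates via the congruence $I(\theta_*) = J^\top \nabla_\varphi^2 R(\wvarphi)\,J$.

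Two sub-steps deviate from the paper cosmetically, and one needs a small patch. Your kernel argument for full column rank of $\der_{\varphi^\ell} T_\ell(\wvarphi)$ is different from the paper's, which writes $\varphi^\ell=f(T_1(\varphi),\ldots,T_\ell(\varphi))$ for a linear $f$ and differentiates to get $\Id=(\der_{T_\ell} f)(\der_{\varphi^\ell} T_\ell)$; both are fine. However, your parenthetical ``taking each $\varphi^\ell_i$ homogeneous, as may be arranged'' is not innocuous: the theorem fixes $\varphi$, and Lemma \ref{lemma:phiconstruction} gives polynomials of degree exactly $\ell$ but not necessarily homogeneous, so swapping to a homogeneous $\hat\varphi^\ell$ would prove part (a) only for the modified basis (strong convexity in the $\varphi$- and $\hat\varphi$-coordinates are not equivalent statements). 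The repair drops the homogeneity assumption entirely: write $q$ as a linear combination of entries of $T_1,\ldots,T_\ell$ rather than of $T_\ell$ alone, and note that $T_k$ for $k<\ell$ does not depend on $\varphi^\ell$ by Lemma \ref{lemma:polynomialreparam}(c), so $\nabla_{\varphi^\ell}q$ still equals $\der_{\varphi^\ell} T_\ell(\wvarphi)^\top b$ for the $T_\ell$-coefficient vector $b$, and your kernel computation goes through unchanged. For part (b), your Ostrowski-style eigenvalue comparison under the fixed congruence $J$ is a slicker route than the paper's, which instead passes to an orthogonal representative via QR decomposition and checks directly that the graded block structure survives the triangular change of basis $W$; both are sound.
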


Note that part (c) describes the limiting variance in (\ref{eq:deltamethod}) for
estimating $\psi(\theta_*)$ by the plug-in maximum likelihood estimate
$\psi(\hat{\theta})$.

The proof of Theorem \ref{thm:locallargenoise} relies on the following
linear-algebraic result for any $\sigma$-dependent matrix with the graded block
structure of Definition \ref{def:gradedblocks}, and large enough $\sigma$. 

\begin{lemma}\label{lemma:gradedblocks}
Suppose $H \equiv H(\sigma) \in \R^{d \times d}$ has a graded block structure with respect
$(\varphi^1,\ldots,\varphi^L)$. Let $d_\ell$ be the dimension of each subvector
$\varphi^\ell$. Let $H_{:\ell,:\ell}$ and
$(H^{-1})_{:\ell,:\ell}$ denote the submatrices consisting of the upper-left
$\ell \times \ell$ blocks in the $L \times L$ block decompositions of $H$ and
$H^{-1}$. Then for some constants $C,c,\sigma_0>0$ and all $\sigma>\sigma_0$:
\begin{enumerate}[(a)]
\item $H$ has $d_\ell$ eigenvalues belonging to
$[c\sigma^{-2\ell},C\sigma^{-2\ell}]$ for each $\ell=1,\ldots,L$.
In particular, $\lambda_{\min}(H) \geq c\sigma^{-2L}$.
\item For each $\ell$ where $d_1+\ldots+d_\ell>0$,
$\lambda_{\min}(H_{:\ell,:\ell}) \geq c\sigma^{-2\ell}$.
\item For each $\ell$ where $d_1+\ldots+d_\ell>0$,
$\lambda_{\max}((H^{-1})_{:\ell,:\ell}) \leq C\sigma^{2\ell}$.
\end{enumerate}
\end{lemma}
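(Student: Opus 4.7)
The plan is to introduce a block-diagonal rescaling $D_\sigma := \diag(\sigma^i I_{d_i})_{i=1}^L$ and show that the conjugated matrix $\tilde H := D_\sigma H D_\sigma$ is uniformly well-conditioned from below for large $\sigma$; parts (b) and (c) then follow directly from this, and (a) follows from (b) via the Courant--Fischer minimax principle.

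First I would verify that $\tilde H$ has $(i,j)$-block $\sigma^{i+j} H_{ij}$, of operator norm at most $C\sigma^{i+j-2\max(i,j)} = C\sigma^{-|i-j|}$, while its diagonal blocks $\sigma^{2i} H_{ii}$ have eigenvalues in $[c,C]$ by assumption. Decomposing $\tilde H = \tilde H_{\diag} + E$ into its block-diagonal part and off-diagonal remainder, the remainder satisfies $\|E\| \leq \sum_{i \neq j} C\sigma^{-|i-j|} \leq C_1/\sigma$, so Weyl's inequality yields $\lambda_{\min}(\tilde H) \geq c - C_1/\sigma \geq c/2$ for every $\sigma \geq \sigma_0$, with $\sigma_0$ depending only on $c, C, L$.

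For (c), I would use the identity $H^{-1} = D_\sigma \tilde H^{-1} D_\sigma$, which in block form gives
\[
(H^{-1})_{:\ell,:\ell} = D_\sigma^{(\ell)} (\tilde H^{-1})_{:\ell,:\ell} D_\sigma^{(\ell)}, \qquad D_\sigma^{(\ell)} := \diag(\sigma^i I_{d_i})_{i=1}^\ell,
\]
and therefore $\|(H^{-1})_{:\ell,:\ell}\| \leq \|D_\sigma^{(\ell)}\|^2 \|\tilde H^{-1}\| \leq (2/c)\,\sigma^{2\ell}$. For (b), the submatrix $H_{:\ell,:\ell}$ itself has graded block structure of depth $\ell$, so the same Weyl argument applied to $\tilde H_\ell := D_\sigma^{(\ell)} H_{:\ell,:\ell} D_\sigma^{(\ell)}$ gives $\lambda_{\min}(\tilde H_\ell) \geq c/2$. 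Writing any $v \in \R^{d_1+\cdots+d_\ell}$ as $v = D_\sigma^{(\ell)} w$, one has $v^\top H_{:\ell,:\ell} v = w^\top \tilde H_\ell w \geq (c/2)\|w\|^2 \geq (c/2)\sigma^{-2\ell}\|v\|^2$, proving $\lambda_{\min}(H_{:\ell,:\ell}) \geq (c/2)\sigma^{-2\ell}$.

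For (a), I would apply Courant--Fischer minimax twice. Restricting $v$ to the coordinate subspace $V := \{v : v^j = 0 \text{ for } j > \ell\}$ of dimension $d_1+\cdots+d_\ell$ and invoking (b) yields $\lambda_k(H) \geq c\sigma^{-2\ell}$ for $k \leq d_1+\cdots+d_\ell$. Conversely, restricting to the complementary subspace $W := \{v : v^j = 0 \text{ for } j < \ell\}$, on which $v^\top H v$ involves only blocks $H_{i,j}$ with $i, j \geq \ell$, and using $\sum_{i,j \geq \ell} \|H_{i,j}\| \leq C'\sigma^{-2\ell}$ (dominated by the $i=j=\ell$ term), gives $\lambda_k(H) \leq C'\sigma^{-2\ell}$ for $k > d_1+\cdots+d_{\ell-1}$. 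Intersecting the two ranges of $k$ yields exactly $d_\ell$ eigenvalues of $H$ in $[c\sigma^{-2\ell}, C'\sigma^{-2\ell}]$; specializing to $\ell = L$ also gives $\lambda_{\min}(H) \geq c\sigma^{-2L}$, completing (a). The only step requiring real care is the off-diagonal norm bound $\|E\| \leq C_1/\sigma$ for $\tilde H$; once this single estimate is in hand, parts (b), (c), and (a) all follow by short arguments with no further technicalities.
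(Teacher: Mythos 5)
Your proof is correct, and it takes a genuinely different route from the paper. The paper establishes (b) by a forward induction on $\ell$ using Cauchy--Schwarz, deduces (a) from (b) via Weyl's inequality applied to the decomposition $H = H^{(\ell-1)} + R^{(\ell-1)}$, and proves (c) by a separate \emph{downward} induction on $\ell$, showing via the Schur complement identity that the matrices $G^{(\ell)} := [(H^{-1})_{:\ell,:\ell}]^{-1}$ inherit a graded block structure, after which (b) applied to $G^{(\ell)}$ gives the bound. Your observation that the conjugated matrix $\tilde{H} = D_\sigma H D_\sigma$ is uniformly well-conditioned, i.e.\ $\lambda_{\min}(\tilde H) \geq c/2$ for $\sigma$ large, collapses all of this into one estimate. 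The biggest simplification is in part (c): the identity $(H^{-1})_{:\ell,:\ell} = D_\sigma^{(\ell)}(\tilde H^{-1})_{:\ell,:\ell}D_\sigma^{(\ell)}$ combined with $\|\tilde H^{-1}\| \leq 2/c$ entirely replaces the paper's Schur-complement induction, and part (b) follows by the same scaling applied to the leading submatrix. For (a) you use Courant--Fischer in place of the paper's Weyl inequality on truncations; both are equivalent spectral tools, but yours pairs more naturally with (b). The one place to state explicitly is that the sum $\sum_{i\neq j}\sigma^{-|i-j|}$ is $O(1/\sigma)$ for $\sigma\geq 2$, uniformly over the fixed number of blocks $L$, and that $\|D_\sigma^{(\ell)}\| \le \sigma^\ell$ and $\|(D_\sigma^{(\ell)})^{-1}\|\le \sigma^{-1}$ require $\sigma\geq 1$, both of which are subsumed by the $\sigma>\sigma_0$ hypothesis. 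Altogether your argument is shorter and conceptually cleaner than the paper's, with no gaps.
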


\begin{proof}
We first show part (b). This holds for the smallest $\ell$ where
$d_1+\ldots+d_\ell>0$, by the definition of the graded block structure.
Assume inductively that it holds for $\ell \leq L-1$, and consider $\ell+1$
where $d_{\ell+1}>0$. For
any unit vector $v=(v_{:\ell},v_{\ell+1})$ where $v_{:\ell} \in \R^{d_1+\ldots+d_\ell}$ and
$v_{\ell+1} \in \R^{d_{\ell+1}}$, we have by the induction hypothesis and
Cauchy-Schwarz
\begin{align*}
v^\top H_{:(\ell+1),:(\ell+1)} v &=v_{:\ell}^\top H_{:\ell,:\ell} v_{:\ell}
+v_{\ell+1}^\top H_{\ell+1,\ell+1}v_{\ell+1}
+2v_{:\ell}^\top H_{:\ell,\ell+1}v_{\ell+1}\\
&\geq c\sigma^{-2\ell}\|v_{:\ell}\|^2+c\sigma^{-2(\ell+1)}\|v_{\ell+1}\|^2
-2C\sigma^{-2(\ell+1)}\|v_{:\ell}\|\|v_{\ell+1}\|\\
&\geq \Big(c\sigma^{-2\ell}-(2C/c)\sigma^{-2(\ell+1)}\Big)
\|v_{:\ell}\|^2+(c/2)\sigma^{-2(\ell+1)}\|v_{\ell+1}\|^2.
\end{align*}
For large $\sigma$, we get
$v^\top H_{:(\ell+1),:(\ell+1)} v \geq c'\sigma^{-2(\ell+1)}$ and some $c'>0$.
Hence (b) holds by induction for each $\ell=1,\ldots,L$.

Next, we show part (a). That $\lambda_{\min}(H) \geq c\sigma^{-2L}$ follows from (b).
For the first statement, for any $\ell$ where $d_\ell>0$, write
$H=H^{(\ell-1)}+R^{(\ell-1)}$ where $H^{(\ell-1)}$ equals
$H_{:(\ell-1),:(\ell-1)}$ on the upper-left $(\ell-1) \times (\ell-1)$
blocks and is 0 elsewhere, and $R^{(\ell-1)}$ is the
remainder. Part (a) implies that $H^{(\ell-1)}$ has $d_1+\ldots+d_{\ell-1}$
eigenvalues at least $c\sigma^{-2(\ell-1)}$, and remaining eigenvalues 0.
The graded block structure condition
implies $\|R^{(\ell-1)}\| \leq C\sigma^{-2\ell}$ for a constant $C>0$.
Then for a constant $c'>0$ and all large $\sigma$,
Weyl's inequality implies that $H$ has
$d_1 + \ldots + d_{\ell-1}$ eigenvalues at least $c'\sigma^{-2(\ell-1)}$,
and remaining eigenvalues at most $C\sigma^{-2\ell}$. Since this result holds for
every $\ell=1,\ldots,L$, this implies part (a).

Finally, for part (c), denote $G^{(\ell)}=[(H^{-1})_{:\ell,:\ell}]^{-1}$.
We claim that for all $\ell$ where $d_1+\ldots+d_\ell>0$, this matrix
$G^{(\ell)}$ has a graded block structure with respect to
$(\varphi^1,\ldots,\varphi^\ell)$.
That is to say, there are constants $C,c>0$ such that for all large $\sigma$
and all $1 \leq j,k \leq \ell$,
\begin{equation}\label{eq:inductioninverse}
C\sigma^{-2j} \geq \lambda_{\max}(G^{(\ell)}_{jj})
\geq \lambda_{\min}(G^{(\ell)}_{jj}) \geq c\sigma^{-2j} \quad
\text{ and } \quad \|G_{jk}^{(\ell)}\| \leq C\sigma^{-2\max(j,k)}.
\end{equation}
For $\ell=L$, we have $G^{(\ell)}=H$, so this holds by assumption.
Assume inductively that it holds for $\ell+1$, and consider $\ell$ where
$d_{\ell+1}>0$. Applying the
definition of $G^{(\ell)}$ and the Schur complement identity,
\[[G^{(\ell)}]^{-1}=([G^{(\ell+1)}]^{-1})_{:\ell,:\ell}
=\Big(G^{(\ell+1)}_{:\ell,:\ell}-G^{(\ell+1)}_{:\ell,\ell+1}[G^{(\ell+1)}_{\ell+1,\ell+1}]^{-1}
G^{(\ell+1)}_{\ell+1,:\ell}\Big)^{-1}.\]
Then
\[G^{(\ell)}=G^{(\ell+1)}_{:\ell,:\ell}-G^{(\ell+1)}_{:\ell,\ell+1}[G^{(\ell+1)}_{\ell+1,\ell+1}]^{-1}
G^{(\ell+1)}_{\ell+1,:\ell}.\]
We have $\|G^{(\ell+1)}_{:\ell,\ell+1}\| \leq C'\sigma^{-2(\ell+1)}$ and
$\|[G^{(\ell+1)}_{\ell+1,\ell+1}]^{-1}\| \leq C'\sigma^{2(\ell+1)}$ for some
$C'>0$, by the induction hypothesis.
For large enough $\sigma$, applying the induction hypothesis also to each block
of $G^{(\ell+1)}_{:\ell,:\ell}$, we get that (\ref{eq:inductioninverse}) holds
for $\ell$ (and some constants $C,c>0$ different from those for $\ell+1$).
Hence (\ref{eq:inductioninverse}) holds by
induction for each $\ell=1,\ldots,L$. Then, applying part (b) to this matrix
$G^{(\ell)}$ in place of $H$, we get that $\lambda_{\min}(G^{(\ell)}) \geq c\sigma^{-2\ell}$,
which implies $\lambda_{\max}((H^{-1})_{:\ell,:\ell}) \leq C\sigma^{2\ell}$.
This establishes (c).
\end{proof}

\begin{proof}[Proof of Theorem \ref{thm:locallargenoise}]
Since $\der_\theta \varphi(\theta_*)$ is non-singular, $\varphi=(\varphi^1,\ldots,\varphi^L)$
forms a local reparametrization on an open neighborhood of $\theta_*$.
We first show that $\theta_*$ is a pseudo-local-minimizer with respect to
this reparametrization. For this, we apply the form
\[S_\ell(\varphi)=\frac{1}{2(\ell!)}\|T_\ell(\varphi)-T_\ell(\varphi_*)\|_\HS^2
+Q_\ell(\varphi)\]
provided in Lemma \ref{lemma:Selldegell}, where $T_\ell(\varphi)$ and
$Q_\ell(\varphi)$ are shorthand for $T(\theta(\varphi))$ and
$Q(\theta(\varphi))$, and $\varphi_*=\varphi(\theta_*)$. Differentiating in $\varphi$,
\begin{align*}
\nabla_\varphi S_\ell(\varphi)&=\frac{1}{\ell!}\der_\varphi T_\ell(\varphi)^\top
(T_\ell(\varphi)-T_\ell(\varphi_*))+\nabla_\varphi Q_\ell(\varphi),\\
\nabla_\varphi^2 S_\ell(\varphi)&=\frac{1}{\ell!}\der_\varphi T_\ell(\varphi)^\top
\der_\varphi T_\ell(\varphi)
+\frac{1}{\ell!}\sum_i (T_\ell(\varphi)_i-T_\ell(\varphi_*)_i)
\nabla_\varphi^2 T_\ell(\varphi)_i+\nabla_\varphi^2 Q_\ell(\varphi).
\end{align*}
Here, $T_\ell(\varphi)_i$ is the $i^\text{th}$ entry of $T_\ell(\varphi)$
and the summation is over all multi-indices $i$. Note that $Q_\ell$ is in the algebra
generated by $\cR_{\leq \ell-1}^G$, so Lemma \ref{lemma:polynomialreparam}(c) ensures
that $Q_\ell$ depends only on $(\varphi^1,\ldots,\varphi^{\ell-1})$ on a neighborhood
of $\theta_*$. Thus,
evaluating the above at $\varphi=\varphi_*$ and restricting to the 
coordinates $\varphi^\ell$ yields
\[\nabla_{\varphi^\ell} S_\ell(\varphi_*)=0,
\qquad \nabla_{\varphi^\ell}^2
S_\ell(\varphi_*)=\frac{1}{\ell!}\der_{\varphi^\ell}
T_\ell(\varphi_*)^\top \der_{\varphi^\ell} T_\ell(\varphi_*).\]
In particular, $\nabla_{\varphi^\ell}^2 S_\ell(\varphi_*) \succeq 0$. To see
that $\nabla_{\varphi^\ell}^2 S_\ell(\varphi_*)$ has full rank $d_\ell$, observe
that every degree-$\ell$ polynomial of $\theta \in \R^d$ is a linear combination of
entries of the tensors $\theta^{\otimes 1},\ldots,\theta^{\otimes \ell}$. Thus,
symmetrizing by $G$, every polynomial in $\cR_{\leq \ell}^G$ is
a linear combination of entries of $T_1,\ldots,T_\ell$ (monomials). This means that
$\varphi^\ell=f(T_1(\varphi),\ldots,T_\ell(\varphi))$ for some linear function
$f:\R^{d+d^2+\ldots+d^\ell} \to \R^{d_\ell}$. Differentiating both sides
in $\varphi^\ell$ and observing that $T_1,\ldots,T_{\ell-1}$ do not depend on
$\varphi^\ell$ by Lemma \ref{lemma:polynomialreparam}(c), we obtain
\[\Id=(\der_{T_\ell} f)(\der_{\varphi^\ell} T_\ell),\]
where the left side is the $d_\ell \times d_\ell$ identity. Thus
$\der_{\varphi^\ell} T_\ell$ has full rank $d_\ell$, so
$\nabla_{\varphi^\ell}^2 S_\ell(\varphi_*) \succ 0$ and $\theta_*$ is a
pseudo-local-minimizer.

Then part (a) of the theorem
follows immediately from Lemma \ref{lemma:largesigmapseudomin}(b).
For (b) and (c), note that since $\nabla_\theta R(\theta_*)=0$, we have
from (\ref{eq:hessreparam}) that
\[I(\theta_*) \equiv \nabla_\theta^2 R(\theta_*)
=\Big(\der \varphi(\theta_*)^\top \cdot
\nabla_\varphi^2 R(\varphi_*) \cdot \der \varphi(\theta_*)\Big)\]
where $\varphi_*=\varphi(\theta_*)$. Then setting
$\tilde{V}=\der \varphi(\theta_*)^{-1}$,
Lemma \ref{lemma:largesigmapseudomin} shows that
$\nabla_\varphi^2 R(\varphi_*)=\tilde{V}^\top I(\theta_*)\tilde{V}$ has a graded
block structure. For any polynomial $\psi \in
\cR^G_{\leq \ell}$, Lemma \ref{lemma:polynomialreparam} shows that $\psi$
is an analytic function of $\varphi^1,\ldots,\varphi^\ell$, and hence that
$\nabla_\varphi \psi=\tilde{V}^\top \nabla_\theta \psi$ is non-zero only in
its first $\ell$ blocks. Writing
\[\nabla_\theta \psi(\theta_*)^\top I(\theta_*)^{-1}
\nabla_\theta \psi(\theta_*)=\Big(\tilde{V}^\top \nabla_\theta
\psi(\theta_*)\Big)^\top \Big(\tilde{V}^\top I(\theta_*)\tilde{V}\Big)^{-1}
\Big(\tilde{V}^\top \nabla_\theta \psi(\theta_*)\Big),\]
part (c) then follows from Lemma \ref{lemma:gradedblocks}(c). Also, by the QR 
decomposition, there is a non-singular lower-triangular matrix $W$ for which
$V=\tilde{V}W$ is orthogonal. It may be verified from Definition
\ref{def:gradedblocks} that the matrix
$V^\top I(\theta_*)V=W^\top (\tilde{V}^\top I(\theta_*)\tilde{V})W$ also has a
graded block structure,
for modified constants $C,c,\sigma_0$.
As the eigenvalues of $V^\top I(\theta_*)V$ are the same as those of
$I(\theta_*)$, this and Lemma \ref{lemma:gradedblocks}(a) show part (b).
\end{proof}

The following then shows that with high probability for $n \gg \sigma^{2L}$,
the empirical risk $R_n(\varphi)$ is also strongly convex with
a local minimizer in $\varphi(U)$. This requirement for $n$ matches the
requirement for list-recovery of generic signals in
\cite{bandeira2017estimation}.

\begin{corollary}\label{cor:locallargenoise}
In the setting of Theorem \ref{thm:locallargenoise}, for
$(\theta_*,d,G)$-dependent constants $C,c,c',\sigma_0>0$, with probability
$1-e^{c(\log n)^2}-Ce^{-cn^{1/(2L)}\sigma^{-1}}$,
we have $\lambda_{\min}(\nabla_\varphi^2 R_n(\varphi)) \geq
c'\sigma^{-2L}$ for all $\varphi \in \varphi(U)$,
and $R_n(\theta)$ has a critical point and unique local minimizer
in $U$.
\end{corollary}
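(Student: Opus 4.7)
My plan is to apply Lemma \ref{lem:localmincompare} in the reparametrized coordinates $\varphi$ with $f_1=R$ and $f_2=R_n$, using Theorem \ref{thm:locallargenoise}(a) as input: on a small $\sigma$-independent ball $B_\rho(\varphi_*)\subset\varphi(U)$ around $\varphi_*=\varphi(\theta_*)$, $R(\varphi)$ is strongly convex with constant $c_0=c\sigma^{-2L}$ and $\varphi_*$ is a critical point of $R$. Consequently it suffices to establish the uniform deviation bounds
\[ \sup_{\varphi\in \varphi(U)} |R_n(\varphi)-R(\varphi)|,\qquad \sup_{\varphi\in \varphi(U)} \|\nabla^2_\varphi R_n(\varphi)-\nabla^2_\varphi R(\varphi)\|\;\le\;\delta \]
for some $\delta$ equal to a small constant times $\sigma^{-2L}$. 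Since $\varphi$ is a real-analytic local reparametrization on a neighborhood of $\overline{U}$, the tensors $\der_\theta\varphi$ and $\nabla^2_\theta\varphi_i$ are bounded on $U$ by $\sigma$-independent constants, and the chain-rule identities \eqref{eq:gradreparam}--\eqref{eq:hessreparam} reduce the task to bounding $|R_n-R|$, $\|\nabla_\theta R_n-\nabla_\theta R\|$, and $\|\nabla_\theta^2 R_n-\nabla_\theta^2 R\|$ in the original $\theta$-coordinates, all at scale $\sigma^{-2L}$ on $U$.

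The general concentration of Lemma \ref{lem:concentration} is too weak at this scale, becoming non-trivial only when $n\gg\sigma^{4L-4}$, so I would use the refined series-based concentration of Lemma \ref{lemma:Rnexpansion} with truncation order $k$ a fixed $(\theta_*,d,G)$-dependent integer chosen large enough that $(\log\sigma/\sigma)^{k+1}\ll\sigma^{-2L}$ (e.g.\ $k=2L+2$), and with fluctuation parameter $t$ proportional to $\sigma^{-2L}$. Because $\|\theta\|\vee 1$ is uniformly bounded on $U$, the right-hand sides of \eqref{eq:Rnexpansion0}--\eqref{eq:Rnexpansion2} all become $O(\sigma^{-2L})$, so $R_n(\theta)$ and its $\theta$-derivatives concentrate at the desired scale around the partial sums $\sum_{\ell=1}^{k}\sigma^{-\ell}\cdot\tfrac{1}{n}\sum_i P_\ell(\eps_i,\theta,\theta_*)$. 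Taking expectations in $\eps$, the odd-$\ell$ polynomials vanish and the even-$\ell$ ones reproduce $\sigma^{-2\ell'}S_{\ell'}(\theta)$, giving the population partial sum $R^{\lfloor k/2\rfloor}(\theta)$, which by Lemma \ref{lemma:seriesexpansion} approximates $R(\theta)$ with deterministic error $O((\log\sigma/\sigma)^{k+2})\ll\sigma^{-2L}$. The residual empirical-process fluctuation is handled by a Bernstein/Hoeffding argument applied to each of the $G$-invariant summands $P_{\ell,m}(\theta)$ in \eqref{eq:Pell-form}, followed by a $\delta$-net argument on $U$ using the Lipschitz estimates \eqref{eq:Pell1}--\eqref{eq:Pell2}; this yields the stated tail probability $1-Ce^{-c(\log n)^2}-Ce^{-cn^{1/(2L)}\sigma^{-1}}$.

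The principal obstacle is the simultaneous tuning of $k$ and $t$: $k$ must be taken large enough that the deterministic truncation error is smaller than $\sigma^{-2L}$, while $t$ must be on the order of $\sigma^{-2L}$ for the stochastic error to be admissible; but smaller $t$ weakens the probability bound in Lemma \ref{lemma:Rnexpansion}, and the exponent $n^{1/(2L)}\sigma^{-1}$ in the final probability estimate arises precisely from this trade-off. Once the uniform deviation bounds are in place, $\lambda_{\min}(\nabla^2_\varphi R_n(\varphi))\ge c'\sigma^{-2L}$ on $\varphi(U)$ follows at once from $\nabla^2_\varphi R_n\succeq\nabla^2_\varphi R-\|\nabla^2_\varphi R_n-\nabla^2_\varphi R\|\,\Id$ together with Theorem \ref{thm:locallargenoise}(a). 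Strong convexity on the whole of $\varphi(U)$ already precludes more than one critical point of $R_n$ in $U$, while the existence of a critical point of $R_n$ (necessarily a local minimizer) inside the smaller ball $B_\rho(\varphi_*)\subset\varphi(U)$ is delivered by Lemma \ref{lem:localmincompare}.
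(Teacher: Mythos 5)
There is a genuine gap: the uniform operator-norm bounds you propose, $\sup_{\varphi\in\varphi(U)}|R_n(\varphi)-R(\varphi)|\le\delta$ and $\sup_{\varphi\in\varphi(U)}\|\nabla^2_\varphi R_n(\varphi)-\nabla^2_\varphi R(\varphi)\|\le\delta$ with $\delta$ a small constant times $\sigma^{-2L}$, are simply not achievable on the stated event (i.e.\ in the regime $n\asymp\sigma^{2L}$, the only regime in which the probability guarantee is non-vacuous). The obstruction is the spread of scales in $\nabla^2_\varphi R$: its $(\ell,\ell')$-block has magnitude $\sigma^{-2\max(\ell,\ell')}$, and the sampling fluctuation of the corresponding block of $\nabla^2_\varphi R_n^L$ is $\sigma^{-\max(\ell,\ell')}\cdot O(n^{-1/2})\cdot O(1)\asymp\sigma^{-\max(\ell,\ell')-L}$, which for small $\max(\ell,\ell')$ and $L>2$ is far larger than $\sigma^{-2L}$. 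Concretely the $(1,1)$-block of $\nabla^2_\varphi R_n^L-\nabla^2_\varphi R^L$ has magnitude $\sim\sigma^{-2-L}$, so $\|\nabla^2_\varphi R_n-\nabla^2_\varphi R\|\sim\sigma^{-2-L}\gg\sigma^{-2L}$. The same issue kills the function-value bound needed to invoke Lemma \ref{lem:localmincompare}, since $|R_n-R|\sim\sigma^{-2-L}$. In short, the minimum eigenvalue of $\nabla^2_\varphi R$ is $\asymp\sigma^{-2L}$ but its largest eigenvalue is $\asymp\sigma^{-2}$, and a perturbation bound in operator norm cannot respect this anisotropy; any attempt to force it through requires $n\gg\sigma^{4L-2}$, not $n\gtrsim\sigma^{2L}$.

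Your proposal to transfer the deviation bounds back to $\theta$-coordinates via the chain rule compounds the problem: the graded block structure exists only in $\varphi$-coordinates, and discarding it is precisely what makes the argument fail. The paper's proof instead establishes that $\nabla^2_\varphi R_n(\varphi)$ itself has the graded block structure of Definition \ref{def:gradedblocks}, by bounding each $(\ell,\ell')$-block of $\nabla^2_\varphi R_n^L-\nabla^2_\varphi R^L$ at the scale $\sigma^{-2\max(\ell,\ell')}$ of that block. The key algebraic input is that each $P_{k,m}\in\cR^G_{\leq k}$ depends only on $\varphi^1,\ldots,\varphi^k$ (Lemma \ref{lemma:polynomialreparam}(c)), so the $(\ell,\ell')$-block of $\nabla^2_\varphi R_n^L-\nabla^2_\varphi R^L$ receives contributions only from $k\geq\max(\ell,\ell')$, each weighted by $\sigma^{-k}$. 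Combined with the event that the polynomial factors satisfy $|n^{-1}\sum_i A_{k,m}(\eps_i,\theta_*)-\E_\eps[A_{k,m}]|\le c_0\sigma^{-(k\wedge L)}$ (a \emph{graded} event, not a uniform one), the block bound $Cc_0\sigma^{-2\max(\ell,\ell')}$ follows, and $\lambda_{\min}(\nabla^2_\varphi R_n)\ge c'\sigma^{-2L}$ is then obtained from Lemma \ref{lemma:gradedblocks}(b), not from an operator-norm perturbation. Likewise, the existence of a local minimizer of $R_n$ in $\varphi(U)$ is proved by the iterative minimize-and-localize argument of Lemma \ref{lemma:largesigmapseudomin}(c) carried over to $R_n$, which again uses the graded event to localize $\hat\varphi^1$, then $\hat\varphi^2$, and so on, at scales $\sigma^{-2},\sigma^{-4},\ldots$, rather than a single application of Lemma \ref{lem:localmincompare}.

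A smaller issue: the polynomial factors $A_{k,m}(\eps_i,\theta_*)$ have degree up to $2L$ in $\eps$, so for $k>2$ they are neither sub-Gaussian nor sub-exponential, and Bernstein/Hoeffding do not yield the required tail. The exponent $n^{1/(2L)}\sigma^{-1}$ comes from Gaussian hypercontractivity for degree-$\ell$ Gaussian chaoses (the inequality \eqref{eq:gaussianpoly}, proved via Theorem \ref{thm:AW15}), which gives tails of order $\exp(-c(nt^2)^{1/\ell})$.
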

\begin{proof}
Note that
applying directly Lemma \ref{lem:localmincompare} and the general
concentration result of Lemma \ref{lem:concentration} with $t \asymp
\sigma^{-2L}$ small enough, we may obtain that this corollary holds with
probability at least
$1-e^{-c\sigma^{-4L+2}n+C\log \sigma}-e^{-cn^{2/3}}$.

To strengthen this probability guarantee, we apply a
concentration argument tailored to large $\sigma$ based on Lemma
\ref{lemma:Rnexpansion}: We assume throughout that $n \geq \sigma^{2L}$, 
as otherwise the desired probability guarantee is vacuous.
Let $A_{\ell,m}$ and $P_{\ell,m}$ be the polynomials of
Lemma \ref{lemma:Rnexpansion}. Since $A_{\ell,m}$ has degree at most
$\ell$ in $\eps$, by Gaussian hypercontractivity, for any $t>0$ we have
\begin{equation}
\P\Bigg[\bigg|\frac{1}{n}\sum_{i=1}^n A_{\ell,m}(\eps_i,\theta_*)
-\E_\eps[A_{\ell,m}(\eps,\theta_*)]\bigg|>t\Bigg]
\leq 2e^{-c(nt^2)^{1/\ell}}.
\label{eq:gaussianpoly}
\end{equation}
(This follows also from Theorem \ref{thm:AW15} applied with
$\eps=(\eps_1,\ldots,\eps_n)$, $f(\eps)=\sum_i
A_{\ell,m}(\eps_i,\theta_*)$, and the bounds $\|\nabla^\ell f(x)\|_\cJ
\leq \|\nabla^\ell f(x)\|_\HS \leq C\sqrt{n}$ and $\|\E[\nabla^j f(\eps)]\|_\cJ
\leq \|\E[\nabla^j f(\eps)]\|_\HS \leq C\sqrt{n}$ for any partition $\cJ$ and
any $j \leq \ell-1$.) For a sufficiently small constant $c_0>0$ to be chosen
later, let $\cE$ be the event where the conclusion of Lemma
\ref{lemma:Rnexpansion} holds with $k=2L$, $t=\sigma^{-2L}/\log \sigma$,
and $r(\sigma)$ a constant larger than $\|\theta_*\|$, and also where
\begin{equation}\label{eq:Abound}
\bigg|\frac{1}{n}\sum_{i=1}^n
A_{\ell,m}(\eps_i,\theta_*)-\E_\eps[A_{\ell,m}(\eps,\theta_*)]\bigg|
\leq c_0\sigma^{-(\ell \wedge L)}
\end{equation}
for every $\ell=1,\ldots,2L$ and $m=1,\ldots,M_\ell$. Note that
\eqref{eq:gaussianpoly} implies (\ref{eq:Abound}) holds with
probability at least $1-2e^{-cn^{1/\ell}\sigma^{-2}} \geq
1-2e^{-cn^{1/L}\sigma^{-2}}$ for $\ell \leq L$, and at least
$1-2e^{-c(n\sigma^{-2L})^{1/\ell}} \geq 1-2e^{-cn^{1/(2L)}\sigma^{-1}}$
for $2L \geq \ell \geq L$ and $n \geq \sigma^{2L}$. Thus, combining with the
probability guarantee in Lemma \ref{lemma:Rnexpansion} and taking a union bound,
\[\P[\cE] \geq 1-e^{c(\log n)^2}
-e^{-\frac{cn}{\sigma^{2L-1}\log \sigma \cdot (\log n)^{2L}}
+C\log \sigma \cdot \log \log n}
-Ce^{-\frac{cn^{1/(2L)}}{\sigma}}
\geq 1-e^{c'(\log n)^2}-C'e^{-c'n^{1/(2L)}\sigma^{-1}}.\]

We now restrict to this event $\cE$ and parametrize by $\varphi$.
There is a $\sigma$-independent neighborhood $U$ of $\theta_*$
on which $\nabla_\varphi^2 R^L(\varphi)$ has a graded block structure:
for each $\ell=1,\ldots,L$ and every $\varphi \in \varphi(U)$,
\[C\sigma^{-2\ell} \geq \lambda_{\max}(\nabla_{\varphi^\ell}^2 R^L(\varphi))
\geq \lambda_{\min}(\nabla_{\varphi^\ell}^2 R^L(\varphi)) \geq
c\sigma^{-2\ell}, \quad
\|\nabla_{\varphi^\ell,\varphi^{\ell'}}^2 R^L(\varphi)\| \leq
C\sigma^{-2\max(\ell,\ell')}.\]
Let us denote the first $2L$ terms of the expansion in Lemma
\ref{lemma:Rnexpansion} by
\[R_n^L(\varphi)=\sum_{\ell=1}^{2L} \sum_{m=1}^{M_\ell}
\sigma^{-\ell} \frac{1}{n}\sum_{i=1}^n
A_{\ell,m}(\eps_i,\theta_*)P_{\ell,m}(\varphi),\]
observe that $\E[R_n^L(\varphi)]=R^L(\varphi)$, and write
\begin{equation}\label{eq:Rdiff}
\nabla^2 R_n(\varphi)-\nabla^2 R^L(\varphi)
=\Big(\nabla^2 R_n(\varphi)-\nabla^2 R_n^L(\varphi)\Big)
+\Big(\nabla^2 R_n^L(\varphi)-\nabla^2 R^L(\varphi)\Big).
\end{equation}
On the event $\cE$, the first term is controlled by Lemma
\ref{lemma:Rnexpansion}, and for our choice of $t=\sigma^{-2L}/\log \sigma$
we have
\begin{equation}\label{eq:Rnapprox}
\|\nabla^2 R_n(\theta)-\nabla^2 R_n^L(\theta)\Big\| \leq \frac{C}{\sigma^{2L}
\log \sigma}.
\end{equation}
For the second term, observe that $P_{\ell,m} \in \cR_{\leq \ell}^G$ and hence
$P_{\ell,m}$ depends only on the coordinates $\varphi^1,\ldots,\varphi^\ell$.
Thus for any $\ell,\ell' \leq L$,
\[\nabla_{\varphi^\ell,\varphi^{\ell'}}^2
(R_n^L(\varphi)-R^L(\varphi))
=\sum_{k=\max(\ell,\ell')}^{2L} \sum_{m=1}^{M_k} \sigma^{-k}
\cdot \left(\frac{1}{n}\sum_{i=1}^n A_{k,m}(\eps_i,\theta_*)
-\E_\eps[A_{k,m}(\eps,\theta_*)]\right) \nabla_{\varphi^\ell,\varphi^{\ell'}}^2
P_{k,m}(\varphi).\]
Then on the event $\cE$, applying (\ref{eq:Abound}) and $k+(k \wedge L)
\geq 2k \wedge 2L$, also
\[\|\nabla_{\varphi^\ell,\varphi^{\ell'}}^2
R_n^L(\varphi)-\nabla_{\varphi^\ell,\varphi^{\ell'}}^2 R^L(\varphi)\|
\leq C \cdot c_0\sigma^{-2\max(\ell,\ell')}.\]
Choosing $c_0$ sufficiently small and combining with (\ref{eq:Rdiff})
and (\ref{eq:Rnapprox}), we obtain that the Hessian of the empirical risk
$\nabla_\varphi^2 R_n(\varphi)$ also has a graded block structure over
$\varphi \in U$. The lower bound
$\lambda_{\min}(\nabla_\varphi^2 R_n(\varphi)) \geq c'\sigma^{-2L}$ then follows
immediately from Lemma \ref{lemma:gradedblocks}(b).

To show that $R_n(\varphi)$ has a critical point (and hence unique local
minimizer) in $\varphi(U)$, we apply an argument similar to that of Lemma
\ref{lemma:largesigmapseudomin}(c). Let $\hat{\varphi} \in
\overline{\varphi(U)}$ be any minimizer of $R_n(\varphi)$ on the
closure of $\varphi(U)$.
We aim to show that $\hat{\varphi}$ cannot occur on the boundary of $\varphi(U)$. Recall from the proof of Theorem
\ref{thm:locallargenoise} that $\varphi_*=\varphi(\theta_*)$ is a pseudo-local-minimizer for the
sequence $S_1,\ldots,S_L$ with respect to $(\varphi^1,\ldots,\varphi^L)$.
In particular, $\varphi_*$ minimizes $S_1$ over any sufficiently small
neighborhood $U$, so we
have $S_1(\hat{\varphi})-S_1(\varphi_*) \geq c\|\hat{\varphi}^1-\varphi_*^1\|^2$
and
\[R^L(\hat{\varphi})-R^L(\varphi_*) \geq c\sigma^{-2}
\|\hat{\varphi}^1-\varphi_*^1\|^2-C\left(\frac{\log \sigma}{\sigma}\right)^4.\]
This implies also for the empirical risk $R_n$ that, on the event $\cE$,
\begin{align*}
R_n(\hat{\varphi})-R_n(\varphi_*)
&\geq R_n(\hat{\varphi})-R^L(\hat{\varphi})-R_n(\varphi_*)+R^L(\varphi_*)
+c\sigma^{-2}
\|\hat{\varphi}^1-\varphi_*^1\|^2-C\left(\frac{\log \sigma}{\sigma}\right)^4\\
&\geq \sum_{k=1}^{2L} \sum_{m=1}^{M_k} \sigma^{-k}
\left(\frac{1}{n}\sum_{i=1}^n A_{k,m}(\eps_i,\theta_*)
-\E_\eps[A_{k,m}(\eps,\theta_*)]\right)\Big(P_{k,m}(\hat{\varphi})
-P_{k,m}(\varphi_*)\Big)\\
&\hspace{1in}-\frac{C}{\sigma^{2L}\log \sigma}+c\sigma^{-2}
\|\hat{\varphi}^1-\varphi_*^1\|^2-C\left(\frac{\log \sigma}{\sigma}\right)^4\\
&\geq c\sigma^{-2}\|\hat{\varphi}^1-\varphi_*^1\|^2
-C' \cdot c_0\sigma^{-2}\|\hat{\varphi}^1-\varphi_*^1\|-C''\left(\frac{\log
\sigma}{\sigma}\right)^4,
\end{align*}
where the last line applies (\ref{eq:Abound}) and the fact 
that the polynomials $P_{1,m}$ depend only on $\varphi^1$ and must be
Lipschitz over $\overline{\varphi(U)}$.
Since $\hat{\varphi}$ minimizes $R_n$, we have $0 \geq
R_n(\hat{\varphi})-R_n(\varphi_*)$. Then this implies for some constant $C>0$
and all $\sigma>\sigma_0$ that $\|\hat{\varphi}^1-\varphi_*^1\|
\leq C \cdot c_0$.

Now defining $f(\varphi^2)=S_2(\varphi_*^1,\varphi^2)$ and
$\hat{f}(\varphi^2)=S_2(\hat{\varphi}^1,\varphi^2)$, these functions
must be Lipschitz over $\overline{\varphi^2(U)}$, so this yields
$|f-\hat{f}|,\|\nabla^2 f-\nabla^2 \hat{f}\| \leq C' \cdot c_0$ on
$\overline{\varphi^2(U)}$. Since $\varphi_*$ is a pseudo-local-minimizer, 
taking the neighborhood $U$ sufficiently small ensures that the
function $f$ is convex over $\varphi^2(U)$, and is minimized at $\varphi_*^2$.
Then for $c_0$ sufficiently small, Lemma
\ref{lem:localmincompare} guarantees that $\hat{f}$
also has a critical point and local minimizer $\bar{\varphi}^2$, which satisfies
\begin{equation}\label{eq:barvarphibound}
\|\bar{\varphi}^2-\varphi_*^2\| \leq C \cdot c_0
\end{equation}
and $S_2(\hat{\varphi}^1,\hat{\varphi}^2)-S_2(\hat{\varphi}^1,\bar{\varphi}^2)
\geq c\|\hat{\varphi}^2-\bar{\varphi}^2\|^2$. Then
\[R^L(\hat{\varphi})-R^L((\hat{\varphi}^1,\bar{\varphi}^2,\varphi_*^3,\ldots,\varphi_*^L)) \geq c\sigma^{-4}
\|\hat{\varphi}^2-\bar{\varphi}^2\|^2
-C\left(\frac{\log \sigma}{\sigma}\right)^6,\]
where there is no $\sigma^{-2}$ term because $S_1$ depends only on $\varphi^1$,
which coincides in these two arguments of $R^L$.
Applying a similar argument as above, this implies on $\cE$ that
\begin{align*}
0 &\geq
R_n(\hat{\varphi})-R_n((\hat{\varphi}^1,\bar{\varphi}^2,\varphi_*^3,\ldots,\varphi_*^L))\\
&\geq c\sigma^{-4}
\|\hat{\varphi}^2-\bar{\varphi}^2\|^2
-C' \cdot c_0\sigma^{-4}\|\hat{\varphi}^2-\bar{\varphi}^2\|
-C''\left(\frac{\log \sigma}{\sigma}\right)^6.
\end{align*}
Then for some constant $C>0$, $\|\hat{\varphi}^2-\bar{\varphi}^2\| \leq C \cdot
c_0$. Combining with the preceding bound (\ref{eq:barvarphibound}),
we get $\|\hat{\varphi}^2-\varphi_*^2\| \leq C' \cdot c_0$.

Now defining $f(\varphi^3)=S_3(\varphi_*^1,\varphi_*^2,\varphi^3)$ and
$\hat{f}(\varphi^3)=S_2(\hat{\varphi}^1,\hat{\varphi}^2,\varphi^3)$, we may
repeat this argument to obtain
$\|\hat{\varphi}^3-\varphi_*^3\| \leq C \cdot c_0$, and so forth. This
establishes $\|\hat{\varphi}^\ell-\varphi_*^\ell\| \leq C \cdot c_0$ for some
constant $C>0$ and each $\ell=1,\ldots,L$. Finally, for $c_0$ sufficiently
small, this implies that the minimizer $\hat{\varphi}$ must belong to the interior of
$\varphi(U)$. Since $R_n(\varphi)$ is differentiable and convex over
$\varphi(U)$, this implies that $\hat{\varphi}$ must be a unique critical
point and local minimizer of $R_n(\varphi)$ over $\varphi(U)$.
\end{proof}

\subsection{Globally benign landscapes at high noise}\label{sec:globalbenign}

In the following three subsections, we apply the tools of
Section \ref{sec:localreparam} to analyze three
examples in which the landscapes of $R(\theta)$ and $R_n(\theta)$ are
\emph{globally} benign in this high-noise regime
$\sigma>\sigma_0(\theta_*,d,G)$, for generic $\theta_* \in \R^d$.

In each example, for each fixed point $\wtheta \in \R^d$, we study the landscape of
$R(\theta)$ near $\wtheta$ using a local reparametrization
$\varphi=(\varphi^1,\ldots,\varphi^L)$ around $\wtheta$. Note that, in
general, we cannot use the same reparametrization $\varphi$ at all points
$\wtheta \in \R^d$, as we must handle non-generic points where
$\der_\theta \varphi(\wtheta)$ is singular for any particular map $\varphi$, even
if the true parameter $\theta_*$ is generic.

We will combine these local statements over a large enough ball
$\{\theta \in \R^d:\|\theta\| \leq M\}$ using a compactness
argument. The following result strengthens Lemmas \ref{lemma:thetabound}
and \ref{lemma:localization} to
provide a lower bound for $\|\nabla R(\theta)\|$ and $\|\nabla
R_n(\theta)\|$ outside this ball.

\begin{lemma}\label{lemma:localizationstrong}
For some $(\theta_*,d,G)$-dependent constants $M,\rho,c_0,\sigma_0>0$ and all
$\sigma>\sigma_0$, if $\|\theta\|>M$, then $\|\nabla R(\theta)\|>c_0\sigma^{-4}$.
If, in addition, $\|\E_g[g\theta]-\E_g[g\theta_*]\|>\rho$, then $\|\nabla
R(\theta)\|>c_0\sigma^{-2}$. For some $(\theta_*,d,G)$-dependent constants
$C,c>0$, with probability at least
$1-e^{-c(\log n)^2}-Ce^{-cn^{1/4}\sigma^{-1}}$,
the same bounds hold for $\nabla R_n(\theta)$ and all $\|\theta\|>M$.
\end{lemma}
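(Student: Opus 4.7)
The plan is to combine the subgroup decomposition of Lemmas \ref{lemma:productgroup} and \ref{lemma:kerneldecomp} with the high-noise series expansion of Lemma \ref{lemma:seriesexpansion}, thereby separating the two mechanisms responsible for the $\sigma^{-2}$ versus $\sigma^{-4}$ lower bounds. Writing $V=[V_1\mid V_2]$ with the columns of $V_1$ spanning the range of $\E_g[g]=V_1V_1^\top$ and setting $\theta_i=V_i^\top\theta$, the risk splits as $R(\theta)=R^{\Id}(\theta_1)+R^{G_2}(\theta_2)$ with $G_2$ mean-zero, and $R^{\Id}$ is just the negative log-likelihood for the single Gaussian $\N(\theta_{1,*},\sigma^2\Id)$. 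A direct computation from \eqref{eq:R} gives $\nabla R^{\Id}(\theta_1)=(\theta_1-\theta_{1,*})/\sigma^2$. Since $\E_g[g\theta]-\E_g[g\theta_*]=V_1(\theta_1-\theta_{1,*})$, the hypothesis $\|\E_g[g\theta]-\E_g[g\theta_*]\|>\rho$ is exactly $\|\theta_1-\theta_{1,*}\|>\rho$, and the second claim follows instantly from $\|\nabla R\|\ge\|\nabla_{\theta_1} R^{\Id}\|$.

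For the $\sigma^{-4}$ bound, fix a convenient constant $\rho>0$ and restrict to the complementary region $\|\theta_1-\theta_{1,*}\|\le\rho$, so that $\|\theta\|>M$ forces $\|\theta_2\|\ge M/2$ once $M$ is chosen large (depending on $\theta_*,d,G$). Lemma \ref{lemma:Smeanzero} gives $S_1\equiv 0$ for the mean-zero group $G_2$ and
\[
S_2(\theta_2)=-\tfrac12 f(\theta_2)+\tfrac14 h(\theta_2),\quad f(\theta_2)=\E_{g}[\langle\theta_{2,*},g\theta_2\rangle^2],\quad h(\theta_2)=\E_{g}[\langle\theta_2,g\theta_2\rangle^2].
\]
Using Euler's identity with $f$ quadratic and $h$ homogeneous quartic, together with the pointwise bounds $h(\theta_2)\ge\|\theta_2\|^4/|G_2|$ (retaining only the $g=\Id$ term in the average) and $f(\theta_2)\le\|\theta_{2,*}\|^2\|\theta_2\|^2$, yields $\langle\theta_2,\nabla S_2(\theta_2)\rangle=h-f\ge\|\theta_2\|^4/(2|G_2|)$ once $\|\theta_2\|^2\ge 2|G_2|\|\theta_{2,*}\|^2$, hence $\|\nabla S_2(\theta_2)\|\ge c\|\theta_2\|^3$ by Cauchy--Schwarz. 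Then Lemma \ref{lemma:seriesexpansion} with $k=2$ and $r(\sigma)=c'\sigma/(\log\sigma)^3$ produces $\|\nabla R^{G_2}(\theta_2)-\sigma^{-4}\nabla S_2(\theta_2)\|\le C((\log\sigma)/\sigma)^6\|\theta_2\|^5$, and a quick comparison shows the main term dominates throughout $M/2\le\|\theta_2\|\le c'\sigma/(\log\sigma)^3$, giving $\|\nabla R^{G_2}\|\gtrsim\sigma^{-4}\|\theta_2\|^3\ge c_0\sigma^{-4}$. For $\|\theta\|\ge c'\sigma/(\log\sigma)^3$, which exceeds $C\sigma^{2/3}$ for all large $\sigma$, Lemma \ref{lemma:localization} directly gives the much stronger bound $\|\nabla R(\theta)\|\gtrsim\sigma^{-5/3}/(\log\sigma)^3\gg\sigma^{-4}$. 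Together these cover the entire range $\|\theta\|>M$ for the population risk.

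The empirical version follows the same two-regime argument using Lemma \ref{lemma:Rnexpansion} in place of Lemma \ref{lemma:seriesexpansion}. The trivial-part contribution is $\nabla R_n^{\Id}(\theta_1)=(\theta_1-\theta_{1,*}-\sigma\bar\eps^{(1)})/\sigma^2$ with $\bar\eps^{(1)}=n^{-1}\sum_i V_1^\top\eps_i\sim\N(0,n^{-1}\Id)$, and $\sigma\|\bar\eps^{(1)}\|\le\rho/2$ with probability at least $1-e^{-c(\log n)^2}$, which gets absorbed into the constant $\rho$. For the mean-zero component, we apply Lemma \ref{lemma:Rnexpansion} with truncation order $k=4$ on the ball $\|\theta_2\|\le c'\sigma/(\log\sigma)^3$, choose the deviation target $t\asymp\sigma^{-4}$ (scaled by the Lipschitz constants on $\nabla P_{\ell,m}$), and control the remaining fluctuation of each coefficient $n^{-1}\sum_i A_{\ell,m}(\eps_i,\theta_{2,*})$ via the Gaussian hypercontractivity bound $\P[|X-\E X|>u]\le 2e^{-c(nu^2)^{1/\ell}}$ already used in \eqref{eq:gaussianpoly}. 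Crucially, $P_1\equiv 0$ in the mean-zero setting (from \eqref{eq:Pell-explicit} with $\E_{g_2}[g_2]=0$), so the worst hypercontractivity exponent is $\ell=4$ with target $u\asymp\sigma^0$, yielding the claimed $e^{-cn^{1/4}/\sigma}$ form after absorbing the polynomial-in-$\sigma$ covering-net factor into the exponent. For $\|\theta\|$ beyond the validity of the expansion, Lemma \ref{lemma:thetabound} supplies the remaining bound.

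The main obstacle is the empirical part: balancing truncation order, deviation target, and covering-net resolution so that every error source remains below $c_0\sigma^{-4}$ simultaneously on an annulus whose outer radius grows like $\sigma/(\log\sigma)^3$. The bookkeeping in Lemma \ref{lemma:Rnexpansion} is set up precisely for this type of tradeoff, but it must be executed carefully to recover the particular probability exponent stated in the lemma (noting that this exponent is essentially trivial for $n\lesssim\sigma^4$ and becomes informative only in the regime $n\gg\sigma^4$ where hypercontractivity in $\ell=4$ is the binding constraint).
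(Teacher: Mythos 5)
Your proposal is correct and follows essentially the same strategy as the paper's proof: isolate the trivial component via the subgroup decomposition of Lemmas~\ref{lemma:productgroup}--\ref{lemma:kerneldecomp} to obtain the $\sigma^{-2}$ bound from $\nabla R^{\Id}(\theta_1)=(\theta_1-\theta_{1,*})/\sigma^2$, then use $S_2$ for the mean-zero factor together with the series expansion (Lemma~\ref{lemma:seriesexpansion}) and the polynomial concentration from Lemma~\ref{lemma:Rnexpansion} to get the $\sigma^{-4}$ bound, falling back to Lemmas~\ref{lemma:thetabound} and~\ref{lemma:localization} outside the validity range of the expansion. The only differences are cosmetic: you perform the $V_1/V_2$ split first and handle both parts uniformly, whereas the paper dispatches the large-$\|\theta\|$ regime first and only decomposes in its ``Case II''; and you use Euler's identity ($\langle\theta_2,\nabla S_2\rangle=h-f$) rather than the paper's direct computation $\|\E_g[g\theta\theta^\top g^\top]\theta\|\geq\|\theta\|^3/K$, both yielding $\|\nabla S_2(\theta_2)\|\geq c\|\theta_2\|^3$. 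The empirical argument is set up the same way, with $P_1\equiv 0$ in the mean-zero factor, $k=4$ truncation, and the $\ell=4$ hypercontractivity term $(n\sigma^{-4})^{1/4}=n^{1/4}\sigma^{-1}$ dominating the probability exponent, exactly as in the paper.
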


\begin{proof}
For the empirical risk $R_n(\theta)$,
we may assume $n \geq \sigma^4$, as otherwise the desired probability
guarantee is vacuous. By Lemma \ref{lemma:thetabound}, for all
$\|\theta\| \geq C\sigma$ and a large enough constant $C>0$, we have
$\|\nabla R(\theta)\| \geq c\sigma^{-1}$ and
$\|\nabla R_n(\theta)\| \geq c\sigma^{-1}$ with probability $1-C'e^{-cn}$. By
Lemma \ref{lemma:localization}, if $C'\sigma^{2/3} \leq \|\theta\| \leq
C\sigma$, then $\|\nabla R(\theta)\|\geq c\sigma^{-2}$.
Applying the concentration bound \eqref{eq:concgradRn} in Lemma \ref{lem:concentration} with $r=C\sigma$ and $t=c\sigma^{-2}/2$, we get 
$\|\nabla R_n(\theta)\| \geq c\sigma^{-2}/2$ for all such $\theta$,
with probability $1-e^{-c n \sigma^{-2}+C\log \sigma} \geq
1-C'e^{-c'n^{1/4}\sigma^{-1}}$ when $n \geq \sigma^4$.

It remains to consider $M<\|\theta\| \leq C\sigma^{2/3}$. We consider
two cases:

{\bf Case I.} Suppose that $\E_g[g]=0$. Then Lemma \ref{lemma:Smeanzero} implies
$S_1(\theta)=0$ and
\[\nabla S_2(\theta)=\E_g[g\theta \theta^\top g^\top]\theta
-\E_g[g\theta_*\theta_*^\top g^\top]\theta.\]
For every unit vector $v \in \R^d$, we have
$v^\top \E_g[gvv^\top g^\top]v=\E_g[\|v^\top gv\|^2] \geq 1/K$ where $K=|G|$,
because $g=\Id$ with probability $1/K$. Thus $\|\E_g[gvv^\top g^\top]v\| \geq
1/K$, so $\|\E_g[g\theta \theta^\top g^\top]\theta\| \geq \|\theta\|^3/K$.
For sufficiently large $M$, this shows $\|\nabla S_2(\theta)\| \geq
c\|\theta\|^3$. Then by Lemma \ref{lemma:seriesexpansion},
for a constant $c_0>0$ (independent of $M$), we have
\begin{equation}\label{eq:populationgradlowerbound}
\|\nabla R(\theta)\| \geq c_0\sigma^{-4}\|\theta\|^3 \geq c'\sigma^{-4}.
\end{equation}

For the empirical risk $R_n$, write
	\begin{equation}
	\nabla R_n(\theta)-\nabla R(\theta) = \underbrace{\nabla R_n(\theta) - \nabla
R_n^2(\theta)}_{\cE_1} + \underbrace{\nabla R_n^2(\theta) - \nabla
R^2(\theta)}_{\cE_2} + \underbrace{\nabla R^2(\theta) - \nabla
R(\theta)}_{\cE_3}
	\label{eq:nablaRn-decomp}
	\end{equation}
	where 
	$R_n^2(\theta) = \sum_{\ell=2}^{4} \sigma^{-\ell} \cdot
\frac{1}{n}\sum_{i=1}^n P_\ell(\eps_i,\theta,\theta_*)$
is the degree-$4$ approximation to $R_n(\theta)$ in Lemma \ref{lemma:Rnexpansion} and 
$R^2(\theta)= \E[R_n^2(\theta)]$.
Note that the $\ell=1$ term is absent in $R_n^2$, because by
\eqref{eq:Pell-explicit} we have $P_1(\eps,\theta,\theta_*) = \kappa_1(\langle
\eps,g\theta\rangle) = \eps^\top \E[g] \theta = 0$. By Lemma
\ref{lemma:Smeanzero}, we have $R^2(\theta)= \sigma^{-4} S_2(\theta)$ since $S_1(\theta)=0$.

	The first term in \eqref{eq:nablaRn-decomp} is controlled by Lemma
\ref{lemma:Rnexpansion}: Applying \eqref{eq:Rnexpansion1} with $k=4$,
$r(\sigma)=C\sigma^{2/3}$ and $t=(\sigma^{-1}\log \sigma)^5$, we get 
$\|\cE_1\| \leq C(\sigma^{-1}\log \sigma)^5 \|\theta\|^4$
with probability at least $1-Ce^{-c(\log n)^2}-e^{-\frac{c n (\log
\sigma)^5}{\sigma^4 \log n}+C\log \sigma \cdot \log \log n} \geq 1-Ce^{-c(\log
n)^2}-C'e^{-c'n^{1/4}\sigma^{-1}}$.
For the third term in \eqref{eq:nablaRn-decomp}, Lemma
\ref{lemma:seriesexpansion} yields
$\|\cE_3\| \leq C (\sigma^{-1}\log \sigma)^{6} \|\theta\|^5$.
	For the second term in \eqref{eq:nablaRn-decomp}, using \eqref{eq:Pell-form} we have
	\begin{equation}
	\cE_2 = \sum_{\ell=2}^4 \sigma^{-\ell} \sum_{m=1}^{M_\ell}
\frac{1}{n}\sum_{i=1}^n
\Big(A_{\ell,m}(\eps_i,\theta_*)-\E_\eps[A_{\ell,m}(\eps,\theta_*)]\Big) \nabla P_{\ell,m}(\theta).
	\label{eq:cE2}
	\end{equation}
	Applying the polynomial concentration \eqref{eq:gaussianpoly}, we have
with probability $1-Ce^{- c n^{1/4} \sigma^{-1}}$
	\begin{equation}
\bigg|\frac{1}{n}\sum_{i=1}^n
A_{\ell,m}(\eps_i,\theta_*)-\E_\eps[A_{\ell,m}(\eps,\theta_*)]\bigg|
\leq C \sigma^{-2}, \quad m=1,\ldots,~M_\ell,\;\;\ell=2,3,4.	
\end{equation}
As stated in Lemma \ref{lemma:Rnexpansion}, each $\nabla P_{\ell,m}(\theta)$ is a polynomial in $\theta$ of degree at most $\ell-1$. Thus $\|\nabla P_{\ell,m}(\theta)\| \leq  C(\|\theta\|\vee 1)^{\ell-1}$ for all $m=1,\ldots,M_\ell$. 	
Combining this with the previous two displays and using 
$\|\theta\|\ll \sigma$ yields
$\|\cE_2\| \leq C \sigma^{-2} \sum_{\ell=2}^4 \sigma^{-\ell}\|\theta\|^{\ell-1}
\leq C'\sigma^{-4}\|\theta\|$. For sufficiently large $M$, this
implies $\|\cE_1\|+\|\cE_2\|+\|\cE_3\| \leq (c_0/2)\sigma^{-4}\|\theta\|^3$
where $c_0$ is the constant in (\ref{eq:populationgradlowerbound}). Thus also
\[\|R_n(\theta)\| \geq (c_0/2)\sigma^{-4}\|\theta\|^3 \geq c'\sigma^{-4}.\]
This concludes the proof in the case $\E_g[g]=0$.

{\bf Case II.} Suppose $\E_g[g] \neq 0$. We apply
Lemmas \ref{lemma:productgroup} and \ref{lemma:kerneldecomp} to write
$R(\theta)=R^{\Id}(\theta_1)+R^{G_2}(\theta_2)$ and
$R_n(\theta)=R_n^{\Id}(\theta_1)+R_n^{G_2}(\theta_2)$, where $\theta_1 \in
\R^{d_1}$ and $\theta_2 \in \R^{d_2}$ are the components
of $\theta$ orthogonal to and belonging to the kernel of $\E_g[g]$.
Then $\|\theta-\theta_*\|^2=\|\theta_1-\theta_{1,*}\|^2+
\|\theta_2-\theta_{2,*}\|^2$, $\|\nabla R(\theta)\|^2=
\|\nabla R^{\Id}(\theta_1)\|^2+\|\nabla R^{G_2}(\theta_2)\|^2$,
and $\|\nabla R_n(\theta)\|^2
=\|\nabla R_n^{\Id}(\theta_1)\|^2+\|\nabla R_n^{G_2}(\theta_2)\|^2$.
Recall from the proof
of Lemma \ref{lemma:kerneldecomp} that $\E_g[g]$ is the projection
orthogonal to its kernel, so 
$\|\E_g[g\theta]-\E_g[g\theta_*]\|=\|\theta_1-\theta_{1,*}\|$.
Since $R^{\Id}(\theta_1)$ and $R_n^{\Id}(\theta_1)$ correspond
to the single Gaussian model
$\N(\theta_{1,*},\sigma^2\Id_{d_1 \times d_1})$, we may verify that
\[\nabla R^{\Id}(\theta_1)=\frac{\theta_1-\theta_{1,*}}{\sigma^2},
\qquad \nabla R_n^{\Id}(\theta_1)=\frac{\theta_1-\theta_{1,*}}{\sigma^2}
-\frac{\bar{\eps}}{\sigma},
\qquad \bar{\eps} \sim \N(0,\Id_{d_1 \times d_1}/n).\]
Thus, if $\|\E_g[g\theta]-\E_g[g\theta_*]\|=\|\theta_1-\theta_{1,*}\|>\rho$, then 
$\|\nabla R(\theta)\| \geq \|\nabla R^{\Id}(\theta_1)\| \geq \rho\sigma^{-2}$
and $\|\nabla R_n(\theta)\| \geq \|\nabla R_n^{\Id}(\theta_1)\| \geq
(\rho/2)\sigma^{-2}$ with probability at least $1-Ce^{-cn\sigma^{-2}}$.
Otherwise, for small enough $\rho>0$, we have
$\|\theta_2-\theta_{2,*}\|>M/2$. Applying the argument of Case I
for the mean-zero group $G_2$ shows $\|\nabla R(\theta)\| \geq 
\|\nabla R^{G_2}(\theta_2)\| \geq c\sigma^{-4}$
and $\|\nabla R_n(\theta)\| \geq 
\|\nabla R_n^{G_2}(\theta_2)\| \geq c\sigma^{-4}$ as desired.
\end{proof}

\subsubsection{Discrete rotations in $\R^2$}\label{sec:rotations}

We consider first the group of $K$-fold discrete rotations on $\R^2$:
For a fixed integer $K$, we have
\begin{equation}\label{eq:GrotationsR2}
G=\{\Id,h,h^2,\ldots,h^{K-1}\} \cong \ZZ/K\ZZ
\end{equation}
where
\begin{equation}\label{eq:rotationgenerator}
h=\begin{pmatrix} \cos 2\pi/K & -\sin 2\pi/K \\
\sin 2\pi/K & \cos 2\pi/K \end{pmatrix}
\end{equation}
is the counterclockwise rotation in the plane by the angle $2\pi/K$.
For fixed $\theta_* \neq 0$ and for any $\theta \neq 0$, denote
\[t(\theta)=\arccos \frac{\langle \theta,\theta_* \rangle}
{\|\theta\|\|\theta_*\|}\]
as the angle formed by $\theta$ and $\theta_*$.

The special case of $K=2$ and $G=\{+\Id,-\Id\}$ is subsumed by results
of \cite[Corollary 3]{xu2016global},
which imply that the global landscape of $R(\theta)$ is benign for all
$\sigma>0$. Thus, we consider here the setting where $K \geq 3$.

\begin{theorem} \label{thm:rotations}
Let $G$ be the group of rotations (\ref{eq:GrotationsR2}) on $\R^2$, with $K
\geq 3$. Consider $\theta_* \neq 0$.
There exists a $(\theta_*,K)$-dependent constant $\sigma_0$ such that for all
$\sigma>\sigma_0$, the landscape of $R(\theta)$ is globally benign. More
quantitatively, for small enough $\rho > 0$, there are $(\theta_*,K)$-dependent
constants $c, \sigma_0>0$ such that when $\sigma>\sigma_0$,
\begin{enumerate}[(a)]
\item For each $\wtheta \in \O_{\theta_*}$, reparametrizing by
$\varphi=(\|\theta\|,t(\theta))$ on $B_\rho(\wtheta)$,
we have the strong convexity
$\lambda_{\min}(\nabla_\varphi^2 R(\varphi)) \geq c\sigma^{-2K}$ for all $\varphi \in
\varphi(B_\rho(\wtheta))$.
\item For each $\theta \in \R^d$ satisfying $\|\theta\|-\|\theta_*\| \in
(-\rho,\rho)$ and $\theta \notin \bigcup_{\wtheta \in \O_{\theta_*}}
B_\rho(\wtheta)$, either $\|\nabla R(\theta)\| \geq c\sigma^{-2K}$
or $\lambda_{\min}(\nabla^2 R(\theta)) \leq -c\sigma^{-2K}$.
\item For each $\theta \in \R^d$ satisfying $\|\theta\|-\|\theta_*\| \notin
(-\rho,\rho)$, either $\|\nabla_\theta R(\theta)\| \geq c\sigma^{-4}$ or
$\lambda_{\min}(\nabla_\theta^2 R(\theta)) \leq -c\sigma^{-4}$.
\end{enumerate}
\end{theorem}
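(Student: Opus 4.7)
The plan is to work in polar coordinates $\varphi=(r,t)=(\|\theta\|,t(\theta))$ on the complement of the origin, and to apply the pseudo-local-minimizer analysis of Lemma \ref{lemma:largesigmapseudomin} at each $\wtheta \in \O_{\theta_*}$ together with the descent criterion of Lemma \ref{lemma:largesigmadescent} at every other point. The key algebraic observation is that the invariant algebra $\cR^G_{\leq \ell}$ for $\ell<K$ is generated solely by $\|\theta\|^2$, so by Lemma \ref{lemma:Selldegell} each of $S_2,\ldots,S_{K-1}$ is a function of $r$ only. At level $L=K$, angular dependence enters through the degree-$K$ bispectrum-type invariant; a direct computation using Lemma \ref{lemma:Selldegell} and the character identity $\frac{1}{K}\sum_{k=0}^{K-1}\cos^K(\alpha+2\pi k/K) = 2^{1-K}\cos(K\alpha)+\mathrm{const}$ yields
\[
S_K(r,t)=A_K(r)-B_K(r,r_*)\cos(Kt)+\mathrm{const},
\]
where $B_K(r_*,r_*)>0$ and $A_K$ does not depend on $t$; in particular, at $r=r_*$ the function $t\mapsto S_K(r_*,t)$ is proportional to $-\cos(Kt)$ up to additive constants.

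For part (a), it suffices by rotational symmetry to take $\wtheta=\theta_*$, so $\wvarphi=(r_*,0)$. Using the explicit form $S_2(r)=-\tfrac14 r^2r_*^2+\tfrac18 r^4$ from Lemma \ref{lemma:Smeanzero}, one checks $\partial_r S_2(r_*)=0$ and $\partial_r^2 S_2(r_*)=r_*^2>0$. At level $K$, the computation above gives $\partial_t S_K(r_*,0)=0$ and $\partial_t^2 S_K(r_*,0)=K^2B_K(r_*,r_*)>0$. The intermediate levels contribute empty coordinate blocks. Hence $\theta_*$ is a pseudo-local-minimizer in the decomposition $\varphi=(\varphi^1,\ldots,\varphi^K)$ with $\varphi^2=r$, $\varphi^K=t$, and other blocks empty, so Lemma \ref{lemma:largesigmapseudomin} delivers the strong convexity bound $\lambda_{\min}(\nabla_\varphi^2 R(\varphi))\geq c\sigma^{-2K}$.

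For part (b), fix $\wtheta$ with $|\|\wtheta\|-r_*|<\rho$ and $t(\wtheta)\not\equiv 0\pmod{2\pi/K}$. The functions $S_2,\ldots,S_{K-1}$ depend only on $\varphi^2=r$, so the hypothesis of Lemma \ref{lemma:largesigmadescent} on lower-order terms is automatic at level $\ell=K$. At the point $\wvarphi$, either $\sin(Kt(\wtheta))\neq 0$, giving $\partial_t S_K(\wvarphi)\neq 0$, or else $\sin(Kt(\wtheta))=0$ but $t(\wtheta)\equiv \pi/K\pmod{2\pi/K}$, so $\cos(Kt(\wtheta))=-1$ and $\partial_t^2 S_K(\wvarphi)<0$ for $\rho$ small enough. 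Lemma \ref{lemma:largesigmadescent} with $\ell=K$ then supplies the desired $\sigma^{-2K}$-scale descent. Patching these pointwise neighborhoods by compactness of the set $\{\theta:|\|\theta\|-r_*|\leq \rho,\,\theta\notin\bigcup_{\mu\in\O_{\theta_*}}B_{\rho}(\mu)\}$ produces a uniform constant $c$.

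For part (c), split the region $|\|\theta\|-r_*|\geq\rho$ into three zones and combine. On any compact set $\{\rho_0\leq\|\theta\|\leq M\}$ with $|\|\theta\|-r_*|\geq \rho$ and $\rho_0>0$, polar coordinates are well-defined and $\partial_r S_2(r)=\tfrac{r(r^2-r_*^2)}{2}$ is bounded away from zero, so Lemma \ref{lemma:largesigmadescent} with $\ell=2$ applied in the ordering $\varphi=(t,r)$ (i.e.\ $\varphi^2=r$) yields $\|\nabla R\|\geq c\sigma^{-4}$; compactness again gives a uniform $c$. Near the origin, polar coordinates degenerate, and we instead apply Lemma \ref{lemma:largesigmadescent} with the identity reparametrization at $\ell=2$: since $\nabla_\theta^2 S_2(0)=-\tfrac{r_*^2}{2}\Id\prec 0$, a $\sigma^{-4}$-scale negative eigenvalue persists in a neighborhood of the origin (subsuming the range $\|\theta\|\leq \rho_0$). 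Finally, for $\|\theta\|>M$, Lemma \ref{lemma:localizationstrong} directly gives $\|\nabla R\|\geq c\sigma^{-4}$. The principal technical point is the explicit expansion of $S_K$ and verification that all critical angles off the orbit are either interior points of intervals where $\sin(Kt)\neq 0$ or saddle points where $\cos(Kt)=-1$; once this is in hand, the patching is routine compactness.
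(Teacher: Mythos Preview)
Your proposal is correct and follows essentially the same approach as the paper's proof: both apply Lemma~\ref{lemma:largesigmadescent} or Lemma~\ref{lemma:largesigmapseudomin} at each point $\wtheta$ in a compact ball, using the polar reparametrization $\varphi=(r,t)$ away from the origin and the identity reparametrization at $\wtheta=0$, then patch by compactness and invoke Lemma~\ref{lemma:localizationstrong} for large $\|\theta\|$. The one substantive difference is how you obtain the structure of $S_\ell$: the paper computes it directly via complex coordinates and roots of unity (Lemma~\ref{lemma:Slrotations}), whereas you argue more conceptually that $\cR^G_{\leq \ell}$ is generated by $\|\theta\|^2$ for $\ell<K$, invoke Lemma~\ref{lemma:Selldegell} to conclude $S_2,\ldots,S_{K-1}$ depend only on $r$, and then extract the $\cos(Kt)$ term of $S_K$ via the character identity $\frac{1}{K}\sum_k \cos^K(\alpha+2\pi k/K)=2^{1-K}\cos(K\alpha)+\mathrm{const}$ applied to the cross term $\langle T_K(\theta),T_K(\theta_*)\rangle=\E_g[\langle\theta_*,g\theta\rangle^K]$. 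Both routes yield the same formula; yours is a bit more structural, the paper's a bit more explicit. A minor organizational difference is that the paper runs a single compactness cover over $\{\|\theta\|\leq M\}$ for all four cases at once, while you run separate covers for parts (b) and (c); this requires noting that at the shared boundary points (where $t\equiv 0\bmod 2\pi/K$ and $r=r_*\pm\rho$) the radial argument of (c) applies and gives the stronger $\sigma^{-4}$ bound, which you have implicitly.
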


The proof rests on the following lemma, which characterizes the functions
$S_\ell(\theta)$ in \eqref{eq:Sl} for this discrete rotation group.

\begin{lemma}\label{lemma:Slrotations}
Let $G$ be the group of rotations (\ref{eq:GrotationsR2}) on $\R^2$, with $K
\geq 3$. Then
\begin{enumerate}[(a)]
\item $S_1(\theta) = 0$ and
$S_2(\theta)=\|\theta\|^4/8-\|\theta\|^2\|\theta_*\|^2/4$.
\item For each $\ell \in \{3,\ldots,K-1\}$,
$S_\ell(\theta)=p_\ell(\|\theta\|^2)$ for some univariate polynomial
$p_\ell:\R \to \R$ (with coefficients depending on $\theta_*$).
\item For $\ell=K$ and
some polynomial $p_K:\R \to \R$ (with coefficients depending on $\theta_*$),
\[S_K(\theta)=-\frac{1}{2^{K-1}K!} \|\theta\|^K \|\theta_*\|^K
\cos(K \cdot t(\theta)) + p_K(\|\theta\|^2).\]
\end{enumerate}
\end{lemma}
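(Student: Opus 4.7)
The plan is to identify $\R^2 \cong \C$ via $\theta \leftrightarrow z = \theta_1 + i\theta_2$, under which the generator $h$ acts as multiplication by $\omega = e^{2\pi i/K}$. Writing $z = re^{i\phi}$ and $z_* = r_* e^{i\phi_*}$, set $\Psi = e^{i(\phi - \phi_*)}$, so that $\cos(s\,t(\theta)) = \Re \Psi^s$ for any integer $s$. The two building blocks become
\[
\langle g_a\theta, g_b\theta\rangle = \tfrac{|z|^2}{2}\big(\omega^{k_b - k_a} + \omega^{k_a - k_b}\big),
\qquad
\langle \theta_*, h^k\theta\rangle = \tfrac{rr_*}{2}\big(\Psi\,\omega^k + \Psi^{-1}\omega^{-k}\big).
\]

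For part (a), we have $\E_g[g]=0$ since $\sum_{k=0}^{K-1}\omega^k = 0$, so Lemma~\ref{lemma:Smeanzero} applies. Using $\sum_{k=0}^{K-1}\omega^{2k}=0$ for $K \geq 3$, direct evaluation gives $\E_g[\langle\theta_*,g\theta\rangle^2] = \|\theta_*\|^2\|\theta\|^2/2$ and $\E_g[\langle\theta,g\theta\rangle^2] = \|\theta\|^4/2$, yielding the stated formula.

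For parts (b) and (c), I expand each $M_{\ell,m}(\pi\mid\theta,\theta_*)$ as a sum over $2^\ell$ sign choices $(\delta_j)_{j=1}^m$ (for the $m$ inner factors) and $(\epsilon_k)_{k=1}^{\ell-m}$ (for the $\ell-m$ outer factors). A specific choice contributes
\[
\tfrac{1}{2^\ell}\,r^{\ell+m}r_*^{\ell-m}\,\Psi^{s}\,\E_{g_1,\ldots,g_{|\pi|}}\big[\omega^N\big],
\qquad s = \textstyle\sum_k \epsilon_k,
\]
where $N$ is a $\ZZ$-linear combination of the $k_i$'s. The key bookkeeping: each inner factor contributes $+\delta_j$ to the coefficient of $k_{b_j}$ and $-\delta_j$ to that of $k_{a_j}$, giving net contribution zero to $\sum_i (\text{coeff of }k_i \text{ in } N)$; each outer factor contributes $\epsilon_k$ to the coefficient of $k_{c_k}$, giving total $\sum_k \epsilon_k = s$. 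Since the expectation over $g_1,\ldots,g_{|\pi|}$ vanishes unless each $k_i$'s coefficient lies in $K\ZZ$, any surviving term must have $s \in K\ZZ$.

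Since $|s| \leq \ell - m \leq \ell$, for $\ell < K$ the only possibility is $s = 0$, so every surviving term is proportional to $r^{\ell+m}r_*^{\ell-m}$, a polynomial in $\|\theta\|^2$; summing over $(m,\pi)$ proves (b). For $\ell = K$, the $s=0$ contributions remain polynomials in $\|\theta\|^2$ (to be collected into $p_K$), and the new possibility $s = \pm K$ forces $m=0$ and all $\epsilon_k$'s of the same sign. With $m=0$ and $\pi$ a partition of $[K]$, the coefficient of $k_i$ in $N$ (for all $\epsilon_k = +1$) equals the block size $|S_i|$, so the survival condition $K \mid |S_i|$ together with $\sum_i|S_i|=K$ and $|S_i|\geq 1$ forces $\pi = \{[K]\}$, a single block. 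A short evaluation of the corresponding coefficient in \eqref{eq:Sl}---using $(|\pi|-1)!(-1)^{|\pi|} = -1$, the $1/(K!\,2^K)$ prefactor, and the combining of $s=\pm K$ into $\Psi^K + \Psi^{-K} = 2\cos(K(\phi-\phi_*))$---yields exactly $-\frac{1}{2^{K-1}K!}\|\theta\|^K\|\theta_*\|^K\cos(K\,t(\theta))$, proving (c). The main subtlety to handle carefully is the combinatorial identity $\sum_i(\text{coeff of }k_i\text{ in }N) = s$, which converts the high-dimensional mod-$K$ survival condition into the simple mod-$K$ condition on the $\Psi$-exponent that drives the whole argument.
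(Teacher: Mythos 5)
Your proof is correct and follows the same overall strategy as the paper: identify $\R^2$ with $\C$, expand each $M_{\ell,m}(\pi\mid\theta,\theta_*)$ into $\omega$-powers, and use the vanishing criterion $\frac{1}{K}\sum_{k=0}^{K-1}\omega^{ak}=\1\{K\mid a\}$ to kill all but the angle-independent terms when $\ell<K$ and to isolate the single partition $\pi=\{[K]\}$ when $\ell=K$.

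The main difference from the paper is in how the survival condition is exploited. The paper tracks the individual exponents $c_i$ of each $\zeta_i$, bounds $|c_i|\leq\ell$, and concludes each $c_i=0$ for $\ell<K$, hence $b=\sum_i c_i=0$; for $\ell=K$ it bounds $\sum_i |c_i|<2K$ to force exactly one $c_{i^*}=\pm K$. You instead observe that $\sum_i c_i$ equals the $\Psi$-exponent $s$ (your combinatorial identity), so that $K\mid c_i$ for all $i$ directly gives $K\mid s$, and $|s|\leq\ell-m$ finishes both cases---reducing the high-dimensional congruence condition to a single scalar one. This is a genuine streamlining for part (b), and your block-size argument for part (c) ($K\mid |S_i|$, $\sum |S_i|=K$, $|S_i|\geq 1$ forces one block) is a clean alternative to the paper's cardinality count. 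One small point you leave implicit: for the $s=0$ terms in part (b), the parity $s\equiv\ell-m\pmod 2$ forces $\ell-m$ (and hence $\ell+m$) even, which is exactly why $r^{\ell+m}$ is a polynomial in $\|\theta\|^2$ rather than merely a power of $r$; worth a half-sentence. Otherwise the argument is complete, and your coefficient computation ($(|\pi|-1)!(-1)^{|\pi|}=-1$, the $1/(K!\,2^K)$ factor, and $\Psi^K+\Psi^{-K}=2\cos(Kt)$) matches the stated formula.
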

\begin{proof}
Let $z = (\theta_*)_1 + \ii (\theta_*)_2$ and $w = \theta_1 + \ii \theta_2$
as elements of $\C$. Let $\zeta = e^{2\pi \ii/K}$, and denote the set of
$K^\text{th}$ roots of unity by $X_K=\{1, \zeta, \ldots, \zeta^{K-1}\}$. Then
$\zeta^k z=(h^k\theta_*)_1+\ii (h^k\theta_*)_2$ where $h$ is the generator
(\ref{eq:rotationgenerator}), and similarly for $w$ and
$\theta$. Notice that for $a = a_1 + \ii a_2$ and $b = b_1 + \ii b_2$ we have
\[\langle (a_1,a_2),(b_1,b_2) \rangle
=a_1 b_1 + a_2 b_2
= \frac{1}{4}\Big((a + \bar{a})(b + \bar{b}) - (a - \bar{a})(b - \bar{b})\Big)
= \frac{1}{2}\Big(a \bar{b} + \bar{a} b\Big).
\]
Then
\[\E_g[\langle \theta_*,g\theta \rangle^2]
=\frac{1}{4K}\sum_{\zeta \in X_K}
\left(\zeta^{-1}z\bar{w}+\zeta\bar{z}w\right)^2=\frac{|z|^2|w|^2}{2}
=\frac{\|\theta\|^2\|\theta_*\|^2}{2},\]
where we have used $K \geq 3$ and
\begin{equation}\label{eq:sumrootsofunity}
\sum_{\zeta \in X_K} \zeta^a = \begin{cases} K & \text{ if } a \equiv 0 \mod K
\\ 0 & \text{ if } a \not\equiv 0 \mod K\end{cases}
\end{equation}
for the second equality. Similarly $\E_g[\langle \theta,g\theta \rangle^2]
=\|\theta\|^4/2$, and (a) follows from Lemma \ref{lemma:Smeanzero}.

Applying this argument for a general term
$M_{\ell,m}(\pi \mid \theta,\theta_*)$, we have
\begin{align*}
&M_{\ell,m}(\pi \mid \theta,\theta_*)\\
&=\E_{g_1,\ldots,g_{|\pi|}}
\left[\prod_{j=1}^m \Big\langle g_{\pi(2j-1)}\theta,\;g_{\pi(2j)}\theta
\Big\rangle \cdot \prod_{j=2m+1}^{\ell+m} \Big\langle \theta_*,
\;g_{\pi(j)}\theta \Big\rangle\right]\\
&= \frac{1}{2^\ell K^{|\pi|}} \sum_{i_1, \ldots, i_{|\pi|} = 0}^{K-1}
\left[\prod_{j = 1}^m
\Big((\zeta^{i_{\pi(2j - 1)} - i_{\pi(2j)}} + \zeta^{i_{\pi(2j)} - i_{\pi(2j -
1)}})|w|^2\Big)
\prod_{j = 2m + 1}^{\ell + m} (\zeta^{-i_{\pi(j)}} z \bar{w} + \zeta^{i_{\pi(j)}} \bar{z} w) \right]\\
&= \frac{|w|^{2m}}{2^\ell K^{|\pi|}} \sum_{\zeta_1, \ldots, \zeta_{|\pi|} \in
X_K} \left[\prod_{j = 1}^m
(\zeta_{\pi(2j - 1)} / \zeta_{\pi(2j)} + \zeta_{\pi(2j)} / \zeta_{\pi(2j - 1)})
\prod_{j = 2m + 1}^{\ell + m} (\zeta_{\pi(j)}^{-1} z \bar{w} + \zeta_{\pi(j)} \bar{z} w) \right].
\end{align*}
Expanding into polynomials of $z, \bar{z}, w, \bar{w}$, this expression is a
linear combination with constant coefficients of terms of the form
\[
\sum_{\zeta_1, \ldots, \zeta_{|\pi|} \in X_K} |w|^{2m + 2a} |z|^{2a} w^b \bar{z}^b
\zeta_1^{c_1} \cdots \zeta_{|\pi|}^{c_{|\pi|}}.
\]
Here, the exponents satisfy
$a \geq 0$, $2a+|b|=(\ell+m)-2m=\ell-m$, $\sum_i c_i = b$,
$\sum_i |c_i| \leq \ell+m$, and $|c_i| \leq m+(\ell+m-2m)=\ell$ for each $i$.
By (\ref{eq:sumrootsofunity}),
these terms vanish unless each $c_i$ is a multiple of $K$. In particular, for
$\ell<K$, the condition $|c_i| \leq \ell$ implies that
the only non-zero terms must have $c_1=\ldots=c_{|\pi|}=b=0$. Then
$M_{\ell,m}(\pi \mid \theta,\theta_*)$ is a polynomial in
$|w|^2=\|\theta\|^2$. Since $S_\ell(\theta)$ is a linear combination of such
terms $M_{\ell,m}(\pi \mid \theta,\theta_*)$, this shows (b).

For (c), if $\ell=K$, the only non-zero terms which are not a polynomial of 
$\|\theta\|^2$ must have $b \neq 0$, so that the condition $2a+|b|=K-m$ requires
$m<K$. Then since $\sum_i |c_i|
\leq K+m<2K$, there is some $i^*$ with $c_{i^*} \in \{-K,K\}$
and $c_j = 0$ for all $j \neq i^*$. Such terms can only appear in
$M_{K,m}(\pi \mid \theta,\theta_*)$ when $m=0$ and
$\pi=\{\{1,\ldots,K\}\}$, for which we have
\[M_{K,0}(\{\{1,\ldots,K\}\} \mid \theta, \theta_*) =
\frac{1}{2^K}(z^K\bar{w}^K+\bar{z}^K w^K).\]
Writing $w=\|\theta\|e^{ir}$ and $z=\|\theta_*\|e^{ir_*}$, this is
\[M_{K,0}(\{\{1,\ldots,K\}\} \mid \theta, \theta_*) =
\frac{\|\theta\|^K\|\theta_*\|^K}{2^K}(e^{i(r-r_*)K}+e^{i(r_*-r)K})
=\frac{\|\theta\|^K\|\theta_*\|^K}{2^{K-1}}\cos(Kt(\theta)).\]
Substituting into (\ref{eq:Sl}) and recalling that the remaining terms are
polynomial in $\|\theta\|^2$ shows (c).
\end{proof}

\begin{proof}[Proof of Theorem \ref{thm:rotations}]
For each point $\wtheta \in \R^d$, we consider a local
reparametrization by $\varphi$ in a neighborhood $U_{\wtheta}$ of $\wtheta$.
At $\wtheta=0$, we take the reparametrization to be $\varphi=\theta$. At each
$\wtheta \neq 0$, we take it to be $\varphi=(\|\theta\|,t(\theta))$. 
We then apply Lemmas \ref{lemma:largesigmadescent} and
\ref{lemma:largesigmapseudomin} on $U_{\wtheta}$.

For $\wtheta=0$, observe that
$\nabla_\theta^2 S_2(\wtheta)=-\frac{1}{2}\|\theta_*\|^2\Id \prec 0$. For
$\wtheta \neq 0$ where $\|\wtheta\| \neq \|\theta_*\|$, set
$\wvarphi=\varphi(\wtheta)$.
Observe that $S_2(\varphi)=\varphi_1^4/8-\varphi_1^2\varphi_{1,*}^2/4$,
so $\nabla_{\varphi_1} S_2(\wvarphi)=\frac{1}{2}\wvarphi_1(\wvarphi_1^2-\wvarphi_{1,*}^2) \neq 0$. In both cases, Lemma \ref{lemma:largesigmadescent} implies that
$\|\nabla_\theta R(\theta)\| \geq c\sigma^{-4}$ or
$\lambda_{\min}(\nabla_\theta^2 R(\theta)) \leq
-c\sigma^{-4}$, for some $c,\sigma_0>0$ and all $\sigma>\sigma_0$ and
$\theta \in U_{\wtheta}$.

For $\wtheta \notin \O_{\theta_*}$ where $\|\wtheta\|=\|\theta_*\|$,
observe that $S_1,\ldots,S_{K-1}$ depend only on $\varphi_1$. For $S_K$,
applying $\wvarphi_1=\varphi_{1,*}$, we have
\begin{equation}\label{eq:gradSKrotations}
\nabla_{\varphi_2}
S_K(\wvarphi)=\frac{1}{2^{K-1}(K-1)!}\varphi_{1,*}^{2K}\sin(K\wvarphi_2),
\qquad \nabla_{\varphi_2}^2
S_K(\wvarphi)=\frac{K}{2^{K-1}(K-1)!}\varphi_{1,*}^{2K}\cos(K\wvarphi_2).
\end{equation}
Then either
$\nabla_{\varphi_2} S_K(\wvarphi) \neq 0$ (when $\wvarphi_2 \notin
\{j\pi/K:j=0,1,\ldots,2K-1\}$), or
$\lambda_{\min}(\nabla_{\varphi_2}^2 S_K(\wvarphi))<0$ (when $\wvarphi_2 \in
\{j\pi/K:j=1,3,5,\ldots,2K-1\}$). So
Lemma \ref{lemma:largesigmadescent} implies that
$\|\nabla_\theta R(\theta)\| \geq c\sigma^{-2K}$ or
$\lambda_{\min}(\nabla_\theta^2 R(\theta)) \leq
-c\sigma^{-2K}$ for all $\sigma>\sigma_0$ and $\theta \in U_{\wtheta}$.

Finally, for $\wtheta \in \O_{\theta_*}$, (\ref{eq:gradSKrotations})
verifies that $\wvarphi=\varphi(\wtheta)$ is a pseudo-local-minimizer in the
parametrization by $\varphi$. Then
Lemma \ref{lemma:largesigmapseudomin} implies
$R(\theta)$ has a unique local minimizer in $U_{\wtheta}$ and
$\lambda_{\min}(\nabla_\varphi^2 R(\varphi))
\geq c\sigma^{-2K}$ for all $\varphi \in \varphi(U_{\wtheta})$ and
$\sigma>\sigma_0$. This unique local minimizer must be $\wtheta$ itself, since
$\wtheta$ is a global minimizer of $R(\theta)$.

The constants $c,\sigma_0>0$ above depend on $\wtheta$. By compactness, for
any $M>0$, there is a finite collection of points
$\wtheta$ where the neighborhoods $U_{\wtheta}$ cover
$\{\theta \in \R^2:\|\theta\| \leq M\}$, and the above statements then
hold for uniform choices of $c,\sigma_0>0$ in their union. For a sufficiently
small constant $\rho>0$, this establishes all
claims of the theorem for points $\theta \in \R^d$ where $\|\theta\| \leq M$,
and the result for $\|\theta\|>M$ follows from Lemma
\ref{lemma:localizationstrong}.
\end{proof}

The following then shows that with high probability for
$n \gg \sigma^{2K}$,
the empirical risk $R_n(\theta)$ is also globally
benign and satisfies the same properties. This is a special case of
the guarantee for $R_n(\theta)$ in Theorem \ref{thm:globalbenign} to follow.

\begin{corollary}\label{cor:rotations}
For some $(\theta_*,K)$-dependent constants $C,c>0$,
the statements of Theorem \ref{thm:rotations} hold also for $R_n(\theta)$,
with probability at least
$1-e^{-c(\log n)^2}-Ce^{-cn^{1/(2K)}\sigma^{-1}}$.
\end{corollary}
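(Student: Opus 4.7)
The plan is to transfer the three conclusions of Theorem \ref{thm:rotations} to the empirical risk $R_n$ by controlling $R_n - R$ and its first two derivatives at scale $\sigma^{-2K}$, uniformly over a compact region of $\theta$. A direct application of the general concentration inequality \eqref{eq:conchessRn} would only give a probability of the form $1 - e^{-cn\sigma^{-4K+2}}$, which is much weaker than the claimed $1 - Ce^{-cn^{1/(2K)}\sigma^{-1}}$. To obtain the sharper bound, I will follow the strategy used for Corollary \ref{cor:locallargenoise}: apply the series expansion of $R_n$ in Lemma \ref{lemma:Rnexpansion} out to order $k = 2K$, and then concentrate the polynomial coefficients using Gaussian hypercontractivity.

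First, I would reduce to a compact region. By Lemma \ref{lemma:localizationstrong}, there is a constant $M$ (independent of $\sigma$) such that, on an event of probability at least $1 - e^{-c(\log n)^2} - Ce^{-cn^{1/4}\sigma^{-1}}$, every $\theta$ with $\|\theta\| > M$ satisfies $\|\nabla R_n(\theta)\| \geq c\sigma^{-4}$, which implies conclusion (c) outside the ball $B_M(0)$. It therefore suffices to work on $\{\|\theta\| \leq M\}$. Next, I apply Lemma \ref{lemma:Rnexpansion} with $k = 2K$, $r(\sigma) = M$, and $t = \sigma^{-2K}/\log \sigma$, obtaining polynomials $P_\ell(\eps,\theta,\theta_*) = \sum_m A_{\ell,m}(\eps,\theta_*) P_{\ell,m}(\theta)$ such that $R_n$, $\nabla R_n$, and $\nabla^2 R_n$ are approximated uniformly by the truncated series $\sum_{\ell=1}^{2K} \sigma^{-\ell}\frac{1}{n}\sum_i P_\ell(\eps_i,\theta,\theta_*)$ up to error $o(\sigma^{-2K})$ on $B_M(0)$.

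The core step is to concentrate the scalar coefficients. For each $\ell \in \{1,\ldots,2K\}$ and $m$, applying Gaussian polynomial concentration \eqref{eq:gaussianpoly} to the degree-$\ell$ polynomial $A_{\ell,m}(\eps,\theta_*)$ at scale $c_0\sigma^{-(\ell \wedge K)}$ gives, for $n \geq \sigma^{2K}$,
\[
\P\bigg[\bigg|\frac{1}{n}\sum_{i=1}^n A_{\ell,m}(\eps_i,\theta_*) - \E_\eps[A_{\ell,m}(\eps,\theta_*)]\bigg| > c_0 \sigma^{-(\ell \wedge K)}\bigg] \leq 2 e^{-c(n\sigma^{-2(\ell \wedge K)})^{1/\ell}} \leq 2 e^{-c n^{1/(2K)}\sigma^{-1}},
\]
where the last inequality uses $\ell \leq 2K$ and $\ell \wedge K \geq \ell/2$ when $\ell \leq K$, together with $n \geq \sigma^{2K}$ when $\ell \geq K$. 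Combining these events for all $\ell \leq 2K, m \leq M_\ell$ via a union bound, on the resulting good event the truncated empirical expansion of $\nabla^2 R_n$ lies within $(c_0/2)\sigma^{-2K}$ (in operator norm) of $\sum_\ell \sigma^{-2\ell}\nabla^2 S_\ell(\theta)$ uniformly on $B_M(0)$. By Lemma \ref{lemma:seriesexpansion} applied to $R$, this in turn is $o(\sigma^{-2K})$-close to $\nabla^2 R(\theta)$; analogous bounds hold for $R_n - R$ and $\nabla R_n - \nabla R$.

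With these uniform approximations in hand, the three conclusions of Theorem \ref{thm:rotations} transfer directly. For part (a), Lemma \ref{lem:localmincompare} applied in the reparametrization $\varphi = (\|\theta\|, t(\theta))$ on $B_\rho(\wtheta)$ for each $\wtheta \in \O_{\theta_*}$ yields strong convexity of $R_n(\varphi)$ at scale $\sigma^{-2K}$ with a unique local minimizer; note that since $\varphi$ is an analytic coordinate change on $B_\rho(\wtheta)$ with $\sigma$-independent Jacobian bounds, the $\sigma^{-2K}$-scale closeness of $\nabla_\theta^2 R_n$ to $\nabla_\theta^2 R$ translates by \eqref{eq:hessreparam} to the same-scale closeness in $\varphi$-coordinates, using that $\|\nabla_\varphi R_n\|, \|\nabla_\varphi R\|$ are at most $O(\sigma^{-2})$ on $B_\rho(\wtheta)$. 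For parts (b) and (c), if $R$ has $\|\nabla_\theta R(\theta)\| \geq c\sigma^{-2K}$ (resp.\ $c\sigma^{-4}$) or $\lambda_{\min}(\nabla_\theta^2 R(\theta)) \leq -c\sigma^{-2K}$ (resp.\ $-c\sigma^{-4}$), then the $(c/2)\sigma^{-2K}$-closeness of the derivatives of $R_n$ and $R$ immediately produces the same bound (with halved constant) for $R_n$. The main technical obstacle is bookkeeping: ensuring that the exponents of $\sigma$ in the hypercontractivity bound collapse to $n^{1/(2K)}\sigma^{-1}$ under $n \geq \sigma^{2K}$, which is precisely the regime in which the corollary is meaningful, and that the Lipschitz covering-net argument for the uniform bound does not inflate the probability beyond $e^{-c(\log n)^2}$.
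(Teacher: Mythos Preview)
Your overall plan---reduce to $\|\theta\|\leq M$ via Lemma \ref{lemma:localizationstrong}, apply Lemma \ref{lemma:Rnexpansion} with $k=2K$, and concentrate the coefficients $A_{\ell,m}$ by Gaussian hypercontractivity \eqref{eq:gaussianpoly}---matches the paper's route (the paper in fact defers this corollary to the empirical-risk part of Theorem \ref{thm:globalbenign}, together with Corollary \ref{cor:locallargenoise}). However, there is a genuine gap at the ``core step.''

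You claim that on the good event, $\nabla_\theta^2 R_n$ lies within $(c_0/2)\sigma^{-2K}$ of $\nabla_\theta^2 R$ uniformly on $B_M(0)$. This is false. From $|\bar A_{\ell,m}-\E A_{\ell,m}|\leq c_0\sigma^{-(\ell\wedge K)}$ you only get
\[
\|\nabla_\theta^2 R_n^{(2K)}-\nabla_\theta^2 R^{(2K)}\|
\;\leq\; C\sum_{\ell=2}^{2K}\sigma^{-\ell}\cdot c_0\sigma^{-(\ell\wedge K)}
\;=\; Cc_0\,\sigma^{-4}+O(\sigma^{-6}),
\]
(the $\ell=1$ term vanishes since $\E_g[g]=0$ for $K\geq 3$, hence $P_1\equiv 0$). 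An $O(c_0\sigma^{-4})$ error in $\nabla_\theta^2$ propagates through \eqref{eq:hessreparam} to an $O(c_0\sigma^{-4})$ error in $\nabla_\varphi^2$, which swamps both the angular Hessian block of size $\sigma^{-2K}$ in part (a) and the descent bound $c\sigma^{-2K}$ in part (b) whenever $K\geq 3$. So your transfer of (a) and (b) does not go through as written; only part (c) survives.

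The missing idea is the one the paper uses in Theorem \ref{thm:globalbenign} and Corollary \ref{cor:locallargenoise}: work \emph{block-wise in the reparametrization} $\varphi=(\varphi^1,\varphi^2)=(\|\theta\|,t(\theta))$ and exploit that each $P_{k,m}\in\cR^G_{\leq k}$ is, for $k\leq K-1$, a polynomial in $\|\theta\|^2$ alone (Lemma \ref{lemma:Slrotations} / Lemma \ref{lemma:polynomialreparam}(c)). Hence $\nabla_{\varphi^2}P_{k,m}=0$ for $k\leq K-1$, and the fluctuation of the $\varphi^2$-block of $\nabla_\varphi^2 R_n$ involves only terms with $k\geq K$, giving
\[
\|\nabla_{\varphi^2,\varphi^2}^2 R_n - \nabla_{\varphi^2,\varphi^2}^2 R^K\|
\;\leq\; C\sum_{k=K}^{2K}\sigma^{-k}\cdot c_0\sigma^{-K}
\;\leq\; C'c_0\,\sigma^{-2K}.
\]
This yields the graded block structure for $\nabla_\varphi^2 R_n$ directly, after which Lemma \ref{lemma:gradedblocks} gives $\lambda_{\min}(\nabla_\varphi^2 R_n)\geq c\sigma^{-2K}$ for part (a), and the analogous block-wise argument at each $\wtheta$ on the annulus (as in Case I of Theorem \ref{thm:globalbenign}) handles part (b). Once you insert this block-wise step, the rest of your outline is correct.
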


\subsubsection{All permutations in $\R^d$}\label{sec:permutations}
Consider any dimension $d \geq 1$, and let $G \cong S_d$ be the symmetric group
of all permutations of coordinates in $\R^d$. Here, the size of the group
is $K=d!$. Define the symmetric power sums in $\theta$ by
\[p_k(\theta)=\frac{1}{d} \sum_{j=1}^d \theta_j^k,\]
and (for fixed $\theta_* \in \R^d$) the Vandermonde varieties by
\begin{equation}\label{eq:vandermondevariety}
\cV_k=\Big\{\theta \in \R^d:
p_\ell(\theta)=p_\ell(\theta_*) \text{ for all } \ell=1,\ldots,k\Big\}.
\end{equation}
Note that the map $\theta \mapsto (p_1(\theta),\ldots,p_d(\theta))$ is injective
on $\{\theta \in \R^d:\theta_1 \leq \ldots \leq \theta_d\}$
(see \cite[Corollary 1.2]{kostov1989geometric}), so $\cV_d=\O_{\theta_*}$.

\begin{theorem} \label{thm:permutations}
Let $G \cong S_d$ be the symmetric group acting on $\R^d$ by permutation of
coordinates. For generic $\theta_* \in \R^d$,
there exists a $(\theta_*,d)$-dependent
constant $\sigma_0>0$ such that the global landscape of $R(\theta)$ is benign
for all $\sigma>\sigma_0$. More quantitatively, for small enough $\rho > 0$
there are $(\theta_*,d)$-dependent constants $c, \sigma_0>0$ such that when $\sigma>\sigma_0$,
\begin{enumerate}[(a)]
\item For each $\wtheta \in \O_{\theta_*}$, reparametrizing by the symmetric power sums
$\varphi=(p_1,\ldots,p_d)$ in $B_\rho(\wtheta)$, we have the strong convexity
$\lambda_{\min}(\nabla_\varphi^2 R(\varphi)) \geq c\sigma^{-2d}$ for all
$\varphi \in \varphi(B_\rho(\wtheta))$.
\item Denote $\cV_\ell^\rho=\{\theta \in
\R^d:\dist(\theta,\cV_\ell)<\rho\}$, where $\cV_0^\rho=\R^d$.
Then for each $\ell=1,\ldots,d$ and each
$\theta \in \cV_{\ell-1}^\rho \setminus \cV_\ell^\rho$,
either $\|\nabla_\theta R(\theta)\| \geq c\sigma^{-2\ell}$ or
$\lambda_{\min}(\nabla_\theta^2 R(\theta)) \leq -c\sigma^{-2\ell}$.
\end{enumerate}
\end{theorem}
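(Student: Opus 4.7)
The plan is to apply the local analysis tools of Lemmas \ref{lemma:largesigmapseudomin} and \ref{lemma:largesigmadescent} with the transcendence basis $\varphi=(p_1,\ldots,p_d)$. By the fundamental theorem of symmetric functions, $\cR^{S_d}_{\leq\ell}=\R[p_1,\ldots,p_\ell]$, so in Lemma \ref{lemma:phiconstruction} we have $L=d$ and $d_\ell=1$ for every $\ell$; the Jacobian $\der_\theta\varphi$ is a scaled Vandermonde matrix, non-singular precisely where the coordinates of $\theta$ are distinct. For generic $\theta_*$ this holds at $\theta_*$ and at every $\wtheta\in\O_{\theta_*}$.

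The first technical step is to derive an explicit ``cascaded quadratic'' form for $S_\ell$ in these coordinates. Starting from Lemma \ref{lemma:Selldegell}, each entry $(T_\ell(\theta))_{\vec i}$ of the $\ell^\text{th}$ moment tensor is an $S_d$-invariant polynomial of homogeneous degree $\ell$ in $\theta$, so is uniquely writable as $c_{\vec i}\varphi_\ell+f_{\vec i}(\varphi_1,\ldots,\varphi_{\ell-1})$ for a constant $c_{\vec i}$ and a polynomial $f_{\vec i}$ (since among power-sum monomials of degree $\ell$, only $p_\ell$ itself involves $\varphi_\ell$, and only linearly). A direct computation gives the diagonal identity $(T_\ell)_{i,i,\ldots,i}=p_\ell(\theta)$, so $c_{i,i,\ldots,i}=1$, and expanding $\|T_\ell(\theta)-T_\ell(\theta_*)\|_\HS^2$ yields
\begin{equation*}
S_\ell(\varphi)=\frac{A_\ell}{2\,\ell!}(\varphi_\ell-\varphi_{\ell,*})^2+B_\ell(\varphi_1,\ldots,\varphi_{\ell-1})(\varphi_\ell-\varphi_{\ell,*})+C_\ell(\varphi_1,\ldots,\varphi_{\ell-1}),
\end{equation*}
with $A_\ell=\sum_{\vec i}c_{\vec i}^2\geq d>0$ and $B_\ell(\varphi_{1,*},\ldots,\varphi_{\ell-1,*})=0$. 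Consequently $\nabla_{\varphi_\ell}^2 S_\ell$ is a positive constant and $\nabla_{\varphi_\ell}S_\ell$ is affine-linear in $\varphi_\ell-\varphi_{\ell,*}$.

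Part (a) follows directly: at $\varphi_*=\varphi(\theta_*)$, the identities $\nabla_{\varphi_\ell}S_\ell(\varphi_*)=0$ and $\nabla_{\varphi_\ell}^2 S_\ell(\varphi_*)=A_\ell/\ell!>0$ hold for every $\ell$, so $\theta_*$ is a pseudo-local-minimizer in the sense of Definition \ref{eq:pseudolocalminimizer}, and Lemma \ref{lemma:largesigmapseudomin} gives strong convexity $\lambda_{\min}(\nabla_\varphi^2 R(\varphi))\geq c\sigma^{-2d}$ on a $\sigma$-independent neighborhood; $S_d$-equivariance extends this to each $\wtheta\in\O_{\theta_*}$. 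For part (b), I would fix $\wtheta\in\cV_{\ell-1}\setminus\cV_\ell$ with distinct coordinates. Then $\wvarphi_k=\varphi_{k,*}$ for $k<\ell$ forces $B_\ell(\wvarphi_1,\ldots,\wvarphi_{\ell-1})=0$, while $\wvarphi_\ell\neq\varphi_{\ell,*}$ gives $\nabla_{\varphi_\ell}S_\ell(\wvarphi)=(A_\ell/\ell!)(\wvarphi_\ell-\varphi_{\ell,*})\neq 0$. Since $S_1,\ldots,S_{\ell-1}$ depend only on $\varphi_1,\ldots,\varphi_{\ell-1}$, Lemma \ref{lemma:largesigmadescent} yields a neighborhood $U_\wtheta$ on which either $\|\nabla_\theta R\|\geq c\sigma^{-2\ell}$ or $\lambda_{\min}(\nabla_\theta^2 R)\leq -c\sigma^{-2\ell}$.

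The main obstacle is patching these pointwise statements together on the inflated tube $\cV_{\ell-1}^\rho\setminus\cV_\ell^\rho$ intersected with a bounded ball $\{\|\theta\|\leq M\}$, in the face of two complications: (i) the reparametrization $\varphi$ degenerates at $\wtheta$ with repeated coordinates, where $\der_\theta\varphi$ drops rank, and (ii) the theorem asks for the descent bound not only on $\cV_{\ell-1}$ itself but on its $\rho$-thickening. For (ii), a quantitative version of the preceding argument handles neighboring points: continuity of $B_\ell$ together with the linearity of $\nabla_{\varphi_\ell}S_\ell$ in $\varphi_\ell$ ensures, after shrinking $\rho$, that $|B_\ell|$ stays negligible compared to the gap $|\wvarphi_\ell-\varphi_{\ell,*}|$ enforced by $\dist(\theta,\cV_\ell)\geq\rho$. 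For (i), I would exploit that the set $\{\|\nabla_\theta R\|\geq c\sigma^{-2\ell}\}\cup\{\lambda_{\min}(\nabla_\theta^2 R)\leq -c\sigma^{-2\ell}\}$ is closed and $S_d$-invariant, while distinct-coordinate points form an open dense subset; continuity of $\nabla_\theta R$ and $\nabla_\theta^2 R$ then extends any uniform lower bound established on the distinct-coordinate part to the whole region, absorbing the lower-dimensional singular locus. A finite covering of the compact region by $U_\wtheta$'s together with Lemma \ref{lemma:localizationstrong} for $\|\theta\|>M$ completes the argument.
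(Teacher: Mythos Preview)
Your proof of part (a) and your derivation of the cascaded quadratic form for $S_\ell$ are correct and match the paper's Lemma \ref{lem:perm-risk}. The gap is in your handling of complication (i), the points with repeated coordinates.

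Your pointwise argument via Lemma \ref{lemma:largesigmadescent} only applies at $\wtheta$ where the full map $\varphi=(p_1,\ldots,p_d)$ is a local reparametrization, i.e.\ where all coordinates are distinct; each neighborhood $U_\wtheta$ so produced lies inside the distinct-coordinate set. Hence the family $\{U_\wtheta\}$ cannot cover any compact region containing repeated-coordinate points, and your ``finite covering of the compact region by $U_\wtheta$'s'' is not available. Your continuity argument is then circular: extending a bound from a dense set to its closure requires a \emph{uniform} constant on the dense set, but you invoke compactness to obtain that uniform constant, and compactness fails for precisely this reason. Concretely, by Lemma \ref{lemma:permutationsaddle}(c) every point $\wtheta\in\cV_{\ell-1}$ with exactly $\ell-1$ distinct entries is a \emph{critical point} of $p_\ell|_{\cV_{\ell-1}}$; there the tangential derivative $\nabla_{\varphi^\ell} p_\ell(\wvarphi)$ vanishes, so in any chart adapted to $\cV_{\ell-1}$ one has $\nabla_{\varphi^\ell}S_\ell(\wvarphi)=0$, and your first-order argument gives nothing. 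One must instead produce a second-order descent direction.

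The paper resolves this by constructing a different local chart at each repeated-coordinate $\wtheta$ (using only $p_1,\ldots,p_k$ where $k$ is the number of distinct entries, completed arbitrarily) and then invoking the Arnold--Kostov theory of Vandermonde varieties (Lemma \ref{lemma:permutationsaddle}). The key fact is part (d): any local minimizer (resp.\ maximizer) of $p_\ell|_{\cV_{\ell-1}}$ is also global, which forces $p_\ell(\wtheta)-p_{\ell,*}$ and the eigenvalues of $\nabla_{\varphi^\ell}^2 p_\ell(\wvarphi)$ to have opposite signs, so that $\nabla_{\varphi^\ell}^2 S_\ell(\wvarphi)=2a_\ell(p_\ell(\wtheta)-p_{\ell,*})\nabla_{\varphi^\ell}^2 p_\ell(\wvarphi)$ has a negative eigenvalue. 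This Morse analysis at critical points is the essential missing ingredient; without it (or a substitute showing uniformly that $\nabla_\theta S_\ell\neq 0$ at every such critical point, which you have not argued), the compactness patching cannot be completed.
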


The proof rests on the following lemma, which characterizes the functions
$S_\ell(\theta)$ in this example.

\begin{lemma} \label{lem:perm-risk}
Let $G \cong S_d$ be the symmetric group acting on $\R^d$ by permutation of
coordinates. For each $\ell=1,\ldots,d$,
some constant $a_\ell>0$, and some polynomials $q_\ell,r_\ell:\R^{\ell-1} \to \R$
with coefficients depending on $\theta_*$ and such that
\[q_\ell\Big(p_1(\theta_*),\ldots,p_{\ell-1}(\theta_*)\Big)=0,\]
we have
\begin{align}
S_\ell(\theta) &= a_\ell\big(p_\ell(\theta)^2 - p_{\ell}(\theta_*)\big)^2
+q_\ell\big(p_1(\theta), \ldots, p_{\ell - 1}(\theta)\big)
\cdot p_\ell(\theta)
+r_\ell\big(p_1(\theta), \ldots, p_{\ell - 1}(\theta)\big).
\label{eq:Sellpermutation}
\end{align}
\end{lemma}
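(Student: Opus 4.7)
The plan is to combine the general decomposition of $S_\ell$ from Lemma \ref{lemma:Selldegell} with the fact that, for $G = S_d$ acting by permutation, the invariant ring $\cR^G$ is polynomial and generated by the power sums. First, Lemma \ref{lemma:Selldegell} gives
\[
S_\ell(\theta) = \frac{1}{2(\ell!)}\|T_\ell(\theta) - T_\ell(\theta_*)\|_\HS^2 + Q_\ell(\theta),
\]
with $Q_\ell$ in the algebra generated by $\cR^G_{\leq \ell-1}$. For $k \leq d$, the fundamental theorem of symmetric polynomials identifies $\cR^G_{\leq k}$ with polynomials in $p_1, \ldots, p_k$ of $\theta$-weighted degree at most $k$. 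Hence $Q_\ell$ is a polynomial in $p_1(\theta), \ldots, p_{\ell-1}(\theta)$ and every entry of $T_\ell(\theta)$ is a polynomial in $p_1(\theta), \ldots, p_\ell(\theta)$, so I can write $S_\ell(\theta) = F_\ell(p_1(\theta),\ldots,p_\ell(\theta))$ for some polynomial $F_\ell$. Since $S_\ell$ has $\theta$-degree at most $2\ell$ and $p_k$ has weighted degree $k$, any monomial $p_1^{a_1}\cdots p_\ell^{a_\ell}$ in $F_\ell$ satisfies $\sum_k k a_k \leq 2\ell$; this forces $a_\ell \leq 2$ and, when $a_\ell = 2$, all other $a_k = 0$. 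Thus
\[
F_\ell = A\, p_\ell^2 + B(p_1,\ldots,p_{\ell-1})\, p_\ell + C(p_1,\ldots,p_{\ell-1}),
\]
with $A \in \R$ a constant independent of $\theta_*$ and $B, C$ polynomials whose coefficients depend on $\theta_*$. The same degree count applied entrywise to $T_\ell(\theta)$ (whose entries are homogeneous symmetric polynomials of $\theta$-degree exactly $\ell$) shows that each entry is \emph{linear} in $p_\ell$; consequently $\partial_{p_\ell} T_\ell$ is a constant tensor in $(\R^d)^{\otimes \ell}$ and $\partial_{p_\ell}^2 T_\ell \equiv 0$.

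Next, I identify $a_\ell := A$ and verify positivity. Differentiating the decomposition above twice in $p_\ell$, and using $\partial_{p_\ell} Q_\ell \equiv 0$ together with $\partial_{p_\ell}^2 T_\ell \equiv 0$, yields
\[
2A \;=\; \partial_{p_\ell}^2 F_\ell \;=\; \frac{1}{\ell!}\|\partial_{p_\ell} T_\ell\|_\HS^2 + \frac{1}{\ell!}\big\langle \partial_{p_\ell}^2 T_\ell,\; T_\ell(\theta) - T_\ell(\theta_*)\big\rangle \;=\; \frac{1}{\ell!}\|\partial_{p_\ell} T_\ell\|_\HS^2.
\]
Positivity is immediate from a single entry: the all-equal diagonal entry $(T_\ell(\theta))_{i\cdots i} = \E_\pi[\theta_{\pi(i)}^\ell] = p_\ell(\theta)$ for each $i$, so $\partial_{p_\ell}(T_\ell)_{i\cdots i} = 1$, giving $\|\partial_{p_\ell} T_\ell\|_\HS^2 \geq d$ and hence $a_\ell \geq d/(2\ell!) > 0$.

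Finally, to produce the $(p_\ell - p_\ell(\theta_*))^2$ form and the vanishing of $q_\ell$ at $\theta_*$, I differentiate once in $p_\ell$ and evaluate at $\theta = \theta_*$, where the argument of the squared norm vanishes:
\[
\partial_{p_\ell} F_\ell\big|_{\theta=\theta_*} \;=\; \tfrac{1}{\ell!}\big\langle \partial_{p_\ell} T_\ell,\; T_\ell(\theta_*) - T_\ell(\theta_*)\big\rangle \;=\; 0,
\]
so $2A\, p_\ell(\theta_*) + B(p_1(\theta_*),\ldots,p_{\ell-1}(\theta_*)) = 0$. Completing the square then rewrites
\[
F_\ell = A\big(p_\ell - p_\ell(\theta_*)\big)^2 + q_\ell(p_1,\ldots,p_{\ell-1})\cdot p_\ell + r_\ell(p_1,\ldots,p_{\ell-1}),
\]
with $q_\ell := B + 2A\,p_\ell(\theta_*)$ and $r_\ell := C - A\,p_\ell(\theta_*)^2$; the preceding display gives exactly $q_\ell(p_1(\theta_*),\ldots,p_{\ell-1}(\theta_*)) = 0$, which is the claimed \eqref{eq:Sellpermutation}. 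The plan contains no substantial obstacle; the main point requiring care is bookkeeping of which quantities depend on $\theta_*$ through their coefficients ($B, C, q_\ell, r_\ell$) versus those that do not ($a_\ell = A$ and $\partial_{p_\ell} T_\ell$), so that the positive constant $a_\ell$ can be read off independently of the choice of $\theta_*$.
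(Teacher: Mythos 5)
Your argument is correct and follows essentially the same path as the paper: both start from Lemma~\ref{lemma:Selldegell}, use that the power sums $p_1,\ldots,p_d$ generate $\cR^{S_d}$ (with the $\theta$-degree grading forcing each entry of $T_\ell$ to be affine in $p_\ell$), and read off $a_\ell = \tfrac{1}{2\ell!}\|\partial_{p_\ell}T_\ell\|_\HS^2 = \tfrac{1}{2\ell!}\sum_\varphi c_\varphi^2 > 0$ from the diagonal entries $(T_\ell)_{i\cdots i}=p_\ell$. The only cosmetic difference is that the paper expands $\|T_\ell(\theta)-T_\ell(\theta_*)\|_\HS^2 = \sum_\varphi\bigl(c_\varphi(p_\ell-p_{\ell,*}) + (q_\varphi - q_{\varphi,*})\bigr)^2$ directly so that the vanishing of $q_\ell$ at $\theta_*$ is visible termwise, whereas you extract it via the first-derivative condition $\partial_{p_\ell}F_\ell|_{\theta_*}=0$ followed by completing the square — these are equivalent.
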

\begin{proof}
We apply Lemma \ref{lemma:Selldegell} and the fact that the symmetric power
sums $p_1(\theta),p_2(\theta),\ldots$ generate $\cR^G$ as an algebra over $\R$
(see \cite[Eq.\ (2.12)]{macdonald2015symmetric}).
Thus, any polynomial $\varphi \in \cR^G_{\leq \ell}$ may be written as
\[\varphi(\theta)=c_\varphi p_\ell(\theta)+q_\varphi(p_1(\theta),\ldots,p_{\ell-1}(\theta))\]
for some $c_\varphi \in \R$ and some polynomial $q_\varphi$ with real
coefficients. In particular, applying this to each entry of the moment
tensor $T_\ell(\theta)$ in Lemma \ref{lemma:Selldegell}, we
obtain the form (\ref{eq:Sellpermutation}) where
\[a_\ell=\sum_{\varphi} \frac{c_\varphi^2}{2(\ell!)}\]
and
\[q_\ell\big(p_1(\theta),\ldots,p_{\ell-1}(\theta)\big)=
\sum_{\varphi} \frac{c_\varphi}{\ell!} \cdot
\Big(q_\varphi\big(p_1(\theta),\ldots,p_{\ell-1}(\theta)\big)
-q_\varphi\big(p_1(\theta_*),\ldots,p_{\ell-1}(\theta_*)\big)\Big),\]
with both summations taken over all entries of $T_\ell(\theta)$.
We have $a_\ell>0$ strictly because the diagonal entries of $T_\ell(\theta)$ are
given by
\[T_\ell(\theta)_{i,\ldots,i}=\frac{1}{d!}\sum_{\sigma \in S_d}
\theta_{\sigma(i)}^\ell=\frac{(d-1)!}{d!}\sum_{i=1}^d \theta_i^\ell=p_\ell(\theta),\]
so that $c_{\varphi}=1$ for these entries.
\end{proof}

The derivative of this map $\varphi=(p_1,\ldots,p_d)$ is
singular at points $\wtheta$ having repeated entries. To analyze the
landscape of $R(\theta)$ near such points,
we use the following known (and non-trivial) facts about the symmetric power sums
and Vandermonde varieties.

\begin{lemma}\label{lemma:permutationsaddle}
Let $\cV_k$ be the Vandermonde variety (\ref{eq:vandermondevariety}), with
$\cV_0=\R^d$. For each $k \in \{1,\ldots,d\}$ and any generic $\theta_* \in \R^d$,
\begin{enumerate}[(a)]
\item Each point $\theta \in \cV_k$ has at least $k$ distinct entries.
\item $\cV_{k-1}$ is a nonsingular algebraic variety, and
$p_k(\theta)$ is a Morse function on $\cV_{k-1}$.
\item The critical points of the restriction $p_k|_{\cV_{k-1}}$ are the
points $\theta \in \cV_{k-1}$ having exactly $k-1$ distinct entries.
\item If $\theta$ is a local minimizer or local maximizer of
$p_k|_{\cV_{k-1}}$, then it is also a global minimizer or global
maximizer of $p_k|_{\cV_{k-1}}$.
\end{enumerate}
\end{lemma}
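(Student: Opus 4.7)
For part (a), I argue by genericity. The set of $\theta \in \R^d$ with at most $k-1$ distinct entries decomposes as a finite union of affine subspaces $W_\pi$ of dimension at most $k-1$, indexed by set partitions $\pi$ of $\{1,\ldots,d\}$ into at most $k-1$ blocks, with each $W_\pi$ being the locus of vectors that are constant on every block of $\pi$. The map $\Phi:\theta\mapsto(p_1(\theta),\ldots,p_k(\theta))$ restricted to each $W_\pi$ has image of dimension at most $k-1$, hence is Zariski-contained in a proper algebraic subvariety of $\R^k$. Taking the union over the finitely many $\pi$ yields a single proper algebraic subvariety $Z\subsetneq\R^k$ containing all of these images. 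Since the map $\theta_*\mapsto\Phi(\theta_*)$ is dominant for $k\leq d$, the pullback $\Phi^{-1}(Z)$ is a proper algebraic subvariety of $\R^d$. Generic $\theta_*$ lie outside this pullback, so for such $\theta_*$, the variety $\cV_k$ contains no vector from any $W_\pi$, i.e.\ every $\theta\in\cV_k$ has at least $k$ distinct entries.

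For parts (b) and (c) I use explicit computations. Since $\partial p_\ell/\partial\theta_j=(\ell/d)\theta_j^{\ell-1}$, the Jacobian of $(p_1,\ldots,p_{k-1})$ at $\theta\in\cV_{k-1}$ equals the $(k-1)\times d$ Vandermonde-type matrix $V_{ij}=\theta_j^{i-1}$ up to a nonzero diagonal rescaling of rows. Applying part (a) to $\cV_{k-1}$, every such $\theta$ has at least $k-1$ distinct entries, so $V$ contains a nonsingular $(k-1)\times(k-1)$ Vandermonde block and has full row rank, making $\cV_{k-1}$ smooth of dimension $d-k+1$. For (c), Lagrange multipliers at a critical point give $\theta_j^{k-1}=\sum_{\ell=0}^{k-2}c_\ell\theta_j^\ell$ for every $j$, so each $\theta_j$ is a root of a fixed polynomial of degree $k-1$, forcing at most $k-1$ distinct entries; combined with (a) this gives exactly $k-1$. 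For the Morse property in (b), at such a critical point with distinct values $a_1<\cdots<a_{k-1}$ and multiplicities $n_i$ on blocks $B_i$, the tangent space to $\cV_{k-1}$ is $\{v\in\R^d:\sum_{j\in B_i}v_j=0 \text{ for each } i\}$, and a direct computation shows the constrained Hessian acts on each block as $f(a_i)\cdot\Id$ with $f(a_i)=(k/d)\prod_{j\neq i}(a_i-a_j)$; this quantity is nonzero precisely because the $a_i$ are distinct, so the Hessian is non-degenerate.

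Part (d) is the main obstacle. Once the $a_i$ are sorted, the formula $f(a_i)=(k/d)\prod_{j\neq i}(a_i-a_j)$ gives $\sign(f(a_i))=(-1)^{k-1-i}$, so the Hessian carries $n_i-1$ eigenvalues of this alternating sign. A critical point can be a local minimum only when $(-1)^{k-1-i}=+1$ for every $i$ with $n_i\geq 2$, and a local maximum only when the opposite holds, a rigid parity constraint on the admissible multiplicity pattern. For $k=3$ this already pins the local-minimum pattern to $(1,d-1)$ and the local-maximum pattern to $(d-1,1)$; the two power-sum constraints then determine $(a_1,a_2)$ uniquely in each case, and (d) follows. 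For general $k$, several patterns may satisfy the parity constraint, so my plan is to combine this Hessian sign analysis with compactness of $\cV_{k-1}$ for $k\geq 3$ (which holds because $p_2(\theta)=p_2(\theta_*)$ fixes $\|\theta\|^2$) and with the structural theory of Vandermonde varieties from Kostov \cite{kostov1989geometric}. The hardest step will be certifying that all admissible local-minimum types achieve the \emph{same} critical $p_k$-value, which I expect to follow either from a direct monotonicity statement along the root configurations $a_1<\cdots<a_{k-1}$ parametrized by the Lagrange polynomial $F(x)=k\prod(x-a_i)$, or from an index-preserving deformation argument that tracks critical points and their signatures along a connected family of generic $\theta_*$.
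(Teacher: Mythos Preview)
Your arguments for parts (a)--(c) and the Morse property in (b) are correct and in fact more explicit than the paper's treatment. For (a) the paper runs essentially the same dimension count, phrased through Chevalley's theorem on constructible images of the maps $(x_1,\ldots,x_{k-1})\mapsto(\tfrac{1}{d}\sum d_jx_j^\ell)_\ell$; your formulation via the finite union of affine subspaces $W_\pi$ is equivalent and arguably cleaner. For the smoothness of $\cV_{k-1}$, the characterization of critical points, and the Morse property, the paper does not compute anything: it simply cites Arnold \cite{arnold1986hyperbolic}, Theorems 5--7 (see also \cite{kostov1989geometric}). Your Lagrange-multiplier derivation and the identification of the constrained Hessian as block-scalar with value $f(a_i)=(k/d)\prod_{j\neq i}(a_i-a_j)$ on the mean-zero subspace of each block are correct and give a self-contained proof of (b) and (c).

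Part (d), however, is genuinely incomplete in your proposal. The sign pattern $(-1)^{k-1-i}$ correctly restricts which multiplicity profiles can support a local extremum, and the $k=3$ case closes as you say, but for general $k$ you have only a plan. The paper's approach is to invoke Arnold's result directly, without attempting a direct argument: in the Weyl chamber $\{\theta_1\leq\cdots\leq\theta_d\}$, Arnold's analysis of the Vandermonde mapping shows that $p_k|_{\cV_{k-1}}$ has a unique local minimum and a unique local maximum, which immediately gives local $=$ global. Your sketched alternatives --- a monotonicity statement along root configurations, or a deformation tracking critical points across generic $\theta_*$ --- are not obviously wrong, but making either rigorous essentially requires the same hyperbolicity-domain and stratification machinery that Arnold and Kostov develop. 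I would keep your direct proofs of (b) and (c), and for (d) cite \cite{arnold1986hyperbolic} (and \cite{kostov1989geometric}) as the paper does, rather than leave an open-ended plan.
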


\begin{proof}
For (a), fixing any integer multiplicities $d_1,\ldots,d_{k-1} \geq 0$ summing
to $d$, the image of the polynomial function $F:\R^{k-1} \to \R^k$ given by
\[F(x_1,\ldots,x_{k-1})=\left(\frac{1}{d}\sum_{j=1}^{k-1}
d_jx_j^\ell:\ell=1,\ldots,k\right)\]
is a constructible set in the Zariski topology on $\R^k$,
by Chevalley's theorem (see \cite[Theorem 3.16]{harris1992algebraic}).
By \cite[Theorem 11.12]{harris1992algebraic},
the Zariski closure of this image has dimension at most $k - 1$,
so its complement is generic.
Taking the intersection of these complements over the finitely many
choices of $d_1, \ldots, d_{k - 1}$, we find that the complement of the set
\[\Big\{(p_1(\theta),\ldots,p_k(\theta)):\theta \text{ has at most } k-1 \text{
distinct coordinates}\Big\}\]
is also generic in $\R^k$. We conclude that for generic $\theta_* \in \R^d$, the
point $(p_1(\theta_*),\ldots,p_k(\theta_*))$ does not belong to the above set,
meaning that each point $\theta \in \cV_k$ has at least $k$ distinct coordinates.

For (b), the gradient of $p_\ell$ is given by
\[\nabla p_\ell(\theta)=\frac{\ell}{d}(\theta_1^{\ell-1},\ldots,
\theta_d^{\ell-1}).\]
Thus, if $\nabla p_1,\ldots,\nabla p_k$ are linearly dependent, then there is a
non-zero polynomial $P$ of degree at most $k-1$ for which $P(\theta_i)=0$ for
every $i=1,\ldots,d$. Since $P$ has at most $k-1$ real roots, this implies that
$\theta$ has at most $k-1$ distinct coordinates. Applying (a), this shows that
for generic $\theta_*$, the vectors
$\nabla p_1,\ldots,\nabla p_k$ are linearly independent at every
$\theta \in \cV_k(\theta_*)$, so $\cV_k(\theta_*)$ is nonsingular. The remaining
two statements then follow from the results
of \cite[Theorems 5, 6, and 7]{arnold1986hyperbolic}; see
also \cite{kostov1989geometric}.
\end{proof}

\begin{proof}[Proof of Theorem \ref{thm:permutations}]
For each $\wtheta \in \R^d$, we consider a local reparametrization by
$\varphi$ in a neighborhood $U_{\wtheta}$. If $k$ is the number of distinct
entries of $\wtheta$, then we take the first $k$ functions in $\varphi$ to
be the symmetric power sums $p_1(\theta),\ldots,p_k(\theta)$. As shown in the
proof of Lemma \ref{lemma:permutationsaddle} above, the gradients $\nabla
p_1,\ldots,\nabla p_k$ must be linearly independent at $\wtheta$. We
arbitrarily pick $d-k$ remaining functions to 
complete $(p_1,\ldots,p_k)$ into the local reparametrization $\varphi$.
Denote $\wvarphi=\varphi(\wtheta)$ and $\varphi_*=\varphi(\theta_*)$. 

We apply Lemmas \ref{lemma:largesigmadescent} and
\ref{lemma:largesigmapseudomin} on each neighborhood $U_{\wtheta}$. Fix $\ell \in
\{1,\ldots,d\}$ and consider
$\wtheta \in \cV_{\ell-1} \setminus \cV_\ell$.
By Lemma \ref{lemma:permutationsaddle}(a), $\wtheta$ has at least $\ell-1$ distinct
coordinates, so the first $\ell-1$ coordinates of $\varphi$ are
$(\varphi_1,\ldots,\varphi_{\ell-1})=(p_1,\ldots,p_{\ell-1})$.
Denote $\varphi^\ell=(\varphi_\ell,\ldots,\varphi_d)$, 
and note that $S_1,\ldots,S_{\ell-1}$ are functions
only of $\varphi_1,\ldots,\varphi_{\ell-1}$. Furthermore, recalling
(\ref{eq:Sellpermutation}) and applying
\[q_\ell(\wvarphi_1,\ldots,\wvarphi_{\ell-1})=
q_\ell(\varphi_{1,*},\ldots,\varphi_{\ell-1,*})=0\]
and the chain rule,
\begin{align*}
\nabla_{\varphi^\ell} S_\ell(\wvarphi)&=2a_\ell\Big(p_\ell(\wvarphi)
-p_\ell(\varphi_*)\Big) \nabla_{\varphi^\ell} p_\ell(\wvarphi),\\
\nabla_{\varphi^\ell}^2 S_\ell(\wvarphi)&=2a_\ell\Big(\nabla_{\varphi^\ell}
p_\ell(\wvarphi)\nabla_{\varphi^\ell} p_\ell(\wvarphi)^\top
+(p_\ell(\wvarphi)-p_\ell(\varphi_*))\cdot \nabla_{\varphi^\ell}^2 p_\ell(\wvarphi)\Big).
\end{align*}
Since $\wtheta \notin \cV_\ell$, we have
$p_\ell(\wvarphi) \neq p_\ell(\varphi_*)$. Then either
$\nabla_{\varphi^\ell} S_\ell(\wvarphi) \neq 0$, or
\[\nabla_{\varphi^\ell} p_\ell(\wvarphi)=0 \qquad \text{ and } \qquad
\nabla_{\varphi^\ell}^2 S_\ell(\wvarphi)=2a_\ell
(p_\ell(\wvarphi)-p_\ell(\varphi_*)) \cdot \nabla_{\varphi^\ell}^2 p_\ell(\wvarphi).\]
In this latter case, note that $\varphi^\ell$ is a local chart for
$\cV_{\ell-1}$ around $\wvarphi$, so $\wvarphi$ is a critical point of
$p_\ell|_{\cV_{\ell-1}}$. The Morse condition of Lemma
\ref{lemma:permutationsaddle}(b) implies that all eigenvalues of
$\nabla_{\varphi^\ell}^2 p_\ell(\wvarphi)$ are non-zero. If
$\nabla_{\varphi^\ell}^2
p_\ell(\wvarphi)$ has both positive and negative eigenvalues, then this
guarantees that
$\lambda_{\min}(\nabla_{\varphi^\ell}^2 S_\ell(\wvarphi))<0$. Otherwise, $\wvarphi$ is
a local minimizer or local maximizer of $p_\ell|_{\cV_{\ell-1}}$.
If it is a local minimizer, then all eigenvalues of $\nabla_{\varphi^\ell}^2
p_\ell(\wvarphi)$ are positive. Lemma \ref{lemma:permutationsaddle}(d) also
implies that $p_\ell(\wvarphi)<p_\ell(\varphi_*)$, so all eigenvalues of
$\nabla_{\varphi^\ell}^2 S_\ell(\wvarphi)$ are negative. The case where $\wvarphi$ is
a local maximizer of $p_\ell|_{\cV_{\ell-1}}$ is similar.
Combining these observations and applying Lemma
\ref{lemma:largesigmadescent}, we get that either $\|\nabla_\theta
R(\theta)\| \geq c\sigma^{-2\ell}$ or $\lambda_{\min}(\nabla_\theta^2 R(\theta))
\leq -c\sigma^{-2\ell}$ for all $\theta \in U_{\wtheta}$ and $\sigma>\sigma_0$.

For $\wtheta \in \cV_d=\O_{\theta_*}$, we have
$\varphi=(p_1,\ldots,p_d)$, so that each $S_\ell$ depends only on
$(\varphi_1,\ldots,\varphi_\ell)$ and
\[\nabla_{\varphi_\ell}
S_\ell(\wvarphi)=2a_\ell(\wvarphi_\ell-\varphi_{\ell,*})=0,
\qquad \nabla_{\varphi_\ell}^2
S_\ell(\wvarphi)=2a_\ell>0.\]
Thus $\wtheta$ is a pseudo-local-minimizer in the reparametrization by
$\varphi$. Lemma \ref{lemma:largesigmapseudomin} implies that $\wtheta$
is the unique critical point of $R(\theta)$ in $U_{\wtheta}$, and that
$\lambda_{\min}(\nabla_\varphi^2 R(\varphi)) \geq c\sigma^{-2d}$ for all
$\varphi \in \varphi(U_{\wtheta})$ and $\sigma>\sigma_0$.

Fixing any $M>0$ and taking a finite collection of these sets $U_{\wtheta}$
which cover the compact set $\{\theta \in \R^d:\|\theta\| \leq M\}$, the
above results hold for uniform choices of constants $c,\sigma_0>0$ in their union.
Then for a sufficiently small constant $\rho>0$, the claims of the theorem hold 
for all $\theta \in \R^d$ with $\|\theta\| \leq M$, and the result for
$\|\theta\|>M$ follows again from Lemma \ref{lemma:localizationstrong}.
\end{proof}

The following then shows that with high probability for
any $d \geq 2$ and $n \gg \sigma^{2d}$,
the empirical risk $R_n(\theta)$ is also globally benign and satisfies the same
properties. Again, this is a special case of
the guarantee for $R_n(\theta)$ in Theorem \ref{thm:globalbenign} to follow.

\begin{corollary}\label{cor:permutations}
For some $(\theta_*,d)$-dependent constants $C,c>0$,
the statements of Theorem \ref{thm:permutations} hold also for $R_n(\theta)$,
with probability at least $1-e^{-c(\log n)^2}-Ce^{-cn^{1/(2d \vee 4)}
\sigma^{-1}}$.
\end{corollary}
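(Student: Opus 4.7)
The argument parallels that of Corollary \ref{cor:locallargenoise} and Corollary \ref{cor:rotations}. The plan is to transfer each of the three regimes in Theorem \ref{thm:permutations}---the local strong convexity near $\O_{\theta_*}$, the descent conditions on the compact annuli $\cV_{\ell-1}^\rho \setminus \cV_\ell^\rho \cap \{\|\theta\|\leq M\}$, and the far-field region $\{\|\theta\|>M\}$---to the empirical risk by appropriate concentration bounds, with probabilities that combine to the claimed bound.

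For part (a), fix any $\mu \in \O_{\theta_*}$ and apply Corollary \ref{cor:locallargenoise} after reparametrizing $R_n$ by $\varphi=(p_1,\ldots,p_d)$ in a neighborhood of $\mu$. For generic $\theta_*$ the map $\varphi$ is a local reparametrization near every point of $\O_{\theta_*}$, as shown in the proof of Theorem \ref{thm:permutations}, and $L=d$ in Lemma \ref{lemma:phiconstruction}. This yields $\lambda_{\min}(\nabla_\varphi^2 R_n(\varphi)) \geq c\sigma^{-2d}$ on $\varphi(B_\rho(\mu))$ and a unique critical point in $B_\rho(\mu)$, with probability at least $1-e^{-c(\log n)^2}-Ce^{-cn^{1/(2d)}\sigma^{-1}}$. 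Taking a union over the $d!$ points of $\O_{\theta_*}$ preserves the same form of probability bound.

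For part (b) restricted to the ball $\{\|\theta\|\leq M\}$, where $M$ is the constant of Lemma \ref{lemma:localizationstrong}, I will apply Lemma \ref{lemma:Rnexpansion} with $k=2d$, $r(\sigma)$ any constant exceeding $M$, and $t=\sigma^{-2d}/\log\sigma$, combined with the polynomial concentration inequality \eqref{eq:gaussianpoly} applied to each polynomial $A_{\ell,m}(\eps,\theta_*)$ for $\ell\leq 2d$, with deviation level $\sigma^{-(\ell\wedge d)}$. Exactly as in the proof of Corollary \ref{cor:locallargenoise}, this gives that on an event of probability at least $1-e^{-c(\log n)^2}-Ce^{-cn^{1/(2d)}\sigma^{-1}}$, the Hessian $\nabla_\varphi^2 R_n$ has a graded block structure matching that of $\nabla_\varphi^2 R$ (in any local reparametrization $\varphi$ used in the proof of Theorem \ref{thm:permutations}) with modified constants, and the gradient difference satisfies $\|\nabla_\theta R_n(\theta)-\nabla_\theta R(\theta)\| \ll \sigma^{-2d}$ uniformly on $\{\|\theta\|\leq M\}$. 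Since the descent condition $\|\nabla_\theta R(\theta)\|\geq c\sigma^{-2\ell}$ or $\lambda_{\min}(\nabla_\theta^2 R(\theta))\leq -c\sigma^{-2\ell}$ in Theorem \ref{thm:permutations}(b) holds for some $\ell\leq d$ on each $\cV_{\ell-1}^\rho\setminus\cV_\ell^\rho$, the same conclusion with halved constants carries over to $R_n$ upon compactness/finite covering of $\{\|\theta\|\leq M\}$ by the local neighborhoods $U_{\widetilde{\theta}}$ used in that proof.

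For the far-field region $\{\|\theta\|>M\}$, Lemma \ref{lemma:localizationstrong} yields the lower bound $\|\nabla R_n(\theta)\|\geq c\sigma^{-4}$ for all such $\theta$ with probability at least $1-e^{-c(\log n)^2}-Ce^{-cn^{1/4}\sigma^{-1}}$, which certainly implies the descent condition of part (b) in that region. The main step---and the only one requiring care beyond what is already established for Corollary \ref{cor:locallargenoise}---is the uniform-in-$\theta$ concentration on $\{\|\theta\|\leq M\}$ at the sharp scale $\sigma^{-2d}$; this is handled exactly as in that proof, since the boundedness of $\theta$ in this region (compared to $\|\theta\|<r(\sigma)$ there) only improves the polynomial factors. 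Intersecting the three events and taking the final probability of failure to be the maximum of $e^{-c(\log n)^2}$, $Ce^{-cn^{1/(2d)}\sigma^{-1}}$ (from parts (a) and the bounded region in (b)), and $Ce^{-cn^{1/4}\sigma^{-1}}$ (from the far-field region), yields the stated bound $1-e^{-c(\log n)^2}-Ce^{-cn^{1/(2d\vee 4)}\sigma^{-1}}$.
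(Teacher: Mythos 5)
Your three-region decomposition (far field via Lemma \ref{lemma:localizationstrong}, neighborhoods of $\O_{\theta_*}$ via Corollary \ref{cor:locallargenoise}, compact remainder via Lemma \ref{lemma:Rnexpansion} and \eqref{eq:gaussianpoly}) is exactly the structure the paper uses: Corollary \ref{cor:permutations} is stated in the paper as a special case of Theorem \ref{thm:globalbenign}, whose proof proceeds along these same three regions, with the hypotheses of Theorem \ref{thm:globalbenign} having been verified implicitly in Lemma \ref{lemma:permutationsaddle}. Your probability bookkeeping at the end is also sound.

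However, one of your intermediate claims is false, and taken literally it would make the argument fail. You assert that on the concentration event, $\|\nabla_\theta R_n(\theta)-\nabla_\theta R(\theta)\| \ll \sigma^{-2d}$ uniformly over $\{\|\theta\|\leq M\}$. This cannot be right: the expansion in Lemma \ref{lemma:Rnexpansion} contains terms $\sigma^{-\ell}\,\bar{A}_{\ell,m}\,\nabla P_{\ell,m}$ for $\ell$ as small as $2$, and after the polynomial concentration \eqref{eq:gaussianpoly} the deviation of such a term is of order $\sigma^{-\ell-(\ell\wedge L)}$, which for $\ell=2$ is $\sigma^{-4}$, vastly larger than $\sigma^{-2d}$ when $d\ge 3$. (Likewise, the Hessian of $R_n$ need not have a graded block structure away from $\O_{\theta_*}$; at saddle regions the relevant conclusion is a descent condition, not a graded block structure.) The correct mechanism---and what the proof of Theorem \ref{thm:globalbenign} (Case I) actually exploits---is that near each $\wtheta \in \cV_{\ell-1}\setminus\cV_\ell$ one works in a local reparametrization $\varphi=(\varphi^1,\ldots,\varphi^\ell)$ in which the low-degree invariants $P_{k,m}$ for $k\le\ell-1$ do not depend on $\varphi^\ell$. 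Consequently the \emph{partial} derivatives $\nabla_{\varphi^\ell} R_n - \nabla_{\varphi^\ell} R$ receive contributions only from $k\ge\ell$, of order $\sigma^{-2\ell}$, exactly matching the scale of the population descent $\|\nabla_{\varphi^\ell} R\|\ge c\sigma^{-2\ell}$ or $\lambda_{\min}(\nabla^2_{\varphi^\ell}R)\le -c\sigma^{-2\ell}$ coming from the saddle analysis in Theorem \ref{thm:permutations}. Once you replace the claimed uniform full-gradient bound with this level-by-level control of the partial derivatives $\nabla_{\varphi^\ell}$, the rest of your argument goes through.
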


\subsubsection{General groups}
We provide a general condition under which the landscape of $R(\theta)$ is
globally benign for high noise, which captures the structure of the previous two
examples.

Let $M_\ell:\R^d \to \R^{d+d^2+\ldots+d^\ell}$ be the combined
vectorized moment map
\[M_\ell(\theta)=\Big(T_1(\theta),\ldots,T_\ell(\theta)\Big).\]
For fixed $\theta_* \in \R^d$, recall
$P_\ell(\theta)=\|T_\ell(\theta)-T_\ell(\theta_*)\|_\HS^2$
from Lemma \ref{lemma:Selldegell}, and define the moment varieties
\[\cV_\ell=\Big\{\theta \in \R^d:M_\ell(\theta)=M_\ell(\theta_*)\Big\},
\qquad \cV_0=\R^d.\]
We denote by $P_\ell|_{\cV_{\ell-1}}$ the restriction of the function $P_\ell$ to
$\cV_{\ell-1}$. We will assume that each $\cV_\ell$ is nonsingular and has the
same dimension $\bar{d}_\ell$ at every point. We then denote by
$\nabla P_\ell|_{\cV_{\ell-1}} \in \R^{\bar{d}_\ell}$ and
$\nabla^2 P_\ell|_{\cV_{\ell-1}} \in \R^{\bar{d}_{\ell} \times \bar{d}_\ell}$ 
the gradient and Hessian of the restriction $P_\ell|_{\cV_{\ell-1}}$ with
respect to any choice of local chart on $\cV_{\ell-1}$. Note that the conditions
below do not depend on the specific choice of chart.

\begin{theorem}\label{thm:globalbenign}
Let $\theta_* \in \R^d$ be generic, and let $L$ be the constant in Lemma
\ref{lemma:phiconstruction}. Suppose that
\[\cV_L=\O_{\theta_*}\]
and that for every $\ell \geq 1$, $\der_\theta M_\ell$ has constant rank on $\cV_\ell$.
Suppose also, for each $\ell=1,\ldots,L$ and
each $\theta \in \cV_{\ell-1}$, that either (1)
$\nabla P_\ell|_{\cV_{\ell-1}}(\theta) \neq 0$, (2)
$\lambda_{\min}(\nabla^2 P_\ell|_{\cV_{\ell-1}}(\theta))<0$, or (3)
$\theta \in \cV_\ell$.
Then there exists a $(\theta_*,d,G)$-dependent constant $\sigma_0>0$ such that 
the landscape of $R(\theta)$ is globally benign for all $\sigma>\sigma_0$.

More quantitatively, for small enough $\rho > 0$, there exist $(\theta_*,d,G)$-dependent
constants $c, \sigma_0 >0$ such that when $\sigma>\sigma_0$,
\begin{enumerate}[(a)]
\item For each $\wtheta \in \O_{\theta_*}$, there is a
local reparametrization $\varphi:B_\rho(\wtheta) \to \R^d$ such that
$\lambda_{\min}(\nabla_\varphi^2 R(\varphi)) \geq c\sigma^{-2L}$ for all $\varphi
\in \varphi(B_\rho(\wtheta))$.
\item Denote $\cV_\ell^\rho=\{\theta \in \R^d:\dist(\theta,\cV_\ell)<\rho\}$,
where $\cV_0^\rho=\R^d$.
Then for each $\ell=1,\ldots,L$ and each $\theta \in \cV_{\ell-1}^\rho \setminus
\cV_\ell^\rho$, either $\|\nabla R(\theta)\| \geq c\sigma^{-2\ell}$ or
$\lambda_{\min}(\nabla^2 R(\theta)) \leq -c\sigma^{-2\ell}$.
\end{enumerate}
With probability at least 
$1-e^{-c(\log n)^2}-Ce^{-cn^{1/(2L \vee 4)}\sigma^{-1}}$,
the same statements hold for the empirical risk $R_n(\theta)$.
\end{theorem}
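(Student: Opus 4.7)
The plan is to follow the template of Theorems \ref{thm:rotations} and \ref{thm:permutations}: at each point $\wtheta$ in the compact ball $\{\|\theta\| \leq M\}$, I will construct a local analytic reparametrization $\varphi=(\varphi^1,\ldots,\varphi^L)$ adapted to the algebraic structure at $\wtheta$, and then invoke either Lemma \ref{lemma:largesigmadescent} or Lemma \ref{lemma:largesigmapseudomin}. Lemma \ref{lemma:localizationstrong} will dispose of the regime $\|\theta\|>M$, and Lemma \ref{lemma:Rnexpansion} combined with the Gaussian polynomial tail bound \eqref{eq:gaussianpoly} will transfer everything to the empirical risk with the claimed probability.

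For the local reparametrization I proceed as follows. Let $\ell^*=\ell^*(\wtheta)\in\{0,1,\ldots,L\}$ be the largest index with $\wtheta\in\cV_{\ell^*}$. The constant rank hypothesis on $\der_\theta M_\ell$ lets me apply the constant rank theorem in a neighborhood of $\wtheta$, yielding for each $\ell\leq\ell^*$ a local factorization $M_\ell=F_\ell\circ\pi_\ell$ with $\pi_\ell$ a submersion. I then take $(\varphi^1,\ldots,\varphi^\ell)$ to be analytic coordinates that locally generate the same functions as $\pi_\ell$, completing arbitrarily for $\ell>\ell^*$ (exactly as $(d-k)$ auxiliary coordinates are adjoined in the proof of Theorem \ref{thm:permutations}). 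By construction any function of $M_\ell$ is locally a function of $(\varphi^1,\ldots,\varphi^\ell)$, and Lemma \ref{lemma:Selldegell} then guarantees that $S_\ell$ depends only on $(\varphi^1,\ldots,\varphi^\ell)$, which is the structural hypothesis required by Lemmas \ref{lemma:largesigmadescent} and \ref{lemma:largesigmapseudomin}.

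Next I verify the descent or pseudo-minimizer property in this reparametrization. If $\wtheta\notin\O_{\theta_*}$, set $\ell=\ell^*+1$, so that $\wtheta\in\cV_{\ell-1}\setminus\cV_\ell$. The decomposition $S_\ell=\tfrac{1}{2(\ell!)}P_\ell+Q_\ell$ with $Q_\ell$ depending only on $(\varphi^1,\ldots,\varphi^{\ell-1})$ gives $\nabla_{\varphi^\ell}S_\ell(\wvarphi)=\tfrac{1}{2(\ell!)}\nabla_{\varphi^\ell}P_\ell(\wvarphi)$ and likewise for the Hessian. Because $(\varphi^1,\ldots,\varphi^{\ell-1})$ locally encode $M_{\ell-1}$, the slice obtained by freezing the first $\ell-1$ blocks at $\wvarphi$ coincides locally with $\cV_{\ell-1}$, so $\varphi^\ell$ forms a local chart on $\cV_{\ell-1}$ near $\wtheta$, and these partial derivatives are related by an invertible linear change of basis to $\nabla P_\ell|_{\cV_{\ell-1}}(\wtheta)$ and $\nabla^2 P_\ell|_{\cV_{\ell-1}}(\wtheta)$. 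The hypothesis then supplies a first- or second-order descent direction, and Lemma \ref{lemma:largesigmadescent} delivers part (b) on the neighborhood $U_\wtheta$. If instead $\wtheta\in\O_{\theta_*}=\cV_L$, the same computation as in the proof of Theorem \ref{thm:locallargenoise} shows that $\wtheta$ is a pseudo-local-minimizer, and Lemma \ref{lemma:largesigmapseudomin} yields part (a) together with the strong-convexity bound $\lambda_{\min}(\nabla_\varphi^2 R(\varphi))\geq c\sigma^{-2L}$.

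A standard compactness argument will cover $\{\|\theta\|\leq M\}$ by finitely many neighborhoods $U_\wtheta$, producing uniform constants $c,\sigma_0$; Lemma \ref{lemma:localizationstrong} handles $\|\theta\|>M$ via $\|\nabla R(\theta)\|\geq c\sigma^{-4}$. For the empirical risk I mimic the proof of Corollary \ref{cor:locallargenoise}: apply Lemma \ref{lemma:Rnexpansion} with $k=2L$ and $t$ of order $\sigma^{-2L}/\log\sigma$, control each coefficient $A_{\ell,m}(\eps_i,\theta_*)-\E_\eps A_{\ell,m}$ using the Gaussian polynomial concentration \eqref{eq:gaussianpoly}, and conclude that $R_n(\theta)$ inherits both the block-graded Hessian structure around orbit points and the descent directions elsewhere, on an event of probability at least $1-e^{-c(\log n)^2}-Ce^{-cn^{1/(2L\vee 4)}\sigma^{-1}}$. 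The main obstacle will be the reparametrization step at non-generic $\wtheta$: one must choose the coordinates $\varphi$ so that both the algebraic condition ``$S_\ell$ depends only on $(\varphi^1,\ldots,\varphi^\ell)$'' and the geometric condition ``$\varphi^\ell$ is a chart on $\cV_{\ell-1}$ transverse to $\cV_\ell$'' hold simultaneously, and this is precisely what the constant rank assumption on $\der_\theta M_\ell$ makes possible.
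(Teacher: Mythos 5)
Your proposal follows essentially the same route as the paper's proof: construct at each $\wtheta$ a local reparametrization $\varphi=(\varphi^1,\ldots,\varphi^L)$ in which $S_\ell$ depends only on the first $\ell$ blocks (via Lemma \ref{lemma:Selldegell} and Lemma \ref{lemma:polynomialreparam}(c)), invoke Lemma \ref{lemma:largesigmadescent} or Lemma \ref{lemma:largesigmapseudomin} according to whether $\wtheta \notin \O_{\theta_*}$ or $\wtheta \in \O_{\theta_*}$, dispose of $\|\theta\|>M$ by Lemma \ref{lemma:localizationstrong}, and transfer to $R_n$ by the same device as Corollary \ref{cor:locallargenoise}. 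The only framing difference is that you appeal to the constant rank theorem and an abstract factorization $M_\ell = F_\ell \circ \pi_\ell$, whereas the paper more directly picks $d_k$ entries of $T_k$ whose gradients are independent at $\wtheta$ and uses Lemma \ref{lemma:polynomialreparam}(c) to show all other entries are locally analytic functions of these; the two constructions are equivalent here. One small imprecision: for $\|\theta\|>M$ you cite $\|\nabla R(\theta)\| \geq c\sigma^{-4}$, but part (b) requires the sharper bound $c\sigma^{-2}$ when $\theta \in \cV_0^\rho \setminus \cV_1^\rho$; the paper handles this by observing $\cV_\ell \subseteq \{\|\theta\|=\|\theta_*\|\}$ for $\ell \geq 2$ (so large $\theta$ lie in $\cV_0^\rho\setminus\cV_2^\rho$) and using both cases of Lemma \ref{lemma:localizationstrong}, which you would also need to do to get the stated exponents.
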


\begin{proof}
For generic $\theta_* \in \R^d$, Lemma \ref{lemma:polynomialreparam} implies that
$\der_\theta M_\ell$ has rank $d_1+\ldots+d_\ell$ at $\theta_*$. Then
by the given assumption that $\der_\theta M_\ell$ has
constant rank over $\cV_\ell$, this rank must be $d_1+\ldots+d_\ell$, and
$\cV_\ell$ is a manifold of dimension $\bar{d}_\ell=d-(d_1+\ldots+d_\ell)$.

Note that for any $\ell \geq 2$, $\cV_\ell \subseteq
\{\theta:\|\theta\|^2=\|\theta_*\|^2\}$. Hence for large enough
$M>0$ and small enough $\rho>0$ (depending on $\theta_*$),
if $\|\theta\|>M$, we have either $\theta \in \cV_0^\rho \setminus \cV_1^\rho$
or $\theta \in \cV_1^\rho \setminus \cV_2^\rho$. Lemma
\ref{lemma:localizationstrong} shows that with probability $1-e^{-c(\log
n)^2}-Ce^{-cn^{1/4}\sigma^{-1}}$, we have $\|\nabla R(\theta)\|,\|\nabla
R_n(\theta)\| \geq c\sigma^{-2}$ for all $\theta \in \cV_0^\rho 
\setminus \cV_1^\rho$ and $\|\nabla R(\theta)\|,\|\nabla R_n(\theta)\|
\geq c\sigma^{-4}$ for all $\theta \in \cV_1^\rho \setminus \cV_2^\rho$.

It remains to consider the points $\{\theta:\|\theta\| \leq M\}$. We will apply
a compactness argument to take a
finite cover by neighborhoods of points $\wtheta \in \R^d$. We consider two
cases for such a point $\wtheta$:

{\bf Case I.} Suppose $\wtheta \notin \O_{\theta_*}$.
Then there must exist $\ell \in \{1,\ldots,L\}$ where $\wtheta \in
\cV_0,\ldots,\cV_{\ell-1}$ and $\wtheta \notin \cV_\ell$. For each
$k=1,\ldots,\ell-1$, since $\der_\theta M_k$ has rank $d_1+\ldots+d_k$, we may
pick $d_k$ coordinates $\varphi^k$ of the moment tensor $T_k$ such that
$(\varphi^1,\ldots,\varphi^{\ell-1})$ have linearly independent
gradients at $\wtheta$. Let us complete the parametrization by
$d-(d_1+\ldots+d_{\ell-1})$ additional coordinates $\varphi^\ell$, so that
$\varphi=(\varphi^1,\ldots,\varphi^\ell)$ has non-singular
derivative at $\wtheta$. Then for some neighborhood $U_{\wtheta}$ of $\wtheta$,
$\varphi$ forms a local reparametrization on $U_{\wtheta}$, and Lemma
\ref{lemma:polynomialreparam}(c) ensures that each polynomial $\psi \in
\cR^G_{\leq \ell-1}$ is a function only of $(\varphi^1,\ldots,\varphi^{\ell-1})$
in this reparametrization. In particular, the manifold $\cV_{\ell-1}$ is defined
by
$\varphi^1(\theta)=\varphi^1(\theta_*),\ldots,\varphi^{\ell-1}(\theta)=\varphi^{\ell-1}(\theta_*)$
on $U_{\wtheta}$, so that the remaining coordinates $\varphi^\ell$ form a local
chart for $\cV_{\ell-1}$. By Lemma \ref{lemma:Selldegell}, $S_1,\ldots,S_{\ell-1}$
are functions only of $(\varphi^1,\ldots,\varphi^{\ell-1})$, and
\[\nabla_{\varphi^\ell} S_\ell(\varphi)=\frac{1}{2(\ell!)}
\nabla_{\varphi^\ell} P_\ell(\varphi),
\qquad \nabla_{\varphi^\ell}^2 S_\ell(\varphi)=\frac{1}{2(\ell!)}\nabla_{\varphi^\ell}^2 P_\ell(\varphi).\]
Since $\wtheta \notin \cV_\ell$, the given condition in the lemma implies
that either $\nabla_{\varphi^\ell} S_\ell(\varphi) \neq 0$
or $\lambda_{\min}(\nabla_{\varphi^\ell}^2 S_\ell(\varphi))<0$.
Then by Lemma \ref{lemma:largesigmadescent}, for $\sigma>\sigma_0$ and large
enough $\sigma_0$, there is a neighborhood $U_{\wtheta}$ of $\wtheta$ on which
either $\|\nabla R(\theta)\| \geq c\sigma^{-2\ell}$ or
$\lambda_{\min} (\nabla^2 R(\theta)) \leq -c\sigma^{-2\ell}$.

For the empirical risk $R_n$, the argument is similar to that
of Corollary \ref{cor:locallargenoise}: We may assume $n \geq \sigma^{2\ell}$,
as otherwise the desired probability guarantee is vacuous.
Observe that since $S_1,\ldots,S_{\ell-1}$ do not depend on $\varphi^\ell$, the
above and Lemma \ref{lemma:seriesexpansion} show for $\varphi \in
\varphi(U_{\wtheta})$ that
\begin{equation}\label{eq:populationdescent}
\|\nabla_{\varphi^\ell} R^\ell(\varphi)\| \geq c\sigma^{-2\ell} \quad \text{ or }
\quad
\lambda_{\min} (\nabla_{\varphi^\ell}^2 R^\ell(\varphi)) \leq
-c\sigma^{-2\ell}.\end{equation}
Let $\cE$ be the event where the guarantee of Lemma \ref{lemma:Rnexpansion}
holds with $k=2L$, $t=\sigma^{-2L}/\log \sigma$, and $r(\sigma)$ a large
enough constant, and also where
\[\bigg|\frac{1}{n}\sum_{i=1}^n
A_{k,m}(\eps_i,\theta_*)-\E_\eps[A_{k,m}(\eps,\theta_*)]\bigg|
\leq c_0\sigma^{-(k \wedge L)}\]
for each $k=1,\ldots,2L$, $m=1,\ldots,M_k$,
and a sufficiently small constant $c_0>0$. By
Lemma \ref{lemma:Rnexpansion} and (\ref{eq:gaussianpoly}), we have
$\P[\cE] \geq 1-e^{c(\log n)^2}-Ce^{-cn^{1/(2L)}\sigma^{-1}}$.
Since $P_{k,m}$ in Lemma \ref{lemma:Rnexpansion}
does not depend on $\varphi^\ell$ for all
$k \leq \ell-1$, on this event $\cE$, the bounds (\ref{eq:Rnexpansion1})
and (\ref{eq:Rnexpansion2}) together with (\ref{eq:populationdescent}) imply
that
\[\|\nabla_{\varphi^\ell} R_n(\varphi)\| \geq c\sigma^{-2\ell}
\quad \text{ or } \quad \lambda_{\min}(\nabla_{\varphi^\ell}^2 R_n(\varphi))
\leq -c\sigma^{-2\ell}.\]
Then, applying the same argument as in Lemma \ref{lemma:largesigmadescent},
this shows also for the gradient and Hessian in $\theta$ that for all
$\theta \in U_{\wtheta}$ a neighborhood small enough, we have
either $\|\nabla R_n(\theta)\| \geq c\sigma^{-2\ell}$ or
$\lambda_{\min} (\nabla^2 R_n(\theta)) \leq -c\sigma^{-2\ell}$.

{\bf Case II.} Suppose $\wtheta \in \O_{\theta_*}$.
Then Theorem \ref{thm:locallargenoise} and Corollary
\ref{cor:locallargenoise} show
that there is a neighborhood $U_{\wtheta}$ where, parametrizing by the full
transcendence basis $\varphi$ of Lemma \ref{lemma:phiconstruction}, we have
$\nabla_\varphi^2 R(\varphi) \geq c\sigma^{-2L}$ and $\nabla_\varphi^2
R_n(\varphi) \geq c\sigma^{-2L}$ on $\varphi(U_{\wtheta})$, with the desired
probability.

Taking a finite collection of these neighborhoods $U_{\wtheta}$ which cover the
compact set $\{\theta \in \R^d:\|\theta\| \leq M\}$, 
this establishes the claims of the theorem also for $\|\theta\| \leq M$
and some sufficiently small constant $\rho>0$.
\end{proof}

\subsection{Global landscape for cyclic permutations in $\R^d$}\label{sec:MRA}

For the group of cyclic permutations of coordinates in dimension $d$, the
orbit recovery problem is often called multi-reference alignment (MRA). We have
\begin{equation}\label{eq:GMRA}
G=\{\Id,h,h^2,\ldots,h^{d-1}\} \cong \ZZ/d\ZZ
\end{equation}
where the generator
\begin{equation}\label{eq:MRAgenerator}
h=\begin{pmatrix} 0 & 0 & \cdots & 0 & 1 \\
1 & 0 & \cdots & 0 & 0 \\
0 & 1 & \cdots & 0 & 0 \\
\vdots & \vdots & \ddots & \vdots & \vdots \\
0 & 0 & \cdots & 1 & 0 \end{pmatrix} \in \R^{d \times d}
\end{equation}
cyclically rotates coordinates by one position. Here, the size of the group is
$K=d$. Since this is the same as the group of all permutations when $d \in
\{1,2\}$, we consider $d \geq 3$. 

We change to the Fourier basis for $\theta$. Index $\R^d$ and $\C^d$
by $0,1,\ldots,d-1$, and define the $d^\text{th}$ root-of-unity
$\omega=e^{2\pi\ii/d}$. For all $k \in \ZZ$, let
\begin{equation}\label{eq:MRAvk}
v_k(\theta)=\frac{1}{\sqrt{d}}\sum_{j=0}^{d-1} \omega^{jk}\theta_j
\end{equation}
be the coordinates of the normalized Fourier transform of $\theta$.
Note that $v_0(\theta)$ is real, and $v_{d/2}(\theta)$ is also real for even
$d$.

Suppose now that $\theta_* \in \R^d$ is such that $v_{k,*} := v_k(\theta_*) \neq 0$ for all
$k \not\equiv 0 \bmod d$.  Denoting the unit circle by $\cS \cong [0, 2 \pi)$
  and writing $\Arg(z) \in \cS$ for the complex argument of $z \in \C$, we choose
  new coordinates $r_k(\theta)$ and $t_k(\theta) \in \cS$ on $\theta$ given by
\[
r_k(\theta)=|v_k(\theta)|, \qquad t_k(\theta)=\begin{cases}
\Arg\big(v_k(\theta)\big)-\Arg\big(v_{k,*}\big) & \text{ if } v_k(\theta) \neq
0\\
0 & \text{ otherwise}
\end{cases}
\]
The quantities $r_k(\theta)^2$ are known as the \emph{power spectrum} of $\theta$.
Finally, we denote $r_{k, *} := r_k(\theta_*)$. 

Because $\theta \in \R^d$ is real-valued, we have that 
\[v_k(\theta)=\overline{v_{-k}(\theta)}, \qquad r_k(\theta)=r_{-k}(\theta), \qquad
t_k(\theta)=-t_{-k}(\theta),
\]
which means that for 
\[
\cI=\{1,\ldots,\lfloor \tfrac{d-1}{2} \rfloor\},
\]
the quantities $\{t_i(\theta)\}_{i \in \cI}$, $\{r_i(\theta)\}_{i \in \cI}$, and $r_{d/2}(\theta)$
if $d$ is even uniquely specify $\theta$.

We now define two surrogate functions $F^+:\cS^{|\cI|} \to \R$ and $F^-:\cS^{|\cI|} \to \R$ in
these coordinates, making the identification $t_{-i}=-t_i$
for $i \in \cI$ and $t_i \in \cS$:
\begin{align} 
F^\pm(t_1,\ldots,t_{|\cI|})
&=-\Bigg(\frac{1}{6}\mathop{\sum_{i,j,k \in \cI \cup -\cI}}_{i+j+k \equiv 0 \bmod d}
r_{i,*}^2r_{j,*}^2r_{k,*}^2\cos(t_i+t_j+t_k)\nonumber\\
&\hspace{0.5in}\pm\1\{d \text{ is even}\} \cdot \frac{1}{2}
\mathop{\sum_{i,j \in \cI \cup -\cI}}_{i+j \equiv d/2
\bmod d} r_{i,*}^2r_{j,*}^2r_{d/2,*}^2\cos(t_i+t_j)\Bigg).\label{eq:MRAF}
\end{align}
We have $F^+=F^-$ when $d$ is odd, and in this case we will only refer to $F^+$.

For generic $\theta_* \in \R^d$ and $\sigma>\sigma_0 \equiv \sigma_0(\theta_*,d)$,
the following shows that local minimizers of $R(\theta)$ are in correspondence
with local minimizers of these surrogate functions on the manifold $\cS^{|\cI|}$.

\begin{theorem}\label{thm:MRA}
Let $G$ be the cyclic group (\ref{eq:GMRA}) acting on $\R^d$,
where $d \geq 3$. Suppose $\theta_* \in \R^d$ has $v_{k,*} \neq 0$ for all
$k \not\equiv 0 \bmod d$. For small enough $\rho > 0$, there exist
some $(\theta_*,d)$-dependent constants $c,\sigma_0>0$ and all $\sigma>\sigma_0$:
\begin{enumerate}[(a)]
\item For each local minimizer $\wt$ of $F^+(t)$ where
$\lambda_{\min}(\nabla^2 F^+(\wt))>0$, there is a unique local
minimizer of $R(\theta)$ in the ball $B_\rho(\wtheta)$, and a local
reparametrization $\varphi$ such that
$\lambda_{\min}(\nabla_\varphi^2 R(\varphi)) \geq c\sigma^{-6}$ for all
$\varphi \in \varphi(B_\rho(\wtheta))$.
Here, $\wtheta \in \R^d$ is the point where $r_k(\wtheta)=r_{k,*}$ for all
$k \in \ZZ$,
$v_0(\wtheta)=v_{0,*}$, $v_{d/2}(\wtheta)=v_{d/2,*}$ if $d$ is even, and
$\Arg(v_k(\wtheta))=\Arg(v_{k,*})+\wt_k$ for each $k \in \cI$.
\item If $d$ is even, then in addition, for each local minimizer
$\wt \in \cS^{|\cI|}$ of $F^-(t)$ where
$\lambda_{\min}(\nabla^2 F^-(\wt))>0$, the same statement of (a) holds over
$B_\rho(\wtheta)$ for $\wtheta \in \R^d$ defined by the same
conditions as in (a), except with
$v_{d/2}(\wtheta)=-v_{d/2,*}$ in place of
$v_{d/2}(\wtheta)=v_{d/2,*}$.
\item If $F^+(t)$ and $F^-(t)$ are Morse on
$\cS^{|\cI|}$, then (a) and (b) characterize all of the local minimizers
of $R(\theta)$. For each $\theta \in \R^d$ outside the union of the
balls $B_\rho(\wtheta)$ in (a) and (b),
either $\|\nabla_\theta R(\theta)\| \geq c\sigma^{-6}$
or $\lambda_{\min}(\nabla_\theta^2 R(\theta)) \leq -c\sigma^{-6}$.
\end{enumerate}
\end{theorem}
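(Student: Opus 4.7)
The plan is to pass to Fourier coordinates, identify the first three terms of the high-noise expansion with explicit polynomials in the power spectrum and bispectrum, and then apply the local analysis of Section~\ref{sec:localreparam} term by term. Since the cyclic generator $h$ acts diagonally as $v_k(h^j\theta)=\omega^{-jk}v_k(\theta)$, the matrix $\E_g[g]$ is the rank-one orthogonal projection onto the constant direction. Lemmas~\ref{lemma:productgroup} and~\ref{lemma:kerneldecomp} therefore split $R(\theta)=R^{\Id}(v_0)+R^G(\theta_\perp)$, where $R^{\Id}$ is the strongly convex single-Gaussian risk in the $v_0$-component and $R^G$ corresponds to the induced mean-zero action of $G$ on the orthogonal complement $\theta_\perp$. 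I would work with $R^G$ from here on, reparametrized on a neighborhood of any $\wtheta$ with $v_k(\wtheta)\neq 0$ for all $k\not\equiv 0\bmod d$ by $\varphi=(\varphi^2,\varphi^3)$, where $\varphi^2=(\{r_k^2\}_{k\in\cI},[v_{d/2}])$ groups the degree-$\leq 2$ real coordinates and $\varphi^3=(t_k)_{k\in\cI}$ collects the phase variables.

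Next, I would plug the Fourier expansion into the formulas of Lemma~\ref{lemma:Smeanzero} and use the root-of-unity identity $\E_g[\omega^{jk}]=\mathbf{1}\{k\equiv 0\bmod d\}$ to reduce each mixed moment to a sum over Fourier tuples whose indices sum to $0\bmod d$. Parseval and the reality of $\theta$ yield $S_2(\theta)=\tfrac{1}{4}\sum_{k\neq 0}(r_k^2-r_{k,*}^2)^2+\mathrm{const}$, so $S_2$ depends only on $\varphi^2$ and is strongly convex in $\varphi^2$ on a neighborhood of any $\wvarphi^2$ with $r_k=r_{k,*}$ and $v_{d/2}=\pm v_{d/2,*}$ (the quartic minima of the $v_{d/2}$-direction). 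The analogous expansion of $S_3$ shows that its only phase-dependent contribution comes from $-\tfrac{1}{6}\E_g[\langle\theta_*,g\theta\rangle^3]$, which on the variety $\cV_2=\{r_k=r_{k,*}\}\cap\{v_{d/2}=\pm v_{d/2,*}\text{ if }d\text{ even}\}$ collapses to $-\tfrac{1}{6}\sum_{k_1+k_2+k_3\equiv 0}r_{k_1,*}^2r_{k_2,*}^2r_{k_3,*}^2\cos(t_{k_1}+t_{k_2}+t_{k_3})$, interpreting $t_{d/2}\in\{0,\pi\}$ according to the sign of $v_{d/2}/v_{d/2,*}$. Separating off the multiplicity-$3$ terms with one index equal to $d/2$ produces the second summand of $F^\pm$ and matches $S_3|_{\cV_2}$ with $F^+$ on the $+$ branch and $F^-$ on the $-$ branch, up to an additive constant.

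Third, with $S_2$ and $S_3|_{\cV_2}$ identified, parts (a) and (b) follow from Lemma~\ref{lemma:largesigmapseudomin}. Let $\wt$ be a local minimizer of $F^+$ or $F^-$ with $\nabla^2F^\pm(\wt)\succ 0$ and let $\wtheta\in\R^d$ be the corresponding point. At $\wvarphi=\varphi(\wtheta)$ the degree-$2$ block $\varphi^2$ sits at the minimum of $S_2$, so $\nabla_{\varphi^2}S_2(\wvarphi)=0$ and $\nabla_{\varphi^2}^2S_2(\wvarphi)\succ 0$; in the phase block, $\nabla_{\varphi^3}S_3(\wvarphi)$ and $\nabla_{\varphi^3}^2S_3(\wvarphi)$ are positive multiples of $\nabla F^\pm(\wt)$ and $\nabla^2F^\pm(\wt)$. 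Thus $\wtheta$ is a pseudo-local-minimizer in the sense of Definition~\ref{eq:pseudolocalminimizer}, and Lemma~\ref{lemma:largesigmapseudomin} produces a unique critical point of $R$ in $B_\rho(\wtheta)$ that is a local minimizer with $\lambda_{\min}(\nabla_\varphi^2 R(\varphi))\geq c\sigma^{-6}$ on $\varphi(B_\rho(\wtheta))$. Part (c) follows from Lemma~\ref{lemma:largesigmadescent} via a compactness cover of $\{\|\theta\|\leq M\}$ combined with Lemma~\ref{lemma:localizationstrong} for $\|\theta\|>M$: if $\theta$ is bounded away from $\cV_2$ then $\nabla_{\varphi^2}S_2\neq 0$ yields $\|\nabla R(\theta)\|\gtrsim\sigma^{-4}$, whereas if $\theta$ lies close to $\cV_2$ but the associated phase is not one of the listed local minima, the Morse hypothesis on $F^\pm$ forces either $\nabla F^\pm\neq 0$ or $\lambda_{\min}(\nabla^2 F^\pm)<0$, which the descent lemma converts into the stated $\sigma^{-6}$-sized bound on $\nabla R(\theta)$ or $\lambda_{\min}(\nabla^2 R(\theta))$.

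The main obstacle I anticipate is the careful Fourier bookkeeping identifying $S_3|_{\cV_2}$ with $F^\pm$, particularly the branching on the sign of $v_{d/2}$ when $d$ is even: this is what produces two distinct surrogate functions rather than one, and it forces the two families (a) and (b) of local minimizers of $R$. Secondary subtleties include the degeneracy of the reparametrization where some $v_k$ vanishes (but any such point satisfies $r_k=0\neq r_{k,*}$ and is therefore absorbed into the $S_2$-descent of part (c)) and the need to choose constants $c,\sigma_0>0$ uniformly across the compact cover so that the local pseudo-local-minimizer and descent statements patch together into the global conclusion.
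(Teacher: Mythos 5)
Your proposal follows essentially the same route as the paper's proof: split off the $v_0$-component via Lemmas~\ref{lemma:productgroup} and~\ref{lemma:kerneldecomp}, compute $S_2$ and $S_3$ in Fourier coordinates using Lemma~\ref{lemma:Smeanzero} (these become Lemma~\ref{lemma:MRAS} in the paper), identify the restriction $S_3|_{\cV_2}$ with the surrogates $F^\pm$ with the branching on $v_{d/2}=\pm v_{d/2,*}$, and then invoke the pseudo-local-minimizer and descent machinery (Lemmas~\ref{lemma:largesigmapseudomin} and~\ref{lemma:largesigmadescent}) point by point, patched together by compactness and Lemma~\ref{lemma:localizationstrong}. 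The identification of $S_2$ as $\tfrac14\sum_{k\neq 0}(r_k^2-r_{k,*}^2)^2+\mathrm{const}$ and of the $d/2$-index terms in $S_3$ that give the second summand of $F^\pm$ are both correct.

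The one genuine gap is the treatment of points $\wtheta$ where some $v_k(\wtheta)=0$ for $k\in\cI$. You correctly flag that $r_k(\wtheta)=0\neq r_{k,*}$, and that this should be ``absorbed into the $S_2$-descent,'' but your proposed coordinate system $\varphi^2=(\{r_k^2\}_{k\in\cI},[v_{d/2}])$ does not give a valid local reparametrization at such a point: $\nabla_\theta r_k^2 = 2\Re v_k\,\nabla\Re v_k + 2\Im v_k\,\nabla\Im v_k$ vanishes at $v_k(\wtheta)=0$, so $\der_\theta\varphi(\wtheta)$ is singular and $t_k$ is undefined. Consequently you cannot conclude $\nabla_{\varphi^2}S_2(\wvarphi)\neq 0$ in that chart. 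The paper's proof handles this by switching to the Cartesian chart $(\Re v_k,\Im v_k)$, where the \emph{gradient} of $S_2$ also vanishes at $v_k=0$ (by the same chain-rule computation), but the \emph{Hessian} $\nabla^2_{(\Re v_k,\Im v_k)} S_2 = -r_{k,*}^2\Id$ is strictly negative-definite. This negative-curvature direction, not a gradient, is what Lemma~\ref{lemma:largesigmadescent} converts into the $\sigma^{-4}$-sized bound. To close the gap you should replace the $r_k^2$ coordinate by $(\Re v_k,\Im v_k)$ at such degenerate points and explicitly invoke the second-order alternative in Lemma~\ref{lemma:largesigmadescent} rather than the first-order one.
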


The following shows that the same statements then hold for the empirical risk $R_n(\theta)$,
with high probability when $n \gg \sigma^6$. The proof is the same as the
empirical risk analysis in Theorem \ref{thm:globalbenign},
and we omit this for brevity.

\begin{corollary}\label{cor:MRA}
For some $(\theta_*,d)$-dependent constants $C,c>0$,
the statements of Theorem \ref{thm:MRA} hold also for $R_n(\theta)$,
with probability at least $1-e^{-c(\log
n)^2}-Ce^{-cn^{1/6}\sigma^{-1}}$.
\end{corollary}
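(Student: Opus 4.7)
The plan is to mirror the empirical-risk portion of the proof of Theorem \ref{thm:globalbenign}, specialized to $L=3$ for the cyclic group. Theorem \ref{thm:MRA} establishes the claim for the population risk $R(\theta)$ in three geometric regions: (a) neighborhoods $B_\rho(\wtheta)$ of pseudo-local-minimizers associated with critical points of $F^+$, (b) the analogous neighborhoods associated with $F^-$ when $d$ is even, and (c) the complement of these balls, where one has $\|\nabla R(\theta)\| \geq c\sigma^{-6}$ or $\lambda_{\min}(\nabla^2 R(\theta)) \leq -c\sigma^{-6}$. I will transfer each region to $R_n$ using the high-noise series expansion of Lemma \ref{lemma:Rnexpansion} together with polynomial concentration, and handle the region $\|\theta\|>M$ separately via Lemma \ref{lemma:localizationstrong}.

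First I would apply Lemma \ref{lemma:localizationstrong} to conclude that with probability at least $1-e^{-c(\log n)^2}-Ce^{-cn^{1/4}\sigma^{-1}}$, the bounds $\|\nabla R_n(\theta)\| \geq c\sigma^{-2}$ or $\|\nabla R_n(\theta)\| \geq c\sigma^{-4}$ hold outside a $\sigma$-independent ball $\{\|\theta\| \leq M\}$, reducing the problem to this compact set. Next, let $\cE$ denote the event on which the conclusion of Lemma \ref{lemma:Rnexpansion} holds with $k=2L=6$, truncation level $t=\sigma^{-6}/\log\sigma$, and $r(\sigma)$ a large constant, intersected with the Gaussian polynomial concentration event
\[
\bigg|\tfrac{1}{n}\sum_{i=1}^n A_{k,m}(\eps_i,\theta_*)-\E_\eps[A_{k,m}(\eps,\theta_*)]\bigg|
\leq c_0\sigma^{-(k\wedge 3)}
\]
for $k=1,\ldots,6$ and all $m=1,\ldots,M_k$, with $c_0>0$ a small constant. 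By \eqref{eq:gaussianpoly}, each such event fails with probability at most $2e^{-cn^{1/k}\sigma^{-2}}$ for $k\leq 3$ and at most $2e^{-cn^{1/6}\sigma^{-1}}$ for $4\leq k\leq 6$ (since $n\geq \sigma^6$ is the non-trivial regime). Combining with Lemma \ref{lemma:Rnexpansion} and a union bound yields $\P[\cE]\geq 1-e^{c(\log n)^2}-Ce^{-cn^{1/6}\sigma^{-1}}$.

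On this event $\cE$, we argue region-by-region exactly as in the proof of Theorem \ref{thm:globalbenign}. For the descent region in case (c), Theorem \ref{thm:MRA} and Lemma \ref{lemma:seriesexpansion} supply, in an appropriate local reparametrization $\varphi=(\varphi^1,\ldots,\varphi^\ell)$ with $\ell \in \{1,2,3\}$, either $\|\nabla_{\varphi^\ell} R^3(\varphi)\|\geq c\sigma^{-2\ell}$ or $\lambda_{\min}(\nabla^2_{\varphi^\ell} R^3(\varphi)) \leq -c\sigma^{-2\ell}$; since each $P_{k,m}$ from Lemma \ref{lemma:Rnexpansion} with $k<\ell$ depends only on the first $\ell-1$ blocks of $\varphi$, the estimates \eqref{eq:Rnexpansion1}--\eqref{eq:Rnexpansion2} propagate this descent property to $R_n(\theta)$ with only an $O(\sigma^{-6}/\log\sigma)$ perturbation, which is dominated by the $c\sigma^{-2\ell}$ signal for $\ell\leq 3$. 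For cases (a) and (b), the pseudo-local-minimizers $\wtheta$ correspond to critical points of $F^\pm$ with positive-definite Hessian; here I would repeat the argument of Corollary \ref{cor:locallargenoise} verbatim, using the graded block structure of $\nabla^2_\varphi R_n(\varphi)$ inherited from that of $\nabla^2_\varphi R^3(\varphi)$ via the truncation-plus-concentration identity, then Lemma \ref{lemma:gradedblocks}(b) to obtain $\lambda_{\min}(\nabla^2_\varphi R_n(\varphi)) \geq c\sigma^{-6}$, and finally the energy-comparison argument from that corollary to locate the unique critical point inside each $B_\rho(\wtheta)$.

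A final compactness step takes a finite subcover of $\{\|\theta\|\leq M\}$ by the neighborhoods $U_{\wtheta}$ produced above, so that the constants $c,\sigma_0$ may be taken uniform; combined with the exterior bound from Lemma \ref{lemma:localizationstrong}, this yields all three parts of Theorem \ref{thm:MRA} for $R_n$ on the event $\cE$. The only delicate point is the concentration rate: the probability is driven by the worst exponent $k=2L=6$ in the polynomial tail \eqref{eq:gaussianpoly}, giving precisely the stated factor $Ce^{-cn^{1/6}\sigma^{-1}}$, while the $(\log n)^2$ term arises from the truncation probability in Lemma \ref{lemma:Rnexpansion}. I expect no new obstacles beyond those already addressed in Theorem \ref{thm:globalbenign}; the MRA-specific structure enters only through Theorem \ref{thm:MRA}, which has already been proved at the population level.
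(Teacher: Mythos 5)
Your proposal is correct and is exactly the argument the paper intends: the paper's proof of this corollary consists of a one-line citation of the empirical-risk analysis in Theorem \ref{thm:globalbenign}, which you reconstruct faithfully — specializing $L=3$ so that the $n^{1/(2L\vee4)}$ rate becomes $n^{1/6}$, using Lemma \ref{lemma:Rnexpansion} with $k=2L=6$, the polynomial concentration bound \eqref{eq:gaussianpoly} on the $A_{k,m}$ coefficients, Lemma \ref{lemma:localizationstrong} for $\|\theta\|>M$, and the Corollary \ref{cor:locallargenoise}-style graded-block and energy-comparison argument near the pseudo-local-minimizers.
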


The following corollary will then follow from an analysis of the landscape
of the functions $F^{\pm}$.

\begin{corollary}\label{cor:MRAbad}
Let $G$ be the cyclic group (\ref{eq:GMRA}) acting on $\R^d$.
\begin{enumerate}[(a)]
\item For $d \leq 5$ and generic $\theta_* \in \R^d$,
there exists a $(\theta_*,d)$-dependent constant $\sigma_0>0$ such that the
landscape of $R(\theta)$ is globally benign for all $\sigma>\sigma_0$.
\item For even $d \geq 6$, there exists an open subset $U \subset \R^d$ and a constant
$\sigma_0>0$ such that for all $\theta_* \in U$ and $\sigma>\sigma_0$,
$R(\theta)$ has a local minimizer not belonging to $\O_{\theta_*}$.
\item For odd $d \geq 53$, there exists an open subset $U \subset \R^d$ and a constant
$\sigma_0>0$ such that for all $\theta_* \in U$ and $\sigma>\sigma_0$,
$R(\theta)$ has a local minimizer not belonging to $\O_{\theta_*}$.
\end{enumerate}
For $(\theta_*,d)$-dependent constants $C,c>0$,
the same statements hold for the empirical risk $R_n(\theta)$ with probability
at least $1-e^{-c(\log n)^2}-Ce^{-cn^{1/6}\sigma^{-1}}$.
\end{corollary}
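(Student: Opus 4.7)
The plan is to apply Theorem~\ref{thm:MRA} to convert each statement into a question about the surrogate trigonometric polynomials $F^\pm$ on the torus $\cS^{|\cI|}$. By Theorem~\ref{thm:MRA}(c), whenever both $F^+$ and $F^-$ are Morse, the local minima of $R(\theta)$ are in exact correspondence with the strict local minima of $F^\pm$. Since $F^\pm$ depends analytically on $\theta_*$ through the radii $r_{k,*}$, the set of $\theta_*$ where either function fails to be Morse is cut out by a non-trivial analytic equation and is hence contained in an exceptional set of measure zero; the same is true of the hypothesis $v_{k,*}\neq 0$ required by Theorem~\ref{thm:MRA}. So for part (a) I may assume generic $\theta_*$ and work entirely with $F^\pm$, while for parts (b) and (c) I only need the local statements Theorem~\ref{thm:MRA}(a),(b), which require only a single strict local minimum of $F^\pm$ rather than global Morse-ness.

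For part (a), I would carry out a direct case analysis of $F^\pm$ for $d\in\{3,4,5\}$. For $d=3$, the only triples $(i,j,k)\in\{\pm 1\}^3$ with $i+j+k\equiv 0\pmod 3$ are $(\pm 1,\pm 1,\pm 1)$, so $F^+(t_1)=-\tfrac{1}{3}r_{1,*}^6\cos(3t_1)$ has exactly three minima $t_1\in\{0,2\pi/3,4\pi/3\}$, matching $|\O_{\theta_*}|=3$. For $d=4$ the first sum in \eqref{eq:MRAF} is empty, leaving $F^\pm(t_1)=\mp r_{1,*}^4 r_{2,*}^2\cos(2t_1)$; each contributes two minima, totalling $4=|\O_{\theta_*}|$. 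For $d=5$ direct computation gives
\[
F^+(t_1,t_2)=-r_{1,*}^4 r_{2,*}^2\cos(2t_1-t_2)-r_{1,*}^2 r_{2,*}^4\cos(t_1+2t_2),
\]
and the change of variables $(u,v)=(2t_1-t_2,\,t_1+2t_2)$, a $5$-to-$1$ unramified cover $\cS^2\to\cS^2$ of Jacobian determinant $5$, separates $F^+$ into $-A\cos u-B\cos v$ with a unique minimum at $(u,v)=(0,0)$; pulling back yields exactly five minima on $\cS^2$, again matching the orbit size. In all three cases $F^\pm$ has no spurious strict local minima, so Theorem~\ref{thm:MRA}(c) gives a globally benign landscape.

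For parts (b) and (c), the task is reversed: exhibit a single configuration $(\theta_*,\wt)$ satisfying $\nabla F^\pm(\wt)=0$ and $\nabla^2 F^\pm(\wt)\succ 0$ with $\wt$ not in the orbit of the trivial minima. The conditions are open in $(\theta_*,\wt)$, so by the implicit function theorem the spurious minimum persists on an open set $U$ of $\theta_*$, and Theorem~\ref{thm:MRA}(a)--(b) lifts it to a spurious local minimum of $R(\theta)$. For even $d\ge 6$, I would adapt the configuration already identified in \cite[Page~17]{bendory2017bispectrum}: the extra second sum in \eqref{eq:MRAF}, present only for even $d$, produces a ``reflected bispectrum'' minimum starting at $d=6$. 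For odd $d\ge 53$, a new family must be constructed; the natural approach is to search for $\wt$ whose phases take only a few distinct values (say, $0$ and $\pm c$ for a to-be-determined $c$), solve the resulting trigonometric system $\partial_{t_m}F^+(\wt)=0$, and verify $\nabla^2 F^+(\wt)\succ 0$. The main obstacle is this last verification: the Hessian is a weighted sum of rank-one outer products indexed by triples $(i,j,k)$ with $i+j+k\equiv 0\pmod d$, and one needs enough such triples locked at $\cos(t_i+t_j+t_k)=+1$ to dominate the negative contributions; the threshold $d=53$ reflects the smallest odd dimension in which the arithmetic of these index triples first admits a strictly stable non-orbit configuration.

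Finally, the empirical-risk versions of (a), (b), (c) follow directly from Corollary~\ref{cor:MRA}: the same descent-direction or local-strong-convexity properties are inherited by $R_n$ with probability at least $1-e^{-c(\log n)^2}-Ce^{-cn^{1/6}\sigma^{-1}}$, the exponent $1/6$ reflecting $L=3$ for the cyclic group.
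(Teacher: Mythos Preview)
Your part (a) is correct and essentially matches the paper's approach. For $d=5$ your covering-map observation (pulling $F^+$ back to $-A\cos u-B\cos v$ under the $5$-to-$1$ map $(t_1,t_2)\mapsto(2t_1-t_2,\,t_1+2t_2)$) is a tidy variant of the paper's direct computation of $\det\nabla^2 F^+$ and $\Tr\nabla^2 F^+$ at the critical points; both arrive at the same five minima. One minor point: asserting that the non-Morse locus is a \emph{proper} analytic subset requires one Morse example, which your explicit calculations in each dimension supply, so this closes.

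Parts (b) and (c), however, are programmatic rather than proofs. You correctly isolate the reduction---exhibit one $(\theta_*,\wt)$ with $\nabla F^\pm(\wt)=0$, $\nabla^2 F^\pm(\wt)\succ 0$, and $\wt$ off-orbit, then use openness and Theorem~\ref{thm:MRA}(a)--(b)---but you do not produce such a configuration, and that is where all the work lies. For (b), the paper takes $s_i=r_{i,*}^2=1$ for $i\in\cI$ and $s_{d/2}$ small, and sets $\wt=t^a$ with $t^a_i=2\pi a i/d$; the key step is showing that the Hessian $\nabla^2 F_0(t^a)$ of the \emph{first} sum alone is diagonally dominant (diagonal entries $\approx 2m-2$, off-diagonal row sums $\leq 2$), hence positive definite for $m\geq 2$. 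Your pointer to \cite{bendory2017bispectrum} is not a substitute for this verification.

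For (c) the gap is larger, and your guess about the form of $\wt$ is off. The paper's spurious minimum is $t^{a,-}=t^{a,+}+\pi e_1$: only the first phase is shifted, by exactly $\pi$, not a family with a free angle $\pm c$. Verifying $\nabla^2 F^+(t^{a,-})\succ 0$ requires a specific tuning of the radii---$s_1=\tfrac12\sqrt{m}$, $s_2\approx(2m-5)/\sqrt{m}$, $s_3$ small, $s_4=\cdots=s_m=1$---then subtracting a particular rank-one matrix $vv^\top$ to obtain diagonal dominance on the minor excluding row/column $3$, and finally perturbing $s_3$ off zero to lift the residual zero eigenvalue. None of this is anticipated by your outline. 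Also, your reading of $d=53$ as an arithmetic threshold is mistaken: the paper notes numerically that spurious minima already appear for odd $d\geq 13$, and $53$ is simply the first dimension where this particular diagonal-dominance argument goes through.
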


\begin{figure}
\includegraphics[height=3in]{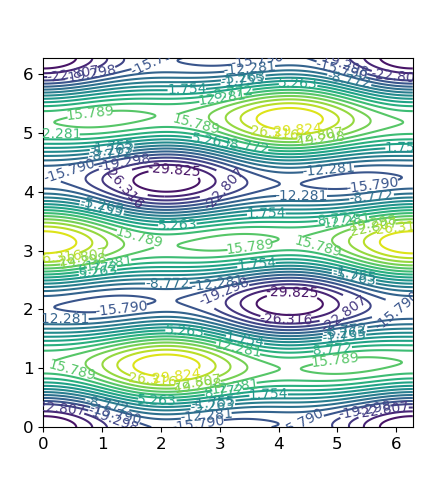}%
\includegraphics[height=3in]{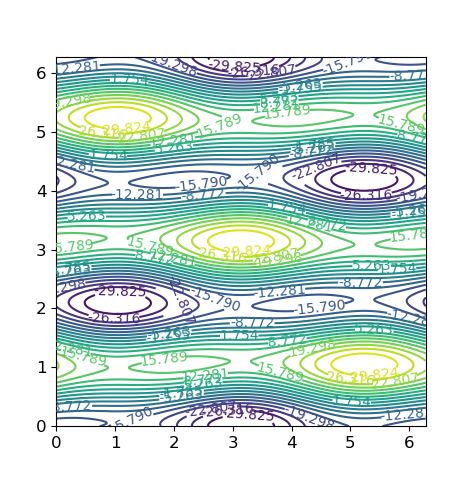}
\caption{Contours of the functions $F^+(t_1,t_2)$ (left) and $F^-(t_1,t_2)$
(right) corresponding to $\theta_*$ in (\ref{eq:MRAexample}), for the group of
cyclic permutations acting in dimension $d=6$.
Each function $F^+$ and $F^-$ is periodic over $t_1,t_2 \in \cS \cong
[0,2\pi)$ and has six local minimizers. Together, these twelve local minimizers
of $F^{\pm}(t_1,t_2)$
correspond to six global minimizers and six spurious local minimizers of
$R(\theta)$ under high noise.}\label{fig:MRAlandscape}
\end{figure}

\begin{figure}
  \begin{center}
    \begin{subfigure}{0.4\textwidth}
      \includegraphics[width=\textwidth]{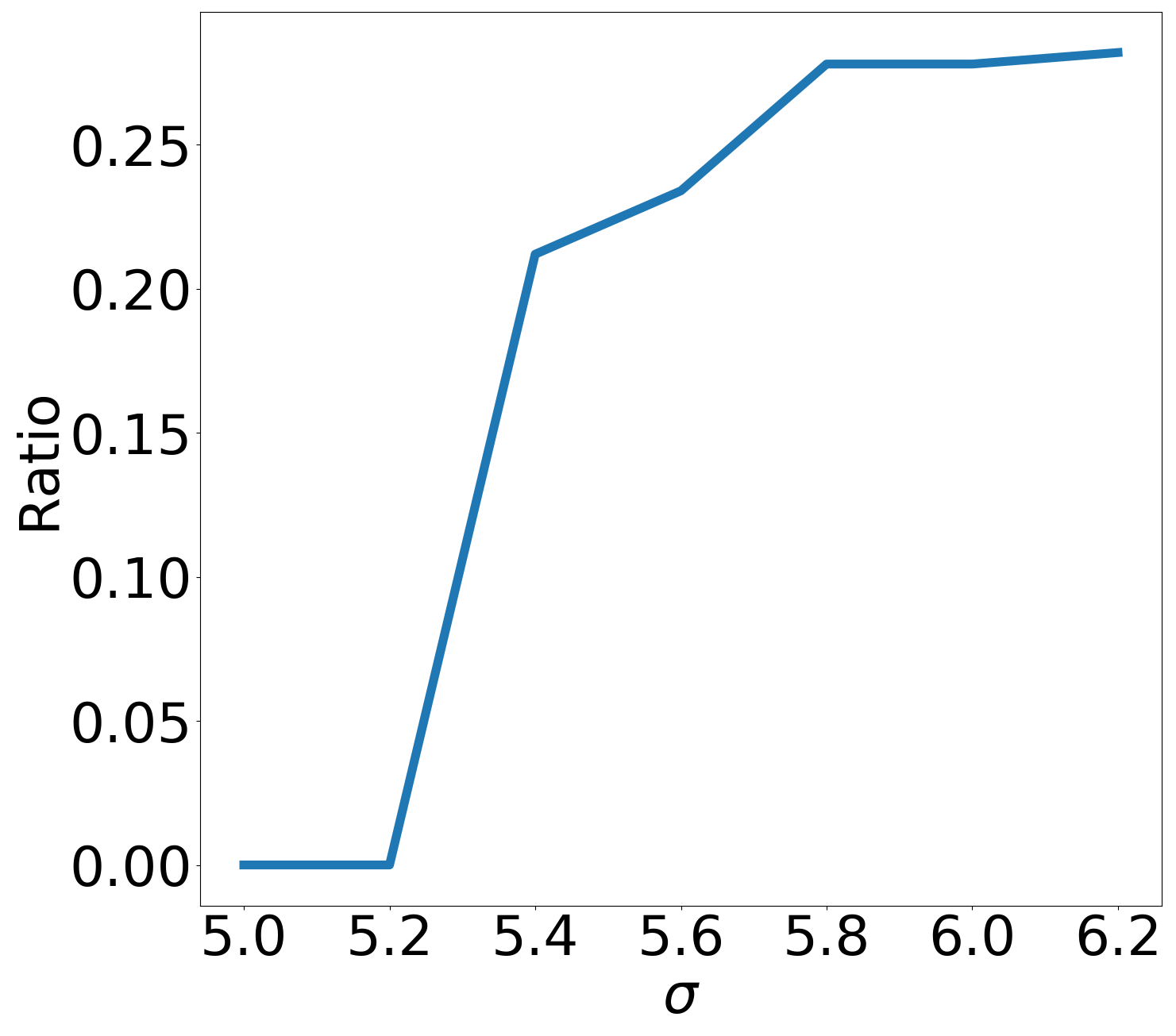}
      \caption{The fraction of AGD runs converging to the spurious
  local minimizers $\O_{\hat{\mu}}$ at different noise levels.}
    \end{subfigure}
\hspace{0.3in}
    \begin{subfigure}{0.52\textwidth}
      \includegraphics[width=\textwidth]{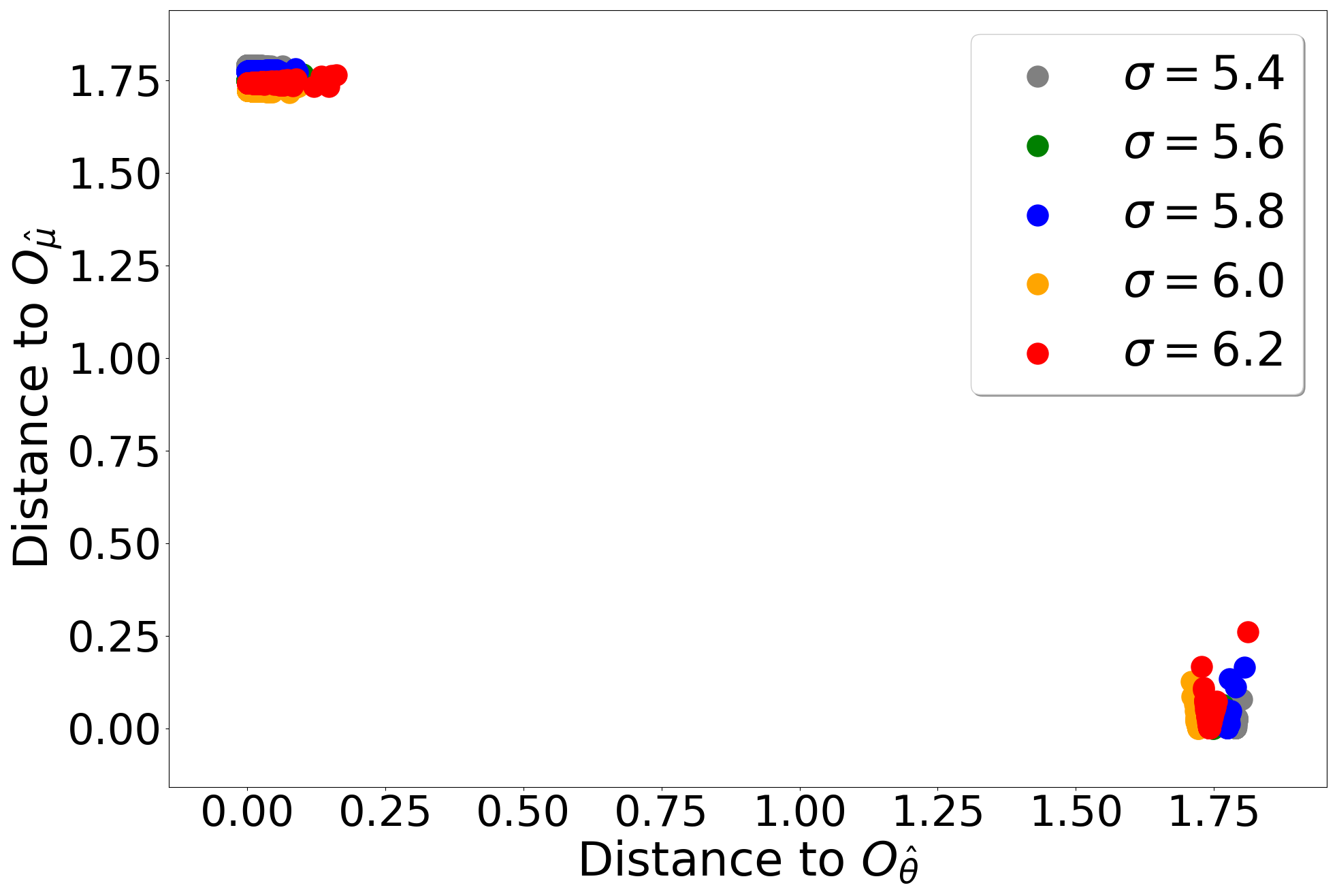}
      \caption{Distances from the $250^\text{th}$ AGD iterate to the
        orbits $\O_{\hat{\theta}}$ and $\O_{\hat{\mu}}$ for each run.}
    \end{subfigure}
\caption{Results of applying Nesterov-accelerated gradient descent (AGD)
  to minimize $R_n(\theta)$ for cyclic permutations in dimension $d=6$,
  with $n=1{,}000{,}000$ samples and $\theta_*$ as in (\ref{eq:MRAexample}).
  AGD was applied from 500 random initializations for noise levels $\sigma$
  between $5.0$ and $6.2$. AGD converges to a point near  $\O_{\hat{\theta}}$ or $\O_{\hat{\mu}}$
  in all cases.  For $\sigma=6.2$, we find
  $\hat{\theta} \approx (2.84,-0.82,-0.85,0.42,-0.79,-0.79)$ and
  $\hat{\mu} \approx (2.08,-0.03,-1.47,1.17,-1.53,-0.21)$, which are close
  to $(\theta_*,\mu_*)$ in (\ref{eq:MRAexample}) and (\ref{eq:MRAexamplelocal}).
}\label{fig:MRAsimulation}
\end{center}
\end{figure}

\begin{remark} \label{rem:min-example}
For $d=6$, setting $(r_{1,*},r_{2,*},r_{3,*})=(1,2,1)$ yields a concrete example
\begin{equation}\label{eq:MRAexample}
\theta_* \approx (2.86,-0.82,-0.82,0.41,-0.82,-0.82)
\end{equation}
belonging to the open set $U$, for which $R(\theta)$ has spurious local minimizers.
Contour maps of $F^+(t_1,t_2)$ and $F^-(t_1,t_2)$ for this point $\theta_*$
are displayed in Figure \ref{fig:MRAlandscape}. It may be verified that
$F^+$ and $F^-$ each has six local minimizers given by
\[(t_1,t_2)=(0,0),(\pi/3,2\pi/3),(2\pi/3,4\pi/3),(\pi,0),(4\pi/3,2\pi/3),
(5\pi/3,4\pi/3).\]
The corresponding twelve points $\wtheta$ constitute the orbits
$\O_{\theta_*}$ and $\O_{\mu_*}$ for a second point
\begin{equation}\label{eq:MRAexamplelocal}
\mu_* \approx (2.04,0.00,-1.63,1.22,-1.63,0.00).
\end{equation}
Theorem \ref{thm:MRA} implies that for large $\sigma$ and large $n$,
the empirical risk $R_n(\theta)$ has (with high probability) twelve
local minimizers, belonging to two orbits $\O_{\hat{\theta}}$ and
$\O_{\hat{\mu}}$ where $\hat{\theta} = \theta_*$ and $\hat{\mu}$
depends on $\sigma$ and lies in a small neighborhood of $\mu_*$.

Simulation results in Figure \ref{fig:MRAsimulation} verify this behavior:
We used the accelerated gradient descent (AGD) method described in
Section \ref{sec:optimization} to minimize $R_n(\theta)$,
with $n=1{,}000{,}000$ samples at various noise levels $\sigma$.
For each noise level, the underlying data $Y_1,\ldots,Y_n$ was fixed, and
simulations were performed with 500 random initializations $\theta^{(0)} \sim
\N(0,\Id)$.
At noise levels $\sigma \leq 5.2$, all simulations converged to the orbit of
a point $\hat{\theta}$ near $\theta_*$, suggesting a benign landscape
for $R_n(\theta)$. For $\sigma \geq 5.4$, a fraction of
simulations converged to the orbit of a second local minimizer $\hat{\mu}$ near
$\mu_*$, and this fraction stabilized to be roughly $28\%$.
This value 28\% may be understood as the ``size'' of the domain of attraction
for the spurious local
minimizers $\O_{\hat{\mu}}$ relative to that for the global minimizers $\O_{\hat{\theta}}$,
for the particular example of $\theta_*$ in (\ref{eq:MRAexample}) and our
simulation parameters.
\end{remark}

The proof of Theorem \ref{thm:MRA} rests on the following lemma, which
describes the first three terms $S_1,S_2,S_3$ of the expansion
(\ref{eq:formalseries}).

\begin{lemma}\label{lemma:MRAS}
Fix $\theta_* \in \R^d$ where $v_{k,*} \neq 0$ for all $k \not\equiv 0 \bmod d$.
Then for some polynomial $q:\R^{d-1} \to \R$ with coefficients depending on
$\theta_*$,
\begin{align}
S_1(\theta)&=-v_{0,*}v_0(\theta)+\frac{1}{2}v_0(\theta)^2\label{eq:MRAS1}\\
S_2(\theta)&=\sum_{i=1}^{d-1}\left(-\frac{1}{2}r_{i,*}^2r_i(\theta)^2
+\frac{1}{4}r_i(\theta)^4\right)\label{eq:MRAS2}\\
S_3(\theta)&=-\frac{1}{6}\mathop{\sum_{i,j,k=1}^{d-1}}_{i+j+k \equiv 0 \bmod d}
r_{i,*}r_{j,*}r_{k,*}r_i(\theta)r_j(\theta)r_k(\theta) \cos\Big(t_i(\theta)
+t_j(\theta)+t_k(\theta)\Big)+q\Big(r_1(\theta)^2,\ldots,r_{d-1}(\theta)^2\Big).
\label{eq:MRAS3}
\end{align}
\end{lemma}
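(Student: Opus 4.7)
The plan is to decompose the cyclic action by its invariant subspace, apply Lemma \ref{lemma:Smeanzero} on the mean-zero complement, and evaluate the resulting expectations in the Fourier basis.

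First I would Fourier-diagonalize the action. Since $(h\theta)_i = \theta_{i-1\bmod d}$, the definition \eqref{eq:MRAvk} gives $v_k(h^s\theta) = \omega^{ks}v_k(\theta)$, so $\E_g[g]$ is the rank-one orthogonal projection onto $\mathrm{span}(\mathbf{1})$, captured in the Fourier basis by the single coordinate $v_0$. By Lemmas \ref{lemma:kerneldecomp} and \ref{lemma:productgroup}, $R(\theta)=R^{\Id}(v_0(\theta))+R^{G_2}(\theta_\perp)$, where $\theta_\perp$ is the orthogonal projection onto $\mathbf{1}^\perp$ and $G_2$ is the induced mean-zero restriction; matching series expansions gives $S_\ell(\theta)=S_\ell^{\Id}(v_0(\theta))+S_\ell^{G_2}(\theta_\perp)$. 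For the trivial group on $\R$, the model is a Gaussian location model and a direct calculation gives $R^{\Id}(v_0) = \sigma^{-2}(\tfrac12 v_0^2 - v_{0,*}v_0)$, so $S_1^{\Id}(v_0) = \tfrac12 v_0^2 - v_{0,*}v_0$ and $S_\ell^{\Id} = 0$ for $\ell\ge 2$. This establishes \eqref{eq:MRAS1}.

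Next, I would compute $S_2^{G_2}$ and $S_3^{G_2}$ from Lemma \ref{lemma:Smeanzero}, expanding each inner product in Fourier coordinates. Writing $\zeta_g = 2\pi s_g/d$ for $g=h^{s_g}$, Parseval together with the diagonalization yields
\begin{align*}
\langle\theta_*,g\theta\rangle &= v_{0,*}v_0(\theta)+\sum_{k=1}^{d-1} r_{k,*}r_k(\theta)\,e^{-\ii(t_k(\theta)+k\zeta_g)},\\
\langle g_1\theta,g_2\theta\rangle &= v_0(\theta)^2+\sum_{k=1}^{d-1} r_k(\theta)^2\,e^{\ii k(\zeta_{g_1}-\zeta_{g_2})},
\end{align*}
and the character orthogonality $\E_g[\omega^{ks_g}] = \1\{k\equiv 0\bmod d\}$ collapses each group expectation onto tuples of frequencies summing to $0\bmod d$. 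For $S_2^{G_2}$, the two averages $\E_g[\langle\theta_{\perp,*},g\theta_\perp\rangle^2]$ and $\E_g[\langle\theta_\perp,g\theta_\perp\rangle^2]$ reduce via the pairing $k\mapsto -k$ to $\sum_{k=1}^{d-1}r_{k,*}^2 r_k(\theta)^2$ and $\sum_{k=1}^{d-1}r_k(\theta)^4$ respectively, since the phases contributed by $k$ and $-k$ cancel by $t_{-k}=-t_k$; this gives \eqref{eq:MRAS2}.

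For $S_3^{G_2}$, the decisive term is $-\tfrac16\E_g[\langle\theta_{\perp,*},g\theta_\perp\rangle^3]$. Cubing the Fourier expansion forces $k_1+k_2+k_3\equiv 0\bmod d$ and leaves the phase $e^{-\ii(t_{k_1}+t_{k_2}+t_{k_3})}$; the symmetry $k\mapsto -k$ on the constrained index set then consolidates these complex exponentials into $\cos(t_{k_1}+t_{k_2}+t_{k_3})$, producing the first summand of \eqref{eq:MRAS3}. The remaining three terms in Lemma \ref{lemma:Smeanzero} each collapse to polynomials in $\{r_k(\theta)^2\}$: $\E_g[\langle\theta_\perp,g\theta_\perp\rangle^3]=\sum_{k_1+k_2+k_3\equiv 0\bmod d}r_{k_1}^2 r_{k_2}^2 r_{k_3}^2$, while the two $\E_{g_1,g_2}$ terms reduce — via the dual constraints $l\equiv k$ and $m\equiv -k\bmod d$ coming from the two independent group averages — to $\sum_{k=1}^{d-1} r_{k,*}^2 r_k(\theta)^4$ and $\sum_{k=1}^{d-1} r_k(\theta)^6$ respectively. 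Absorbing these into $q(r_1^2,\ldots,r_{d-1}^2)$ completes \eqref{eq:MRAS3}. The main obstacle is purely bookkeeping: tracking the interaction of the modular constraint $\sum_i k_i\equiv 0\bmod d$, the Fourier symmetry $v_{-k}=\overline{v_k}$, and the reality of the final expressions which is what allows complex exponentials to be consolidated into cosines. Once the diagonalization and character-orthogonality step are in place, each individual group average is a short calculation.
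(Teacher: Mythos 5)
Your proposal is correct and follows essentially the same route as the paper's proof: decompose via Lemma \ref{lemma:kerneldecomp}/\ref{lemma:productgroup} into the $v_0$ direction plus a mean-zero complement, apply Lemma \ref{lemma:Smeanzero} on the complement, and evaluate each group average after Fourier-diagonalizing the cyclic action using character orthogonality (the paper carries this out via the identity $Ph^kP = F^*D^kF$, whereas you write the Parseval expansion of $\langle\theta_*,g\theta\rangle$ directly, but these are the same computation). The consolidation into cosines in \eqref{eq:MRAS3} is exactly the paper's final ``take real parts'' step, justified by the sum being real under the conjugation $k\mapsto d-k$.
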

\begin{proof}
Let $e=(1,\ldots,1)/\sqrt{d} \in \R^{d \times 1}$ and let $V \in
\R^{d \times (d-1)}$ complete the orthonormal basis. Then the columns of $V$
span the kernel of $\E_g[g]$, and Lemma \ref{lemma:kerneldecomp} applies with
$G_2=\{V^\top h^k V:k=0,\ldots,d-1\} \subset \rO(d-1)$
for the generator $h$ in (\ref{eq:MRAgenerator}).
Thus, noting that $e^\top \theta=v_0(\theta)$, we have
\[R(\theta)=R^{\Id}(v_0(\theta))+R^{G_2}(V^\top \theta).\]
Applying the series expansion (\ref{eq:formalseries}) to each of $R$, $R^{\Id}$,
and $R^{G_2}$, we have the analogous decomposition
\[S_\ell(\theta)=S^{\Id}_\ell(v_0(\theta))+S^{G_2}_\ell(V^\top \theta)\]
for every $\ell \geq 1$. Note that
\[R^{\Id}(v_0(\theta))=\frac{v_0(\theta)^2}{2\sigma^2}
-\frac{v_{0,*}v_0(\theta)}{\sigma^2}\]
by (\ref{eq:R}), so that
$S_1^{\Id}(v_0(\theta))=-v_{0,*}v_0(\theta)+v_0(\theta)^2/2$,
and $S_\ell^{\Id}(v_0(\theta))=0$ for all $\ell \geq 2$.

To compute the terms $S_\ell^{G_2}(V^\top \theta)$, we apply Lemma
\ref{lemma:Smeanzero}. For $\ell=1$, since $\E_{g\sim\Unif(G_2)}[g]=0$,
 we have
$S_1^{G_2}(V^\top \theta)=0$, so we get (\ref{eq:MRAS1}). For $\ell=2$,
\[S_2^{G_2}(V^\top \theta)=-\frac{1}{2}\E_g[\langle V^\top \theta_*,
(V^\top gV)V^\top \theta \rangle^2]
+\frac{1}{4}\E_g[\langle V^\top \theta,(V^\top gV)V^\top \theta \rangle^2].\]
Introduce $P=VV^\top=\Id-ee^\top$ and the partial Fourier matrix
$F \in \C^{(d-1) \times d}$ such that
$F\theta=(v_1(\theta),\ldots,v_{d-1}(\theta)) \in \C^{d-1}$.
Denote $v=F\theta$, and let $v_*=F\theta_*$. Then note that
\[Ph^kP=F^*D^kF\]
where $D=\diag(\omega,\omega^2,\ldots,\omega^{d-1})$, so
\[S_2^{G_2}(V^\top \theta)=-\frac{1}{2d}\sum_{k=0}^{d-1}
\langle v_*,D^k v\rangle^2+\frac{1}{4d}\sum_{k=0}^{d-1}
\langle v,D^k v\rangle^2.\]
We may write
\[\frac{1}{d}\sum_{k=0}^{d-1} \langle v_*,D^k v\rangle^2
=\frac{1}{d}\sum_{k=0}^{d-1} \left(\sum_{i=1}^{d-1}
\overline{v_{i,*}}\cdot \omega^{ki}v_i\right)^2
=\sum_{i,j=1}^{d-1} \overline{v_{i,*}v_{j,*}}v_iv_j
\left(\frac{1}{d}\sum_{k=0}^{d-1} \omega^{ki+kj}\right).\]
Applying
\begin{equation}\label{eq:avgrootofunity}
\sum_{k=0}^{d-1} w^{jk}=\begin{cases} d & \text{ if } j \equiv 0 \mod d \\
0 & \text{ if } j \not \equiv 0 \mod d,\end{cases}
\end{equation}
and also $v_i=\overline{v_{-i}}$ and $v_{i,*}=\overline{v_{-i,*}}$,
this yields
\[\frac{1}{d}\sum_{k=0}^{d-1} \langle v_*,D^k v\rangle^2=\sum_{i,j=1}^{d-1}
\overline{v_{i,*}v_{j,*}}v_iv_j \cdot \1\{i+j \equiv 0 \bmod d\}
=\sum_{i=1}^{d-1} |v_{i,*}|^2 \cdot |v_i|^2
=\sum_{i=1}^{d-1} r_{i,*}^2r_i(\theta)^2.\]
A similar computation shows $d^{-1}\sum_k \langle v, D^k v
\rangle^2=\sum_{i=1}^{d-1} r_i(\theta)^4$, which yields (\ref{eq:MRAS2}).

For $\ell=3$, applying Lemma \ref{lemma:Smeanzero} and similar arguments,
\begin{align*}
S_3^{G_2}(V^\top \theta)&=\frac{1}{d}\sum_{p=0}^{d-1}
\left(-\frac{\langle v_*,D^p v \rangle^3}{6}
+\frac{\langle v,D^p v \rangle^3}{12}\right)\\
&\hspace{0.5in}+\frac{1}{d^2}\sum_{p,q=0}^{d-1}
\left(\frac{\langle D^p v,D^q v \rangle \langle v_*,D^p v \rangle
\langle v_*,D^q v \rangle}{2}-\frac{\langle D^p v,D^q v \rangle
\langle v,D^p v \rangle\langle v,D^q v \rangle}{3}\right)\\
&=\sum_{i,j,k=1}^{d-1}\Bigg[\left(-\frac{\overline{v_{i,*}v_{j,*}v_{k,*}}
v_iv_jv_k}{6}+\frac{|v_i|^2|v_j|^2|v_k|^2}{12}\right)
\1\{i+j+k \equiv 0 \bmod d\}\\
&\hspace{0.5in}
+\left(\frac{|v_i|^2\overline{v_{j,*}v_{k,*}}v_jv_k}{2}
-\frac{|v_i|^2|v_j|^2|v_k|^2}{3}\right)
\1\{i+k \equiv 0 \bmod d,\,-i+j \equiv 0 \bmod d\}\Bigg].
\end{align*}
Observe that for the second term, we must have $k \equiv -i$ and $j \equiv i$,
in which case $\overline{v_{j,*}v_{k,*}}v_jv_k
=|v_i|^2|v_{i,*}|^2$. Then applying also
$\overline{v_{i,*}}v_i=r_{i,*}r_ie^{\ii t_i}$ (where we write $r_i,t_i$ for
$r_i(\theta),t_i(\theta)$), for some polynomial $q:\R^{d-1} \to \R$ we get
\[S_3^{G_2}(V^\top \theta)
=-\frac{1}{6}\sum_{i,j,k=1}^{d-1}
r_{i,*}r_{j,*}r_{k,*}r_ir_jr_k e^{\ii(t_i+t_j+t_k)}\1\{i+j+k \equiv 0 \bmod d\}
+q(r_1^2,\ldots,r_{d-1}^2).\]
Taking the real part on both sides yields (\ref{eq:MRAS3}).
\end{proof}

\begin{proof}[Proof of Theorem \ref{thm:MRA}]
For each $\wtheta \in \R^d$, we construct a local reparametrization
$\varphi=(\varphi^1,\varphi^2,\varphi^3)$ as follows: Let
$\varphi^1(\theta)=v_0(\theta)$. For each $k \in \cI$, if $v_k(\wtheta)
\neq 0$, then include $r_k(\theta)$ as a coordinate of $\varphi^2$.
If $v_k(\wtheta)=0$, then include $\Re v_k(\theta)$ and $\Im v_k(\theta)$ as two
coordinates of $\varphi^2$. If $d$ is even, include also
$v_{d/2}(\theta)$ as a coordinate of $\varphi^2$. Then for each $k \in \cI$
where $v_k(\wtheta) \neq 0$, include $t_k(\theta)$ as a coordinate of
$\varphi^3$. If there are $m$ coordinates $k \in \cI$ where $v_k(\wtheta) \neq
0$, then $\varphi^3 \in \R^m$ and
$\varphi^2 \in \R^{d-m-1}$. It is easily verified that this defines a 
local reparametrization in some neighborhood $U_{\wtheta}$ around
every $\wtheta \in \R^d$. Note that $S_1$ depends only on $\varphi^1$, and
$S_2$ on $\varphi^1$ and $\varphi^2$.

We now apply Lemmas \ref{lemma:largesigmadescent} and \ref{lemma:largesigmapseudomin}. Let
$\wvarphi=\varphi(\wtheta)$. For $\wtheta \in \R^d$
where $v_0(\wtheta) \neq v_{0,*}$, we have
$\nabla_{\varphi^1} S_1(\wvarphi) \neq 0$. For $\wtheta \in \R^d$ where
$v_k(\wtheta) \neq 0$ and $r_k(\wtheta) \neq r_{k,*}$ for some $k \in \cI$,
we similarly have $\nabla_{\varphi^2} S_2(\wvarphi) \neq 0$, because the
derivative of $S_2$ in the coordinate $r_k$ is non-zero. For $\wtheta \in \R^d$
where $v_k(\wtheta)=0$ for some $k \in \cI$, let us write $r_k(\theta)^2=(\Re
v_k(\theta))^2+(\Im v_k(\theta))^2$ in (\ref{eq:MRAS2}). Differentiating $S_2$
twice in these variables $\Re v_k(\theta)$ and $\Im v_k(\theta)$ and evaluating
at $\Re v_k(\wtheta)=\Im v_k(\wtheta)=0$, we get that the Hessian of $S_2$ in
these variables is $-r_{k,*}^2\Id$. Thus, $\lambda_{\min}(\nabla_{\varphi^2}^2
S_2(\wvarphi))<0$. Finally, for even $d$ and $\wtheta \in \R^d$ where
$v_{d/2}(\wtheta) \notin \{+v_{d/2,*},-v_{d/2,*}\}$, let us write
$r_{d/2}(\theta)^2=v_{d/2}(\wtheta)^2$ in (\ref{eq:MRAS2}). Then either
$v_{d/2}(\wtheta) \neq 0$ and $\nabla_{\varphi^2} S_2(\wvarphi) \neq 0$, or
$v_{d/2}(\wtheta)=0$ and $\lambda_{\min}(\nabla_{\varphi^2} S_2(\wvarphi))<0$.
In all of these cases, Lemma \ref{lemma:largesigmadescent} implies either
$\|\nabla_\theta R(\theta)\| \geq c\sigma^{-4}$ or
$\lambda_{\min}(\nabla_\theta^2 R(\theta)) \leq -c\sigma^{-4}$, for all $\theta
\in U_{\wtheta}$ and $\sigma>\sigma_0$.

It remains to consider those points $\wtheta \in \R^d$ where
$v_0(\wtheta)=v_{0,*}$ and $r_k(\wtheta)=r_{k,*} \neq 0$ for all $k \in \ZZ$.
For such $\wtheta$, we have $\varphi^3 \equiv (t_1,\ldots,t_{|\cI|})
\in \R^{|\cI|}$. When $d$ is odd, the summation defining (\ref{eq:MRAS3}) may be
written as that over $i,j,k \in \cI \cup -\cI$ with $i+j+k \equiv 0 \bmod d$,
and the restriction of $S_3(\varphi)$ to points $\varphi \in \R^d$ where
$r_k=r_{k,*}$ for all $k \in \ZZ$ coincides with $F^+(t)$. When $d$ is even,
we may isolate the terms of the summation in (\ref{eq:MRAS3}) where some
coordinate, say $k$, equals $d/2$. Then we must have $i+j \equiv d/2 \bmod d$,
and the constraint $i,j \not\equiv 0 \bmod d$ is equivalent to $i,j \in \cI \cup
-\cI$. When $v_{d/2}=v_{d/2,*}$, we have $t_k=0$ so
$\cos(t_i+t_j+t_k)=\cos(t_i+t_j)$. In this case, $S_3(\varphi)$ restricted to
$r_k=r_{k,*}$ is the function $F^+(t)$, where the factor
$1/2$ is produced from $1/6$ by considering the three symmetric
settings where $i$, $j$, or $k$ is $d/2$. When $v_{d/2}=-v_{d/2,*}$, we have $t_k=\pi$, so
$\cos(t_i+t_j+t_k)=-\cos(t_i+t_j)$. In this case, $S_3(\varphi)$ restricted to 
$r_k=r_{k,*}$ is the function $F^-(t)$.

Thus, if $\wt=\wvarphi^3$ is not a
critical point of $F^\pm(t)$, then $\nabla_{\varphi^3} S_3(\wvarphi)
\neq 0$. If $\wt$ is a critical point where $\lambda_{\min}(\nabla^2
F^\pm(t))<0$, then also $\lambda_{\min}(\nabla_{\varphi^3} S_3(\wvarphi))<0$.
In these cases, Lemma \ref{lemma:largesigmadescent} implies that either
$\|\nabla_\theta R(\theta)\| \geq c\sigma^{-6}$ or
$\lambda_{\min}(\nabla_\theta^2 R(\theta)) \leq -c\sigma^{-6}$, for all $\theta
\in U_{\wtheta}$ and $\sigma>\sigma_0$. If $\wt$ is a critical point of
$F^\pm(t)$ where $\lambda_{\min}(\nabla^2 F^\pm(t))>0$, then $\wvarphi$ is a
pseudo-local-minimizer of $R(\theta)$, and 
Lemma \ref{lemma:largesigmadescent} implies both that there is a unique
local minimizer of $R(\varphi)$ in $\varphi(U_{\wtheta})$ and that
$\nabla_\varphi^2 R(\varphi) \geq c\sigma^{-6}$ on $\varphi(U_{\wtheta})$.
Finally, if $F^\pm(t)$ is Morse, then this accounts for all possible points
$\wtheta$.

Taking a finite collection of these sets $U_{\wtheta}$ which cover
$\{\theta:\|\theta\| \leq M\}$, the above constants $c,\sigma_0>0$ may be chosen
to be uniform over this finite cover. Then for small enough $\rho>0$, the above
arguments establish the claims of the theorem for $\|\theta\| \leq M$.
The result for $\|\theta\|>M$ follows from Lemma
\ref{lemma:localizationstrong}.
\end{proof}

Finally, let us analyze the functions $F^\pm$ for $d \leq 5$, even $d \geq 6$,
and odd $d \geq 53$.

\begin{proof}[Proof of Corollary \ref{cor:MRAbad}]
{\it Part (a):} The result for $d=1$ or 2 follows from the analysis of
all permutations in Theorem \ref{thm:permutations}. 

For $d=3$ or 4, $\cI=\{1\}$, so $F^{\pm}(t)$ is a function of a single scalar
argument in $t_1 \in \cS$. For $d=3$,
\[\nabla F^+(t_1)=r_{1,*}^6\sin(3t_1),\quad
\nabla^2 F^+(t_1)=3r_{1,*}^6\cos(3t_1).\]
Then $F^+$ is Morse and there are six critical points, three of which are the
local minimizers $\{0,2\pi/3,4\pi/3\}$. These correspond to the three
points $\wtheta \in \O_{\theta_*}$.
For $d=4$,
\[\nabla F^\pm(t_1)=\pm 2r_{1,*}^6\sin(2t_1),\quad
\nabla^2 F^\pm(t_1)=\pm 4r_{1,*}^6\cos(2t_1).\]
Each function $F^+$ and $F^-$ is Morse with four critical points. For $F^+$,
there are two local minimizers $\{0,\pi\}$, and for $F^-$, there are two local
minimizers $\{\pi/2,3\pi/2\}$. These correspond to the four points $\wtheta \in
\O_{\theta_*}$.

For $d=5$, we have $\cI=\{1,2\}$. Let us abbreviate
\[s_i=r_{i,*}^2, \qquad u_1=2t_1-t_2, \qquad u_2=t_1+2t_2.\]
Then
\begin{align*}
\nabla F^+(t)&=\Big(2s_1^2s_2\sin u_1+s_1s_2^2\sin u_2,\;\;
-s_1^2s_2\sin u_1+2s_2s_1^2\sin u_2\Big),\\
\nabla^2 F^+(t)&=\begin{pmatrix}
4s_1^2s_2\cos u_1+s_1s_2^2\cos u_2 &
-2s_1^2s_2\cos u_1+2s_1s_2^2\cos u_2 \\
-2s_1^2s_2\cos u_1+2s_1s_2^2\cos u_2 &
s_1^2s_2\cos u_1+4s_1s_2^2\cos u_2\end{pmatrix}.
\end{align*}
From this, we may also compute
\begin{align*}
\det \nabla^2 F^+(t)&=25s_1^3s_2^3\cos u_1\cos u_2,\\
\Tr \nabla^2 F^+(t)&=5s_1^2s_2\cos u_1+5s_1s_2^2\cos u_2.
\end{align*}
For generic $\theta_*$ and hence generic $(s_1,s_2)$,
the condition $\nabla F^+(t)=0$ for a critical point requires
$\sin u_1=\sin u_2=0$. We have $\det \nabla^2 F^+(t) \neq 0$ at such
points, so $F^+$ is Morse. The condition $\nabla^2 F^+(t) \succ 0$ for a local
minimizer then requires
$\det H^+(t)>0$ and $\Tr H^+(t)>0$, so we must have
$\cos u_1=\cos u_2=1$, and hence
$t_1+2t_2 \equiv 2t_1-t_2 \equiv 0 \bmod 2\pi$. This implies that
$5t_1 \equiv 0 \bmod 2\pi$, and there are five local minimizers
$(t_1,t_2)=(0,0)$, $(2\pi/5,4\pi/5)$, $(4\pi/5,8\pi/5)$,
$(6\pi/5,2\pi/5)$, or $(8\pi/5,6\pi/5)$. These correspond to the five points
$\wtheta \in \O_{\theta_*}$. Together with Theorem \ref{thm:MRA} and Corollary
\ref{cor:MRA}, this shows part (a).

{\it Part (b):} Write $d = 2m + 2$ with $m \geq 2$ so that $\cI = \{1, 2, \ldots, m\}$.
Define the quantities $s_i = r_{i, *}^2$ so that $s_i = s_{-i}$.  We exhibit a family
of points where $F^\pm(t)$ have spurious local minima. For each $a \in \{0,
\ldots, d - 1\}$,
define $t^{a} = (t^{a}_1, \ldots, t^{a}_m)$ by
\begin{equation} \label{eq:t-def}
t^a_i = \frac{2\pi ai}{d}.
\end{equation}
For $i, j, k \in \cI \cup - \cI$ with $i + j + k \equiv 0 \bmod{d}$, we find that
\[
t^a_i + t^a_j + t^a_k \equiv 0 \bmod{2\pi}.
\]
Similarly, for $i, j \in \cI \cup - \cI$ with $i + j \equiv d/2 \bmod{d}$, we have
$t^a_i + t^a_j \equiv 0 \bmod{\pi}$.  Together, these imply that $\nabla
F^\pm(t^a) = 0$.

We now restrict to $s_1 = \cdots = s_m = 1$.  We claim that $t^a$ is a local
minimum of both $F^+$ and $F^-$ for sufficiently small values of $s_{m + 1}$. Define
\[
F_0(t) := - \frac{1}{6} \sum_{\substack{i, j, k \in \cI \cup -\cI \\ i + j + k
\equiv 0 \bmod{d}}}
\cos(t_i + t_j + t_k)
\]
so that for $d$ even we have
\begin{equation} \label{eq:fpm-eq}
F^\pm(t) = F_0(t) \mp \frac{1}{2} s_{m+1} \sum_{\substack{i, j \in \cI \cup -
\cI \\ i + j \equiv d/2 \bmod{d}}}
\cos(t_i + t_j).
\end{equation}
We show that $\nabla^2 F_0(t^a)$ is diagonally dominant: Denote by $\partial_p$
the partial derivative in $t_p$. For any $p \in \cI$,
\begin{align*}
\partial_p F_0(t)&=\frac{1}{6}\mathop{\sum_{i,j,k \in \cI \cup
-\cI}}_{i+j+k \equiv 0 \bmod d} \sin(t_i+t_j+t_k) \cdot \\
&\hspace{1in}\Big(\1\{i=p\}+\1\{j=p\}+\1\{k=p\}
-\1\{i=-p\}-\1\{j=-p\}-\1\{k=-p\}\Big)\\
&=\mathop{\sum_{j,k \in \cI \cup -\cI}}_{p+j+k \equiv 0 \bmod d}
\sin(t_p+t_j+t_k)
\end{align*}
where the second line applies symmetry with respect to permutations of $(i,j,k)$ and negation $(i,j,k) \mapsto (-i,-j,-k)$.
Then, for any $q \in \cI$,
\begin{align}
\partial_{pq} F_0(t)&=
\mathop{\sum_{j,k \in \cI \cup -\cI}}_{p+j+k \equiv 0 \bmod d}
\cos(t_p+t_j+t_k) \cdot \nonumber\\
&\hspace{1in}\Big(\1\{p=q\}+\1\{j=q\}+\1\{k=q\}-\1\{j=-q\}-\1\{k=-q\}\Big)\nonumber\\
&=\1\{p=q\}\mathop{\sum_{j,k \in \cI \cup -\cI}}_{p+j+k \equiv 0 \bmod d}
\cos(t_p+t_j+t_k)\nonumber\\
&\hspace{0.2in}+\1\{p+q \not\equiv d/2 \bmod d\} \cdot
2\cos(t_p+t_q+t_{-p-q})-\1\{p \neq q\} \cdot 2\cos(t_p+t_{-q}+t_{q-p}).
\label{eq:hessF0}
\end{align}
At any point $t^a$, we have $t_i^a+t_j^a+t_k^a \equiv 0 \bmod 2\pi$, so
$\cos(t_i^a + t_j^a + t_k^a)=1$ for all triples $(i,j,k)$ above. Then
\[\partial_{pp} F_0(t^a)=2m-2+2 \cdot \1\{p \equiv d/4 \bmod d\},\]
where the first term accounts for the sum over $j \in \cI \cup -\cI$
excluding $j=-p$ and $j=d/2-p$. We also have
\[\sum_{q:q \neq p} |\partial_{pq} F_0(t^a)|=
\sum_{q:q \neq p} 2 \cdot \1\{p+q \equiv d/2 \bmod d\}
=2 \cdot \1\{p \not\equiv d/4 \bmod d\}.\]
Thus, for $m \geq 2$,
\[\partial_{pp} F_0(t^a)  - \sum_{q:q \neq p} |\partial_{pq} F_0(t^a)|
=2m-2>0.\]
This implies that $\nabla F_0(t^a)$ is diagonally dominant and thus positive definite. Taking
$s_{m + 1}$ sufficiently small in (\ref{eq:fpm-eq}), we find that the Hessians $\nabla^2 F^\pm(t^a)$
are also positive definite, meaning that each $t^a$ for $a=0,\ldots,d-1$ 
is a local minimum of both $F^+(t)$ and $F^-(t)$. By continuity, this statement
also holds for $(s_1,\ldots,s_{m+1}) \in U_s$ and some open set $U_s
\subset \R^{m+1}$.

Now for each $\theta_* \in \R^d$ such that
$(r_{1,*}^2,\ldots,r_{{m+1},*}^2) \in U_s$, Theorem \ref{thm:MRA}(a--b)
implies that $R(\theta)$ has $2d$ local minima (for sufficiently
large $\sigma$), corresponding to these $2d$
local minima $t^a$ for $F^\pm(t)$.
Of these, $d$ local minima constitute the orbit $\cO_{\theta_*}$, and
the other $d$ local minima are spurious and lie on another orbit $\cO_{\mu_*}$
for some $\mu_* \in \R^d$. The set of such $\theta_*$ contains an open set $U
\subset \R^d$, and this establishes part (b).

{\it Part (c):} Write $d = 2m + 1$ so that $\cI = \{1, 2, \ldots, m\}$.
We will exhibit a family of points where
$F^+(t)$ has spurious local minima.  For each $a \in \{0, \ldots, d - 1\}$, define
$t^{a, \pm} = (t^{a, \pm}_1, \ldots, t^{a, \pm}_m)$ by
\[
t^{a, +}_i = \frac{2\pi a i}{d} \qquad \text{ and } \qquad t^{a, -} = t^{a, +} + (\pi, 0, \ldots, 0).
\]
For $i, j, k \in \cI \cup - \cI$ with $i + j + k \equiv 0 \bmod{d}$, we find
\begin{align*}
t^{a, +}_i + t^{a, +}_j + t^{a, +}_k &\equiv 0 \bmod{2\pi}\\
t^{a, -}_i + t^{a, -}_j + t^{a, -}_k &\equiv \pi \Big(\bI\{i = 1\} + \bI\{j =
1\} + \bI\{k = 1\}\Big)\bmod{2\pi}.
\end{align*}
So $\nabla F^+(t^{a, \pm}) = 0$. It may be checked that the $d$ points $t^{a,+}$
are minimizers of $F^+(t)$ and correspond to the $d$ points of the true
orbit $\cO_{\theta_*}$. Thus, we focus on the points $t^{a,-}$, which
correspond to a second orbit $\cO_{\mu_*}$ for some $\mu_* \in \R^d$.

Again set $s_i = r_{i, *}^2$.
We now construct $(s_1,\ldots,s_m)$ for which these points $t^{a,-}$ are local
minima of $F^+$. By a computation similar to (\ref{eq:hessF0}), we obtain
for all $p,q \in \cI$ that
\begin{align*}
\partial_{pq} F^+(t)&=
\1\{p=q\}\mathop{\sum_{j,k \in \cI \cup -\cI}}_{p+j+k \equiv 0 \bmod d}
s_ps_js_k \cos(t_p+t_j+t_k)\\
&\hspace{0.2in}+2s_ps_qs_{p+q}\cos(t_p+t_q+t_{-p-q})
-\1\{p \neq q\} \cdot 2s_ps_qs_{q-p}\cos(t_p+t_{-q}+t_{q-p}).
\end{align*}
We take $m \geq 8$.
Let us first consider $s_4 = \cdots = s_m = 1$ and $s_3 = 0$, with $s_1>0$ and
$s_2>1$ to be chosen later. Then, applying $\cos(t_i^{a,-}+t_j^{a,-}+t_k^{a,-})
=(-1)^{\1\{i=1\}+\1\{j=1\}+\1\{k=1\}}$, an explicit computation shows
that the diagonal terms of $\nabla^2 F^+(t^{a,-})$ are given by
\[{\tiny \partial_{pp} F^+(t^{a,-})
=\begin{cases}
4s_1^2s_2-(2m-7)s_1 & p=1 \\
4s_2^2+s_1^2s_2+(2m-8)s_2 & p=2 \\
0 & p=3 \\
s_2^2-2s_1+2s_2+2m-8 & p=4 \\
-4s_1+2s_2+2m-9 & p=5 \\
-4s_1+4s_2+2m-10 & p=6 \\
-4s_1+4s_2+2m-13 & p=m-1 \\
-6s_1+4s_2+2m-13 & p=m \\
-4s_1+4s_2+2m-11 & \text{for all other } p \\
\end{cases}}\]
and the off-diagonal terms (for $q>p$) are given by
\[{\tiny \partial_{pq} F^+(t^{a,-})
=\begin{cases}
-2s_1^2s_2 & (p,q)=(1,2) \\
-2s_1 & (p,q)=(1,4) \\
-2s_2^2+2s_2 & (p,q)=(2,4) \\
2s_2 & (p,q)=(2,5) \\
-2s_2 & (p,q)=(m-2,m) \\
2s_1+2s_2 & (p,q)=(m-1,m) \\
2s_1+2 & (p,q)=(p,p+1) \text{ for all } p=4,\ldots,m-2 \\
-2s_2+2 & (p,q)=(p,p+2) \text{ for all } p=4,\ldots,m-3 \\
2 & (p,q)=(p,p+3) \text{ for all } p=4,\ldots,m-3 \\
0 & \text{for all other } (p,q)
\end{cases}}\]
For $v = (0, 2s_2, 0, -s_2, 0, \ldots, 0)$ define
\[
X := \nabla^2 F^+(t^{a, -}) - v v^{\mathsf{T}},
\]
which removes the $s_2^2$ contributions from the entries $(2,2)$, $(2,4)$,
$(4,2)$, and $(4,4)$. Let $Y \in \R^{(m-1) \times (m-1)}$
be the minor of $X$ excluding the third row and column, indexed by
$\{1,2,4,\ldots,m\}$, and set
\[
\Delta_p := Y_{pp} - \sum_{q:q \neq p} |Y_{pq}|.
\]
Then the above expressions yield
  \[\Delta_1 = 2s_1^2 s_2 - (2m - 5)s_1 \qquad
  \Delta_2 = -s_1^2s_2+(2m - 12)s_2  \qquad 
  \Delta_4 = - 6s_1 - 2s_2 + 2m - 10\]
  \[\Delta_5 = - 8s_1 - 2s_2 + 2m - 13 \qquad
  \Delta_6 = - 8s_1 + 2m - 12 \qquad
\Delta_p = -8s_1+2m-15 \quad \text{ for } p=7,\ldots,m.\]
We now choose $s_1,s_2$ to ensure that each $\Delta_p$ above is strictly
positive: This is true if and only if
\[2m-12 > s_1^2 \quad \text{ and } \quad 2s_1s_2>2m-5 \quad \text{ and } \quad 2m-13>8s_1+2s_2
\quad \text{ and } \quad 2m-15>8s_1.\]
Setting $s_1 = \frac{1}{2} \sqrt{m}$ and $s_2 = \frac{2m - 5}{2s_1} + \eps$ for some small
$\eps > 0$, we may verify that these expressions hold for $m \geq 26$.
Then $Y$ is strictly diagonally dominant, and hence positive-definite.

This implies that all eigenvalues of $\nabla^2 F^+(t^{a,-})$ are strictly
positive, except for a single eigenvalue of 0 corresponding to the
eigenvector $e_3$.
We now increase $s_3$ from 0 a small constant $\delta$ to remove this 0
eigenvalue: Fixing $s_1,s_2$ and
$s_4=\ldots=s_m=1$ as above, denote by $h(s_3)$ the value of $\partial_{33}
F^+(t^{a,-})$ at $(s_1,s_2,s_3,\ldots,s_m)$. Then
\[
h(s_3) = -2s_1s_2 s_3 - 2s_1s_3 + 2s_2s_3 + 4 s_3^2 + s_3 (2m - 9).
\]
Since $e_3$ is the eigenvector of $\nabla^2 F^+(t^{a,-})$
corresponding to 0, the derivative of this 0 eigenvalue with respect to $s_3$ is
(see \cite[Eq.\ (67)]{cookbook})
\[h'(s_3)\big|_{s_3 = 0} = -2s_1s_2 - 2s_1 + 2s_2 + 2m - 9.\]
For $m \geq 26$ and the above choices of $s_1,s_2$, this derivative is positive.
Then for some sufficiently small $s_3=\delta$,
$\nabla^2 F^+(t^{a, -})$ is strictly positive definite.  We conclude that for
this choice of $(s_1,\ldots,s_m)$, each $t^{a,-}$ is a local minimum of
$F^+(t)$.
By continuity, this holds also for all $(s_1,\ldots,s_m) \in U_s$ and some open
set $U_s \subset \R^m$. Then Theorem \ref{thm:MRA}(a) implies that for each
$\theta_* \in \R^d$ where $(r_{1,*}^2,\ldots,r_{m,*}^2) \in U_s$, $R(\theta)$
has $d$ local minima (for sufficiently large $\sigma$), and these do not belong
to the orbit $\cO_{\theta_*}$. The condition $m \geq 26$ corresponds to
$d \geq 53$, and this shows part (c).

The analogous statements for the empirical landscape of $R_n(\theta)$
follow from Corollary \ref{cor:MRA}.
\end{proof}

\appendix

\section{Auxiliary lemmas and proofs}

\subsection{Cumulants and cumulant bounds}\label{appendix:cumulants}

The order-$\ell$ cumulant $\kappa_\ell(X)$ of a random variable $X$ is
defined recursively by the moment-cumulant relations
\[\E[X^\ell]=\sum_{\text{partitions } \pi \text{ of } [\ell]}\;\;
\prod_{S \in \pi} \kappa_{|S|}(X).\]
More generally, for random variables $X_1,\ldots,X_\ell$, the mixed cumulants
$\kappa_{|S|}(X_k:k \in S)$ for $S \subseteq [\ell]$ are defined recursively by
the moment-cumulant relations
\[\E\left[\prod_{i \in T} X_i\right]
=\sum_{\text{partitions } \pi \text{ of } T}\;\;
\prod_{S \in \pi} \kappa_{|S|}(X_k:k \in S).\]
These relations may be M\"obius-inverted to obtain the explicit definition
\begin{equation}\label{eq:scalarmomentcumulant}
\kappa_\ell(X_1,\ldots,X_\ell)
=\sum_{\text{partitions } \pi \text{ of } [\ell]}\;\;
(|\pi|-1)!(-1)^{|\pi|-1}\prod_{S \in \pi} \E\left[\prod_{i \in S}
X_i\right]
\end{equation}
where $|\pi|$ is the number of sets in $\pi$
(see \cite[Sec.~2.3.4]{mccullagh2018tensor}).
If $X_1=\ldots=X_\ell=X$, then $\kappa_\ell(X_1,\ldots,X_\ell)=\kappa_\ell(X)$.
The mixed cumulant $\kappa_\ell(X_1,\ldots,X_\ell)$ is multi-linear and
permutation-invariant in its $\ell$ arguments.
We have $\kappa_1(X)=\E[X]$, $\kappa_2(X)=\Var[X]$, and
$\kappa_2(X_1,X_2)=\Cov[X_1,X_2]$.

The cumulant generating function of a random variable $X$ is
the formal power series
\begin{equation}\label{eq:cumulantseries}
K_X(s)=\sum_{\ell=1}^\infty \kappa_\ell(X) \frac{s^\ell}{\ell!}.
\end{equation}
If $\log \E[e^{sX}]$ exists on a neighborhood of $0$, then its $\ell^\text{th}$ derivative
at 0 is $\kappa_\ell(X)$. 
Similarly, the cumulant generating function of a random vector $u \in \R^d$ is
the formal power series 
\[K_u(\theta)=\sum_{\ell_1,\ldots,\ell_d=1}^\infty
\frac{\theta_1^{\ell_1}\ldots \theta_d^{\ell_d}}{\ell_1!\ldots \ell_d!}
\kappa_{\ell_1+\ldots+\ell_d}(u_1,\ldots,u_1,\ldots,u_d,\ldots,u_d),\]
where in $\kappa_{\ell_1+\ldots+\ell_d}(u_1,\ldots,u_1,\ldots,u_d,\ldots,u_d)$,
each $u_j$ appears $\ell_j$ times. If $\log \E[e^{\langle \theta,u \rangle}]$ exists in
a neighborhood of $\theta = 0$, its $\ell^\text{th}$ derivative at 0 is
\[\kappa_\ell(u) \in (\R^d)^{\otimes \ell},\]
where $\kappa_\ell(u)$ denotes the order-$\ell$ cumulant tensor of $u$. This has
entries, for $i_1,\ldots,i_\ell \in [d]$,
\[\kappa_\ell(u)_{i_1,\ldots,i_\ell}=\kappa_\ell(u_1,\ldots,u_1,\ldots,u_d,\ldots,u_d)\]
where each coordinate $u_j$ appears $\ell_j$ times if $\ell_j$ of the indices
$i_1,\ldots,i_\ell$ equal $j$. The first two cumulant tensors are $\kappa_1(u)=\E[u]$ and
$\kappa_2(u)=\Cov[u]$.

More generally, if $\log \E[e^{\langle \theta,u \rangle}]$ exists in a neighborhood of $\theta$, a
reweighted exponential family law $p(u|\theta)$ may be defined by the expectation
\[\E[f(u) \mid \theta]=\E[f(u)e^{\langle \theta,u \rangle-K_u(\theta)}]
=\frac{\E[f(u)e^{\langle \theta,u \rangle}]}{\E[e^{\langle \theta,u \rangle}]}.\]
Then the $\ell^\text{th}$ derivative of $\log \E[e^{\langle \theta,u \rangle}]$ at $\theta$
is $\kappa_\ell(u \mid \theta)$, the order-$\ell$ cumulant tensor of this reweighted law
(see \cite[Theorem 1.5.10]{lehmann2006theory}).

The following result provides an upper bound for these cumulants when $X,X_1,\ldots,X_\ell$ are
bounded random variables. This bound is tight up to an
exponential factor in $\ell$, as may be seen for $X \sim \Unif([0,1])$ where
$\kappa_\ell(X)=B_\ell/\ell$ and $B_\ell$ is the $\ell^\text{th}$ Bernoulli number
(see \cite[Example 2.7]{billey2020asymptotic}), satisfying $|B_{2\ell}| \sim
4\sqrt{\pi \ell}(\ell/(\pi e))^{2\ell}$.

\begin{lemma}\label{lem:cumulantbounds}
\begin{enumerate}[(a)]
\item If $|X| \leq m$ almost surely, then $|\kappa_\ell(X)| \leq (m\ell)^\ell$.
\item If $|X_i| \leq m_i$ almost surely for each $i=1,\ldots,\ell$, then
$|\kappa_\ell(X_1,\ldots,X_\ell)| \leq \ell^\ell m_1\ldots m_\ell$.
\item If $|X| \leq m$ almost surely, then the series (\ref{eq:cumulantseries})
is absolutely convergent for $|s|<1/(me)$.
\end{enumerate}
\end{lemma}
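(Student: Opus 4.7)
The plan is to derive all three parts from the explicit moment--cumulant formula~\eqref{eq:scalarmomentcumulant} via an elementary combinatorial estimate. For part~(b), I would first apply the triangle inequality to~\eqref{eq:scalarmomentcumulant} and use the bound $|\E[\prod_{i \in S} X_i]| \leq \prod_{i \in S} m_i$, which holds under the hypothesis $|X_i| \leq m_i$ a.s., to reduce the problem to establishing
\[
N(\ell) \;:=\; \sum_{\pi \text{ of } [\ell]} (|\pi|-1)! \;\leq\; \ell^\ell.
\]

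The combinatorial heart of the argument is the interpretation of $(|\pi|-1)!$, for a partition $\pi$ with $k$ blocks, as the number of linear orderings of the $k$ blocks in which the block containing the element $1$ appears first. Under this interpretation, $N(\ell)$ counts pairs $(\pi,\sigma)$ consisting of a partition $\pi$ of $[\ell]$ together with such an ordering $\sigma$ of its blocks. I would then set up the bijection $(\pi,\sigma) \mapsto f$, where $f \colon [\ell] \to \{1,\ldots,k\}$ is defined by letting $f(i)$ be the position in $\sigma$ of the block containing $i$; this $f$ is automatically surjective and satisfies $f(1)=1$. Viewing each such surjection as a function $[\ell] \to [\ell]$ (with distinct surjections remaining distinct, since $k$ is recoverable from the image), we obtain
\[
N(\ell) \;\leq\; \bigl|\{ f \colon [\ell] \to [\ell] : f(1) = 1 \}\bigr| \;=\; \ell^{\ell-1} \;\leq\; \ell^\ell,
\]
which completes the proof of (b).

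Part~(a) will then follow from~(b) by specializing $X_1=\cdots=X_\ell=X$ and $m_i=m$. For part~(c), I would apply~(a) together with the standard Stirling bound $\ell! \geq (\ell/e)^\ell$ to write
\[
\sum_{\ell=1}^\infty \frac{|\kappa_\ell(X)|}{\ell!}\,|s|^\ell
\;\leq\; \sum_{\ell=1}^\infty \frac{(m\ell)^\ell}{\ell!}\,|s|^\ell
\;\leq\; \sum_{\ell=1}^\infty (me\,|s|)^\ell,
\]
which converges whenever $|s|<1/(me)$.

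The only step that is not a direct reduction is the combinatorial bound $N(\ell)\leq \ell^\ell$ underlying part~(b); I expect the bijective identification above to dispatch it cleanly, after which parts (a) and (c) are immediate consequences.
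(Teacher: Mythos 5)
Your proof is correct, and the combinatorial core of part (b) takes a genuinely different route from the paper. The paper groups partitions by block count $v$, writes $\sum_\pi (|\pi|-1)! = \sum_v S(\ell,v)(v-1)!$ in terms of Stirling numbers, bounds $v!\,S(\ell,v)$ by $v^\ell$ (all functions $[\ell]\to[v]$), and obtains $\sum_{v=1}^\ell v^{\ell-1}\leq \ell^\ell$. Your argument instead interprets $(|\pi|-1)!$ directly as the number of orderings of the blocks with the block of $1$ placed first, sets up an injection from such pairs $(\pi,\sigma)$ into functions $[\ell]\to[\ell]$ fixing $1$, and obtains the sharper bound $N(\ell)\leq \ell^{\ell-1}$ in one step. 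This is arguably cleaner: it sidesteps the Stirling-number/multinomial bookkeeping (and a minor imprecision in the paper where the displayed equality $\sum_{\ell_1+\cdots+\ell_v=\ell}\binom{\ell}{\ell_1,\ldots,\ell_v}=v^\ell$ silently trades the constraint $\ell_i\geq 1$ for $\ell_i\geq 0$, so the first step should really be an inequality), while the paper's version is a more routine calculation once one is comfortable with Stirling numbers. Your reductions for (a) and (c) — specializing $X_i=X$, then applying $\ell!\geq(\ell/e)^\ell$ and comparing with a geometric series — are identical to the paper's.
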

\begin{proof}
We apply (\ref{eq:scalarmomentcumulant}). Enumerating over $v=|\pi|$, we have
\[\sum_{\text{partitions } \pi \text{ of } [\ell]}\;\; (|\pi|-1)!
=\sum_{v=1}^\ell \frac{(v-1)!}{v!} \sum_{\ell_1+\ldots+\ell_v=\ell}
\binom{\ell}{\ell_1,\ldots,\ell_v}=\sum_{v=1}^\ell \frac{1}{v} \cdot v^\ell
=\sum_{v=1}^\ell v^{\ell-1} \leq \ell^\ell,\]
so (b) follows from (\ref{eq:scalarmomentcumulant}).
Specializing to $X_1=\ldots=X_\ell$ yields (a), and (c) follows
from (a) and the bound $\ell! \geq \ell^\ell/e^\ell$.
\end{proof}

\subsection{Reparametrization by invariant polynomials}\label{appendix:transcendence}

We prove Lemmas \ref{lemma:phiconstruction} and \ref{lemma:polynomialreparam}.
Parts of these are well-known, but we provide a brief proof here for
convenience.

We recall the more usual definition of transcendence degree
for two fields $E \subset F$, where $\trdeg(F/E)$ is the
maximum number of elements in $F$ that are algebraically independent over $E$.
We verify also in the proof of Lemma \ref{lemma:phiconstruction} that our
definition of $\trdeg(A)$ for any subset $A \subseteq \cR^G$ coincides with
$\trdeg(\R(A)/\R)$, where $\R(A)$ is the field of rational functions generated by $A$.

\begin{proof}[Proof of Lemma \ref{lemma:phiconstruction}]
Consider any subsets $A' \subseteq A \subseteq \cR^G$, where $A'$ is
algebraically independent. Call $A'$ \emph{maximal} in $A$
if $A' \cup \{a\}$ is algebraically dependent for every
$a \in A \setminus A'$. Let $A'$ be maximal in $A$, and suppose $|A'|=k$.
Let $\R(A)$ and $\R(A')$ be the fields of $G$-invariant
rational functions generated by $A$ and $A'$. Algebraic independence
of $A'$ implies that $\trdeg(\R(A')/\R)=k$. Maximality of $A'$ implies that
each $a \in A$ is algebraic over $\R(A')$. Then $\R(A)$ is an algebraic
extension of $\R(A')$, so $\trdeg(\R(A)/\R(A'))=0$, hence $\trdeg(\R(A)/\R)=k$.
This verifies that every such maximal algebraically independent set $A'$ of $A$
has the same cardinality, which coincides with $\trdeg(\R(A)/\R)$.

Letting $\R(\theta_1,\ldots,\theta_d)$ and $\R(\cR^G)$ be the fields of all
rational functions and all $G$-invariant rational functions in $\theta$,
respectively, $\R(\theta_1,\ldots,\theta_d)$ is an algebraic extension of
$\R(\cR^G)$ (see \cite[Lemma
11]{Cox1992}), so $\trdeg(\R(\theta_1,\ldots,\theta_d)/\R(\cR^G))=0$.
Since $\trdeg(\R(\theta_1,\ldots,\theta_d)/\R)=d$, this shows
$\trdeg(\cR^G) = \trdeg(\R(\cR^G)/\R) = d$.  Thus $\trdeg(\cR_{\leq L}^G)=d$ for
some $L \geq 1$, and there exists a smallest such $L$.
To construct $\varphi$, let $\varphi^1$ be any maximal algebraically independent
subset of $\cR^G_1$. The above implies that the cardinality of $\varphi^1$ is
$d_1=\trdeg(\cR^G_1)$. These polynomials have degree exactly 1.
Now extend this to any maximal algebraically independent subset
$(\varphi^1,\varphi^2)$ of $\cR^G_2$. The above implies that the cardinality of
$\varphi^2$ is $d_2=\trdeg(\cR^G_2)-\trdeg(\cR^G_1)$. If $d_2>0$, then the
polynomials of $\varphi^2$ must have degree exactly 2, by maximality of
$\varphi^1$. We may iterate this procedure to obtain $(\varphi^1,\ldots,\varphi^L)$.
\end{proof}

\begin{proof}[Proof of Lemma \ref{lemma:polynomialreparam}]
For parts (a) and (b), recall by \cite[Theorem 2.3]{ehrenborg1993apolarity} that
$\varphi_1, \ldots, \varphi_k$ are algebraically independent if and only if
$\nabla \varphi_1, \ldots, \nabla \varphi_k$ are linearly independent over
the field of rational functions
$\C(\theta_1, \ldots, \theta_d)$.  For part (a), this linear independence means that
some maximal $k \times k$ minor of the $k \times d$ derivative $\der_\theta \varphi$ 
does not vanish in $\C(\theta_1, \ldots, \theta_d)$. Then that same maximal minor
does not vanish in $\C$ for generic $\theta \in \R^d$, showing linear independence for
generic $\theta$.  For part (b), linear independence at any point $\theta$ implies
that some maximal minor of $\der_\theta \varphi$
does not vanish and hence $\nabla \varphi_1, \ldots, \nabla \varphi_k$ are linearly
independent over $\C(\theta_1, \ldots, \theta_d)$, implying algebraic independence.

For part (c), let us arbitrarily extend $(\varphi_1,\ldots,\varphi_k)$ to a
system of coordinates $\varphi=(\varphi_1,\ldots,\varphi_d)$, where $\der
\varphi$ is non-singular in a neighborhood of $\wtheta$.
(Here, $\varphi_{k+1},\ldots,\varphi_d$ are
general analytic functions and need not belong to $\cR^G$.)
By the inverse function theorem, there is a
neighborhood $U$ of $\wtheta$ and corresponding neighborhood $\varphi(U)$ of
$\varphi(\wtheta)$ for which $\theta$ is an analytic function of
$\varphi \in \varphi(U)$. Then any polynomial $\psi \in \cR^G_{\leq \ell}$ is
such that $\psi(\theta)$ is also an analytic function of
$\varphi \in \varphi(U)$. Let us write this
function as $\psi=f(\varphi)$. Then $\psi(\theta)=f(\varphi(\theta))$ for all
$\theta \in U$, so by the chain rule,
\begin{equation}\label{eq:psiphi}
\der \psi(\theta)=\der_\varphi f(\varphi) \cdot \der \varphi(\theta).
\end{equation}
By part (b), since $(\varphi_1,\ldots,\varphi_k,\psi)$ are algebraically
dependent, the gradients $\nabla \varphi_1,\ldots,\nabla \varphi_k,\nabla \psi$
must be linearly dependent at every $\theta \in U$. So $\nabla \psi=\der
\psi^\top$ belongs to the span of $\nabla \varphi_1,\ldots,\nabla \varphi_k$ at
every $\theta \in U$. Since $\der \varphi(\theta)$ is a non-singular matrix,
this and (\ref{eq:psiphi}) imply that $\nabla_\varphi f=\der_\varphi f^\top$
has coordinates $k+1,\ldots,d$ equal to 0 for every $\varphi \in \varphi(U)$.
So $f$ is in fact an analytic function of only the first $k$ variables
$\varphi_1,\ldots,\varphi_k$ over $\varphi(U)$, which is the statement of part (c).
\end{proof}

\subsection{Concentration inequality for $\sum_i
\|\eps_i\|^3$}\label{appendix:sumcubes}

We prove the inequality (\ref{eq:sumcubes}).
We use the following concentration result, which specializes
\cite[Theorem 1.2]{adamczak2015concentration} to Gaussian random variables.

\begin{theorem}[\cite{adamczak2015concentration}]\label{thm:AW15}
Suppose $f:\R^m \to \R$ is $D$ times continuously-differentiable, and
$\nabla^D f(x)$ is uniformly bounded over $x \in \R^m$. Let $\eps \in \R^m$
have i.i.d.\ $\N(0,1)$ coordinates. Then for a constant $c \equiv c(D)>0$,
\[\P[|f(\eps)-\E f(\eps)| \geq t] \leq 2e^{-c\eta_f(t)}.\]
Here,
\begin{align*}
\eta_f(t)&=\min\Bigg(\min_{\text{partitions } \cJ \text{ of } [D]}
\left(\frac{t}{\sup_{x \in \R^m} \|\nabla^D f(x)\|_\cJ}\right)^{2/|\cJ|},\\
&\hspace{1in}\min_{1 \leq d \leq D-1} \min_{\text{partitions } \cJ \text{ of } [d]}
\left(\frac{t}{\|\E[\nabla^d f(\eps)]\|_\cJ}\right)^{2/|\cJ|}\Bigg)
\end{align*}
where $|\cJ| \equiv K$ is the number of sets in the
partition $\cJ=\{J_1,\ldots,J_K\}$ of $[d]$, and
\[\|A\|_\cJ=\sup\left(\sum_{i_1,\ldots,i_d=1}^m
a_{i_1,\ldots,i_d} \prod_{k=1}^K x_{(i_\ell:\ell \in J_k)}^{(k)}:
\|x^{(k)}\|_{\HS} \leq 1 \text{ for all } k=1,\ldots,K\right).\]
In this expression, $x^{(k)}$ denotes an order-$|J_k|$ tensor in $(\R^m)^{\otimes |J_k|}$,
and $x_{(i_\ell:\ell \in J_k)}^{(k)}$ is its entry at the
indices $(i_\ell:\ell \in J_k)$.
\end{theorem}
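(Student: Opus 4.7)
The plan is to reduce the bound to Lata{\l}a's sharp tail inequality for Gaussian polynomial chaoses, applied separately to each order of a Hermite/Wiener expansion of $f(\eps)$ about its mean. The starting point is iterated Gaussian integration by parts (Stein's identity $\E[\eps_i g(\eps)]=\E[\partial_i g(\eps)]$), which produces the exact decomposition
\[
f(\eps)-\E f(\eps)=\sum_{d=1}^{D-1}\frac{1}{d!}\big\langle \E[\nabla^d f(\eps)],\,H_d(\eps)\big\rangle+R_D(\eps),
\]
where $H_d(\eps)\in(\R^m)^{\otimes d}$ is the order-$d$ Hermite tensor and each summand is a deterministic multilinear form (the coefficient tensor $\E[\nabla^d f(\eps)]$ is non-random) in the orthogonal Hermite polynomials of $\eps$. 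The remainder $R_D$ is itself an order-$D$ chaos whose coefficient tensor is $\nabla^D f$ evaluated at random intermediate points; what makes the iteration terminate is precisely the hypothesis that $\nabla^D f$ is uniformly bounded.

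For each deterministic coefficient tensor $A\in(\R^m)^{\otimes d}$, the second ingredient is Lata{\l}a's tail bound
\[
\P\big[\big|\langle A,H_d(\eps)\rangle\big|\geq t\big]\leq 2\exp\!\Big(-c\min_{\cJ}(t/\|A\|_{\cJ})^{2/|\cJ|}\Big),
\]
with the minimum taken over partitions $\cJ$ of $[d]$ and $\|\cdot\|_{\cJ}$ the injective tensor norm appearing in the statement. This is proved by Gaussian hypercontractivity and induction on $d$, the base cases $d=1,2$ being the classical Gaussian linear and Hanson--Wright bounds. Substituting $A=\E[\nabla^d f(\eps)]$ for each $d=1,\ldots,D-1$ yields precisely the second family of minima in $\eta_f(t)$.

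The remainder $R_D$ is handled by an extension of Lata{\l}a's inequality (the core contribution of \cite{adamczak2015concentration}) that allows random but uniformly bounded coefficient tensors: the kernel $\nabla^D f$ is bounded in every $\cJ$-norm by $\sup_x\|\nabla^D f(x)\|_{\cJ}$, and the standard argument goes through by a conditioning/decoupling step that absorbs this extra supremum into the tensor norm. This produces the first family of minima in $\eta_f(t)$. A final union bound over the $D$ terms collapses the bounds into the single exponent $\eta_f(t)$. The main obstacle is this last step: Lata{\l}a's original inequality is stated for deterministic coefficients, and upgrading it to uniformly bounded random kernels is what requires the full machinery of \cite{adamczak2015concentration}, via a chaining/entropy argument combined with sharp moment estimates for Gaussian chaoses. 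Everything else is essentially bookkeeping of the $\cJ$-norms across orders.
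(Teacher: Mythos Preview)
The paper does not prove this theorem. It is quoted verbatim as \cite[Theorem~1.2]{adamczak2015concentration} (specialized to standard Gaussian coordinates) and used as a black box in Appendix~\ref{appendix:sumcubes} to establish the tail bound \eqref{eq:sumcubes}. There is therefore no ``paper's own proof'' against which to compare your proposal.

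As a sketch of the Adamczak--Wolff argument, your outline is in the right spirit but conflates two distinct decompositions. The identity you write,
\[
f(\eps)-\E f(\eps)=\sum_{d=1}^{D-1}\frac{1}{d!}\big\langle \E[\nabla^d f(\eps)],\,H_d(\eps)\big\rangle+R_D(\eps),
\]
is the orthogonal projection onto Wiener chaoses of order $\leq D-1$ (with coefficients obtained by iterating Stein's identity), and the remainder $R_D$ then lives in the sum of \emph{all} chaoses of order $\geq D$, not in a single order-$D$ chaos with kernel ``$\nabla^D f$ at random intermediate points'' --- that description belongs to a Taylor remainder, which is a different object. The actual route in \cite{adamczak2015concentration} is moment-based: one controls $\|f-\E f\|_p$ by iterating the Gaussian Poincar\'e inequality $D$ times, which produces terms governed by $\|\E[\nabla^d f(\eps)]\|_{\cJ}$ for $d<D$ and by $\sup_x\|\nabla^D f(x)\|_{\cJ}$ at the terminal step, and then converts the resulting $L^p$ growth into the stated tail via Chebyshev. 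Lata{\l}a's chaos inequality enters in bounding the $L^p$ norms of the intermediate multilinear pieces, not through a direct chaos expansion of $f$ itself.
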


To show (\ref{eq:sumcubes}), let us
write the coordinates of $\eps_i$ as $\eps_{ij}$. We consider
\[f(\eps_1,\ldots,\eps_n)=\sum_{i=1}^n \|\eps_i\|^3\]
as a function of the $m=nd$ standard Gaussian variables $\eps_{ij}$, and apply the
above result with $D=3$ and this function $f:\R^{nd} \to \R$.
We analyze $\eta_f(t)$: Applying
$\partial_{\eps_{ij}}\|\eps_i\|=\eps_{ij}/\|\eps_i\|$, a direct computation yields
\begin{align*}
\partial_{\eps_{ij}} f&=3\|\eps_i\|\eps_{ij},\\
\partial_{\eps_{ij}}\partial_{\eps_{ik}} f&=3\|\eps_i\|\1\{j=k\}
+3\eps_{ij}\eps_{ik}/\|\eps_i\|,\\
\partial_{\eps_{ij}}\partial_{\eps_{ik}}\partial_{\eps_{i\ell}}
f&=3(\eps_{i\ell}\1\{j=k\}+\eps_{ik}\1\{j=\ell\}
+\eps_{ij}\1\{k=\ell\})/\|\eps_i\|-3\eps_{ij}\eps_{ik}\eps_{i\ell}/\|\eps_i\|^3,
\end{align*}
and all other partial derivatives up to order three are 0.
Taking expectations above
and applying sign invariance of $\eps_{ij}$, we have $\E[\nabla f]=0$ and
$\E[\nabla^2 f]=c\,\Id$ (in dimension $nd \times nd$) for a constant $c>0$.
Then $\|\E[\nabla f]\|_{\{1\}}=0$,
$\|\E[\nabla^2 f]\|_{\{1,2\}}=\|\E[\nabla^2 f]\|_{\HS}=c\sqrt{n}$, and
$\|\E[\nabla^2 f]\|_{\{1\},\{2\}}=\|\E[\nabla^2 f]\|=c$. Thus
\begin{equation}\label{eq:etaboundprelim}
\min_{1 \leq d \leq D-1} \min_{\text{partitions } \cJ \text{ of } [d]}
\left(\frac{t}{\|\E[\nabla^d f(\eps)]\|_\cJ}\right)^{2/|\cJ|}
\geq c'\min(t^2/n,t).
\end{equation}

The third derivative $A=\nabla^3 f$ has $n$ non-zero
blocks of size $d \times d \times d$, with entries uniformly bounded in the range
$[-12,12]$. We observe that for $\cJ=\{\{1,2,3\}\}$,
\[\|A\|_{\{1,2,3\}}=\|A\|_\HS \leq C\sqrt{n}.\]
For $\cJ=\{\{1,2\},\{3\}\}$, denote by $B_1,\ldots,B_n$ the $n$ blocks of $d$
consecutive coordinates in $[nd]$, and by $\|z_B\|_2^2=\sum_{i \in B} z_i^2$.
Then, since $a_{ijk}=0$ unless $i,j,k$ belong to the same such block,
\begin{align*}
\|A\|_{\{1,2\},\{3\}}&=\sup\left(\sum_{i,j,k=1}^{nd}
a_{ijk}y_{ij}z_k:\|Y\|_{\HS} \leq 1,\|z\|_2 \leq 1\right)\\
&=\sup\left(\sum_{i,j=1}^{nd}\left(\sum_{k=1}^{nd}
a_{ijk}z_k\right)^2:\|z\|_2 \leq 1 \right)^{1/2}\\
&=\sup\left(\sum_{\ell=1}^n \sum_{i,j \in B_\ell} \left(\sum_{k \in B_\ell}
a_{ijk}z_k\right)^2: \|z\|_2 \leq 1 \right)^{1/2}\\
&\leq C\sup \left(\sum_{\ell=1}^n \|z_{B_\ell}\|^2:\|z\|_2 \leq
1\right)^{1/2}=C.
\end{align*}
Similarly $\|A\|_{\{1,3\},\{2\}},\|A_{\{2,3\},\{1\}}\| \leq C$, and we also have
$\|A\|_{\{1\},\{2\},\{3\}} \leq \|A\|_{\{1,2\},\{3\}} \leq C$.
Combining with (\ref{eq:etaboundprelim}),
$\eta_f(t) \geq c'\min(t^{2/3},t,t^2/n)$ for a constant
$c'>0$. Then applying Theorem \ref{thm:AW15} with $t=n$,
\[\P\left[n^{-1}\Big(f(\eps_1,\ldots,\eps_n)-
\E[f(\eps_1,\ldots,\eps_n)]\Big) \geq 1\right] \leq 2e^{-cn^{2/3}}.\]
As $n^{-1}\E[f(\eps_1,\ldots,\eps_n)]=C_1$ for a constant $C_1>0$, this shows
(\ref{eq:sumcubes}) for $C_0=1+C_1$.

\bibliography{orbitMLE}
\bibliographystyle{plain}

\end{document}